\theoremstyle{definition}
\newtheorem{defn}[equation]{Definition}
\theoremstyle{plain}
\newtheorem{thm}[equation]{Theorem}
\newtheorem{prob}[equation]{Problem}
\newtheorem{prop}[equation]{Proposition}
\newtheorem{fact}[equation]{Fact}
\newtheorem{cor}[equation]{Corollary}
\newtheorem{lem}[equation]{Lemma}
\theoremstyle{remark}
\newtheorem{rem}[equation]{Remark}
\newtheorem{ex}[equation]{Example}
\newcommand{\pullbackcorner}[1][dr]{\save*!/#1-1.2pc/#1:(-1,1)@^{|-}\restore}
\title[Indices on manifolds with fibered boundaries]{A topological approach to indices of geometric operators on manifolds with fibered boundaries}
\author[M. Yamashita]{Mayuko Yamashita}
\address{Graduate School of Mathematical Sciences, the University of Tokyo, 3-8-1 Komaba Meguro-ku Tokyo 153-8914, Japan}
\email{mayuko@ms.u-tokyo.ac.jp}
\begin{document}
\begin{abstract}
In this paper, we investigate topological aspects of indices of twisted geometric operators on manifolds equipped with fibered boundaries. 
We define $K$-groups relative to the pushforward for boundary fibration, and show that indices of twisted geometric operators, defined by complete $\Phi$ or edge metrics, can be regarded as the index pairing over these $K$-groups. 
We also prove various properties of these indices using groupoid deformation techniques.  
Using these properties, we give an application to the localization problem of signature operators for singular fiber bundles. 
\end{abstract}
\maketitle
\tableofcontents
\section{Introduction}
In this paper, we consider pairs of the form $(M, \pi : \partial M \to Y)$, where $M$ is a compact manifold with boundary $\partial M$ which is a closed manifold, and $\pi : \partial M \to Y$ is a smooth submersion, equivalently a fiber bundle structure, to a closed manifold $Y$. 
We call such pairs {\it manifolds with fibered boundaries}. 
We investigate topological aspects of indices of geometric operators, namely $spin^c$-Dirac operators, signature operators and their twisted versions, on such manifolds. 
There are two purposes of this paper. 
The first one is to formulate the index pairing on such manifolds. 
We define $K$-groups relative to the pushforward for boundary fibration, and show that indices of twisted geometric operators, defined by complete metrics of the form (\ref{intro_phi_metric}), can be regarded as the index pairing over these $K$-groups. 
The second one is to 
prove properties of these indices using groupoid deformation techniques.  
Using these properties, we give an application to the localization problem of signature operators for singular fiber bundles. 

Singular spaces arise in various areas in mathematics. 
In particular, stratified pseudomanifolds include many important examples of singular spaces, such as manifolds with corners and algebraic varieties. 
Manifolds with fibered corners arise as resolutions of stratified pseudomanifolds \cite{ALMP}, and the simplest case, stratified manifolds of depth $1$, corresponds to manifolds with fibered boundaries. 
There are some classes of metrics which is suited to encode the singularities of such spaces, including (complete) $\Phi$-metrics and edge metrics.  
To study pseudodifferential operators with respect to such metrics, the corresponding pseudodifferential calculi, called $\Phi$-calculus and $e$-calculus, were introduced by \cite{MM} and \cite{Maz}. 
Since then, analysis of elliptic operators in these calculi, in particular Fredholm theory and spectral theory of geometric operators, has been developed by many authors and there have been many applications to geometry of singular spaces, for example see \cite{ALMP}, \cite{DLR} and \cite{LMP}. 

Most of those works are analytic in nature, in the sense that they analyze individual operators under these calculi. 
On the other hand, it is natural to expect more topological description of Fredholm indices of these operators, as in the case of closed manifolds. 
One of related works in this direction is \cite{MR}, in which they formulate the index theorem for fully elliptic operators, as an equality of analytic and topological indices defined on abelian groups of stable homotopy classes of full symbols $K_{\Phi-cu}(\phi)$, which corresponds to $K_1(\Sigma^{\mathring{M}}(G_\Phi))$ in our paper. 
We go in this direction further, and show that, once we fix a class $\star$ of geometric operators we are concerned with (for example $\star$ can be $spin^c$ or $sign$), the indices of twisted operators can be formulated in terms of the pairing on more primary $K$-groups, $K_*(\mathcal{A}^\star_\pi)$, ``$K$-groups relative to the $\star$ pushforward for boundary fibration''. 
This paper is considered as a step to understand elliptic theory on singular spaces from a more topological, or $K$-theoretical viewpoint. 

In order to explain our index pairing on manifolds with fibered boundaries, first we recall the index pairing on closed manifolds. 
Let $M$ be a closed even dimensional smooth manifold. 
Suppose we are given a complex vector bundle $E$ over $M$, and a Clifford module bundle $W$ over $M$, with a Dirac type operator $D_M$. 
Then we have the corresponding classes $[E] \in K^0(M)$ and $[D_M] \in K_0(M)$ in the $K$-theory and the $K$-homology of $M$, and the index pairing of $K^0(M)$ and $K_0(M)$ sends this pair to the index of the twisted Dirac operator, $D_M^E$, acting on the Clifford module bundle $W \hat{\otimes} E$, 
\begin{align}\label{intro_closed_eq}
\langle {\cdot}, {\cdot}\rangle : K^0(M) \otimes K_0(M) &\to \mathbb{Z}  \notag \\
\langle [E], [D_M]\rangle &= \mathrm{Ind}(D_M^E) . 
\end{align} 
Many examples of such an operator $D_M$ arise as ``geometric operators'' on compact manifolds, such as $spin^c$-Dirac operators and signature operators. 
Fixing a class of geometric operator $\star$ and a corresponding geometric structure on $M$ that determines the operator $D_M$, the above pairing is equivalently described as the pushforward $p_!^\star : K^0(M) \to \mathbb{Z}$ in $K$-theory. 

The first main purpose of this paper is to generalize this index pairing to the case of compact manifolds with fibered boundaries. 
It is stated in terms of $K$-theory for some $C^*$-algebras. 
The $C^*$-algebras depend on the ``geometric structure'' we choose to deal with, so here we explain the case of $spin^c$-structures. 
\if0
For a compact manifold with fibered boundary $(M, \pi : \partial M \to Y)$ with some ``geometric structure'' on $\pi$ that determines the corresponding pushforward $\pi^\star_!$ in $K$-theory, we define a $C^*$-algebra $\mathcal{A}_\pi^\star$, whose $K$-groups $K_*(\mathcal{A}_\pi^\star)$ can be regarded as ``the $K$-groups relative to the boundary pushforward''. 
A pair of complex vector bundle $E$ and an invertible perturbation of the twisted fiberwise operator defines a class in $K_*(\mathcal{A}_\pi^\star)$, and a pair of ``geometric structures'' on $M$ and $Y$ which are compatible with the one on $\pi$, determines a class in $K^*(\mathcal{A}_\pi^\star)$. 
We show that, the pairing of these classes are equal to the index of the geometric operator twisted by $E$, with respect to a $\Phi$-metric or edge metric, determined by the fiberwise invertible perturbation. 
For ``geometric operators'' which are given by elliptic operators, we have corresponding pushforward in $K$-theory by proper submersions equipped with the corresponding ``geometric structures''. 
For example if a fiber bundle $\pi : N \to Y$ is equipped with $spin^c$ structure, we get the usual Gysin map in $K$-theory, $\pi_! : K^*(N) \to K^{*-n}(Y)$. 
Another example is given by signature operators. 
If $\pi$ is oriented, we get signature pushforward $\pi^{\mathrm{sign}}_! : K^*(N) \to K^{*-n}(Y)$. 
\fi

Assume we are given a compact even dimensional manifold with fibered boundary $(M, \pi : \partial M \to Y)$, and assume that the fibers of $\pi$ has dimension $n$. 
If $\pi$ is equipped with a $spin^c$-structure, 
we associate a $C^*$-algebra $\mathcal{A}_\pi$, whose $K$-groups fit into the long exact sequence
\[
\cdots\to K^{*}(M) \xrightarrow{\pi_! \circ i^*} {K}^{*-n}(Y) \to {K}_{*-n}(\mathcal{A}_\pi) \to {K}^{*+1}(M) \xrightarrow{\pi_! \circ i^*} \cdots
\]
where $\pi_!$ is the Gysin map in $K$-theory $\pi_! : K^*(\partial M) \to K^{*-n}(Y)$ defined by the fiberwise $spin^c$-structure, and $i$ is the inclusion $i : \partial M \to M$ (Definition \ref{def_A_pi} and Proposition \ref{A_pi_longexact}). 
Thus $K$-groups of this $C^*$-algebra, $K_*(\mathcal{A}_\pi)$, can be regarded as the $K$-groups relative to the $spin^c$-pushforward of the boundary fibration. 

\if0
In the signature operator case, we have analogous construction. 
If we are given a pair $(M, \pi : \partial M \to Y)$ such that $T^VM$ are oriented, 
then we associate a $C^*$-algebra $\mathcal{A}^{\mathrm{sign}}_\pi$, whose $K$-groups fit into the long exact sequence
\[
\cdots\to K^{*}(M) \xrightarrow{\pi^{\mathrm{sign}}_! \circ i^*} {K}^{*-n}(Y) \to {K}_{*-n}(\mathcal{A}^{\mathrm{sign}}_\pi) \to {K}^{*+1}(M) \xrightarrow{\pi^{\mathrm{sign}}_! \circ i^*}\cdots
\] 
\fi

From now on, we assume $n$, the dimension of $M$, is even. 
A pair of the form $(E, \tilde{D}_\pi^E)$, where $E$ is a complex vector bundle over $M$ satisfying $\pi_![E] = 0\in K^0(Y)$, and the operator $\tilde{D}_\pi^E$ is an invertible perturbation of the fiberwise $spin^c$-Dirac operators by lower order odd self-adjoint operators, gives a class $[(E, [\tilde{D}_\pi^E])] \in K_1(\mathcal{A}_\pi)$ (Lemma \ref{lem_K_class}; the bracket in $[\tilde{D}_\pi^E]$ means that we actually only have to consider the homotopy equivalence class of invertible perturbations). 
Furthermore, a pair $(P'_M, P'_Y)$ of (equivalence classes of) $spin^c$-structures on $M$ and $Y$ which is compatible with the one on $\pi$ at the boundary, gives a class in $K^1(\mathcal{A}_\pi)$. 
This is the element $[(P'_M, P'_Y)] \otimes_{\Sigma^{\mathring{M}}(G_\Phi)} \partial^{\mathring{M}}(G_\Phi) \in KK^1(\mathcal{A}_\pi, \mathbb{C})$ appearing in Theorem \ref{main_thm_K}. 

On the other hand, from the data $(E, \tilde{D}_\pi^E, P'_M, P'_Y)$ we can construct a Fredholm operator by using $\Phi$ or edge metrics, as we now explain. 
For a manifold with fibered boundary $(M, \pi : \partial M \to Y)$, natural classes of complete riemannian metrics on the interior arise as follows. 
First fix a splitting $T\partial M = \pi^*TY \oplus T^V\partial M$ and a collar structure near the boundary. 
Consider metrics on $\mathring{M}$ which are on the collar of the form
\begin{equation}\label{intro_phi_metric}
g_\Phi = \frac{dx^2}{x^4} \oplus \frac{\pi^*g_Y}{x^2} \oplus g_\pi \mbox{ and }
g_e = \frac{dx^2}{x^2} \oplus \frac{\pi^*g_Y}{x^2} \oplus g_\pi. 
\end{equation}
Here $g_Y$ and $g_\pi$ are some riemannian metrics on $TY$ and $T^V\partial M$, respectively, and $x$ is a normal coordinate of the collar. 
These are called rigid $\Phi$-metrics and rigid edge metrics in the literature, respectively. 
In this paper, we adopt pseudodifferential calculus on Lie groupoids, which is due to \cite{NWX}. 
As constructed in \cite{N}, the groupoids corresponding to $\Phi$ and edge metrics are of the form 
\begin{align*}
G_\Phi &= \mathring{M} \times \mathring{M} \sqcup \partial M \times_\pi \partial M \times_\pi TY \times \mathbb{R} \rightrightarrows M \\
G_e &= \mathring{M} \times \mathring{M} \sqcup \partial M \times_\pi \partial M \times_\pi (TY \rtimes \mathbb{R}^*_+) \rightrightarrows M. 
\end{align*}
Using the given (equivalence classes of) $spin^c$-structures, we can consider the $spin^c$-Dirac operator twisted by $E$ under these metrics, denoted by $D_\Phi^E$ and $D_e^E$. 
For $\Phi$-case, the operator $D_\Phi^E$ restricts to the boundary operator of the form $
D^E_\pi \hat{\otimes}1 + 1 \hat{\otimes}D_{TY \times \mathbb{R}}$. 
If we perturb this boundary operator to the invertible operator $\tilde{D}^E_\pi \hat{\otimes}1 + 1 \hat{\otimes}D_{TY \times \mathbb{R}}$, we get the Fredholmness of the operator on the interior and get the index, denoted by $\mathrm{Ind}_\Phi(P'_M,P'_Y,E,[\tilde{D}^E_\pi])$ (Definition \ref{def_index_perturbation_spinc}). 
The $e$-case is analogous, and we define $\mathrm{Ind}_e(P'_M,P'_Y,E, [\tilde{D}^E_\pi])$. 

Our main theorem, Theorem \ref{main_thm_K}, proves the equality
\[
\mathrm{Ind}_\Phi(P'_M, P'_Y, E, [\tilde{D}^E_\pi]) = [(E, [\tilde{D}^E_\pi])] \otimes_{\mathcal{A}_\pi} [(P'_M, P'_Y)] \otimes_{\Sigma^{\mathring{M}}(G_\Phi)} \partial^{\mathring{M}}(G_\Phi), 
\]
which can be regarded as a generalization of the index pairing (\ref{intro_closed_eq}). 

In the case of signature operators, the arguments proceed in parallel. 
If we are given a pair $(M, \pi : \partial M \to Y)$ such that $T^VM$ are oriented, 
then we associate a $C^*$-algebra $\mathcal{A}^{\mathrm{sign}}_\pi$, whose $K$-groups can be regarded as the $K$-groups relative to the signature pushforward of the boundary fibration. 
Given an orientation on $M$, we get the corresponding index pairing formula, Theorem \ref{main_thm_K_sign}, 
\[
\mathrm{Sign}_\Phi(M, E, [\tilde{D}^{\mathrm{sign}, E}_\pi])= [(E, [\tilde{D}^{\mathrm{sign}, E}_\pi])] \otimes_{\mathcal{A}_\pi^{\mathrm{sign}}} [M^{\mathrm{sign}}] \otimes_{\Sigma^{\mathring{M}}(G_\Phi)} \partial^{\mathring{M}}(G_\Phi) . 
\]

These indices, being defined as indices of operators on Lie groupoids, can be analyzed in terms of groupoids. 
We call {\it groupoid deformation technique} the following type of arguments. 
Suppose we are given a compact manifold $M$ and two Lie groupoids $G_0 \rightrightarrows M$ and $G_1 \rightrightarrows M$, equipped with geometric structures that determine the geometric operators $D_i \in \mathrm{Diff}^*(G_i; E_i)$. 
If one can define a Lie groupoid structure on $\mathcal{G} = G_0 \times \{0\} \sqcup G_1\times (0, 1] \rightrightarrows M \times [0, 1]$, and a geometric structure on $\mathcal{G}$ that restricts to ones on $G_0$ and $G_1$ by evaluation at $0$ and $1$ respectively, 
then the associated geometric operator $\mathcal{D}$ satisfies $\mathcal{D}|_{M \times \{i\}} = D_i$. 
Under some nice assumption on the groupoids, the element $[ev_0] \in KK(C^*(\mathcal{G}), C^*(G_0))$ is a $KK$-equivalence. 
Then, for example if the operators are elliptic, their index classes, $\mathrm{Ind}(D_i) \in K_0(C^*(G_i))$, are related as 
\[
\mathrm{Ind}(D_0)\otimes_{C^*(G_0)} [ev_0]^{-1} \otimes_{C^*(\mathcal{G})} [ev_1] = \mathrm{Ind}(D_1) \in K_0(C^*(G_1)).
\]
Furthermore, assuming we have a closed saturated subset $V \subset M$ such that $D_i|_V$ are invertible for $i = 0, 1$, we get the index classes $\mathrm{Ind}_{M \setminus V}(D_i) \in K_0(C^*(G_i|_{M \setminus V}))$. 
If we can give $\mathcal{G}$ a geometric structure such that the associated operator $\mathcal{D}$ is invertible when restricted to $V \times [0, 1]$, then we can relate these indices as 
\[
\mathrm{Ind}_{M \setminus V}(D_0)\otimes_{C^*(G_0|_{M \setminus V})} [ev_0]^{-1} \otimes_{C^*(\mathcal{G}|_{M \setminus V \times [0,1]})} [ev_1] = \mathrm{Ind}_{M \setminus V}(D_1) \in K_0(C^*(G_1|_{M \setminus V})).
\]
This argument, though very simple, turns out to be useful in proving various properties of indices considered here, without any difficult analysis involved. 

For example, in Proposition \ref{equality} (also see (\ref{equality_perturbation})), we prove that the indices defined by $\Phi$-metrics and $e$-metrics actually coincide for our settings, 
\[
\mathrm{Ind}_\Phi(P'_M, P'_Y, E, [\tilde{D}^E_\pi]) = \mathrm{Ind}_e(P'_M, P'_Y, E, [\tilde{D}^E_\pi]) . 
\]
The proof is an application of the groupoid deformation technique, by considering a groupoid of the form $G_\Phi \times \{0\} \sqcup G_e \times (0, 1]$. 
Also we prove the gluing formula (Proposition \ref{gluing}), 
as well as a condition for vanishing of the indices (Proposition \ref{vanishing}), using the such arguments.  

As an application of properties obtained in this way, in Section \ref{sec_localsignature}, we explain that the $\Phi$ (and $e$) indices of signature operators defined by the fiberwise invertible perturbations, can be used to solve the localization problem of signature for the singular fiber bundles. 
Suppose we are given a smooth map $\pi : M^{4k} \to X^{\mathrm{ev}}$ between closed oriented manifolds and $X$ is partitioned into compact manifolds with closed boundaries as $X = U \cup \cup_{i=1}^{m}V_i$. 
Suppose that each $V_i$ are disjoint, and the restriction of $\pi$ to $U$ is a fiber bundle structure with structure group contained in some nice subgroup $G \subset \mathrm{Diff}^+(F)$ of the orientation-preserving diffeomorphism group of the typical fiber $F$.  
The submanifold $U$ is regarded as the ``regular part'' of this singular fiber bundle structure $\pi$. 
The localization problem is to 
define a real number $\sigma (M_i, V_i, \pi|_{M_i}) \in \mathbb{R}$, which only depends on the data $(M_i, V_i, \pi|_{M_i})$, and write
\[
\mbox{Sign}(M) = \sum_{i = 1}^m \sigma (M_i, V_i, \pi|_{M_i}). 
\]
This problem originates from algebraic geometry for the case where the typical fiber is two dimensional, and the local signatures are constructed and calculated in various areas of mathematics, including topology, algebraic geometry and complex analysis. 
For example see \cite{Mat}, \cite{E} for topological approaches and \cite{F} for a differential geometric approaches. 
Also see \cite{AK} and the introduction of \cite{S} for more survey on this problem. 

In this situation, for each $i$ the pair $(\pi^{-1}(V_i), \pi :\pi^{-1}(\partial V_i)\to \partial V_i )$ is a compact manifold with fibered boundary, and $\pi$ has the structure group $G$. 
The idea is to fix an invertible perturbation of universal family of signature operators defined on the classifying space, and pullback the perturbation to define the $\Phi$-indices of signature operators for each $V_i$. 
We verify this idea and construct functions $\sigma$ with the desired properties in the main theorem of Section \ref{sec_localsignature}, Theorem \ref{mainthm}. 
In Section \ref{sec_example}, we give a particular example of this localization problem, where the typical fiber is the two dimensional oriented closed manifold with genus $g\geq 2$, and the group $G$ is the hyperelliptic diffeomorphism group.  
This is similar to the situation considered in \cite{E} for the case where $M$ is four dimensional, but we consider a more general situation where the dimension of $M$ can be higher. 

This paper is organized as follows. 
In Section \ref{sec_preliminaries}, we give preliminaries on representable $K$-theory, Lie groupoids and $\Phi$, $e$-calculi. 
In Section \ref{sec_index_without_perturbation} and Section \ref{sec_index_perturbation}, we define the indices of twisted geometric operators in $\Phi$ and edge metrics, and prove various properties, using the groupoid deformation technique. 
Section \ref{sec_index_without_perturbation} is about the case without the invertible perturbations, and Section \ref{sec_index_perturbation} is for the case with invertible perturbations. 
Although the results in Section \ref{sec_index_without_perturbation} are covered by those in Section \ref{sec_index_perturbation}, we separate this primitive case, because the author believes it makes it easier to understand what is going on. 
We note that the properties proved in these sections are not used in Section \ref{sec_K}, so the readers who are only interested in the index pairing need only to check the definitions of indices given in Definition \ref{def_index_perturbation_spinc} and Definition \ref{twisted_sign_perturbation_def}, and proceed to Section \ref{sec_K}. 
In Section \ref{sec_K}, we give the formulation of the indices as index pairings over the $K$-groups relative to the boundary pushforward. 
In Section \ref{sec_localsignature}, we give the application of the those indices to the localization problem of signature for singular fiber bundles, and in Section \ref{sec_example}, we apply this to the case of singular hyperelliptic fiber bundles. 
\section{Preliminaries}\label{sec_preliminaries}
\subsection{Representable $K$-theory}\label{subsec_repK}

In this subsection, we recall the definitions for representable $K$-theory in [AS]. 
We only work with complex coefficients. 
 
Let $H$ be a separable infinite dimensional Hilbert space. 
Let $\hat{H} := H \oplus H$ be the $\mathbb{Z}_2$-graded separable infinite dimensional Hilbert space. 
Let $B(H)$ and $K(H)$ denote the spaces of bounded operators and compact operators on $H$, respectively. 
For two topological spaces $X$ and $Y$, let $[X, Y]$ denote the set of homotopy classes of continuous maps from $X$ to $Y$.  

\begin{defn}\label{repKdef}
\begin{enumerate}
\item[(0)]
Let $\mathrm{Fred}^{(0)}(\hat{H})$ denote the space of self-adjoint odd bounded Fredholm operators $\tilde{A}$ on $\hat{H}$ such that $\tilde{A}^2 - I \in K(\hat{H})$, 
with the topology coming from its embedding
\[
\mathrm{Fred}^{(0)}(\hat{H}) \to B(\hat{H})_{\mathrm{c.o.}} \times K(\hat{H})_{\mathrm{norm}} \ , \ \tilde{A} \mapsto (\tilde{A}, \tilde{A}^2-1). 
\]
Here we denoted by $B(\hat{H})_{\mathrm{c.o.}}$ the space of bounded operators equipped with compact open topology, and by $K(\hat{H})_{\mathrm{norm}}$ the space of compact operators equipped with norm topology. 
\item[(1)]
Let $\mathrm{Fred}^{(1)}(H)$ denote the space of self-adjoint bounded Fredholm operators $A$ on $H$ such that $A^2-I \in K(H)$, 
with the topology coming from its embedding
\[
\mathrm{Fred}^{(1)}({H}) \to B({H})_{\mathrm{c.o.}} \times K({H})_{\mathrm{norm}} \ , \ {A} \mapsto ({A}, {A}^2-1). 
\]
\end{enumerate}
\end{defn}

\begin{fact}[{\cite[Section3]{AS}}]\label{repKfact1}

$\mathrm{Fred}^{(0)}(\hat{H})$ and $\mathrm{Fred}^{(1)}({H})$ are classifying spaces of the functors $K^0$ and $K^1$, respectively, 
i.e., we have for any space $X$, 
\[
K^0(X) = [X, \mathrm{Fred}^{(0)}(\hat{H})] \ \mbox{and} \ K^1(X) = [X, \mathrm{Fred}^{(1)}({H})]. 
\]
\end{fact}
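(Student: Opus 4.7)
The statement is classical, due to Atiyah-Singer; the plan is to outline the standard argument, adapted to the mixed topology of Definition \ref{repKdef}.

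First I would construct, for each $i\in\{0,1\}$, a natural index map $\Phi^{(i)} : [X, \mathrm{Fred}^{(i)}] \to K^{i}(X)$. For $i=0$, a self-adjoint odd operator on $\hat{H} = H \oplus H$ has off-diagonal form $\tilde{A} = \begin{pmatrix} 0 & A^{*} \\ A & 0 \end{pmatrix}$, and the condition $\tilde{A}^{2} - I \in K(\hat{H})$ is equivalent to $A \in B(H)$ being invertible modulo compacts. A continuous map $f : X \to \mathrm{Fred}^{(0)}(\hat{H})$ then gives a continuous family $\{A_x\}_{x\in X}$, whose families index defines a well-defined class in $K^{0}(X)$. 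For $i=1$, one forms the Calkin-unitary family $u_x = \exp(\pi i (A_x + I)/2)$ and takes its class in $K^{1}(X)$.

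Next I would establish bijectivity, handling surjectivity and injectivity separately. For surjectivity in the even case, any $K^{0}$-class is represented by a virtual bundle $[E] - [F]$; embedding $E \oplus F$ as a finite-rank subbundle of the trivial $\hat{H}$-bundle produces, via the corresponding partial isometries, an explicit map $X \to \mathrm{Fred}^{(0)}(\hat{H})$ realizing the given class, and the odd case is analogous using automorphisms modulo compacts. For injectivity, a family with trivial index can be deformed to a constant by a stabilization argument combined with a contractibility statement for the essentially-unitary group modulo compacts in the stated topology.

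The main obstacle, and where I would spend most of the effort, is verifying that the specific mixed topology---compact-open on $B(\hat{H})$ together with norm topology on the residual term $\tilde{A}^{2} - I$---yields the correct classifying space. It must be strong enough to make the index locally constant on all of $X$, including for noncompact $X$ where representable $K$-theory genuinely differs from other variants, yet weak enough to admit the deformations needed for injectivity. The norm-continuity of $\tilde{A}^{2} - I$ is what enforces local constancy, while the compact-open factor on $B(\hat{H})$ supplies enough flexibility; checking that every continuous family can be approximated by one in a convenient normal form, so that standard $K$-theoretic manipulations apply, is the bulk of the technical work. Once this is settled, the identification of $\Phi^{(i)}$ as a natural isomorphism of representable functors is a bookkeeping exercise.
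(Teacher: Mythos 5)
The paper offers no proof of this statement: it is imported as a Fact directly from \cite[Section 3]{AS}, so there is nothing in-paper to compare your argument against. Judged on its own terms, your outline is the classical Atiyah--J\"anich route (index map, surjectivity via finite-rank subbundles, injectivity via contractibility of the relevant operator groups), and for the \emph{norm} topology on Fredholm operators and compact $X$ this is a correct and standard strategy. Two small points: the odd-case formula should be an essential unitary such as $\exp(\pi i(A_x+I))$ (or $-\exp(\pi i A_x)$), which is the identity on the essential spectrum $\{\pm 1\}$; your $\exp(\pi i(A_x+I)/2)$ is not, though the underlying idea (pass to the Calkin unitary group, whose homotopy classes give $K^1$) is right. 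Also, for noncompact $X$ the left-hand sides are essentially the \emph{definition} of representable $K$-theory, so the substantive content is agreement with ordinary $K$-theory on compact spaces plus the statement about the topology.

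That last point is where the genuine gap lies. You correctly identify that the entire novelty of the Atiyah--Segal formulation is the mixed topology (compact-open on $\tilde A$, norm on $\tilde A^2-I$), but your proposal defers exactly this verification as ``the bulk of the technical work'' without indicating how it would go. This is not a routine approximation argument: the compact-open topology on $B(\hat H)$ is badly behaved (multiplication is not jointly continuous on bounded sets without care, and the index is \emph{not} locally constant for the compact-open topology alone), and the reason the result holds is a nontrivial comparison, carried out in \cite{AS}, showing that the identity map from the norm-topology Fredholm space to $\mathrm{Fred}^{(0)}(\hat H)$ is a weak homotopy equivalence --- an argument resting on Proposition 3.1 of \cite{AS} (continuity of the $U(H)_{\mathrm{c.o.}}$-action, quoted in this paper just after Definition \ref{def_hilbertbundle}) and on the contractibility of $U(H)_{\mathrm{c.o.}}$ (Fact \ref{repKfact2}). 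Re-running the index/surjectivity/injectivity argument directly in the exotic topology, as you propose, is not how the cited proof proceeds and would require you to supply these continuity and contractibility inputs anyway; as written, the proposal names the obstacle but does not overcome it.
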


\begin{fact}[{\cite[Proposition A2.1]{AS}}]\label{repKfact2}

The space of unitary operators on $H$ equipped with compact open topology, denoted by $U(H)_{\mathrm{c.o.}}$, is contractible. 
\end{fact}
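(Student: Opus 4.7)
The plan has two main moves: first, identify the compact-open topology on $U(H)$ with the strong operator topology on $U(H)$; second, construct an explicit contraction in the latter via the Eilenberg swindle.

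For the first move, the compact-open topology is a priori finer, and convergence at singletons recovers the strong topology. Conversely, on the norm-bounded set $U(H)$, strong convergence upgrades to uniform convergence on compacta: given $U_\alpha \to U$ strongly, a compact set $K \subset H$, and $\epsilon > 0$, choose an $\epsilon/4$-net $x_1, \ldots, x_N$ in $K$; for any $x \in K$ nearest some $x_i$,
\[
\|U_\alpha x - U x\| \leq \|x - x_i\| + \|U_\alpha x_i - U x_i\| + \|x_i - x\| \leq \tfrac{\epsilon}{2} + \|U_\alpha x_i - U x_i\|,
\]
using the unitarity bound $\|U_\alpha\| = \|U\| = 1$. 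Strong convergence at the finite set $\{x_i\}$ forces the right side below $\epsilon$ eventually. Hence $U(H)_{\mathrm{c.o.}} = U(H)_{\mathrm{sot}}$ as topological spaces.

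For the second move, I would construct a contraction by the Eilenberg swindle. Fix a unitary isomorphism $H \cong \bigoplus_{k \geq 0} H_k$ with each $H_k \cong H$, and view $U \in U(H)$ as the unitary that acts by $U$ on $H_0$ and by the identity on $\bigoplus_{k \geq 1} H_k$. Using the family of rotations
\[
R_s = \begin{pmatrix} \cos(\pi s/2) & -\sin(\pi s/2) \\ \sin(\pi s/2) & \cos(\pi s/2) \end{pmatrix}, \quad s \in [0,1],
\]
conjugation by $R_s$ on the pair $H_{n-1} \oplus H_n$ gives a continuous path that interchanges the roles of $U$ and $I$ in those two slots. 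Splicing these rotations together at successive pairs on shrinking time intervals of length $2^{-n}$, I obtain a homotopy $F : U(H) \times [0,1) \to U(H)$ in which the $U$-block is pushed further and further down the direct sum as $t \to 1$. Extend by $F_1(U) := I$.

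The main obstacle is establishing joint strong-operator continuity of $F$ at the endpoint $t = 1$. The key observation is that for any $x = \sum_k x_k \in H$ and $\epsilon > 0$, there is an $N$ with $\sum_{k \geq N} \|x_k\|^2 < \epsilon^2$; once the $U$-block has been shifted to some slot $M \geq N$, the unitary $F_t(U)$ acts as the identity on $\bigoplus_{k < N} H_k$, so $\|F_t(U) x - x\| = \|U x_M - x_M\| \leq 2\|x_M\| < 2\epsilon$. Thus $F_{t_n}(U_n) \to I$ strongly whenever $t_n \to 1$, uniformly in $U_n$, giving joint continuity and hence contractibility of $U(H)_{\mathrm{c.o.}}$.
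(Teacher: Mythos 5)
The paper itself offers no proof of this statement: it is imported verbatim as a Fact from \cite[Proposition A2.1]{AS}. So the comparison is between your argument and the standard external one.

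Your first step is correct and is indeed the standard first reduction: singletons are compact, so the compact-open topology refines the strong one, and your $\epsilon$-net estimate shows that on the norm-bounded set $U(H)$ strong convergence of a net upgrades to uniform convergence on compact sets; hence the two topologies coincide on $U(H)$ (this is essentially \cite[Proposition A1.1]{AS}). Your endpoint estimate at $t=1$ is also sound for the homotopy you build.

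The gap is in the swindle itself. Once you fix a unitary identification $H\cong\bigoplus_{k\ge 0}H_k$, a general element of $U(H)$ does \emph{not} correspond to an operator of the form $U\oplus 1\oplus 1\oplus\cdots$; it corresponds to an arbitrary unitary of $\bigoplus_k H_k$ that mixes all the summands. The phrase ``view $U\in U(H)$ as the unitary that acts by $U$ on $H_0$ and by the identity on $\bigoplus_{k\ge1}H_k$'' therefore silently replaces the identity map of $U(H)$ by the block-inclusion (stabilization) map $j:U(H)\cong U(H_0)\hookrightarrow U(\bigoplus_k H_k)\cong U(H)$, $U\mapsto U\oplus1\oplus1\oplus\cdots$. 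What your rotations contract to a point is $j$, not $\mathrm{id}_{U(H)}$; to conclude contractibility you would still need a homotopy $\mathrm{id}_{U(H)}\simeq j$, and producing one is essentially the hard step in all Kuiper-type theorems, not a formality. The standard repair in the strong topology is to run the swindle on \emph{all} of $U$ at once with a continuous shift: identify $H\cong L^2([0,1],H)$, let $V_t:L^2([0,1],H)\to L^2([t,1],H)$ be the unitary induced by the affine reparametrization of intervals, and set $F(U,t):=V_tUV_t^*\oplus 1_{L^2([0,t],H)}$. Then $F(U,0)=U$ genuinely, $F(U,1)=1$, joint strong continuity on $[0,1)$ is elementary, and continuity at $t=1$ follows from exactly the tail estimate you wrote, namely $\|F(U,t)\xi-\xi\|\le 2\|\xi|_{[t,1]}\|$ uniformly in $U$. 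With that replacement (or any other argument that first homotopes $\mathrm{id}_{U(H)}$ to the stabilized form), your proof becomes complete.
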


Define the following spaces as
\begin{align}
GL^{(0)}(\hat{H}) := GL(\hat{H}) \cap \mathrm{Fred}^{(0)}(\hat{H}) &\mbox{ and } 
U^{(0)}(\hat{H}) := U(\hat{H}) \cap \mathrm{Fred}^{(0)}(\hat{H}) \label{repKeq1} \\
GL^{(1)}({H}) := GL({H}) \cap \mathrm{Fred}^{(1)}({H}) &\mbox{ and } 
U^{(1)}({H}) := U({H}) \cap \mathrm{Fred}^{(1)}({H}), \notag
\end{align}
equipped with the topology induced by the ones on $\mathrm{Fred}^{(0)}(\hat{H})$ and $\mathrm{Fred}^{(1)}({H})$. 

\begin{cor}\label{repKcor}
The spaces $GL^{(0)}(\hat{H})$, $U^{(0)}(\hat{H})$, $GL^{(1)}({H})$ and $U^{(1)}({H}) $ are contractible. 
\end{cor}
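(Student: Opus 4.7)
The plan is to prove the four contractibility statements in two batches: the unitary cases $U^{(0)}(\hat{H})$ and $U^{(1)}(H)$, which are reduced (or shown analogously) to Fact \ref{repKfact2}; and the $GL$ cases, which deformation retract onto the corresponding unitary subspaces via polar decomposition.

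For $U^{(0)}(\hat{H})$, the conditions that $A$ be self-adjoint and odd with respect to the grading $\hat{H} = H \oplus H$ force the block form
\[
A = \begin{pmatrix} 0 & V^* \\ V & 0 \end{pmatrix},
\]
while unitarity translates into $V \in U(H)$. Since unitary plus self-adjoint forces $A^2 = I$, the norm factor of the topology in Definition \ref{repKdef} is trivial on $U^{(0)}(\hat{H})$, yielding a homeomorphism $U^{(0)}(\hat{H}) \cong U(H)_{\mathrm{c.o.}}$, so contractibility follows from Fact \ref{repKfact2}. The space $U^{(1)}(H)$ consists of self-adjoint unitaries on $H$ (equivalently, orthogonal splittings $H = H_+ \oplus H_-$), and its contractibility in the compact open topology is proved by a direct construction parallel to the null-homotopy underlying Fact \ref{repKfact2}.

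For $GL^{(k)}$ with $k = 0, 1$, polar decomposition provides a strong deformation retract onto $U^{(k)}$. For $A \in GL^{(k)}$, set $|A| := (A^2)^{1/2}$ (using $A^* = A$) and $V := A|A|^{-1}$. Since $|A|$ is a continuous function of $A^2$, it commutes with $A$, is positive and invertible, and is even when $A$ is odd; consequently $V = V^*$, $V^2 = A^2|A|^{-2} = I$, and $V$ is odd for $k = 0$, so $V \in U^{(k)}$. The homotopy
\[
A_t := V|A|^{1-t}, \qquad t \in [0, 1],
\]
has $A_0 = A$ and $A_1 = V$, with $A_t$ self-adjoint, invertible, and (for $k = 0$) odd throughout. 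The only nontrivial check is $A_t^2 - I \in K$: commutation together with $V^2 = I$ give $A_t^2 - I = |A|^{2(1-t)} - I$, and since $|A|^2 - I = A^2 - I \in K$ the image of $|A|$ in the Calkin algebra equals $1$, so continuous functional calculus shows that $|A|^{2(1-t)}$ also maps to $1$ in the Calkin algebra, whence $A_t^2 - I \in K$. Continuity of the homotopy in the topology of $GL^{(k)}$ is routine, as that topology includes norm convergence of $A^2 - I$, which in particular uniformly bounds $\|A\|$ and enables passage of the functional calculus.

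The main obstacle I expect is the contractibility of $U^{(1)}(H)$: because the signature of a self-adjoint unitary is locally constant on any finite-dimensional invariant subspace, any honest null-homotopy must genuinely exploit the compact open topology and the infinite-dimensionality of $H$, in the style of the explicit contraction of \cite{AS} used for Fact \ref{repKfact2}. The $GL \to U$ retractions, by contrast, are mechanical once the polar decomposition is set up as above.
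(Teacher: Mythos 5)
Your proof is correct and follows essentially the same route as the paper: the paper's entire argument is ``by Fact \ref{repKfact2} the unitary pieces are contractible'' plus the polar-decomposition retraction $(A,t)\mapsto A|A|^{-t}$, which is literally your $V|A|^{1-t}$, so your additional verifications (the off-diagonal block identification $U^{(0)}(\hat{H})\cong U(H)_{\mathrm{c.o.}}$, and the check that $A_t^2-I$ remains compact along the retraction) only supply rigor the paper omits. The one step you leave as a sketch --- contractibility of $U^{(1)}(H)$, i.e.\ of the self-adjoint unitaries in the compact-open topology --- is precisely the step the paper dispatches with a bare citation of Fact \ref{repKfact2}, which strictly speaking covers only $U(H)$ itself; you are right to flag that this ungraded case does not reduce formally to that fact and needs an Atiyah--Segal-style explicit contraction.
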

\begin{proof}
By Fact \ref{repKfact2}, the spaces $U^{(0)}(\hat{H})$ and $U^{(1)}({H}) $ are contractible. 
The map $(A, t) \to A|A|^{-t}$ for $t \in [0, 1]$ gives a retraction from $GL^{(0)}(\hat{H})$ to
$U^{(0)}(\hat{H})$ and from $GL^{(1)}({H})$ to $U^{(1)}({H}) $, respectively. 
So we get the result. 
\end{proof}

The definition of Hilbert bundles, which is suitable for our purposes, is as follows. 
\begin{defn}[Hilbert bundles]\label{def_hilbertbundle}
Let $X$ be a space. 
A {\it separable infinite dimensional Hilbert bundle} $\mathcal{H} \to X$ is a fiber bundle whose typical fibers are separable infinite dimensional Hilbert space $H$, with structure group $U(H)_{\mathrm{c.o.}}$. 

A {\it $\mathbb{Z}_2$-graded separable infinite dimensional Hilbert bundle} $\hat{\mathcal{H}} \to X$ is a fiber bundle whose typical fibers are $\mathbb{Z}_2$-graded separable infinite dimensional Hilbert space $H$, with structure group $U(\hat{H})_{\mathrm{c.o.}}$. 
\end{defn}

By \cite[Proposition 3.1]{AS}, the action of $U(\hat{H})_{\mathrm{c.o.}}$ on $\mathrm{Fred}^{(0)}(\hat{H})$ is continuous. 
Thus given a $\mathbb{Z}_2$-graded Hilbert bundle $\hat{\mathcal{H}} \to X$, we also get the associated $\mathrm{Fred}^{(0)}(\hat{H})$-bundle $\mathrm{Fred}^{(0)}(\hat{\mathcal{H}})\to X$. 
The analogous construction applies to the ungraded case. 

By Fact \ref{repKfact2}, we have the following. 
\begin{cor}\label{hilbert_bundle_trivial}
Any separable infinite dimensional Hilbert bundle is trivial, and any choices of trivialization are homotopic. 
\end{cor}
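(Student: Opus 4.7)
The plan is to reduce both claims to the contractibility of $U(H)_{\mathrm{c.o.}}$ provided by Fact \ref{repKfact2}, together with the contractibility of $U(\hat H)_{\mathrm{c.o.}}$ for the graded version, which follows by the same argument applied to the graded Hilbert space.

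\emph{Existence of a global trivialization.} By Definition \ref{def_hilbertbundle}, $\mathcal{H}$ is a fiber bundle with structure group $U(H)_{\mathrm{c.o.}}$, equivalently specified by an associated principal $U(H)_{\mathrm{c.o.}}$-bundle $P \to X$. Principal $G$-bundles for a (well-pointed) topological group $G$ over a paracompact space are classified by $[X, BG]$, and $BG$ is weakly contractible whenever $G$ is. Since $U(H)_{\mathrm{c.o.}}$ is contractible, every principal $U(H)_{\mathrm{c.o.}}$-bundle over $X$ is trivial, yielding a global trivialization of $\mathcal{H}$. If one wishes to avoid classifying-space machinery, a direct construction works: choose local trivializations with transition cocycle $\{g_{ij}: U_i \cap U_j \to U(H)_{\mathrm{c.o.}}\}$ on a countable locally finite cover, and successively modify each local trivialization using a null-homotopy of the relevant transition function and a partition of unity, until the cocycle becomes constant. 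The same argument applies verbatim to the graded case.

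\emph{Uniqueness up to homotopy.} Given two trivializations $\phi_0, \phi_1 : \mathcal{H} \to X \times H$, the composition $\phi_1 \circ \phi_0^{-1}$ is a bundle automorphism of $X \times H$ whose structure group is $U(H)_{\mathrm{c.o.}}$, so it corresponds to a continuous map $g : X \to U(H)_{\mathrm{c.o.}}$. By Fact \ref{repKfact2}, $g$ is homotopic (through continuous maps into $U(H)_{\mathrm{c.o.}}$) to the constant map at the identity. This homotopy, reassembled into a family of bundle automorphisms of $X \times H$ and composed with $\phi_0$, produces a continuous path of trivializations from $\phi_0$ to $\phi_1$.

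The only point requiring care is that the transition and comparison maps, which a priori take values only in $U(H)$, actually land continuously in $U(H)_{\mathrm{c.o.}}$; but this is forced by the very definition of Hilbert bundle adopted in Definition \ref{def_hilbertbundle}. No analytic input is needed beyond Fact \ref{repKfact2}, so the main (and essentially only) obstacle has already been overcome in that fact.
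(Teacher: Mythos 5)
Your argument is correct and is exactly the route the paper takes: the paper derives this corollary directly from Fact \ref{repKfact2} (contractibility of $U(H)_{\mathrm{c.o.}}$, and likewise of $U(\hat H)_{\mathrm{c.o.}}$), and you have simply filled in the standard bundle-theoretic details (triviality of principal bundles with contractible structure group, and null-homotopy of the comparison map between two trivializations). No further comment is needed.
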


\subsection{$\mathbb{C}l_1$-invertible perturbations}\label{subsec_invertibleperturbation}

In this subsection, we discuss $\mathbb{C}l_1$-invertible perturbations for a family of $\mathbb{Z}_2$-graded Fredholm operators parametrized by a possibly noncompact space. 
The symbol $K^i$ denotes the representable $K$-theory. 
The setting is as follows. 
\begin{itemize}
\item Let $X$ be a topological space.  
\item Let $\hat{\mathcal{H}} = \{\hat{\mathcal{H}}_x\}_{x \in X} \to X$ be a $\mathbb{Z}_2$-graded separable Hilbert bundle (see Definition \ref{def_hilbertbundle}). 
\item Let $\gamma$ be the involution on $\hat{\mathcal{H}}$ defining the $\mathbb{Z}_2$-grading. 
\item Let $\mathrm{Fred}^{(0)}(\hat{\mathcal{H}}) = \{\mathrm{Fred}^{(0)}(\hat{\mathcal{H}}_x)\}_{x \in X} \to X$ be the $\mathrm{Fred}^{(0)}(\hat{H})$-fiber bundle associated to $\hat{\mathcal{H}}$. 
\item Assume we are given an element $F \in \Gamma(X; \mathrm{Fred}^{(0)}(\hat{\mathcal{H}}) )$. 
\end{itemize}

Let $pr : X \times [0, 1] \to X$ be the canonical projection, and consider the Hilbert bundle $pr^*\hat{\mathcal{H}} \to X \times [0, 1]$. 
For simplicity, we also denote this bundle by $\hat{\mathcal{H}} \to X \times [0, 1]$. 
A $\mathbb{C}l_1$-invertible perturbation for $F$ is defined to be a homotopy from $F$ to an invertible family, as follows. 

\begin{defn}[$\mathbb{C}l_1$-invertible perturbations]
Let $(X, \hat{\mathcal{H}}, F)$ as above. 
An operator $\tilde{\mathbb{F}} : \Gamma(X \times [0, 1]; \hat{\mathcal{H}}) \to \Gamma(X \times [0,1]; \hat{\mathcal{H}})$ is called a $\mathbb{C}l_1$-invertible perturbation for $F$ if 
\begin{itemize}
\item $\tilde{\mathbb{F}}\in \Gamma(X\times[0,1]; \mathrm{Fred}^{(0)}(\hat{\mathcal{H}}) )$
\item $\tilde{\mathbb{F}}|_{X \times \{0\}} = F$. 
\item $\tilde{\mathbb{F}}|_{X \times \{1\}} $ is a family of invertible operators.  
\end{itemize}
Let us denote the set of $\mathbb{C}l_1$-invertible perturbations for $F$ by $\tilde{\mathcal{I}}(F)$. 
\end{defn}

We introduce a natural homotopy equivalence relation on $\tilde{\mathcal{I}}(F)$, 

\begin{defn}
Let $\tilde{\mathbb{F}}$ and $\tilde{\mathbb{F}}'$ be two elements in $\tilde{\mathcal{I}}(F)$. 
We say $\tilde{\mathbb{F}}$ and $\tilde{\mathbb{F}}'$ are homotopic if 
there exists an operator $\tilde{\mathbb{F}}'' : \Gamma(X \times [0, 1] \times [0, 1]; \hat{\mathcal{H}}) \to \Gamma(X\times[0,1] \times [0, 1]; \hat{\mathcal{H}})$ such that
\begin{itemize}
\item $\tilde{\mathbb{F}}'' \in \Gamma(X\times[0,1]\times[0,1]; \mathrm{Fred}^{(0)}(\hat{\mathcal{H}}))$. 
\item $\tilde{\mathbb{F}}''|_{X \times [0, 1] \times \{0\}} = \tilde{\mathbb{F}}$ and $\tilde{\mathbb{F}}''|_{X \times [0,1] \times \{1\}} = \tilde{F}'$. 
\item $\tilde{\mathbb{F}}''|_{X \times [0,1] \times \{u\}} \in \tilde{\mathcal{I}}(F)$ for all $u \in [0,1]$. 
\end{itemize}
Let us denote $\mathcal{I}(F)$ the set of homotopy classes of elements in $\tilde{\mathcal{I}}(F)$. 
\end{defn}

The following lemma follows directly from Fact \ref{repKfact1}. 
 
\begin{lem}\label{perturbationexistence}
The element $F$ admits a $\mathbb{C}l_1$-invertible perturbation if and only if $[F] = 0 \in K^0(X)$. 
\end{lem}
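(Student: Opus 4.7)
The plan is to reduce the statement to the classifying-space description of $K^0$ in Fact \ref{repKfact1}, combined with the contractibility of $GL^{(0)}(\hat{H})$ from Corollary \ref{repKcor}. First I would trivialize the Hilbert bundle using Corollary \ref{hilbert_bundle_trivial}, fixing an isomorphism $\hat{\mathcal{H}} \cong X \times \hat{H}$ and the induced identification $\mathrm{Fred}^{(0)}(\hat{\mathcal{H}}) \cong X \times \mathrm{Fred}^{(0)}(\hat{H})$ (and similarly over $X \times [0,1]$). Under this trivialization, sections correspond to continuous maps, and $[F] \in K^0(X)$ is precisely the homotopy class of the map $F : X \to \mathrm{Fred}^{(0)}(\hat{H})$. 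Fix a basepoint $p_0 \in GL^{(0)}(\hat{H})$, which is nonempty, so that $[F] = 0$ is equivalent to $F$ being homotopic to the constant map at $p_0$.

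For the forward direction, assume $F$ admits a perturbation $\tilde{\mathbb{F}}$. By definition $\tilde{\mathbb{F}}$ is a homotopy from $F$ to a map $F_1 := \tilde{\mathbb{F}}|_{X \times \{1\}}$ taking values in $GL^{(0)}(\hat{H})$. Since $GL^{(0)}(\hat{H})$ is contractible, $F_1$ is null-homotopic inside it; concatenating the two homotopies exhibits $F$ as null-homotopic in $\mathrm{Fred}^{(0)}(\hat{H})$, so $[F] = 0$. For the converse, if $[F] = 0$, choose any null-homotopy $H : X \times [0, 1] \to \mathrm{Fred}^{(0)}(\hat{H})$ with $H(\cdot, 0) = F$ and $H(\cdot, 1) \equiv p_0$. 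Transporting $H$ back through the trivialization gives a section of $\mathrm{Fred}^{(0)}(\hat{\mathcal{H}})$ over $X \times [0, 1]$ whose time-$1$ slice is the constant invertible operator $p_0$, satisfying the three defining properties of $\tilde{\mathcal{I}}(F)$.

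There is no serious analytic obstacle; the only point to handle carefully is the convention that the zero class of $K^0(X) = [X, \mathrm{Fred}^{(0)}(\hat{H})]$ is represented by the constant map at a basepoint chosen inside $GL^{(0)}(\hat{H})$, so that a null-homotopy of $F$ can be arranged to land at an invertible operator at time $1$. This is legitimate precisely because $GL^{(0)}(\hat{H})$ is a nonempty path component of $\mathrm{Fred}^{(0)}(\hat{H})$, and a different choice of basepoint in $GL^{(0)}(\hat{H})$ merely changes the homotopy by another contraction inside this contractible subspace.
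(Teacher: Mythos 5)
Your argument is correct and is essentially the expansion of the paper's one-line proof, which simply states that the lemma "follows directly from Fact \ref{repKfact1}": trivialize the Hilbert bundle via Corollary \ref{hilbert_bundle_trivial}, identify $[F]$ with the homotopy class of $F : X \to \mathrm{Fred}^{(0)}(\hat{H})$, and use the contractibility of $GL^{(0)}(\hat{H})$ to pass back and forth between null-homotopies of $F$ and $\mathbb{C}l_1$-invertible perturbations. One harmless slip: $GL^{(0)}(\hat{H})$ is not a path component of $\mathrm{Fred}^{(0)}(\hat{H})$ (the index-zero component also contains non-invertible operators), but your argument only uses that it is nonempty and contractible and that constant maps into it represent $0 \in K^0$.
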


\begin{lem}\label{htpyclass_spectralsection}
Suppose $F$ satisfies $[F] = 0 \in K^0(X)$. 
Then $\mathcal{I}(F)$ has a natural structure of affine space over $K^{-1}(X) (:= [X, \Omega\mathrm{Fred}^{(0)}(\hat{H})])$. 
\end{lem}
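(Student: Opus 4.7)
The plan is to realize $\mathcal{I}(F)$ as the $\pi_0$ of a path space of maps into $Y := \mathrm{Fred}^{(0)}(\hat{H})$, and to extract the affine structure from the contractibility of $GL^{(0)}(\hat{H})$ given by Corollary \ref{repKcor}. By Corollary \ref{hilbert_bundle_trivial} I will first trivialize $\hat{\mathcal{H}}$ so that $F$ becomes a map $X \to Y$, and write $\mathcal{M} := \mathrm{Map}(X, Y)$, $\mathcal{M}_0 := \mathrm{Map}(X, GL^{(0)}(\hat{H}))$ for the mapping spaces with the compact-open topology. The definition of $\tilde{\mathcal{I}}(F)$ then realizes it as the space of paths in $\mathcal{M}$ starting at $F$ and ending somewhere in $\mathcal{M}_0$, with $\mathcal{I}(F)$ its $\pi_0$. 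Since $GL^{(0)}(\hat{H})$ is contractible, so is $\mathcal{M}_0$; in particular $\pi_0(\mathcal{M}_0) = \pi_1(\mathcal{M}_0) = 0$.

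The core step is to build a free transitive action of $\pi_1(\mathcal{M}, F)$ on $\mathcal{I}(F)$. A loop $\ell$ at $F$ acts on $\tilde{\mathbb{F}} \in \tilde{\mathcal{I}}(F)$ by concatenation $\tilde{\mathbb{F}} \mapsto \ell * \tilde{\mathbb{F}}$, which descends to homotopy classes. Transitivity and freeness come from the following: given $\tilde{\mathbb{F}}, \tilde{\mathbb{F}}' \in \tilde{\mathcal{I}}(F)$, contractibility of $\mathcal{M}_0$ provides a path $\alpha$ in $\mathcal{M}_0$ from $\tilde{\mathbb{F}}(1)$ to $\tilde{\mathbb{F}}'(1)$ which is unique up to homotopy rel endpoints, and the loop obtained by concatenating the reverse of $\tilde{\mathbb{F}}'$ with $\alpha$ and then $\tilde{\mathbb{F}}$ has a well-defined class in $\pi_1(\mathcal{M}, F)$; this is the unique element of $\pi_1(\mathcal{M}, F)$ carrying $\tilde{\mathbb{F}}$ to $\tilde{\mathbb{F}}'$ under the action. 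Equivalently, this is the content of the long exact sequence for the path fibration $\mathrm{ev}_1 : \mathcal{P}_F \to \mathcal{M}$ pulled back over $\mathcal{M}_0 \subset \mathcal{M}$, together with $\pi_*(\mathcal{M}_0) = 0$.

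To complete the proof, I will identify $\pi_1(\mathcal{M}, F)$ canonically with $K^{-1}(X) = [X, \Omega Y]$. Since $[F] = 0 \in K^0(X)$ and any $y_0 \in GL^{(0)}(\hat{H})$ is self-adjoint odd invertible, hence of index zero, the constant map $c_{y_0} : X \to Y$ lies in the same path component of $\mathcal{M}$ as $F$ by Fact \ref{repKfact1}. A path between them, for instance $\tilde{\mathbb{F}}$ followed by a path in $\mathcal{M}_0$ from $\tilde{\mathbb{F}}(1)$ to $c_{y_0}$, induces an isomorphism $\pi_1(\mathcal{M}, F) \cong \pi_1(\mathcal{M}, c_{y_0})$, and by the exponential law $\pi_1(\mathcal{M}, c_{y_0}) = [X, \Omega_{y_0} Y] = K^{-1}(X)$.

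The hard part will be ensuring canonicity of this last identification: a change of basepoint in $\pi_1$ depends on the chosen path only up to inner automorphism, but $K^{-1}(X)$ is abelian, so this ambiguity is trivial and the identification is intrinsic. Naturality in $F$ and in $X$ can then be checked from the construction.
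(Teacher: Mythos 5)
Your proposal is correct and takes essentially the same route as the paper: both trivialize $\hat{\mathcal{H}}$ using Corollary \ref{hilbert_bundle_trivial}, realize the difference of two invertible perturbations as a loop in $\mathrm{Map}(X,\mathrm{Fred}^{(0)}(\hat{H}))$ based at $F$ by concatenating one path with the reverse of the other and closing up the invertible endpoints inside the contractible $GL^{(0)}(\hat{H})$ (Corollary \ref{repKcor}), and then read off a class in $K^{-1}(X)=[X,\Omega\mathrm{Fred}^{(0)}(\hat{H})]$. Your torsor/homotopy-fiber packaging simply makes systematic the well-definedness, freeness and transitivity that the paper asserts as obvious, with the only (shared) caveat being the usual point-set hypotheses on $X$ needed for the exponential law.
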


\begin{proof}
Assume we are given two elements in $\mathcal{I}(F)$. 
By Corollary \ref{hilbert_bundle_trivial}, we choose a trivialization of the Hilbert bundle $\hat{\mathcal{H}} \simeq \hat{H}\times X$, which is unique up to homotopy.  
Take any representative of these elements and denote them by $\tilde{\mathbb{F}}^0, \tilde{\mathbb{F}}^1 \in \tilde{\mathcal{I}}(F)$, respectively. 
We explain the definition of the difference class $[\tilde{\mathbb{F}}^1 - \tilde{\mathbb{F}}^0] \in K^{-1}(X)$. 

Define the continuous map $\mathcal{F}$ as follows. 
\begin{align*}
\mathcal{F} &: X \times [0, 1] \to \mathrm{Fred}^{(0)}(\hat{H}) \\
\mathcal{F}|_{X \times [0, 1]} &= \begin{cases}
\tilde{\mathbb{F}}^1|_{X \times \{1-2t\}} & \mbox{for } t \in [0, 1/2]\\
\tilde{\mathbb{F}}^0|_{X \times \{2t-1\}} & \mbox{for } t \in [1/2, 1]. 
\end{cases}
\end{align*}
The image of $\mathcal{F}|_{X \times \{0, 1\}}$ is contained in $GL^{(0)}(\hat{H})$. 
Since $GL^{(0)}(\hat{H})$ is contractible by Corollary \ref{repKcor}, the map $\mathcal{F}$ gives the desired element 
\[
[\tilde{\mathbb{F}}^1 - \tilde{\mathbb{F}}^0] := [\mathcal{F}] \in [X, \Omega\mathrm{Fred}^{(0)}(\hat{H})] = K^{-1}(X). 
\]
The well-definedness is obvious. 

Conversely, if we are given an element $[\tilde{\mathbb{F}}^0] \in \mathcal{I}(F)$ and an element $[\mathcal{F}] \in K^{-1}(X)$, it is easy to construct the unique element $[\tilde{\mathbb{F}}^1] \in \mathcal{I}(F)$ such that $[\tilde{\mathbb{F}}^1 - \tilde{\mathbb{F}}^0] = [\mathcal{F}] $. 
Also it is easy to see that this defines an affine structure of $\mathcal{I}(F)$ over $K^{-1}(X)$. 
\end{proof}

Let us turn to the case where the parameter space $X$ is a smooth compact manifold (possibly with boundaries or corners), the Hilbert bundle $\hat{\mathcal{H}}$ and the family of operators $D$ come from a fiber bundle over $X$, and the family $D$ is unbounded. 
More precisely, we consider the following situations. 
\begin{itemize}
\item Let $\pi : M \to X$ be a smooth fiber bundle with closed fibers, equipped with a smooth fiberwise riemannian metric $g_\pi$. 
\item Let $E \to M$ be a smooth hermitian $\mathbb{Z}_2$-graded vector bundle. 
\item Let $D= \{D_x\}_{x \in X}$, $D_x : C^\infty(\pi^{-1}(x) ; E|_{\pi^{-1}(x)}) \to C^\infty(\pi^{-1}(x) ; E|_{\pi^{-1}(x)})$ be a smooth family of odd formally self-adjoint elliptic operators of positive order. 
\item Let us denote $\hat{\mathcal{H}} = \{\hat{\mathcal{H}}_x = L^2(\pi^{-1}(x) ; E|_{\pi^{-1}(x)})\}_{x \in X}$ with the natural Hilbert bundle structure over $X$. 
The operator $D$ also denotes the closed extension to $D : \Gamma(X; \hat{\mathcal{H}}) \to\Gamma(X; \hat{\mathcal{H}})$. 
\end{itemize}
For such a family $D$, the bounded transform $\psi(D) :=D / \sqrt{1 + D^2}$ is a smooth family pseudodifferential operators of order $0$, and defines an element $[\psi(D)] \in K^0(X)$. 
We call this class the {\it family index class} of $D$, and abuse the notation to write $[D]:=[\psi(D)] \in K^0(X)$. 

\begin{defn}[$\mathcal{I}_{\mathrm{sm}}(D)$]\label{def_smooth_perturbation}
In the above situations, an operator $\tilde{D} : C^\infty(M; E) \to C^\infty(M; E)$ is  called a $\mathbb{C}l_1$-smooth invertible perturbation of $D$ if 
\begin{itemize}
\item $\tilde{D}=\{\tilde{D}_x\}_{x \in X}$ is a smooth family of invertible odd formally self-adjoint operators. 
\item $\tilde{D} - D = A = \{A_x\}_{x \in X}$, where $A_x$ is a pseudodifferential operator of order $0$ for each $x \in X$. 
\end{itemize}
Let us denote the set of $\mathbb{C}l_1$-smooth invertible perturbations for $D$ by $\tilde{\mathcal{I}}_{\mathrm{sm}}(D)$.  
We can introduce the obvious homotopy equivalence relations in $\tilde{\mathcal{I}}_{\mathrm{sm}}(D)$. 
We denote $\mathcal{I}_{\mathrm{sm}}(D)$ the set of homotopy classes of elements in
 $\tilde{\mathcal{I}}_{\mathrm{sm}}(D)$. 
\end{defn}

There is a canonical map 
\begin{align}\label{perturbation_eq}
\tilde{\mathcal{I}}_{\mathrm{sm}}(D) & \to \tilde{\mathcal{I}}(\psi(D)) \\
( \tilde{D} = D + A) &\mapsto \left( t \in [0, 1] \mapsto \psi(D + tA)\right). 
\end{align}
In fact this map induces an isomorphism $\mathcal{I}_{\mathrm{sm}}(D) \simeq \mathcal{I}(\psi(D))$, by the following fact. 

\begin{fact}[\cite{MP}]\label{affinestr}
The family $D$ admits a $\mathbb{C}l_1$-smooth invertible perturbation if and only if $[D] = 0 \in K^0(X)$. 

If $[D] = 0 \in K^0(X)$, then $\mathcal{I}_{\mathrm{sm}}(D)$ has a natural structure of an affine space over $K^{-1}(X)$, described as follows. 

Let $Q_i \in \mathcal{I}_{\mathrm{sm}}(D)$, $i = 0, 1$.  
Choose a representative $\tilde{D}_{i}$ for $Q_i$. 
Consider the family $D_{[0, 1]}$ of operators parametrized by $X \times [0, 1]$, defined as 
\[
D_{[0,1]}|_{X \times \{u\}} := u\tilde{D}_0 + (1-u)\tilde{D}_1. 
\]
Since the family $D_{[0, 1]}$ is invertible on $X \times \{0, 1\}$, it defines a family index class in $[D_{[0,1]}] \in K^0(X \times [0, 1]; X \times \{0, 1\}) \simeq K^0(X \times (0, 1)) \simeq K^{-1}(X)$. We have
\[
[Q_1 - Q_0] = [D_{[0, 1]}] \in K^{-1}(X). 
\]
Every element in $K^{-1}(X)$ can be written as the index of some operator of the form $D_{[0, 1]}$ above. 
\end{fact}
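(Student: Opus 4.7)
The plan is to reduce the entire statement to the already-established bounded analogues, Lemmas \ref{perturbationexistence} and \ref{htpyclass_spectralsection}, by promoting the bounded-transform map \eqref{perturbation_eq} to a bijection
\[
\mathcal{I}_{\mathrm{sm}}(D) \xrightarrow{\;\sim\;} \mathcal{I}(\psi(D))
\]
which is equivariant for the respective $K^{-1}(X)$-actions. Once this is in place, the existence criterion $[D] = 0$ is immediate from Lemma \ref{perturbationexistence} applied to $\psi(D)$, since $[\psi(D)] = [D] \in K^0(X)$ by definition of the family index class, and only the smooth-vs-continuous issue remains.

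For the existence of a smooth invertible perturbation when $[D] = 0$, I would invoke the Melrose--Piazza style construction: a fiberwise self-adjoint odd smoothing operator $A$ can be chosen so that $D + A$ is invertible, either through spectral sections (which exist precisely when $[D]=0$) or through a finite-rank modification on the complement of a sufficiently large spectral window of $D$. In either case the output lies in $\tilde{\mathcal{I}}_{\mathrm{sm}}(D)$. Combining this with the surjectivity of \eqref{perturbation_eq} on homotopy classes---established by approximating a continuous path of bounded $\mathbb{C}l_1$-invertible perturbations by a smooth family of order-zero pseudodifferential perturbations and using density of smoothing operators within the relevant operator topology---yields surjectivity of the induced map on $\mathcal{I}$. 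Injectivity follows from a parallel smoothing-approximation argument applied to a homotopy inside $\mathcal{I}(\psi(D))$ rel.~endpoints.

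To identify the two affine structures, I would take two smooth invertible perturbations $\tilde{D}_0,\tilde{D}_1 \in \tilde{\mathcal{I}}_{\mathrm{sm}}(D)$ and compare the linear interpolation $D_{[0,1]}$ with the concatenated loop $\mathcal{F}$ built in Lemma \ref{htpyclass_spectralsection} from $\psi(\tilde{D}_0)$ and $\psi(\tilde{D}_1)$. Applying the bounded transform pointwise in $u$, the family $\psi(D_{[0,1]})$ is a continuous family of odd self-adjoint bounded Fredholm operators on $X \times [0,1]$, invertible at $u = 0, 1$; a linear rescaling and path reversal gives an explicit homotopy identifying its class in $K^0(X \times [0,1], X \times \{0,1\}) \simeq K^{-1}(X)$ with $[\mathcal{F}] = [Q_1 - Q_0]$ as defined in the proof of Lemma \ref{htpyclass_spectralsection}. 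Surjectivity of the realization by families $D_{[0,1]}$ then follows by transporting the surjective $K^{-1}(X)$-action from the bounded side through the bijection.

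The main obstacle is the passage between continuous bounded-operator constructions and smooth pseudodifferential ones: lifting a continuous homotopy in $\mathrm{Fred}^{(0)}(\hat{\mathcal{H}})$ to a homotopy of smooth pseudodifferential perturbations of $D$, while preserving invertibility at the endpoints, is the technical core and is precisely the content invoked from \cite{MP}. Everything else amounts to a formal transport of $K$-theoretic data through the bijection and a direct identification of the two ways of packaging a relative index as an element of $K^{-1}(X)$.
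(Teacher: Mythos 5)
The paper does not actually prove this statement: it is imported wholesale as a Fact from \cite{MP}, and the only commentary added is the subsequent remark that the paper's class of perturbations (order-zero rather than smoothing) is larger but yields canonically isomorphic sets of homotopy classes. So there is no in-paper argument to compare yours against; what you have written is a reconstruction, and it is a sound one that runs in the opposite logical direction from the paper. The paper takes the torsor structure on $\mathcal{I}_{\mathrm{sm}}(D)$ as given and deduces the bijection with $\mathcal{I}(\psi(D))$ (an equivariant map between torsors over the same group is automatically a bijection), whereas you propose to establish the bijection first by smoothing approximation and then pull the torsor structure back from Lemma \ref{htpyclass_spectralsection}. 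Your route buys a self-contained derivation of the existence criterion and of the transitivity of the $K^{-1}(X)$-action from the already-proved bounded Lemmas \ref{perturbationexistence} and \ref{htpyclass_spectralsection}, at the cost of having to prove surjectivity and injectivity of (\ref{perturbation_eq}) on homotopy classes directly --- which is exactly the hard analytic content (lifting continuous families in $\mathrm{Fred}^{(0)}(\hat{\mathcal{H}})$ to smooth order-zero pseudodifferential perturbations of $D$ while keeping invertibility on the prescribed boundary faces) that you, like the paper, end up outsourcing to \cite{MP}. Two points to tighten. First, the identification of $[D_{[0,1]}]$ with $[\mathcal{F}]$ is cleanest via the affine homotopy, in the $(a,b)$-plane of perturbations $D + aA^0 + bA^1$ with $A^i := \tilde{D}_i - D$, between the linear path $(u, 1-u)$ and the concatenation of $(0, 1-2t)$ with $(2t-1, 0)$: this homotopy is rel endpoints, stays Fredholm throughout since the $A^i$ have order zero, and the orientations of the two paths already agree (both run from the invertible end of $\tilde{D}_1$ to that of $\tilde{D}_0$), so no path reversal is needed. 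Second, \cite{MP} proves the statement for smoothing perturbations, so when you invoke their spectral-section construction for existence and for realizing arbitrary classes in $K^{-1}(X)$, you should either route through the comparison in the paper's remark following the Fact or simply observe that a smoothing perturbation is in particular an order-zero one, which suffices for existence and for surjectivity of the action.
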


\begin{rem}
Actually, in \cite{MP} they define $\mathbb{C}l_1$-invertible perturbations as perturbations by fiberwise smoothing operators. 
Our class of smooth $\mathbb{C}l_1$-invertible perturbations in Definition \ref{def_smooth_perturbation} is larger because we allow the perturbations to be zeroth order operators. 
But divided by the homotopy equivalences, they are canonically isomorphic.  
\end{rem}

Since the above affine structure corresponds to the affine structure on $\mathcal{I}(\psi(D))$ under the canonical map (\ref{perturbation_eq}), we have the following corollary. 
\begin{cor}
In the above situations, we have a canonical isomorphism
\[
\mathcal{I}_{\mathrm{sm}}(D) \simeq \mathcal{I}(\psi(D))
\]
between affine spaces over $K^{-1}(X)$, induced by the map (\ref{perturbation_eq}). 
\end{cor}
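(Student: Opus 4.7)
The plan is to reduce the statement to a general fact about affine spaces: if both sides are torsors over the same group $K^{-1}(X)$ (by Fact~\ref{affinestr} and Lemma~\ref{htpyclass_spectralsection}), and if the induced map on homotopy classes is equivariant, then it is automatically a bijection (up to checking nonemptiness, but both sides are nonempty iff $[D]=[\psi(D)]=0\in K^0(X)$, which is the same condition). So the content of the proof is in three steps: (i) descent to homotopy classes, (ii) equivariance for the $K^{-1}(X)$-actions, and (iii) the affine-space/torsor argument.

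First, I would check that the map~(\ref{perturbation_eq}) descends to a map $\mathcal{I}_{\mathrm{sm}}(D)\to \mathcal{I}(\psi(D))$. Given a smooth homotopy $\tilde{D}^{(s)}=D+A^{(s)}$ in $\tilde{\mathcal{I}}_{\mathrm{sm}}(D)$ parametrized by $s\in[0,1]$, the family $(t,s)\mapsto \psi(D+tA^{(s)})$ provides a homotopy in $\tilde{\mathcal{I}}(\psi(D))$, since $\psi$ applied to the elliptic family $D+tA^{(s)}$ gives a norm-continuous family of odd self-adjoint Fredholm operators with $\psi(\cdot)^2-1$ compact by standard pseudodifferential arguments.

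The main step is equivariance. Fix two smooth invertible perturbations $\tilde{D}_0=D+A_0$ and $\tilde{D}_1=D+A_1$. Fact~\ref{affinestr} computes their difference in $K^{-1}(X)$ as the family index of the straight-line path $D_{[0,1]}|_u = u\tilde{D}_0 + (1-u)\tilde{D}_1$, viewed via $\psi(D_{[0,1]})$ as a map $X\times[0,1]\to \mathrm{Fred}^{(0)}$ with invertible restriction to $X\times\{0,1\}$. Under~(\ref{perturbation_eq}) the images are the paths $\psi(D+tA_i)$, and the recipe in Lemma~\ref{htpyclass_spectralsection} concatenates the first (reversed) with the second to form a map $\mathcal{F}:X\times[0,1]\to\mathrm{Fred}^{(0)}$ again with invertible boundary. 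I would build an explicit $2$-parameter homotopy in $X\times[0,1]^2$ of the form
\[
G(\theta,s) \;=\; \psi\!\bigl(D + s(\theta A_0 + (1-\theta)A_1) + (1-s)\tilde{A}(\theta)\bigr),
\]
where $\tilde{A}(\theta)=(1-2\theta)A_1$ for $\theta\in[0,1/2]$ and $(2\theta-1)A_0$ for $\theta\in[1/2,1]$. Direct evaluation shows $G(\cdot,0)=\mathcal{F}$ and $G(\cdot,1)=\psi(D_{[0,1]})$; moreover at the endpoints $\theta=0,1$ one has $G(0,s)=\psi(\tilde{D}_1)$ and $G(1,s)=\psi(\tilde{D}_0)$, both invertible for all $s$. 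Thus $G$ is a homotopy in $\mathrm{Fred}^{(0)}(\hat{\mathcal{H}})$ relative to the invertible boundary, so $[\mathcal{F}]=[\psi(D_{[0,1]})]$ in $K^0(X\times[0,1],X\times\{0,1\})\simeq K^{-1}(X)$. This is exactly equivariance.

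The hard part is this last step. The formal combinatorics of comparing the two descriptions of the difference class is straightforward, but one must justify that the interpolation $G$ really lies in $\mathrm{Fred}^{(0)}(\hat{\mathcal{H}})$ with the right continuity (here compact open on the bounded transform, norm continuity on $G^2-1$), which requires the standard observation that zeroth-order perturbations of a fixed elliptic family $D$ depend continuously on the perturbation in the appropriate topology after applying $\psi$. Once this is in place, the equivariant map between nonempty torsors over $K^{-1}(X)$ must be bijective, yielding the asserted isomorphism of affine spaces.
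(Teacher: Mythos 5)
Your proposal is correct and follows the same route the paper takes: the corollary is deduced from the fact that both sides are (nonempty, under the same condition $[D]=0$) torsors over $K^{-1}(X)$ and that the map (\ref{perturbation_eq}) preserves difference classes, which forces bijectivity. The paper leaves the compatibility of the two difference-class constructions unverified, and your explicit two-parameter homotopy $G(\theta,s)$ interpolating between the concatenated path $\mathcal{F}$ and $\psi(D_{[0,1]})$ rel the invertible endpoints is precisely the missing verification.
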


With an abuse of notation we write $\mathcal{I}(D):= \mathcal{I}(\psi(D))$ for a positive order elliptic family $D$. 

\subsection{Groupoids}

\subsubsection{Basic definitions}
We recall basic definitions on groupoids and pseudodifferential calculus on them. 
The material is taken from \cite{DL}. 
\begin{defn}[Groupoids]
Let $G$ and $G^{(0)}$ be two sets. 
A groupoid structure on $G$ over $G^{(0)}$ is given by the following maps. 
\begin{itemize}
\item An injective map $u : G^{(0)} \to G$, called the unit map. 
We often identify $G^{(0)}$ with its image $u(G^{(0)}) \subset G$. 
$G^{(0)}$ is called the space of units. 
\item Two surjective maps $r, s : G \to G^{(0)}$, satisfying $r\circ u = s \circ u = id_{G^{(0)}}$. These are called range and source map, respectively.  
\item An involution $i : G \to G , \gamma \mapsto \gamma^{-1}$, called the inverse map. 
It satisfies $s \circ i = r$. 
\item A map $m : G^{(2)} \to G, (\gamma_1, \gamma_2) \mapsto \gamma_1 \cdot \gamma_2$,  called product, where $G^{(2)} = \{(\gamma_1, \gamma_2) \in G \times G| s(\gamma_1) = r(\gamma_2)\}$. 
Moreover for $(\gamma_1, \gamma_2) \in G^{(2)}$, we have $r(\gamma_1 \cdot \gamma_2) = r(\gamma_1)$ and $s(\gamma_1 \cdot \gamma_2) = s(\gamma_2)$. 
\end{itemize}
The following properties must be satisfied: 
\begin{itemize}
\item The product is associative: for any $\gamma_1$, $\gamma_2$, $\gamma_3$ in $G$ such that $s(\gamma_1) = r(\gamma_2)$ and $s(\gamma_2) = r(\gamma_3)$, the following equality holds. 
\[
(\gamma_1 \cdot \gamma_2) \cdot \gamma_3 = \gamma_1 \cdot (\gamma_2 \cdot \gamma_3). 
\]
\item For any $\gamma$ in $G$, we have $r(\gamma) \cdot \gamma = \gamma \cdot s(\gamma)= \gamma$ and $\gamma \cdot \gamma^{-1} = r(\gamma)$.  
\end{itemize}
A groupoid structure on $G$ over $G^{(0)}$ is usually denoted by $G \rightrightarrows G^{(0)}$, where the arrows stand for the source and range maps. 
\end{defn}
For $A, B \subset G^{(0)}$, we use the following notations. 
\[
G_A := s^{-1}(A), \ G^B := r^{-1}(B), \ G_A^B := G_A \cap G^B \ \mathrm{and} \ G|_A := G_A^A. 
 \]
We say a subset $A \subset G^{(0)}$ is {\it saturated} if it satisfies $G_A = G^A = G|_A$. 

Suppose that $G \rightrightarrows G^{(0)}$ is a locally compact groupoid and $\phi : X \to G^{(0)}$ is an open surjective map, where $X$ is a locally compact space. 
The {\it pull back groupoid} is the groupoid
\[
^*\!\phi^*(G) \rightrightarrows X, 
\]
where 
\[
^*\!\phi^*(G) = \{(x, \gamma, y) \in X \times G \times X \ | \ \phi (x) = r(\gamma) \mbox{ and } \phi(y) = s(\gamma)\}
\]
with $s(x, \gamma, y) = y$, $r(x, \gamma, y) = x$, $(x, \gamma_1, y) \cdot (y, \gamma_2, z) = (x, \gamma_1\cdot \gamma_2, z)$ and $(x, \gamma, y)^{-1} = (y, \gamma^{-1}, x)$. 
This endows $^*\!\phi^*(G)$ with a structure of locally compact groupoid. 
Moreover the groupoids $G$ and $^*\!\phi^*(G)$ are Morita equivalent (see \cite[Section 1.2]{DL}). 

\begin{defn}[Lie groupoids]\label{def_lie_groupoid}
We call $G \rightrightarrows G^{(0)}$ a Lie groupoid when $G$ and $G^{(0)}$ are second-countable smooth manifolds with $G^{(0)}$ Hausdorff, and all the structural homomorphisms are smooth and $s$ is a submersion (for definitions of submersions between manifolds with corners, we refer to \cite[Definition 1]{LN}). 
\end{defn}
Note that by requiring $s$ to be a submersion, for each $x \in G^{(0)}$, the $s$-fiber $G_x$ is a smooth manifold without boundary or corners. 

For a Lie groupoid $G$, let us denote $\Omega^{\frac{1}{2}}(\mathrm{ker}(ds) \oplus \mathrm{ker}(dr)) \to G$ the half density bundle of the vector bundle $\mathrm{ker}(ds) \oplus \mathrm{ker}(dr) \to G$. 
We also denote this vector bundle by $\Omega^{\frac{1}{2}} \to G$. 
Then $C^\infty_c(G; \Omega^{\frac{1}{2}})$ has a structure of a $*$-algebra with 
\begin{itemize}
\item The involution given by $f^*(\gamma)= \overline{f(\gamma^{-1})}$. 
\item The convolution product given by $f*g(\gamma)=\int_{G_{s(\gamma)}} f(\gamma \eta^{-1})g(\eta)$. 
\end{itemize}
For all $x \in G^{(0)}$ there is a $*$-homomorphism $\lambda_x : C^\infty_c(G; \Omega^{\frac{1}{2}}) \to B(L^2(G_x; \Omega^{\frac{1}{2}}(G_x)))$ defined by 
\[
\lambda_x(f) \xi (\gamma) = \int_{G_x} f(\gamma\eta^{-1})\xi(\eta). 
\]

\begin{defn}[Reduced groupoid $C^*$-algebras]
Let $G$ be a Lie groupoid. 
The reduced $C^*$-algebra of $G$, denoted by $C^*(G)$, is the completion of $C^\infty_c(G; \Omega^{\frac{1}{2}})$ with respect to the norm
\[
\|f\|_r = \sup_{x \in G^{(0)}} \|\lambda_x(f)\|_x, 
\]
where $\|\|_x$ is the operator norm on $B(L^2(G_x; \Omega^{\frac{1}{2}}(G_x)))$. 
\end{defn}

\begin{rem}
In general, there are many possible $C^*$-completion of $C^\infty_c(G; \Omega^{\frac{1}{2}})$ which are not necessarily isomorphic to $C^*(G)$. 
For example the full $C^*$-algebra of $G$ is the completion of $C^\infty_c(G; \Omega^{\frac{1}{2}})$ with respect to all continuous representations. 
All the groupoids we actually use in this paper are amenable, so the full and reduced $C^*$-algebras coincide. 
We use reduced $C^*$-algebras in this paper, in order to make the argument in subsection \ref{subsubsec_ellipticity} work. 
\end{rem}

\begin{defn}[Lie algebroids]
A Lie algebroid $\mathfrak{A}=(p : \mathfrak{A} \to M , [{\cdot},{\cdot}]_{\mathfrak{A}})$ on a smooth manifold $M$ is a vector bundle equipped with a Lie bracket 
$[{\cdot},{\cdot}]_{\mathfrak{A}} : C^\infty(M; \mathfrak{A}) \times  C^\infty(M; \mathfrak{A}) \to  C^\infty(M; \mathfrak{A})$ together with a homomorphism of fiber bundle $p : \mathfrak{A} \to TM$ called the anchor map, satisfying the following. 
\begin{itemize}
\item The bracket $[{\cdot},{\cdot}]_{\mathfrak{A}}$ is $\mathbb{R}$-bilinear, antisymmetric and satisfies the Jacobi identity. 
\item $[X, fY]_{\mathfrak{A}} = f[X ,Y]_{\mathfrak{A}} + p(X)(f)Y$ for all $X, Y \in C^\infty(M; \mathfrak{A}) $ and $f \in C^\infty(M)$. 
\item $p([X, Y]_{\mathfrak{A}}) = [p(X) , p(Y)]$ for all $X, Y \in C^\infty(M; \mathfrak{A})$. 
\end{itemize}
\end{defn}

Given a Lie groupoid $G$, we associate a Lie algebroid as follows. 
The vector bundle is given by $\ker(ds)|_{G^{(0)}} = \cup_{x \in G^{(0)}}TG_x \to G^{(0)}$. 
This has the structure of a Lie algebroid over $G^{(0)}$ with the anchor map $dr$. 
We denote this Lie algebroid by $\mathfrak{A}G$ and call it the Lie algebroid of $G$. 

For a Lie groupoid $G$, a submanifold $V \subset G^{(0)}$ is said to be {\it transverse} to $G$ if for each $x \in V$, the composition $p_x \circ \sharp_x : \mathfrak{A}_xG \to (N_V^M)_x = T_xM / T_x V$ is surjective.  

\begin{defn}[$G$-operators]
Let $G$ be a Lie groupoid. 
Let $E, F \to G^{(0)}$ be two vector bundles. 
A linear $G$-operator $D$ is a continuous linear operator
\[
D : C^\infty_c(G; r^{*}E \otimes \Omega^{\frac{1}{2}}) \to C^\infty(G; r^{*}F \otimes \Omega^{\frac{1}{2}})
\]
satisfying the following. 
\begin{itemize}
\item The operator $D$ restricts to a continuous family $\{D_x\}_{x \in G^{(0)}}$ of linear operators 
$D_x : C^\infty_c(G_x; r^{*}E \otimes \Omega^{\frac{1}{2}}) \to C^\infty(G_x; r^{*}F \otimes \Omega^{\frac{1}{2}})$ such that
\[
Df(\gamma) = D_{s(\gamma)}f_{s(\gamma)}(\gamma) \ \forall f \in  C^\infty_c(G; r^{*}E \otimes \Omega^{\frac{1}{2}}) . 
\]
\item The following equivariance property holds: 
\[
U_\gamma D_{s(\gamma)}= D_{r(\gamma)}U_\gamma, 
\]
where $U_\gamma$ is the map induced by the right multiplication by $\gamma$. 
\end{itemize}
A linear $G$-operator $D$ is called pseudodifferential of order $m$ if it satisfies the following. 
\begin{itemize}
\item Its Schwartz kernel $k_D$ is a distribution on $G$ that is smooth outside $G^{(0)}$. 
\item For every distinguished chart $\psi : U \subset G \to \Omega \times s(U) \subset \mathbb{R}^{n-p} \times \mathbb{R}^p$ of $G$, 
\[
\xymatrix{
	U \ar[rd]_s  \ar[rr]^\psi&& \Omega \times s(U) \ar[ld]_{p_2} \\
	& s(U) &
}
\]
the operator $(\psi^{-1})^*D\psi^* : C^\infty_c(\Omega \times s(U); (r\circ \psi^{-1})^*E) \to C^\infty_c(\Omega \times s(U); (r\circ \psi^{-1})^*F)$ is a smooth family parametrized by $s(U)$ of pseudodifferential operators of order $m$ on $\Omega$. 
\end{itemize}
We say that $D$ is smoothing if $k_D$ is smooth and that $D$ is compactly supported if $k_D$ is compactly supported. 
We denote the space of compactly supported order $m$ $G$-pseudodifferential operators from $E$ to $F$ by $\Psi^m_c(G; E, F)$. 
We also denote $\Psi^m_c(G; E) = \Psi^m_c(G; E, E)$ and when $E$ is the trivial bundle we denote $\Psi^m_c(G)= \Psi^m_c(G; E)$. 
\end{defn}
One can show that the space $\Psi^*_c(G; E)$ of compactly supported pseudodifferential $G$-operators on $E$ is an involutive algebra. 

Let us denote the cosphere bundle of $\mathfrak{A}G \to G^{(0)}$ as $\mathfrak{S}^*(G) \to G^{(0)}$. 
Given a $G$-pseudodifferential operator $D$, we can associate its {\it principal symbol} $\sigma(D) \in C^\infty_c(\mathfrak{S}^*(G); \mathrm{Hom}(E; F))$ as follows. 
Recall that $D$ is given by a family $\{D_x\}_{x\in G^{(0)}}$ of pseudodifferential operators on $G_x$. 
We define
\[
\sigma(D, \xi) := \sigma_{pr}(D_x)(x, \xi), 
\]
where $\sigma_{pr}(D_x)$ denotes the principal symbol of the pseudodifferential operator $D_x$. 

Now we give important examples of Lie groupoids which are building blocks of groupoids appearing in this paper. 
For more examples including the ones below, see \cite[Example 6.2 and Example 6.4]{DL}. 
\begin{ex}[Vector bundle groupoids]
If we are given a smooth vector bundle $\pi : E \to X$, we get a Lie groupoid $E \rightrightarrows X$ by setting $s = r = \pi$ and multiplication induced from the addition on $E_x$ for each $x$. 
Choosing any smooth family of fiberwise riemannian metric on $E$, the $C^*$-algebra $C^*(E)$ is the fiberwise convolution algebra of $E$, and we have $C^*(E) \simeq C_0(E^*)$ by the fiberwise Fourier transform. 
An $E$-pseudodifferential operator $D_E$ is equivalent to a family of pseudodifferential operators $\{D_x\}_{x \in X}$ parametrized by $X$, and each $D_x$ is an operator on the space $E_x$ which is translation invariant. 
\end{ex}
\begin{ex}[Groupoids associated to fiber bundles]
If we are given a smooth fiber bundle $\pi : M \to X$, we get a Lie groupoid $M \times_\pi M = \{(m, n) \in M \times M \ | \ \pi(m) = \pi(n)\} \rightrightarrows M$. 
Here $s(m, n) = n$, $r(m ,n) = m$ and $(m, n)\cdot (n, l) = (m, l)$. 
Choosing any smooth family of fiberwise riemannian metric for $\pi$, the $C^*$-algebra $C^*(M \times_\pi M)$ is isomorphic to $\mathcal{K}(L^2_X(M))$, where $L^2_X(M)$ is the Hilbert $C_0(X)$-module given by the completion of $C_c^\infty(M)$ by the canonical $C_0(X)$-valued inner product, and the symbol $\mathcal{K}$ denotes the $C^*$-algebra of compact operators in the sense of a Hilbert module. 
We have the canonical Morita equivalence (for the notion of Morita equivalence, see \cite[Section 1.2]{DL}) between $C^*(M \times_\pi M)$ and $C_0(X)$. 
An $M\times_\pi M$-pseudodifferential operator $D_\pi$ is equivalent to a family of pseudodifferential operators $\{D_x\}_{x \in X}$ parametrized by $X$, and each $D_x$ is an operator on $\pi^{-1}(x)$. 
\end{ex}

\subsubsection{Geometric operators}
Here we define geometric operators, such as spin Dirac operators and signature operators, on a given Lie groupoid $G$. 
For detailed discussion and other examples, we refer to \cite{LN}. 

In this paper, we often deal with $\mathbb{Z}_2$-graded vector bundles and algebras. 
If we are given two $\mathbb{Z}_2$-graded vector bundles $V$ and $W$, or algebras $A$ and $B$, we always consider their graded tensor product $V \hat{\otimes} W$ and $A \hat{\otimes} B$, following the conventions in \cite[Section 1.1]{LM}. 

In this subsubsection, for an Euclidean space $E$, we denote $\mathrm{Cliff}(E)$ by the $*$-algebra over $\mathbb{R}$, generated by the elements of $E$ and relations
\[
e = -e^*, \mbox{ and } e^2 = -||e||^2\cdot 1 \mbox{ for all } e \in E. 
\]
This construction applies to Euclidean vector bundles as well. 

First we define spin Dirac operators. 
In order to do this, we first define our convention on spin and $spin^c$ structures on vector bundles. 
Denote $\widetilde{GL^+_k(\mathbb{R})} \to GL^+_k(\mathbb{R})$ the unique non-trivial covering of $GL^+_k(\mathbb{R})$ for $k \geq 2$. 
For $k=1$, denote $\widetilde{GL^+_k(\mathbb{R})} := GL^+_k(\mathbb{R}) \times \mathbb{Z}_2 \to GL^+_k(\mathbb{R})$ the projection to the first factor.  
\begin{defn}{(Spin/Pre-spin structures on vector bundles)}\label{convention_spin}

Let $E \to X$ be a real vector bundle on a space $X$ with rank $k$. 
\begin{itemize}
\item 
A pre-spin structure on $E$ consists of the following data $(o, P')$. 
\begin{itemize}
\item An orientation $o$ on the vector bundle $E \to X$. 
\item A principal $\widetilde{GL^+_k(\mathbb{R})}$-bundle $P' \to X$ equipped with a bundle map $P' \to P_{GL^+}(E)$ which is equivariant with respect to the canonical homomorphism $\widetilde{GL^+_k(\mathbb{R})} \to GL^+_k(\mathbb{R})$. 
Here we denoted $P_{GL^+}(E)$ the oriented frame bundle of $E$ defined by $o$. 
\end{itemize}
\item
A spin structure on $E$ consists of the following data $(o, g, P)$. 
\begin{itemize}
\item An orientation $o$ and a riemannian metric $g$ on the vector bundle $E\to X$. 
\item A principal $Spin_k$-bundle $P \to X$ equipped with a bundle map $P \to P_{SO}(E)$ which is equivariant with respect to the canonical homomorphism $Spin_k \to SO_k$. 
\end{itemize}
\end{itemize}
\end{defn}

Note that a pre-spin structure on $E$ together with any riemannian metric $g$ on $E$ defines a spin structure on $E$ uniquely. 
A pre-spin structure on $E$ can also be regarded as a homotopy class of spin structures on $E$. 
See \cite[pp.131--132]{LM}. 

\begin{defn}[$Spin^c$/Pre-$spin^c$ structures on vector bundles]\label{convention_spinc}
Let $E \to X$ be a real vector bundle on a space $X$. 
\begin{itemize}
\item 
A pre-$spin^c$ structure on $E$ consists of the following data $(o, P')$. 
\begin{itemize}
\item An orientation $o$ on the vector bundle $E \to X$. 
\item A principal $\widetilde{GL^+_k(\mathbb{R})} \times_{\mathbb{Z}_2}  \mathbb{C}^*$-bundle $P' \to X$ equipped with a bundle map $P' \to P_{GL^+}(E)$ which is equivariant with respect to the canonical homomorphism $\widetilde{GL^+_k(\mathbb{R})} \times_{\mathbb{Z}_2} \mathbb{C}^* \to GL^+_k(\mathbb{R})$. 
\end{itemize}
\item
A $spin^c$ structure on $E$ consists of the following data $(o, g, P)$. 
\begin{itemize}
\item An orientation $o$ and a riemannian metric $g$ on the vector bundle $E\to X$. 
\item A principal $Spin^c_k$-bundle $P \to X$ equipped with a bundle map $P \to P_{SO}(E)$ which is equivariant with respect to the canonical homomorphism $Spin^c_k \to SO_k$. 
\end{itemize}
\end{itemize}
If $E$ has a $spin^c$-structure, by the group homomorphism
\[
p : Spin^c_k = Spin_k \times_{\mathbb{Z}_2} U(1) \to U(1), [g, z] \mapsto z^2 
\]
we get a hermitian line bundle 
\[
L := P \times_p \mathbb{C} \to X. 
\]
We call $L$ the {\it determinant line bundle} associated to the $spin^c$-structure. 
\begin{itemize}
\item
A differential $spin^c$ structure on $E$ consists of the following data $(o, g, P, \nabla^L)$. 
\begin{itemize}
\item A $spin^c$ structure $(o, g, P)$ on $E$. 
\item A unitary connection $\nabla^L$ on the determinant line bundle $L$. 
\end{itemize}
\end{itemize}
\end{defn}

\begin{defn}
A spin (pre-spin, $spin^c$, pre-$spin^c$, differential $spin^c$) structure on a Lie groupoid $G$ is a spin (pre-spin,$spin^c$, pre-$spin^c$, differential $spin^c$ ) structure on its Lie algebroid $\mathfrak{A}G \to G^{(0)}$ (regarded as a vector bundle). 
\end{defn}

Suppose we are given a metric $g$ on $\mathfrak{A}G$. 
For each $x \in G^{(0)}$, since we have $TG_x \simeq (r^*\mathfrak{A}G)|_{G_x}$ canonically, $g$ induces a riemannian metric on $G_x$. 
Levi-Civita connection on each $G_x$, denoted by $\nabla^x : C^\infty(G_x; TG_x) \to C^\infty(G_x; TG_x \otimes T^*G_x)$, combines to give a linear map
\begin{equation}\label{levicivita}
\nabla^{LC} : C^\infty(G; r^*\mathfrak{A}G) \to C^\infty(G;r^*\mathfrak{A}G\otimes r^*\mathfrak{A}^*G ). 
\end{equation}
For each $X \in C^\infty(G^{(0)}; \mathfrak{A}G)$, $r^*X \in C^\infty(G; r^*\mathfrak{A}G)$ gives a first order differential operator
\[
\nabla^{LC}_{r^*X} : C^\infty(G; r^*\mathfrak{A}G) \to C^\infty(G; r^*\mathfrak{A}G), 
\]
and it is right invariant, i.e., $\nabla_{r^*X} \in \mathrm{Diff}^1(G; \mathfrak{A}G)$. 

Suppose that we are given a spin structure on $G$. 
Let $S \to G^{(0)}$ be the associated complex spinor bundle. 
The Levi-Civita connection on $r^*\mathfrak{A}G$ lifts uniquely to a connection $\nabla^S : C^\infty(G; r^*S) \to C^\infty(G;r^*S\otimes r^*\mathfrak{A}^*G )$ and it has a right invariance property as above. 
Let us denote $c : \mathrm{Cliff}(\mathfrak{A}G) \to \mathrm{End}(S)$ the Clifford action on the spinor bundle. 

\begin{defn}[Spin Dirac operators on Lie groupoids]
Let $G$ be a Lie groupoid equipped with a spin structure. 
Let $\{e_\alpha\}_{\alpha}$ be a local orthonormal frame of $\mathfrak{A}G \to G^{(0)}$. 
The differential operator $D^S$ on $C^\infty(G; r^*S)$, locally defined as
\[
D^S := \sum_\alpha c(e_\alpha)\nabla^S_{r^*e_\alpha}, 
\]
gives an element $D^S \in \mathrm{Diff}^1(G; S)$. 
We call it the spin Dirac operator on $G$. 
If the rank of $\mathfrak{A}G$ is even, the spinor bundle is naturally $\mathbb{Z}_2$-graded and the Dirac operator is odd with respect to this grading. 

Equivalently, the definition of $D^S$ can also be described as follows. 
Given a spin structure on $G$, for each $x \in G^{(0)}$ the spin structure on $G_x$ is associated. 
If we denote $D^S_x$ the spin Dirac operator for each $G_x$, the family $D^S = \{D^S_x\}_{x \in G^{(0)}}$ forms a right invariant family, and coincides with the definition given above. 
\end{defn}

This construction generalizes to Clifford modules and Dirac operators on a Lie groupoid $G$, defined as follows. 
\begin{defn}[Clifford modules, connections and Dirac operators on Lie groupoids]
Let $G$ be a Lie groupoid equipped with an orientation and a metric on $\mathfrak{A}G$. 
Let $\mathrm{Cliff}(\mathfrak{A}G) \to G^{(0)}$ denote the Clifford bundle of $\mathfrak{A}G$. 
Let $W \to G^{(0)}$ be a $\mathrm{Cliff}(\mathfrak{A}G)$-module bundle. 
Let us denote $c : \mathrm{Cliff}(\mathfrak{A}G) \to \mathrm{End}(W)$ the Clifford action. 
\begin{itemize}
\item
A continuous linear map $\nabla^W : C^\infty(G; r^*W) \to C^\infty(G; r^*W \otimes r^*\mathfrak{A}^*G)$ is called an admissible connection if
\[
\nabla^W_X (c(Y) \xi) = c(\nabla^{LC}_XY)\xi + c(Y) \nabla_X^W(\xi),
\]
for all $\xi \in C^\infty(G; r^*W)$ and $X,Y \in C^\infty(G; r^*\mathfrak{A}G)$.
\item
A right invariant admissible connection $\nabla^W$ is called a Clifford connection on $W$. 
\item 
For a $\mathrm{Cliff}(\mathfrak{A}G)$-module bundle $W$ equipped with a Clifford connection $\nabla^W$, 
the Dirac operator $D^W \in \mathrm{Diff}^1(G; W)$ is defined by
\[
D^W := \sum_\alpha c(e_\alpha)\nabla^W_{r^*e_\alpha}, 
\]
using a local orthonormal frame $\{e_\alpha\}_\alpha$ for $\mathfrak{A}G$. 
\end{itemize}
In other words, a Clifford connection is given by a smooth family of Clifford connections on $r^*W \to G_x$ for each $x \in G^{(0)}$, satisfying the right invariance. 
The associated Dirac operators $D_x^W$ form a right invariant family, and define the element $D^W \in \mathrm{Diff}^1(G; W)$ which coincides with the above definition.  
\end{defn}

\begin{ex}[$Spin^c$-Dirac operators]
Let $G$ be a Lie groupoid equipped with a differential $spin^c$-structure. 
The $spin^c$-structure on $\mathfrak{A}G$ gives the spinor bundle $S(\mathfrak{A}G) \to G^{(0)}$ with a $\mathrm{Cliff}(\mathfrak{A}G)$-module structure. 
Moreover, as in the classical case, the unitary connection $\nabla^L$ on the determinant line bundle, together with the fiberwise Levi-Civita connection $\nabla^{LC}$ for $G$ as in (\ref{levicivita}), determines a Clifford connection $\nabla^S$ on the complex spinor bundle of $\mathfrak{A}G$, denoted by $S(\mathfrak{A}G)$. 
We call the associated Dirac operator $D^S \in \mathrm{Diff}^1(G; S(\mathfrak{A}G))$ the $spin^c${\it -Dirac operator}. 
\end{ex}

\begin{ex}[Twisted $spin^c$ Dirac operators]\label{ex_twisted_spinc}
Let $G$ be a Lie groupoid equipped with a differential $spin^c$ structure. 
Let $E \to G^{(0)}$ be a $\mathbb{Z}_2$-graded hermitian vector bundle with unitary connection $\nabla^E$ which preserves the grading. 
If we denote by $c : \mathrm{Cliff}(\mathfrak{A}G) \to \mathrm{End}(S(\mathfrak{A}G))$ the Clifford action on the spinor bundle, 
$c \hat{\otimes} 1 : \mathrm{Cliff}(\mathfrak{A}G) \to \mathrm{End}(S(\mathfrak{A}G)\hat{\otimes}E)$ gives a Clifford module structure on $S(\mathfrak{A}G)\hat{\otimes}E$. 

For each $x \in G^{(0)}$, denote $r_x := r|_{G_x} :G_x \to G^{(0)}$. 
We consider the pullback connection $r_x^*\nabla^E$ on $r_x^*E \to G_x$ for each $x \in X$. 
These combine to give a right invariant continuous linear map
\[
r^*\nabla^E : C^\infty(G; r^*E) \to C^\infty(G; r^*E \otimes r^*\mathfrak{A}G). 
\]
The map
\begin{align*}
\nabla^{S\hat{\otimes}E} &: C^\infty(G; r^*(S(\mathfrak{A}G)\hat{\otimes}E)) \to C^\infty(G; r^*((\mathfrak{A}G)\hat{\otimes}E) \otimes r^*\mathfrak{A}^*G) \\
\nabla^{S\hat{\otimes}E} &:= \nabla^S \hat{\otimes} 1 + 1 \hat{\otimes} r^*\nabla^E
\end{align*}
gives a Clifford connection on $S\hat{\otimes}E$. 
We call the associated Dirac operator $D^{S\hat{\otimes}E} \in \mathrm{Diff}^1(G; S(\mathfrak{A}G)\hat{\otimes}E)$ the {\it spin Dirac operator twisted by} $(E, \nabla^E)$. 
\end{ex}

\begin{ex}[The signature operator]
Let $G$ be a Lie groupoid equipped with a metric on $\mathfrak{A}G$. 
As in the classical case, the complexified exterior algebra bundle $\wedge_{\mathbb{C}} \mathfrak{A}^*G \to G^{(0)}$ has the $\mathrm{Cliff}(\mathfrak{A}G)$-module structure. 
The fiberwise Levi-Civita connection as in (\ref{levicivita}) induces a Clifford connection on $\wedge_{\mathbb{C}} \mathfrak{A}^*G \to G^{(0)}$. 
We call the associated Dirac operator $D^{\mathrm{sign}} \in \mathrm{Diff}^1(G; \wedge_{\mathbb{C}} \mathfrak{A}^*G )$ the {\it signature operator} on $G$. 
Of course this is the family consisting of the signature operator on $G_x$ for each $x\in G^{(0)}$.  
If the rank of $\mathfrak{A}G$ is even (let us denote it by $n$), the exterior algebra bundle $\wedge_{\mathbb{C}}\mathfrak{A}^*G$ is $\mathbb{Z}_2$-graded by the Hodge star operator. 
We only consider this grading on complexified exterior algebra bundles of even-rank real vector bundles in this paper. 
Under this grading, the signature operators are odd. 
\end{ex}

\subsubsection{Ellipticity and index classes}\label{subsubsec_ellipticity}
From now on we assume that $G^{(0)}$ is compact. 

A $G$-pseudodifferential operator $D$ is called {\it elliptic} if $\sigma(D)$ is invertible. 
If $D \in \Psi_c^m(G; E, F)$ is elliptic, as in the classical situations, it has a parametrix $Q \in \Psi_c^{-m}(G; F, E)$ such that $DQ - \mathrm{Id} \in \Psi^{-\infty}_c(G; F)$ and $QD - \mathrm{Id} \in \Psi^{-\infty}_c(G; E)$. 

For an elliptic operator $F \in \Psi_c^0(G; E, F)$, we can define the {\it index class} $\mathrm{Ind}(F) \in K_0(C^*(G))$ as follows. 
\if0
Let $\mathcal{E}$ and $\mathcal{F}$ denote the Hilbert $C^*(G)$ modules defined by completing $C^\infty_c(G; r^*E \otimes \Omega^{\frac{1}{2}})$ and $C^\infty_c(G; r^*F \otimes \Omega^{\frac{1}{2}})$ with respect to the canonical $C^*(G)$-valued inner product, respectively. 
Let $Q \in \Psi_c^0(G; F, E)$ denote a parametrix of $D$. 
Then $\mathrm{Ind}(D) \in KK(\mathbb{C}, C^*(G))$ is defined as the class given by the Kasparov bimodule defined by the Hilbert $C^*(G)$-module $\mathcal{E}\oplus \mathcal{F}$ and the operator
\[
T = \left(
	\begin{array}{cc}
	0 & Q \\
	D & 0
	\end{array}
\right). 
\]

\fi
For simplicity we work in the case where coefficient bundles are trivial; for the general case we use the nontrivial-coefficient version of algebras (such as $C^*(G; E)$) which are Morita equivalent to trivial coefficient versions ($C^*(G)$), and proceed exactly in the same way. 
We have $\Psi^0_c(G) \subset \mathcal{M}(C^*(G))$, where $\mathcal{M}(C^*(G))$ denotes the multiplier algebra of $C^*(G)$. 
We denote $\overline{\Psi^0_c(G)}$ the completion of $\Psi^0_c(G)$ by the norm induced from $\mathcal{M}(C^*(G))$. 
The $*$-homomorphism $\sigma : \Psi^0_c(G)  \to C^\infty(\mathfrak{S}^*(G)) $ extends to the $*$-homomorphism $\sigma : \overline{\Psi^0_c(G)}  \to C(\mathfrak{S}^*(G))$ and fits into the exact sequence
\[
0 \to C^*(G) \to \overline{\Psi^0_c(G)}  \stackrel{\sigma}{\to} C(\mathfrak{S}^*(G)) \to 0. 
\]
We denote the connecting element associated to the above exact sequence by $\mathrm{ind}^{G^{(0)}}(G) \in KK^1(C(\mathfrak{S}^*(G)) ,C^*(G) )$. 

For $F \in \overline{\Psi^0_c(G)}$, we say it is elliptic if $\sigma(F)$ is invertible.  
When $F$ is elliptic, it defines a class $[\sigma(F)] \in K_1(C(\mathfrak{S}^*(G)))$. 
We define the {\it index class} of the elliptic operator $F$ as
\[
\mathrm{Ind}(F) := [\sigma(F)]  \otimes \mathrm{ind}^{G^{(0)}}(G) \in K_0(C^*(G)). 
\]

Suppose there is a compact space $Y$ and a continuous map $f : G^{(0)} \to Y$ such that $f^{-1}(y)$ is saturated for $G$ for all $y\in Y$.
Then $C(Y)$ acts canonically on $C^*$-algebras associated to $G$ above, namely $\overline{\Psi^0_c(G)}$, $C^*(G)$ and $C(\mathfrak{S}^*(G))$. 
These actions commute with elements in these algebras, so for an elliptic operator $F \in \overline{\Psi^0_c(G)}$, we get a finer index class, 
\begin{equation}\label{eq_ind_y}
\mathrm{Ind}^Y(F) \in KK(C(Y), C^*(G)), 
\end{equation}
which maps to $\mathrm{Ind}(F)$ by the $*$-homomorphism $\mathbb{C} \to C(Y)$. 

Next, we consider the case where an elliptic operator $F \in \overline{\Psi^0_c(G)}$ is invertible in some closed saturated subset of $G^{(0)}$. 
Recall that we call a subset $A \subset G^{(0)}$ {\it saturated} if $G_A = G^A_A = G^A$. 
Assume $X \subset G^{(0)}$ is closed and saturated, and $G_X$ is amenable. 
We have the following diagram, whose rows and columns are all exact. 
\begin{equation}\label{diag_3by3}
\xymatrix{
&0 \ar[d]&0\ar[d]&0\ar[d]& \\
0 \ar[r]&C^*(G|_{G^{(0)}\setminus X}) \ar[r] \ar[d]&C^*(G) \ar[r] \ar[d] & C^*(G|_X) \ar[r] \ar[d] &0 \\
0 \ar[r]&\overline{\Psi^0_c(G|_{G^{(0)}\setminus X})} \ar[r] \ar[d]_\sigma&\overline{\Psi^0_c(G)} \ar[r] \ar[d]_\sigma & \overline{\Psi^0_c(G|_X)} \ar[r] \ar[d]_\sigma &0 \\
0 \ar[r]&C_0(\mathfrak{S}^*G|_{G^{(0)}\setminus X}) \ar[r] \ar[d]&C(\mathfrak{S}^*G) \ar[r] \ar[d] & C(\mathfrak{S}^*G|_X) \ar[r] \ar[d] &0 \\
&0 &0&0&
}
\end{equation}
Throughout this article, we denote the connecting element of the top row of the above exact sequence by $\partial^{G^{(0)}\setminus X}(G) \in KK^1(C^*(G|_X), C^*(G|_{G^{(0)}\setminus X}))$. 
We denote the pullback $C^*$-algebra of the downright corner of the above diagram by $\Sigma^{G^{(0)} \setminus X}(G) :=\overline{\Psi^0_c(G|_X)} \oplus_X C(\mathfrak{S}^*G)$. 
We have the exact sequence
\begin{equation}\label{fullsymb}
0 \to C^*(G|_{G^{(0)}\setminus X}) \to \overline{\Psi^0_c(G)}  \stackrel{\sigma_{f, X}}{\to} \Sigma^{G^{(0)} \setminus X}(G) \to 0. 
\end{equation}
We call $\sigma_{f, X}$ the full symbol map with respect to $X$. 
Since we are assuming that $G|_{X}$ is amenable, this exact sequence is semisplit. 
Denote $\mathrm{ind}^{G^{(0)}\setminus X}(G) \in KK^1(\Sigma^{G^{(0)} \setminus X}(G), C^*(G|_{G^{(0)}\setminus X}))$ the connecting element of the exact sequence (\ref{fullsymb}). 

Suppose $F \in \overline{\Psi^0_c(G)}$ is elliptic and its restriction to $G|_X$, $F|_{X} \in \overline{\Psi^0_c(G|_X)}$, is invertible. 
We call such an operator as {\it fully elliptic with respect to $X$}. 
This means that $\sigma_{f, X}(F) \in \Sigma^{G^{(0)} \setminus X}(G)  $ is invertible, so it defines a class $[\sigma_{f, X}(F)] \in K^1(\Sigma^{G^{(0)} \setminus X}(G))$. 
The class
\[
\mathrm{Ind}_{G^{(0)} \setminus X}(F) :=[\sigma_{f, X}(F)] \otimes \mathrm{ind}^{G^{(0)} \setminus X}(G) \in K_0(C^*(G|_{G^{(0)}\setminus X}))
\]
is called the {\it full index class} of the operator $F$ with respect to $G^{(0)} \setminus X$. 

In the case of an elliptic positive order pseudodifferential operator $D\in \Psi^m_c(G)$, we also define the index class as follows. 
Let us denote $\psi(x) = \frac{x}{\sqrt{1+x^2}}$ and consider the operator $\psi(D)$. 
This operator satisfies $\psi(D) \in \overline{\Psi^0_c(G)}$, as shown in [V] (note that it is  not in $\Psi^0_c(G)$ in general). 
Then we define its {\it index class} as 
\[
\mathrm{Ind}(D):= \mathrm{Ind}(\psi(D)) \in K_0(C^*(G)). 
\]

In the case $D \in  \Psi^m_c(G)$ is invertible on a closed saturated subset $X \subset G^{(0)}$, the bounded transform $\psi(D)$ is fully elliptic with respect to X. 
In this case we also say that $D$ is fully elliptic with respect to X, and define its full index class as 
\[
\mathrm{Ind}_{G^{(0)}\setminus X}(D):= \mathrm{Ind}_{G^{(0)}\setminus X}(\psi(D)) \in K_0(C^*(G|_{G^{(0)}\setminus X})). 
\] 

\subsubsection{Deformation goupoids and blowup groupoids}\label{dnc_blup}
Here we recall the two constructions of groupoids; deformation to the normal cone and blowup. 
For details we refer to \cite{DS}. 

First we explain these constructions for manifolds. 
Let $Y$ be a manifold and $X$ a locally closed submanifold. 
First we explain in the case $X$ does not intersect with the boundary of $Y$. 
Denote by $N_X^Y$ the normal bundle of $X$ in $Y$. 
The {\it deformation to the normal cone}, denoted by $DNC(Y, X)$, is a smooth manifold which is obtained by gluing $N_X^Y \times \{0\}$ with $Y \times \mathbb{R}^*$. 
Choose an exponential map $\theta : U' \to U$, where $U'$ is an open neighborhood of the $0$-section in $N_X^Y$ and $U$ is an open neighborhood of $X$ in $Y$.  
The smooth structure is defined in the way that the following maps are diffeormorphisms onto open subsets of $DNC(Y, X)$. 
\begin{itemize}
\item the inclusion $Y \times \mathbb{R}^* \to DNC(Y, X)$. 
\item the map $\Theta : \Omega' :=\{((x,\xi), \lambda) \in N_X^Y \times \mathbb{R} \ | \ (x, \lambda \xi) \in U' \} \to DNC(Y, X)$ defined by $\Theta((x, \xi), 0) = ((x, \xi), 0) $ and $\Theta((x, \xi), \lambda)=(\theta(x, \lambda \xi), \lambda) \in Y\times \mathbb{R}^*$ if $\lambda \neq 0$. 
\end{itemize}
This condition defines the smooth structure on $DNC(Y, X)$ uniquely and it does not depend on the choice of $\theta$. 
We also denote by $DNC_+(Y, X) := Y \times (\mathbb{R}^*_+) \sqcup N_X^Y \times \{0\} \in DNC(Y, X)$. 

There exists a canonical action of the group $\mathbb{R}^*$ on the manifold $DNC(Y, X)$, called the {\it gauge action}. 
This is defined by, for an element $\lambda \in \mathbb{R}^*$, $\lambda.(w, t) = (w, \lambda t)$ and $\lambda. ((x, \xi), 0) = ((x, \lambda^{-1}\xi), 0)$ (with $t \in \mathbb{R}^*$, $w \in Y$, $x \in X$ and $\xi \in ((N_X^Y)_x)$). 
This action is free and locally proper on the open subset $DNC(Y, X) \setminus X \times \mathbb{R}$. 

The $DNC$-construction has the functoriality as follows. 
Let $f : (Y, X) \to (Y', X') $ be a smooth map between the pair as above. 
We can show that $f$ induces a smooth map
\[
DNC(f) : DNC(Y, X) \to DNC(Y', X'). 
\]
This map is equivariant with respect to the gauge action by $\mathbb{R}^*$. 

The blowup $Blup(Y, X)$ is a smooth manifold which is a union of $Y \setminus X$ with $\mathbb{P}(N_X^Y)$, the projective space of the normal bundle $N_X^Y$. 
We also define the spherical blowup $SBlup(Y, X)$, which is a manifold with boundary obtained by gluing $Y \setminus X$ with the sphere bundle $\mathbb{S}(N_X^Y)$. 
The definition is as follows. 
\begin{align*}
Blup(Y, X) &:= (DNC(Y, X) \setminus X \times \mathbb{R} )/\mathbb{R}^* \\
SBlup(Y, X) &:= (DNC_+(Y, X) \setminus X \times \mathbb{R}_+ )/\mathbb{R}^*_+. 
\end{align*}
Here we take quotient by the gauge action. 

The functoriality of $Blup$ is described as follows. 
Let $f : (Y, X) \to (Y', X') $ be a smooth map between the pair as above. 
Let $U_f := DNC(Y, X)\setminus DNC(f)^{-1}(X' \times \mathbb{R})$. 
Denote $Blup_f(Y, X) := U_f / \mathbb{R}^*$. 
Then we obtain a smooth map $Blup(f) : Blup_f(Y, X) \to Blup(Y', X')$. 
Similarly we obtain a smooth map $SBlup(f) : SBlup_f(Y, X) \to SBlup(Y', X')$. 

Let us explain the case where $Y$ is a manifold with corners and $X$ meets $\partial Y$. 
$X$ is called an {\it interior $p$-submanifold} of $Y$ if it is a smooth submanifold which meets all the boundary faces of $Y$ transversally, and covered by coordinate neighborhoods $\{U, (v, w)\}$ in $Y$ such that $v$ is a tuple of boundary defining functions on $Y$ and $U \cap X = \{w_i = 0 \ | \ i = 1, \cdots, \mathrm{codim} X\}$. 
If $X$ is an interior $p$-submanifold of $Y$, we consider the inward normal bundle $(N_X^{Y})^+$ and we can define $DNC_+(Y, X) = (N_X^Y)^+ \times \{0\}\sqcup Y \times \mathbb{R}^*_+$. 
This manifold admits the gauge action by $\mathbb{R}^*_+$. 
We define $SBlup(Y, X )$ by the same formula as above. 

Now we apply these constructions to inclusions of Lie groupoids. 
Let $\Gamma$ be a closed Lie subgroupoid of a Lie groupoid $G$. 
First we assume that $\Gamma$ does not meet the boundary of $G$. 
Using the functoriality of the $DNC$ construction, we get a Lie groupoid
\[
DNC(G, \Gamma) \rightrightarrows DNC(G^{(0)}, \Gamma^{(0)}), 
\]
where the source and range maps are $DNC(s)$ and $DNC(r)$, and the multiplication is $DNC(m)$. 
Denoting the subset $\widetilde{DNC}(G, \Gamma) : = U_r \cap U_s \subset DNC(G, \Gamma)$, we define $Blup_{r, s}(G, \Gamma) := \widetilde{DNC}(G, \Gamma)/\mathbb{R}^*$. 
We get a Lie groupoid
\[
Blup_{r, s}(G, \Gamma) \rightrightarrows Blup(G^{(0)}, \Gamma^{(0)}), 
\]
where the source and range maps are $Blup(s)$ and $Blup(r)$, and the multiplication is $Blup(m)$. 
Using $\widetilde{DNC}_+(G, \Gamma) := \widetilde{DNC}(G, \Gamma) \cap DNC_+(G, \Gamma)$ instead of $\widetilde{DNC}(G, \Gamma)$ in the above construction and taking the quotient by the gauge action of $\mathbb{R}^*_+$, we get a Lie groupoid
\[
SBlup_{r, s}(G, \Gamma) \rightrightarrows SBlup_{r,s}(G^{(0)}, \Gamma^{(0)}). 
\]

In the case $\Gamma$ meets the boundary of $G$, 
if $\Gamma$ is a $p$-submanifold of $G$, we can define the Lie groupoids $DNC_+(G, \Gamma)$ and $SBlup_{r, s}(G, \Gamma)$ in the same way as above. 
\subsection{$b$, $\Phi$, $e$-calculus and corresponding groupoids}
In this subsection, we recall the basics of $b$, $\Phi$, and $e$-calculus, in terms of the groupoid approach. 
The settings are as follows. 
\begin{itemize}
\item Let $(M, \partial M)$ be a compact manifold with closed boundaries.  
Here closed means that $\partial M$ is a compact manifold without boundary. 
\item Let $\partial M = \sqcup_{i = 1}^m H_i$ be the decomposition into connected components.  
\item Let $\pi_i : H_i \to Y_i$ be a smooth oriented fiber bundle structure with closed fibers. 
The typical fibers are allowed to vary from one component to another.  
We also denote $Y = \sqcup_{i} Y_i$ and $\pi : \sqcup_{i} \pi_i$. 
\item Let $x \in C^\infty(M)$ be a boundary defining function. 
Here a boundary defining function is a smooth function $x$ on $M$ such that $x^{-1}(0) = \partial M$, $x > 0$ on $\mathring{M}$ and $dx(p) \neq 0$ for all $p \in \partial M$.  
\end{itemize}

Define $\mathcal{V}(M) := C^\infty(M; TM)$. 
Consider the subspaces $\mathcal{V}_b(M)$, $\mathcal{V}_\Phi(M)$ and $\mathcal{V}_e(M)$ of $\mathcal{V}(M)$, defined as follows. 
\begin{align*}
\mathcal{V}_b (M) &:= \{\xi \in \mathcal{V}(M) \ | \ \xi |_{\partial M}\mbox{ is tangent to }\partial M \} \\
\mathcal{V}_\Phi (M) &:= \{\xi \in \mathcal{V}(M) \ | \ \xi |_{\partial M}\mbox{ is tangent to the fibers of }\pi \mbox{ and } \xi(x) \in x^2C^\infty(M) \} \\
\mathcal{V}_e (M) &:= \{\xi \in \mathcal{V}(M) \ | \ \xi |_{\partial M}\mbox{ is tangent to the fibers of }\pi \}.  
\end{align*}   
These are Lie subalgebras of $\mathcal{V}(M)$. 
Using the Serre-Swan theorem, we see that there exist smooth vector bundles $T^bM$, $T^\Phi M$, and $T^e M$ over $M$ such that $\mathcal{V}_b(M) = C^\infty(M; T^bM)$, $\mathcal{V}_\Phi(M) = C^\infty(M; T^\Phi M)$, and $\mathcal{V}_e(M) = C^\infty(M; T^eM) $. 
Note that, restricted to $\mathring{M}$, these vector bundles are canonically isomorphic to $T\mathring{M}$. 

A $b$, $\Phi$, $e$-metric on $M$ is a smooth riemannian metric on the vector bundles $T^bM$, $T^\Phi M$, and $T^e M$, respectively. 
We also call a riemannian metric on $\mathring{M}$ a $b$, $\Phi$, $e$-metric, if it extends to a smooth metric on these vector bundles. 
Examples of such metrics are described as follows. 
Let $T\partial M \simeq \pi^* TY \oplus T^V\partial M$ be a fixed splitting for the boundary fibration. 
We consider three classes of metrics on $M$, which have the following forms near the boundary. 
\begin{align}
g_b &= \frac{dx^2}{x^2} \oplus g_{\partial M} \notag \\
g_\Phi &= \frac{dx^2}{x^4} \oplus \frac{\pi^*g_Y}{x^2} \oplus g_\pi \label{orthomet_phi}\\
g_e &= \frac{dx^2}{x^2} \oplus \frac{\pi^*g_Y}{x^2} \oplus g_\pi. \label{orthomet_e} 
\end{align}
Here $g_{\partial M}$ and $g_Y$ are some riemannian metrics on $\partial M$ and $Y$ respectively, and $g_\pi$ is a fiberwise riemannian metric for $\pi$. 
These are examples of $b$, $\Phi$, $e$-metrics respectively, and a metric of the form above is called {\it rigid}. 

Denote by $\Omega^b$, $\Omega^\Phi$ and $\Omega^e$ the bundle of smooth densities on the vector bundle $T^bM$, $T^\Phi M$ and $T^eM$, respectively, and we call them $b$, $\Phi$, $e$-density bundles, respectively.  

We define the space of $b$, $\Phi$, and $e$-pseudodifferential operators. 
Let $\mathrm{Diff}^*_b(M)$ denote the filtered algebra generated by $\mathcal{V}_b(M)$ and $C^\infty(M)$. 
An element in this algebra is called a $b$-differential operator. 
The space of $b$-pseudodifferential operators contains this algebra. 
We define the algebra $\mathrm{Diff}^*_\Phi(M)$ and $\mathrm{Diff}^*_e(M)$ in the analogous way, and the analogous result holds. 
This space of pseudodifferential operators can be described in two ways, {\it microlocal approach} and {\it groupoid approach}.  
The microlocal approach originates from Melrose \cite{Me} for the $b$-case, and the $\Phi$-case was given by Mazzeo and Melrose \cite{MM} and the $e$-case was given by Mazzeo \cite{Maz}.
In this paper, we use the groupoid approach, which is more suited with $K$-thoretic approach using $C^*$-algebras, as explained below. 
For relations between these two approaches, see \cite[Section 6.6]{PZ}. 

\subsubsection{The groupoid approach}\label{subsubsec_groupoid_approach}
Here we recall the groupoid approach. 
We can construct groupoids $G_b$, $G_\Phi$, $G_e$ associated to a manifold with fibered boundary, and define $b$, $\Phi$, $e$-pseudodifferential operators as operators in $\Psi^*_c(G_b)$, $\Psi^*_c(G_\Phi)$ and $\Psi^*_c(G_e)$, respectively. 
The groupoid corresponding to $b$-calculus is introduced by \cite{Mo}, and a general construction by \cite{N} includes the $\Phi$ and $e$ cases. 
Here we use the description using the blowup construction of groupoids. 
We use the blowup construction for groupoids explained in the subsubsection \ref{dnc_blup}. 
For this description, also see \cite[Section 13]{PZ}. 
\begin{itemize}
\item The $b$-groupoids. 

We start with the pair groupoid $M \times M \rightrightarrows M$. 
Note that this does not satisfy the definition of Lie groupoid given in Definition \ref{def_lie_groupoid}, since $s$ is not a submersion; however it is easy to see that the spherical blowup construction is also valid in this case. 
Consider the subgroupoid 
$\sqcup_i (H_i \times H_i) \rightrightarrows \partial M$ of $M\times M$. 
Then $b$-groupoid of $M$ is defined by
\[
G_b := SBlup_{r, s}(M \times M, \sqcup_i (H_i \times H_i) ) \rightrightarrows M. 
\]
$\mathring{M}$ and $H_i$, $1 \leq i \leq m$ are saturated subsets of $G_b$, and we have
\[
G_b = \mathring{M} \times \mathring{M} \sqcup  \sqcup_i (H_i \times H_i \times \mathbb{R}) \rightrightarrows M. 
\]

\item The $\Phi$-groupoids. 

Consider the subgroupoid $\partial M \times_\pi \partial M = \sqcup_{i} (H_i \times_{\pi_i} H_i) \rightrightarrows \partial M$ of $G_b$. 
Then $\Phi$-groupoid of $M$ is defined by
\[
G_\Phi := SBlup_{r, s}(G_b, \partial M \times_\pi \partial M ) \rightrightarrows M. 
\]
Let us look at the singular part. 
The inward normal bundle groupoid of $H_i \times_{\pi_i} H_i$ in $G_b$ is 
\begin{align*}
H_i \times_{\pi_i} TY_i \times_{\pi_i} H_i \times \mathbb{R} \times \mathbb{R}_+
&\rightrightarrows H_i \times \mathbb{R}_+ \\
s(x, v, y, a, b) &= (y, b) \\
r(x, v, y, a, b) &= (x, b) \\
m((x, v, y, a, b), (y, w, z, a', b)) &= (x, v+w, z, a+a', b)
\end{align*}
And the gauge action by $\lambda \in \mathbb{R}_+^*$ is given by $(x, v, y, a, b) \mapsto (x, \lambda v, y, a, \lambda b)$. 
Thus dividing by this gauge action, we get an isomorphism 
\[
G_\Phi|_{H_i} \simeq H_i \times_{\pi_i} H_i \times_{\pi_i} TY_i\times \mathbb{R}
\]
(this can be seen by restricting to $b = 1$). 
In other words we have
\[
G_\Phi = \mathring{M} \times \mathring{M} \sqcup \partial M \times_\pi \partial M \times_\pi TY \times \mathbb{R} \rightrightarrows M. 
\]
  
\item The $e$-groupoids. 

Consider the groupoid $M \times M \rightrightarrows M$ and its subgroupoid 
$\partial M \times_\pi \partial M =\sqcup_i (H_i \times_{\pi_i} H_i) \rightrightarrows \partial M$. 
Then $e$-groupoid of $M$ is defined by
\[
G_e = SBlup_{r, s}(M \times M, \partial M \times_\pi \partial M ) \rightrightarrows M. 
\]
Let us look at the singular part. 
The inward normal bundle groupoid of $ H_i \times_{\pi_i} H_i$ in $M \times M$ is
\begin{align*}
H_i \times_{\pi_i} TY_i \times_{\pi_i} H_i \times ( \mathbb{R}_+)^2
&\rightrightarrows H_i \times \mathbb{R}_+ \\
s(x, v, y, a, b) &= (y, b) \\
r(x, v, y, a, b) &= (x, a) \\
m((x, v, y, a, b), (y, w, z, b, c)) &= (x, v+w, z, a, c)
\end{align*}
And the gauge action by $\lambda \in \mathbb{R}_+^*$ is given by $(x, v, y, a, b) \mapsto (x, \lambda v, y, \lambda a, \lambda b)$. 
So dividing by this action, we get
\[
G_e|_{H_i} \simeq H_i \times_{\pi_i} H_i \times_{\pi_i} (TY_i \rtimes \mathbb{R}_+^*)
\]
where $\mathbb{R}_+^*$ acts on $TY_i$ by multiplication. 
\end{itemize}

\if0
Let $\Box$ be either of $b$, $\Phi$, or $e$. 
Under the identifications (\ref{identificationb}), (\ref{identificationphi}) and (\ref{identificatione}), the Schwartz kernel of an element $P \in \Psi^k_c(G_\Box; E, F)$ corresponds to a compactly supported conormal distribution in $I^k(M^2_\Box; \Delta_\Box; \beta_\Box^*(\mathrm{Hom}(E, F))\otimes \pi_R^* \Omega^\Box)$, which vanishes identically on a neighborhood of the boundary hypersurfaces $\partial M^2_\Box \setminus \mathrm{ff}_\Box$ (Here we denoted $\mathrm{ff}_b := B$). 
Thus we get
\[
\Psi^k_c(G_\Box; E, F) \subsetneq \Psi^k_\Box(M; E, F). 
\]
These spaces are not equal because of the support condition. 
However we have
\[
\overline{\Psi^0_c(G_\Box; E)} = \overline{\Psi^0_\Box(M; E)}.
\]
\fi

We apply the general construction of the subsubsection \ref{subsubsec_ellipticity} to these groupoids. 
Recall that a $G_\Box$ operator $P \in \overline{\Psi^0_c(G)}$ is called elliptic if its symbol $\sigma(P) \in C(\mathfrak{S}^*G_\Box)$ is invertible. 
Note that $\partial M \subset M = G_\Box^{(0)}$ is a closed saturated submanifold. 
Applying the construction in (\ref{fullsymb}) to the case $\partial M =X$, we get the exact sequence
\[
0 \to C^*(G_\Box|_{\mathring{M}}) \to \overline{\Psi^0_c(G_\Box)}  \stackrel{\sigma_{f, \partial M}}{\to} \Sigma^{\mathring{M}}(G_\Box) \to 0. 
\]
We say $P\in \overline{\Psi^0_c(G)}$ is fully elliptic if $\sigma_{f, \partial M}(P) \in \Sigma^{\mathring{M}}(G_\Box)$ is invertible. 
Recall that if $P$ is fully elliptic it defines the index class $\mathrm{Ind}_{\mathring{M}}(P)\in K_0(C^*(G_\Box|_{\mathring{M}})) = K_0(K(L^2(\mathring{M}))) \simeq \mathbb{Z}$. 
By the exact sequence above, the restriction of a fully elliptic operator $P$ to $\mathring{M}$ is Fredholm, and its Fredholm index corresponds to $\mathrm{Ind}_{\mathring{M}}(P) \in \mathbb{Z}$. 
\section{Indices of geometric operators on manifolds with fibered boundaries : the case without perturbations}\label{sec_index_without_perturbation}

\subsection{The definition of indices}\label{subsec_def_index}

In subsections \ref{subsec_def_index} and \ref{subsec_properties}, for simplicity we only consider spin Dirac operators, without any twists or perturbations. 
For our conventions on spin structures and pre-spin structures on vector bundles, see Definition \ref{convention_spin}. 

For a given even dimensional compact manifold with fibered boundary $(M^{\mathrm{ev}}, \pi : \partial M \to Y)$ equipped with pre-spin structures on $TM$ and $TY$ as well as a riemannian metric on the vertical tangent bundle of the boundary fibration, $T^V\partial M$, for which the fiberwise spin Dirac operator forms an invertible family, we associate its index in $\mathbb{Z}$. 
This index can be realized using either $\Phi$-metrics or $e$-metrics. 
In the next section, 
we show that they actually coincide. 
Also we show some properties of this index, using groupoid deformation techniques. 
For simplicity, we only work in the case where $Y$ is odd dimensional. 
The case where $Y$ is even dimensional can be treated similarly. 

\begin{rem}\label{rem_spin_str}
For a manifold with fibered boundary $(M, \pi: \partial M \to Y)$, assume that we are given pre-spin structures on $TM$ and $TY$. 
The pre-spin structure on $TM$ induces a pre-spin structure on $T\partial M$. 
Choose any splitting $T\partial M = T^V\partial M \oplus \pi^*TY$. 
We introduce the pullback pre-spin structure on $\pi^*TY$. 
Then a pre-spin structure on $T^V\partial M$ is induced, and it does not depend on the choice of the splitting of $T\partial M$. 
We always consider this choice of pre-spin structure on $T^V\partial M$. 
In particular, when we are given pre-spin structures on $TM$ and $TY$ as well as a riemannian metric on $T^V\partial M$, a spin structure on $T^V\partial M$ is canonically induced and the fiberwise spin Dirac operator $D_\pi$ is defined. 

\end{rem}
 
First, we show that for a fixed spin structure on $T^\Phi M$ or $T^e M$ which has a product decomposition at the boundary, we get the Fredholmness from the invertibility of the fiberwise Dirac operators.  

Let $(M^{\mathrm{ev}}, \pi : \partial M \to Y^{\mathrm{odd}})$ be a compact manifold with fibered boundary, equipped with pre-spin structures on $TM$ and $TY$, as well as a riemannian metric $g_\pi$ on $T^V\partial M$. 

Fix some riemannian metric $g_Y$ on $Y$. 
Choose a smooth riemannian metric $g_\Phi$ ($g_e$) for $\mathfrak{A}G_\Phi$ ($\mathfrak{A}G_e$), whose restriction to $\mathfrak{A}G_\Phi |_{\partial M} = T^V\partial M \oplus \pi^*TY \oplus \mathbb{R}_x$ ($\mathfrak{A}G_e |_{\partial M} = T^V\partial M \oplus \pi^*TY \oplus \mathbb{R}_x$) can be written as
\begin{align*}
g_\Phi|_{\partial M} &= g_\pi \oplus \pi^*g_{Y} \oplus dx^2 \\
g_e|_{\partial M} &= g_\pi \oplus \pi^*g_{Y} \oplus dx^2. 
\end{align*}
For example rigid metrics as in (\ref{orthomet_phi}) and (\ref{orthomet_e}) on the interior $\mathring{M}$ extends to metrics on $\mathfrak{A}G_\Phi$ and $\mathfrak{A}G_e$ satisfying this condition. 
Let $D_\Phi \in \mathrm{Diff}^1(G_\Phi; S(\mathfrak{A}G_\Phi))$, $D_e\in \mathrm{Diff}^1(G_e; S(\mathfrak{A}G_e))$ be the spin Dirac operators associated to the metrics $g_\Phi$ and $g_e$, respectively. 
Denote $D_\pi$ the fiberwise spin Dirac operators for the boundary fibration structure $\pi$($D_\pi$ is a family of operators parametrized by $Y$). 

\begin{prop}\label{fredholmness}
In the above settings, 
assume that the family $D_\pi$ is invertible. 
Then both $D_\Phi$ and $D_e$ are Fredholm, as operators on $\mathring{M}$ with metric induced from $g_\Phi$ and $g_e$, respectively.  
\end{prop}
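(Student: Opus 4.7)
The plan is to invoke the groupoid-theoretic Fredholmness criterion set up in Section~\ref{subsubsec_ellipticity}: a positive order elliptic $G_\Box$-pseudodifferential operator is Fredholm as an operator on $\mathring{M}$ precisely when it is fully elliptic with respect to $\partial M$, that is, both its principal symbol and its restriction $D_\Box|_{\partial M}$ to the boundary groupoid are invertible. The principal symbol of a Dirac-type operator is always invertible off the zero section, so the entire content of the proposition is to show that the boundary operators $D_\Phi|_{\partial M}$ and $D_e|_{\partial M}$ are invertible on the respective groupoids $G_\Phi|_{\partial M}$ and $G_e|_{\partial M}$.

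First I would unpack the Lie algebroid at the boundary. By the description of $G_\Phi$ and $G_e$ in Section~\ref{subsubsec_groupoid_approach}, both $\mathfrak{A}G_\Phi|_{\partial M}$ and $\mathfrak{A}G_e|_{\partial M}$ canonically split as $T^V\partial M \oplus \pi^*TY \oplus \mathbb{R}_x$, and by hypothesis the restricted metric is the orthogonal sum $g_\pi \oplus \pi^*g_Y \oplus dx^2$. Via the convention in Remark~\ref{rem_spin_str}, the pre-spin structure on $TM$ induces one on $\mathfrak{A}G_\Box|_{\partial M}$ compatible with this orthogonal splitting, so the spinor bundle factors as a graded tensor product and the Levi--Civita connection splits as a direct sum, preventing mixed terms from appearing in the Dirac operator.

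For the $\Phi$-case this decomposition yields
\[
D_\Phi|_{\partial M} = D_\pi \,\hat\otimes\, 1 + 1 \,\hat\otimes\, D_{TY\times\mathbb{R}},
\]
where $D_\pi$ is the fiberwise spin Dirac family for $\pi$ and $D_{TY\times\mathbb{R}}$ is the translation invariant Dirac operator on the vector bundle groupoid $\pi^*TY \oplus \mathbb{R} \rightrightarrows \partial M$. These are odd self-adjoint operators on distinct factors of a graded tensor product, so they anticommute and
\[
(D_\Phi|_{\partial M})^2 = D_\pi^2 \,\hat\otimes\, 1 + 1 \,\hat\otimes\, D_{TY\times\mathbb{R}}^2 .
\]
Invertibility of the smooth family $D_\pi$ over the compact base $Y$ gives a uniform bound $D_\pi^2 \ge c\cdot \mathrm{Id}$ for some $c>0$, hence the squared boundary operator is bounded below by $c$ and so $D_\Phi|_{\partial M}$ is invertible on $G_\Phi|_{\partial M}$. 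The $e$-case is parallel: $G_e|_{\partial M} = \partial M \times_\pi \partial M \times_\pi (TY\rtimes \mathbb{R}^*_+)$, the boundary operator takes the analogous form $D_e|_{\partial M} = D_\pi \,\hat\otimes\, 1 + 1 \,\hat\otimes\, D_{TY\rtimes \mathbb{R}^*_+}$ with $D_{TY\rtimes \mathbb{R}^*_+}$ the Dirac operator on the semidirect product groupoid, and the same squaring argument closes the case.

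The main obstacle is the tensor product factorization of the boundary Dirac operator: one must verify carefully that the pre-spin structures, the spinor bundle, and the Levi--Civita connection all respect the orthogonal splitting of $\mathfrak{A}G_\Box|_{\partial M}$ coming from the product form of the metric. Once this purely structural decomposition is in hand, the analytic step is the trivial squaring estimate above, and full ellipticity—hence Fredholmness via the criterion of Section~\ref{subsubsec_ellipticity}—follows formally.
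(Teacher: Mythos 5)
Your proposal is correct and follows essentially the same route as the paper: reduce Fredholmness to full ellipticity with respect to $\partial M$, use the product form of the metric and spin structure at the boundary to factor the boundary operator as $D_\pi\,\hat\otimes\,1 + 1\,\hat\otimes\,D_{TY\times\mathbb{R}}$ (resp.\ $1\,\hat\otimes\,D_{TY\rtimes\mathbb{R}^*_+}$), and conclude invertibility from the anticommutation of the two summands together with the uniform lower bound $D_\pi^2\ge c$. The only cosmetic difference is that you make the squaring estimate explicit here, whereas the paper states the anticommutation argument briefly in this proof and spells out the bound $D_{y,t}^2\ge D_\pi^2\,\hat\otimes\,1\ge c$ only later, in the proof of Proposition~\ref{equality}.
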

\begin{proof}
First we prove in the $\Phi$-case. 
We have the decomposition
\begin{equation}\label{phigrpd}
G_\Phi = \mathring{M} \times \mathring{M} \sqcup \partial M \times_\pi \partial M \times_\pi TY \times \mathbb{R} \rightrightarrows M. 
\end{equation}
The restriction of $D_\Phi$ to the singular part $ \partial M \times_\pi \partial M \times_\pi TY \times \mathbb{R}$ is a family of operators $D_\Phi|_{\partial M} = \{D_{\Phi, y}\}_{y \in Y}$ parametrized by $Y$, and each $D_{\Phi, y}$ is given by
\begin{align}\label{fredholmness_op1}
D_{\Phi, y} &: C_c^\infty(\pi^{-1}(y) \times T_y Y \times \mathbb{R}; S(\pi^{-1}(y)) \hat{\otimes} S(T_y Y \times \mathbb{R})) \notag \\
&\to  C_c^\infty(\pi^{-1}(y) \times T_y Y \times \mathbb{R}; S(\pi^{-1}(y)) \hat{\otimes} S(T_y Y \times \mathbb{R})) \notag \\
D_{\Phi, y} &= D_\pi \hat{\otimes} 1 + 1 \hat{\otimes} D_{T_yY \times \mathbb{R}}
\end{align}
Here $S(T_y Y \times \mathbb{R})$ and $D_{T_yY \times \mathbb{R}}$ is the translation invariant spinor bundle and the Dirac operator over the Euclidean space $T_yY \times \mathbb{R}$ with respect to the metric $g_Y \oplus dx^2 $. 
The operators $D_\pi \hat{\otimes} 1$ and $1 \hat{\otimes} D_{T_yY \times \mathbb{R}}$ anticommute, and since $D_\pi$ is invertible, we see that $D_{\Phi, y}$ is invertible for all $y\in Y$. 
So $D_\Phi|_{\partial M}$ is invertible. 
Thus $D_\Phi$ is fully elliptic and we get the Fredholm index
\[
\mathrm{Ind}_{\mathring{M}}(D_\Phi) \in K_0(C^*(G_\Phi|_{\mathring{M}})) \simeq \mathbb{Z}. 
\]

Next, we prove the Proposition in the $e$-case. 
The restriction of $D_e$ to the boundary component $G_e|_{\partial M} =\partial M \times_{\pi} \partial M \times_{\pi} (TY \rtimes \mathbb{R}_+^*)$ is described as follows. 
$D_e|_{\partial M}$ is a family of operators parametrized by $Y$, $D_e|_{\partial M} = \{D_{e, y}\}_{y \in Y}$, and for each $y \in Y$, we have
\begin{align}\label{fredholmness_op2}
D_{e, y} & : C^\infty(\pi^{-1}(y) \times (T_yY \rtimes \mathbb{R}_{+}^*); S(\pi_i^{-1}(y)) \hat{\otimes} S(T_yY \rtimes \mathbb{R}_{+}^*)) \notag \\
&\to C^\infty(\pi^{-1}(y) \times (T_yY \rtimes \mathbb{R}_{+}^*); S(\pi_i^{-1}(y)) \hat{\otimes} S(T_yY \rtimes \mathbb{R}_{+}^*)) \notag \\
D_{e, y} &= D_{\pi^{-1}(y)} \hat{\otimes} 1 + 1 \hat{\otimes} D_{T_yY \rtimes \mathbb{R}_+^*}. 
\end{align}
Here, $S(T_yY \rtimes \mathbb{R}_{+}^*)$ and $D_{T_yY\rtimes \mathbb{R}_{+}^*}$ denotes the spinor bundle and its Dirac operator on the Lie group $T_yY \rtimes \mathbb{R}_{+}^*$ with the translation invariant spin structure and metric $g_Y \oplus dx^2$. 
From the same argument as in the $\Phi$-case above, we get the full ellipticity of $D_e$. 
\end{proof}

Next we show that the index only depends on the choice of the fiberwise metric $g_\pi$ for the boundary fibration, and does not depend on the choice of base metrics as well as interior metrics. 
We consider the following situations. 
\begin{enumerate}
\item Pre-spin structures $P'_M$ and $P'_Y$ on $TM$ and $TY$, respectively, are fixed. 
\label{stabdata1}
\item A riemannian metric $g_\pi$ on $T^V\partial M$ is fixed. 
Assume that the associated fiberwise spin Dirac operator $D_\pi$ is invertible. 
\label{stabdata2}
\item A smooth riemannian metric $g_\Phi$ for $\mathfrak{A}G_\Phi \simeq T^\Phi M \to M$, whose restriction to $\mathfrak{A}G_\Phi |_{\partial M} = T^V\partial M \oplus \pi^*TY \oplus \mathbb{R}$ can be written as
\[
g_\Phi|_{\partial M} = g_\pi \oplus \pi^*g_{TY \oplus \mathbb{R}}  ,
\] 
where $g_{TY \oplus \mathbb{R}}$ is some riemannian metric on the vector bundle $TY \oplus \mathbb{R} \to Y$. 
\label{stabcond1}
\item A smooth riemannian metric $g_e$ for $\mathfrak{A}G_e \simeq T^e M \to M$, whose restriction to $\mathfrak{A}G_e |_{\partial M} = T^V\partial M \oplus \pi^*TY \oplus \mathbb{R}$ can be written as
\[
g_e|_{\partial M} = g_\pi \oplus \pi^*g_{TY \oplus \mathbb{R}}  ,
\] 
where $g_{TY \oplus \mathbb{R}}$ is some riemannian metric on the vector bundle $TY \oplus \mathbb{R} \to Y$. 
\label{stabcond2}
\item Let us denote the spin Dirac operators associated to $g_\Phi$ and $g_e$ by $D_\Phi$ and $D_e$, respectively. 
\end{enumerate}
\begin{prop}[Stability]\label{stability}
Under the above situations, $\mathrm{Ind}_{\mathring{M}}(D_\Phi)$ and $\mathrm{Ind}_{\mathring{M}}(D_e)$ only depend on the data (\ref{stabdata1}) and (\ref{stabdata2}) above. 
It does not depend on the choice of $g_\Phi$ and $g_e$ which satisfy the conditions (\ref{stabcond1}) and (\ref{stabcond2}) above. 
\end{prop}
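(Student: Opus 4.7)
The strategy is a homotopy argument, the prototypical instance of the groupoid deformation technique described in the introduction. I describe the $\Phi$-case in detail; the $e$-case is identical with $G_e$ in place of $G_\Phi$.

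Given two choices $g_\Phi^0, g_\Phi^1$ of $\Phi$-metric on $\mathfrak{A}G_\Phi$ both satisfying (\ref{stabcond1}) with the common fiberwise part $g_\pi$, the linear interpolation $g_\Phi^t := (1-t)g_\Phi^0 + t g_\Phi^1$ for $t \in [0,1]$ is a smooth family of $\Phi$-metrics each satisfying (\ref{stabcond1}) with the same $g_\pi$ (only the auxiliary boundary metric $g_{TY\oplus\mathbb{R}}$ varies). Form the product Lie groupoid $\mathcal{G}_\Phi := G_\Phi \times [0,1] \rightrightarrows M \times [0,1]$ whose Lie algebroid is $\mathfrak{A}G_\Phi \times [0,1]$. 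The fixed pre-spin structure on $TM$, transported to $\mathfrak{A}G_\Phi$ via the collar and boundary splitting, combined with the family $\{g_\Phi^t\}$, yields a smooth family of spin structures whose spinor bundles are canonically identified, by the pre-spin formalism of \cite[pp.131--132]{LM}, with a single $\mathbb{Z}_2$-graded Clifford module bundle $S$ on $M \times [0,1]$. The associated spin Dirac operator $\mathcal{D}_\Phi \in \mathrm{Diff}^1(\mathcal{G}_\Phi; S)$ satisfies $\mathcal{D}_\Phi|_{M \times \{t\}} = D_\Phi^t$.

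Next, replay the proof of Proposition \ref{fredholmness} uniformly in $t$. The restriction of $\mathcal{D}_\Phi$ to the closed saturated subset $\partial M \times [0,1]$ is the smooth family over $Y \times [0,1]$ whose fiber over $(y,t)$ takes the anticommuting-sum form
\[
D_{\Phi, y, t} = D_\pi \hat{\otimes} 1 + 1 \hat{\otimes} D_{T_y Y \times \mathbb{R}}^t,
\]
so invertibility of $D_\pi$ (independent of $t$) forces each $D_{\Phi, y, t}$ to be invertible, with uniform spectral gap in $(y,t)$ by compactness. Thus $\mathcal{D}_\Phi$ is fully elliptic with respect to $\partial M \times [0,1]$ and defines a full index class
\[
\mathrm{Ind}_{\mathring{M} \times [0,1]}(\mathcal{D}_\Phi) \in K_0\!\left(C^*(\mathcal{G}_\Phi|_{\mathring{M} \times [0,1]})\right).
\]
Since $\mathcal{G}_\Phi|_{\mathring{M} \times [0,1]} = \mathring{M} \times \mathring{M} \times [0,1]$, the target $C^*$-algebra is $\mathcal{K}(L^2(\mathring{M})) \otimes C([0,1])$, and the evaluation maps $ev_0, ev_1$ induce $K_0$-isomorphisms onto $\mathbb{Z}$; by construction they carry the above class to $\mathrm{Ind}_{\mathring{M}}(D_\Phi^0)$ and $\mathrm{Ind}_{\mathring{M}}(D_\Phi^1)$ respectively, whence equality. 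The analogous argument with $G_e$ proves the $e$-case.

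The conceptual content is routine homotopy invariance; the main obstacle is the technical verification that the parametrized family $\mathcal{D}_\Phi$ is genuinely a first-order pseudodifferential element on $\mathcal{G}_\Phi$ whose bounded transform has invertible full symbol in $\Sigma^{\mathring{M} \times [0,1]}(\mathcal{G}_\Phi)$, rather than merely pointwise invertibility in $t$. This reduces to $(y,t)$-continuity of $(D_{\Phi,y,t})^{-1}$, which follows from the anticommuting sum structure and the uniform spectral gap of $D_\pi$ over the compact base $Y$.
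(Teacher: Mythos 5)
Your proposal is correct and follows essentially the same route as the paper: linear interpolation of the two metrics (which preserves condition (\ref{stabcond1}) since both share the block $g_\pi$), the product groupoid $G_\Phi\times[0,1]$, full ellipticity of the resulting family from the invertibility of $D_\pi$, and evaluation maps inducing the identity on $K_0\simeq\mathbb{Z}$. No gaps.
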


\begin{proof}
This can be proved by a simple homotopy argument. 
We prove in the $\Phi$-case. 
The $e$-case is similar. 
Let $g_\Phi^0$ and $g_\Phi^1$ be two choices of smooth metrics on $T^\Phi M$ which satisfies the condition (\ref{stabcond1}) (for the same fiberwise metric $g_\pi$). 
Let us denote $D_\Phi^0$ and $D_\Phi^1$ the spin Dirac operator with respect to these metrics. 
Letting $g_\Phi^t = tg_\Phi^0 + (1-t)g_\Phi^1$ for $t \in [0,1]$, we get a smooth path of riemannian metrics $T^\Phi M$ connecting $g_\Phi^0$ and $g_\Phi^1$. 
Note that for all $t\in[0, 1]$, $g_\Phi^t$ satisfies the condition (\ref{stabcond1}). 

Let us consider the groupoid $G_\Phi \times [0, 1]\rightrightarrows M\times[0,1]$. 
The metrics $\{g_\Phi^t\}_{t \in [0, 1]}$ give a smooth metric on $\mathfrak{A}{G_\Phi \times [0,1]}$. 
Under this metric and the spin structure induced from $\mathring{M}$, we get the spin Dirac operator $D_{\Phi}^{[0,1]}$. 
Since $D_{\Phi}^{[0, 1]}|_{\partial M \times \{t\}}$ is invertible for all $t$, we get the index
\[
\mathrm{Ind}_{\mathring{M} \times [0, 1]}(D_\Phi^{[0,1]}) \in K_0(C^*(G_\Phi|_{\mathring{M}})\times [0, 1]) \simeq \mathbb{Z}. 
\]
and we have, denoting the $*$-homomorphisms $ev_t : C^*(G_\Phi|_{\mathring{M}})\times [0, 1]) \to C^*(\mathcal{G}|_{\mathring{M}} \times \{t\})$ for $t \in [0, 1]$, 
\begin{align*}
(ev_0)_* \mathrm{Ind}_{\mathring{M} \times [0, 1]}(D_\Phi^{[0,1]}) &= \mathrm{Ind}_{\mathring{M}}(D_\Phi^0) \in K_0(C^*(G_\Phi|_{\mathring{M}})) \simeq \mathbb{Z}, \mbox{ and } \\
(ev_1)_* \mathrm{Ind}_{\mathring{M} \times [0, 1]}(D_\Phi^{[0,1]}) &= \mathrm{Ind}_{\mathring{M}}(D_\Phi^1) \in K_0(C^*(G_\Phi|_{\mathring{M}})) \simeq \mathbb{Z}. 
\end{align*}
Since $(ev_t)_* : K_0(C^*(\mathring{M} \times \mathring{M} \times [ 0, 1])) \simeq \mathbb{Z} \to K_0(C^*(\mathring{M} \times \mathring{M} \times \{t\})) \simeq \mathbb{Z}$ is the identity map on $\mathbb{Z}$ for all $t \in [0, 1]$, we get the result. 
\end{proof}

By Proposition \ref{stability}, in order to define the indices of spin Dirac operator $D_\Phi$ and $D_e$, we only have to specify the data (\ref{stabdata1}) and (\ref{stabdata2}) listed before the Proposition \ref{stability}. 
So we define the index of the triple $(P'_M, P'_Y, g_\pi)$ by the above number. 
\begin{defn}\label{def_index_spin}
Let $( M^{\mathrm{ev}} , \pi : \partial M \to Y^{\mathrm{odd}})$ be a compact manifold with fibered boundary. 
For a triple $(P'_M, P'_Y, g_\pi)$, where $P'_M$ and $P'_Y$ are pre-spin structures on $TM$ and $TY$, respectively, and $g_\pi$ is a riemannian metric on $T^V \partial M$ such that the associated fiberwise spin Dirac operator is an invertible family, we define its $\Phi$-index and $e$-index as
\begin{align*}
\mathrm{Ind}_\Phi(P'_M, P'_Y, g_\pi) &:= \mathrm{Ind}_{\mathring{M}}(D_\Phi) \\
\mathrm{Ind}_e(P'_M, P'_Y, g_\pi) &:= \mathrm{Ind}_{\mathring{M}}(D_e). 
\end{align*}
Here $D_\Phi \in \mathrm{Diff}^1(G_\Phi; S(\mathfrak{A}G_\Phi))$ and $D_e \in \mathrm{Diff}^1(G_e; S(\mathfrak{A}G_e))$ are spin Dirac operators which are defined by arbitrary choices of data (\ref{stabcond1}) and (\ref{stabcond2}). 
\end{defn}

\subsection{Properties}\label{subsec_properties}
 
First, we show that two indices $\mathrm{Ind}_\Phi(P'_M, P'_Y, g_\pi)$ and $\mathrm{Ind}_e(P'_M, P'_Y, g_\pi) $ actually coincide. 
\begin{prop}[Equality of $\Phi$ and $e$-indices]\label{equality}
For a compact manifold with fibered boundary $(M^{\mathrm{ev}}, \pi : \partial M \to Y^{\mathrm{odd}})$, assume that we are given pre-spin structures $P'_M$ and $P'_Y$ on $TM$ and $TY$, respectively, and a riemannian metric $g_\pi$ on $T^VM$, for which the fiberwise spin Dirac operator $D_\pi$ is an invertible family. 
Then we have
\[
\mathrm{Ind}_\Phi(P'_M, P'_Y, g_\pi)=\mathrm{Ind}_e(P'_M, P'_Y, g_\pi). 
\]
\end{prop}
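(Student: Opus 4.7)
The plan is to apply the groupoid deformation technique from the introduction: construct a single Lie groupoid $\mathcal{G} \rightrightarrows M \times [0,1]$ that restricts to $G_\Phi$ at $u = 0$ and to $G_e$ at $u = 1$, equip it with compatible data to obtain a spin Dirac operator $\mathcal{D}$ whose two endpoint restrictions are $D_\Phi$ and $D_e$, and use the $K$-theoretic invariance under the deformation parameter to equate the two indices.

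First I would construct $\mathcal{G}$. On $\mathring{M} \times [0,1]$ take the pair groupoid $\mathring{M} \times \mathring{M} \times [0,1]$. The singular parts of $G_\Phi$ and $G_e$ at the boundary are $\partial M \times_\pi \partial M \times_\pi (TY \times \mathbb{R})$ and $\partial M \times_\pi \partial M \times_\pi (TY \rtimes \mathbb{R}_+^*)$, with a common underlying manifold $\partial M \times_\pi \partial M \times_\pi TY \times \mathbb{R}$ via $\mathbb{R}_+^* \simeq \mathbb{R}$. For $u \in [0,1]$ I would define a Lie group law on $TY \times \mathbb{R}$ by
\[
(v_1, t_1) \cdot_u (v_2, t_2) := (v_1 + e^{u t_1} v_2, \, t_1 + t_2),
\]
which is the abelian structure at $u = 0$ and the semidirect product at $u = 1$. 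Then $\mathcal{G}$ is built by repeating the spherical blowup construction of Section \ref{subsubsec_groupoid_approach} with this $u$-dependent group law carried through, yielding a Lie groupoid over $M \times [0,1]$ whose interior is $\mathring{M} \times \mathring{M} \times [0,1]$ and whose endpoint restrictions are $G_\Phi$ and $G_e$.

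Next I would choose a smooth metric on $\mathfrak{A}\mathcal{G}$ whose $u = 0$ and $u = 1$ slices are $g_\Phi$ and $g_e$, and whose restriction to $\mathfrak{A}\mathcal{G}|_{\partial M \times [0,1]}$ takes the orthogonal form $g_\pi \oplus \pi^* g_Y \oplus dx^2$ under the splitting $T^V \partial M \oplus \pi^* TY \oplus \mathbb{R}_x$; Proposition \ref{stability} ensures the endpoint indices are insensitive to the choice of extension. The pre-spin structures $P'_M, P'_Y$ induce a spin structure on $\mathfrak{A}\mathcal{G}$, and the associated spin Dirac operator $\mathcal{D} \in \mathrm{Diff}^1(\mathcal{G}; S(\mathfrak{A}\mathcal{G}))$ restricts to $D_\Phi$ and $D_e$ at the endpoints. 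Mirroring the computation in Proposition \ref{fredholmness}, for each $(y,u) \in Y \times [0,1]$ the fiber restriction of $\mathcal{D}$ takes the anticommuting form $D_{\pi^{-1}(y)} \hat{\otimes} 1 + 1 \hat{\otimes} D_{G_u,y}$, where $D_{G_u,y}$ is the translation-invariant Dirac operator on the Lie group $(TY \times \mathbb{R})$ with the $u$-dependent structure; invertibility of $D_\pi$ then forces invertibility of the entire boundary family, so $\mathcal{D}$ is fully elliptic with respect to $\partial M \times [0,1]$.

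Since $\mathcal{G}|_{\mathring{M} \times [0,1]} = \mathring{M} \times \mathring{M} \times [0,1]$, we have $C^*(\mathcal{G}|_{\mathring{M} \times [0,1]}) \simeq \mathcal{K}(L^2(\mathring{M})) \otimes C([0,1])$, and the full index
\[
\mathrm{Ind}_{\mathring{M} \times [0,1]}(\mathcal{D}) \in K_0\bigl( \mathcal{K}(L^2(\mathring{M})) \otimes C([0,1]) \bigr) \simeq \mathbb{Z}
\]
is defined. The evaluation $*$-homomorphisms $ev_0$ and $ev_1$ induce the same isomorphism on $K_0$ by homotopy invariance of $K$-theory, and by construction $(ev_0)_* \mathrm{Ind}_{\mathring{M} \times [0,1]}(\mathcal{D}) = \mathrm{Ind}_{\mathring{M}}(D_\Phi)$ while $(ev_1)_* \mathrm{Ind}_{\mathring{M} \times [0,1]}(\mathcal{D}) = \mathrm{Ind}_{\mathring{M}}(D_e)$, yielding the desired equality. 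The main obstacle I expect is the rigorous verification that $\mathcal{G}$ carries a genuine Lie groupoid structure, in particular the smoothness of the multiplication uniformly in $u$ across the hypersurface where the interior pair groupoid meets the boundary semidirect-product bundles; this should follow by adapting the spherical blowup arguments that produce $G_\Phi$ and $G_e$ individually, but the $u$-dependent twisting of the group law requires careful coordinate bookkeeping.
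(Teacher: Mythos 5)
Your proposal is correct and follows essentially the same route as the paper: the paper also builds the adiabatic groupoid $\mathcal{G} = G_\Phi \times \{0\} \sqcup G_e \times (0,1] \rightrightarrows M \times [0,1]$ (whose boundary part is exactly the deformation to the normal cone of $TY \subset TY \rtimes \mathbb{R}_+^*$, i.e.\ your $u$-dependent group law written in the exponential chart), checks invertibility of the boundary family from $D_\pi^2 \geq c$ via the anticommuting product form, and concludes by homotopy invariance of $K_0(\mathcal{K}(L^2(\mathring{M})) \otimes C([0,1]))$. The smoothness issue you flag as the main obstacle is resolved in the paper by defining the Lie algebroid explicitly through the module of vector fields with $V(x) \in x(x+t)C^\infty(M\times[0,1])$ (Serre--Swan), integrating it by Debord's theorem on almost injective Lie algebroids, and then writing down an explicit exponential chart near $\partial M \times [0,1]$.
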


\begin{proof}
Let us fix a splitting $T\partial M \simeq \pi^* TY \oplus T^V\partial M$, a boundary defining function $x$, and a metric $g_Y$ on $Y$. 
Fix a collar neighborhood $U$ of $\partial M$ and also fix an identification $\partial M \times [0, a) \simeq U$ which is compatible with $x$. 
Then consider the metric $g_\Phi$ and $g_e$ defined as (\ref{orthomet_phi}) and (\ref{orthomet_e}): these satisfy the conditions above. 
The idea is to consider the family of metrics
\begin{equation}\label{coincidence_eq}
g_e(t) = \frac{dx^2}{x^2(x^2+t^2)} \oplus \frac{\pi^*g_Y}{x^2} \oplus g_\pi. 
\end{equation}
and justify the limit $t \to 0$. 
This can be realized as follows. 

Consider the Lie algebra $TM \times [0, 1] \to M \times [0, 1]$ with the canonical Lie bracket (not to be confused with $T(M \times [0, 1])$). 
Consider the following $C^\infty(M \times [0, 1])$-submodule of $C^\infty(M \times [0, 1] ; TM \times [0, 1])$. 
\[
\mathcal{V} := \left\{
\begin{array}{c|l}
V \in C^\infty(M \times [0, 1] ; TM \times [0, 1]) & V|_{\partial M \times [0, 1]} \in C^\infty(\partial M \times [0, 1]; T^V\partial M \times [0,1]), \mbox{ and } \\
& V(x) \in x(x+t)C^\infty(M \times [0, 1]_t)
\end{array}
\right\}. 
\]
This is a Lie subalgebra of $C^\infty(M \times [0, 1] ; TM \times [0, 1])$. 
By the Serre-Swan theorem, there exists a smooth vector bundle $\mathfrak{A} \to M \times [0, 1]$, unique up to isomorphism, such that $C^\infty(M \times [0, 1]; \mathfrak{A}) \simeq \mathcal{V}$ as a $C^\infty(M \times [0, 1])$-module. 
The map
\[
p : C^\infty(M \times [0, 1]; \mathfrak{A}) \to C^\infty(M \times [0, 1]; T(M \times [0, 1]))
\]
induced by $\mathcal{V} \hookrightarrow C^\infty(M \times [0, 1] ; TM \times [0, 1]) \hookrightarrow C^\infty(M \times [0, 1]; T(M \times [0, 1]))$, 
gives a Lie algebroid structure on $\mathfrak{A} \to M \times [0, 1]$ with anchor $p$. 
We have the following. 
\begin{itemize}
\item $\mathfrak{A}|_{M \times \{t\}} = \mathfrak{A}G_e$ for all $t \in (0, 1]$. 
\item $\mathfrak{A}|_{M \times \{0\}} = \mathfrak{A}G_\Phi$. 
\item The metric $g_{\mathfrak{A}}$ on $\mathfrak{A}$, defined as (see (\ref{coincidence_eq}))
\[
g_{\mathfrak{A}} := \begin{cases}
g_e(t) & \mbox{on }M \times (0, 1]_t \\
g_\Phi & \mbox{on }M \times \{0\}, 
\end{cases}
\] 
gives a smooth metric on $\mathfrak{A}$. 
\end{itemize}
Since $p|_{\mathring{M} \times [0, 1]}$ is injective, $(\mathfrak{A}, p)$ is an almost injective Lie algebroid.
By \cite{D}, there exists a smooth Lie groupoid $\mathcal{G} \to M \times [0, 1]$ such that its Lie algebroid $\mathfrak{A}\mathcal{G}$ is isomorphic to $(\mathfrak{A}, p)$.  

We give the explicit definition of $\mathcal{G}$. 
As a set, 
\begin{align*}
\mathcal{G} &= G_\Phi \times \{0\} \sqcup G_e \times (0, 1]   \\
&= \mathring{M} \times \mathring{M} \times [0, 1] \sqcup \partial M\times_\pi \partial M \times_\pi (TY \rtimes \mathbb{R}^*_+) \times (0, 1] \sqcup \partial M \times_\pi \partial M \times_\pi TY \times \mathbb{R} \times \{0\},  
\end{align*}
We describe the smooth structure as follows. 
Recall we have fixed a tubular neighborhood $U$ of $\partial M$. 
Outside the collar neighborhood, the smooth structure $\mathcal{G} \setminus \mathcal{G}_{U\times [0, 1]}^{U \times [0, 1]}$ is given in the canonical way. 
On $U$, recall we have fixed the isomorphism $U \simeq \partial M \times [0, a)$ and $TU \simeq (T^V\partial M \oplus \pi^*TY \oplus \mathbb{R}) \times [0, a)$. 
Also fix an exponential map for $T\partial M$. 
On $\mathcal{G}_{U\times [0, 1]}^{U \times [0, 1]}$, we consider the following exponential map. 
\begin{align*}
&(T^V\partial M \oplus \pi^*TY \oplus \mathbb{R}) \times \partial M \times [0, a) \times [0, 1] \to \mathcal{G}_{U\times [0, 1]}^{U \times [0, 1]} \\
(\partial_z, \partial_y, \xi, p, x, t) &\mapsto ((\exp_p(\partial_z+x\partial_y), x +e^{x(x+t)\xi}), (p, x), t) \in U \times U \times [0, 1] \mbox{ for }x > 0 \\
&\mapsto (\exp_p(\partial_z), p, \partial_y, e^{t\xi}, t) \in \partial M\times_\pi \partial M \times_\pi (TY \rtimes \mathbb{R}^*_+) \times (0, 1] \mbox{ for } x = 0, t \in (0, 1] \\
&\mapsto (\exp_p(\partial_z), p, \partial_y, \xi, 0) \in \partial M \times_\pi \partial M \times_\pi TY \times \mathbb{R} \times \{0\}\mbox{ for } x = t = 0. 
\end{align*}
We define the smooth structure on $\mathcal{G}_{U\times [0, 1]}^{U \times [0, 1]}$ so that the above map is a diffeomorphism. 
This smooth structure does not depend on any of the choices. 
Note also that, restricted to $\partial M \times [0, 1]\subset M \times [0, 1]$, we have
\[
\mathcal{G}_{\partial M \times [0, 1]} \simeq DNC(\partial M \times_\pi \partial M \times_\pi (TY \rtimes \mathbb{R}_+^*), \partial M \times_\pi \partial M \times_\pi TY )|_{\partial M \times [0, 1]} \rightrightarrows \partial M \times [0, 1]. 
\]

We consider the spin structure on $\mathfrak{A}$ induced from the one on $T{M}$ and the metric $g_\mathfrak{A}$, and denote the associated spin Dirac operator by $\mathcal{D} \in \mathrm{Diff}^1(\mathcal{G}; S(\mathfrak{A}))$. 
We can show that $\mathcal{D}|_{\partial M \times [0, 1]} \in \mathrm{Diff}^1(\mathcal{G}_{\partial M \times [0, 1]}; S(\mathfrak{A}_{\partial M \times [0, 1]}))$ is invertible, as follows.  
By the invertibility of $D_\pi$, there exists $c >0$ such that $D_\pi^2 \geq c$. 
The operator $\mathcal{D}|_{\partial M \times [0, 1]}$ is given by a family of operators $\{D_{y, t}\}_{(y, t) \in Y \times [0, 1]}$ parametrized by $Y \times [0, 1]$. 
Each $D_{y, t}$ has the form (\ref{fredholmness_op1}) for $t=0$ and (\ref{fredholmness_op2}) for $t \in (0, 1]$. 
As in the proof of Proposition \ref{fredholmness}, we have $D^2_{y, t} \geq D^2_\pi \hat{\otimes} 1 \geq c$. 
Thus $\mathcal{D}|_{\partial M \times [0, 1]}$ is invertible. 

So we get the index class 
\[
\mathrm{Ind}_{\mathring{M} \times [0, 1]}(\mathcal{D}) \in K_0(C^*(\mathcal{G}|_{\mathring{M} \times [0, 1]})) \simeq \mathbb{Z}. 
\]
and we have, denoting the $*$-homomorphisms $ev_t : C^*(\mathcal{G}|_{\mathring{M} \times [0, 1]}) \to C^*(\mathcal{G}|_{\mathring{M} \times \{t\}})$ for $t \in [0, 1]$, 
\begin{align*}
(ev_0)_* \mathrm{Ind}_{\mathring{M} \times [0, 1]}(\mathcal{D}) &= \mathrm{Ind}_{\mathring{M}}(D_\Phi) \in K_0(C^*(G_\Phi|_{\mathring{M}})) \simeq \mathbb{Z}, \mbox{ and } \\
(ev_1)_* \mathrm{Ind}_{\mathring{M} \times [0, 1]}(\mathcal{D}) &= \mathrm{Ind}_{\mathring{M}}(D_e) \in K_0(C^*(G_e|_{\mathring{M}})) \simeq \mathbb{Z}. 
\end{align*}
Since $(ev_t)_* : K_0(C^*(\mathring{M} \times \mathring{M} \times [ 0, 1])) \simeq \mathbb{Z} \to K_0(C^*(\mathring{M} \times \mathring{M} \times \{t\})) \simeq \mathbb{Z}$ is the identity map on $\mathbb{Z}$ for all $t \in [0, 1]$, we get the result. 

\end{proof}
\if0
\begin{proof}(Old version)

Consider the subgroupoid $\Gamma :=\partial M \times_Y \partial M \times_Y TY \rightrightarrows \partial M$ of $G_e$ (corresponding to the subgroup $T_yY \triangleleft (T_yY \rtimes \mathbb{R})$). 
We have
\[
G_\Phi = SBlup_{r,s}(G_e, \Gamma ) \rightrightarrows M. 
\]
Consider the groupoid 
\[
\mathcal{G} :=SBlup_{r, s}(G_e \times \mathbb{R}_+, \Gamma \times \{0\})\rightrightarrows 
SBlup(M \times \mathbb{R}_+, \partial M \times \{0\}). 
\]
We introduce a metric on $\mathfrak{A}\mathcal{G}$ as follows. 
Recall that we have fixed a collar structure on a collar neighborhood $U$ of $\partial M$. 
Using the boundary defining function $x$, the subbundle $H := \mathrm{span}(x\partial_x) \subset T^eG|_U = \mathfrak{A}G_e|_U$ and $H|_{\partial M} $ coincides with the $\mathbb{R}$-factor of the canonical decomposition
\[
\mathfrak{A}G_e|_{\partial M} = T^V\partial M \oplus \pi^*TY \oplus \mathbb{R}. 
\]
On $U$, using the metric $g_e$ we get an orthogonal decomposition $\mathfrak{A}G_e = H \oplus H^{\bot}$, and using the fixed collar structure and splitting of $T^V\partial M$, we can trivialize $H^{\bot}$ in $x$-direction as follows. 
\begin{align}\label{coincidence_eq2}
H^{\bot} &\simeq \mathfrak{A}\Gamma \times [0, a)_x \to \partial M \times [0, a)_x \simeq U & \\
x\partial_y &\mapsto \partial_y & \mbox{for } \partial_y \in TY \notag \\
\partial_z &\mapsto \partial_z & \mbox{for }\partial_z \in T^V\partial M. \notag
\end{align}
Under this identification, the metric $g_e$, restricted to $H^\bot$, corresponds to the constant family $g_\Gamma := g_\pi \oplus \pi^*g_Y$ on $\mathfrak{A}\Gamma \simeq T^VM \oplus \pi^*TY$. 

We now apply the Lemma \ref{invmet} and Lemma \ref{sblupmet} for $E = \mathfrak{A}G_e \times \mathbb{R}_+$ and $F = \mathfrak{A}\Gamma$. 
According to these lemmas, the metric on $\mathfrak{A}G_e \times \mathbb{R}_+|_{U \times \mathbb{R} \setminus (\partial M \times \{0\})}$, defined by
\[
g_{\mathring{\mathcal{G}}} = g_\Gamma \oplus \frac{dx^2}{x^2(x^2 + t^2)}
\]
(where we used the trivialization of $H^\bot$ as in (\ref{coincidence_eq2})) extends to a smooth metric $g_{\mathcal{G}}$ on $\mathfrak{A}\mathcal{G}$ (here we denoted the $\mathbb{R}_+^*$-coordinate by $t$). 

Restricting to each $M \times \{t\} \subset SBlup(M \times \mathbb{R}_+, \partial M \times \{0\})$, this metric corresponds to $g_e(t)$ in (\ref{coincidence_eq}). 
Also restriction of $\mathcal{G}$ to $SBlup(M \times \{0\}, \partial M \times \{0\})$ is canonically isomorphic to $SBlup(G_e, \Gamma) \simeq G_\Phi$ by the Lemma \ref{general}, and the metric $g_{\mathcal{G}}$ corresponds to the $\Phi$-metric $g_\Phi$. 

Let us look at the boundary parts, and show that the induced spin Dirac operators on the boundary parts are invertible. 
The boundary component decomposes as 
\[
\partial SBlup(M \times \mathbb{R}, \partial M \times \{0\}) = \partial M \times \mathbb{R}^*_+ \sqcup \mathbb{S}N_{\partial M \times \{0\}}^{M \times \mathbb{R}_+}. 
\]
The operators on the first component coincides with the boundary operators on $G_e$ equipped with the $e$-metrics $g_e(t)$ for each $t \in \mathbb{R}_+^*$, and the invertibility is proved in the Proposition \ref{fredholmness}.  
By the description of the Lemma \ref{general}, the second component decomposes into saturated submanifolds as
\[
\mathbb{S}N_{\partial M \times \{0\}}^{M \times \mathbb{R}_+} \simeq \mathbb{S}(N_{\partial M}^{M})^+ \sqcup (N_{\partial M}^{M})^+ \simeq \partial M \sqcup \partial M \times \mathbb{R}_+. 
\]
The restriction to the closed saturated subset $\partial M \simeq \mathbb{S}N_{\partial M}^{M \times}$ is equal to the boundary operator of $D_\Phi$, and its invertibility is proved in the proof of the Proposition \ref{fredholmness}. 

The restriction to the other saturated subset, $\partial M \times \mathbb{R}_+$, is given by
\begin{align*}
N_{\Gamma}^{G_e} \simeq \partial M \times_\pi \partial M \times_\pi TY \times \mathbb{R} \times \mathbb{R}_+ &\rightrightarrows \partial M \times \mathbb{R}_+ \\
s : (x, y , v, b,c) &\mapsto (y, c) \\
r : (x, y, v, b, c) &\mapsto (x, c). 
\end{align*}
We see that for each $c \in \mathbb{R}_+$, $M \times \{c\}$ is a saturated submanifold of this groupoid. 
By the Lemma \ref{dncmet}, the metric induced from $g_{\mathcal{G}}$ is
\[
g_\pi \oplus \pi^*g_Y \oplus \frac{dx^2}{1 + c^2}
\]
under the canonical decomposition $\mathfrak{A}N_{\Gamma}^{G_e}|_{M \times \{c\}} \simeq T^V\partial M \oplus \pi^*TY \oplus \mathbb{R}$. 
We see that the spin Dirac operator on this groupoid is also of the form $D_\pi \hat{\otimes }1 + 1 + \hat{\otimes} D_{TY \times \mathbb{R}}$, and using the invertibility of $D_\pi$, we see that this operator also have spectral gap at $0$ uniformly in $c\in mathbb{R}_+$. 

Thus we get the invertibility of the spin Dirac operator $\mathcal{D}$ on $\mathcal{G}$ restricted to the boundary parts. 
Thus we get the index class 
\[
\mathrm{Ind}(\mathcal{D}) \in K_0(C^*(\mathcal{G}|_{\mathring{M} \times \mathbb{R}_+})) \simeq \mathbb{Z}
\]
and we have
\[
[ev_0] \circ \mathrm{Ind}(\mathcal{D}) = \mathrm{Ind}(D_\Phi), \mbox{ and }
[ev_1]\circ \mathrm{Ind}(\mathcal{D}) = \mathrm{Ind}(D_e). 
\]
Thus we get the result. 
\end{proof}
\fi

Next we show the gluing formula. 

\begin{prop}[The gluing formula]\label{gluing}
Consider the following situation. 
\begin{itemize}
\item $M^0$ and $M^1$ are manifolds with fibered boundaries as above, equipped with  pre-spin structures $P'_{M^i}$ and $P'_{Y^i}$ on $TM^i$ and $TY^i$, respectively, and a riemannian metric $g_{\pi^i}$ on $T^V\partial {M^i}$, for $i = 0, 1$.
\item Assume that on some components of $\partial M^0$ and $-\partial M^1$, we are given isomorphisms of the data $(\pi^i, P'_{M^i}, P'_{Y^i}, g_{\pi^i})$ restricted there.  
\item $(M, \partial M, \pi', Y')$ : the manifold with fibered boundary obtained by the above isomorphism of some boundary components. 
This manifold is equipped with the pre-spin structures $P'_M$ and $P'_{Y'}$ on $TM$ and $TY'$, respectively, and a riemannian metric $g_{\pi'}$ on $T^V\partial M$ induced by the ones on $M^i$. 
\item Assume that on each boundary components of $M^0$ and $M^1$, the fiberwise spin Dirac operators are invertible. 
\end{itemize}
Then, we have 
\begin{align*}
\mathrm{Ind}_e(P'_M, P'_{Y'}, g_{\pi'}) &= \mathrm{Ind}_e(P'_{M^0}, P'_{Y^0}, g_{\pi^0}) + \mathrm{Ind}_e(P'_{M^1}, P'_{Y^1}, g_{\pi^1}).  
\end{align*}
\end{prop}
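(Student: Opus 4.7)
The plan is to use the groupoid deformation technique, in close analogy with the proof of Proposition \ref{equality}. Let $N$ denote the union of glued boundary components, carrying the common fibered structure $\pi_N : N \to Y_N$, common pre-spin structure, and common fiberwise metric $g_{\pi_N}$, so that $D_{\pi_N}$ is invertible. Fix a two-sided collar $N \times (-\varepsilon, \varepsilon)_x \hookrightarrow M$ which restricts, on each side, to the given $e$-collars in $M^0$ and $M^1$.

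I would construct a Lie groupoid $\mathcal{G} \rightrightarrows B$ interpolating between the $e$-groupoid of $M$ and the disjoint union $G_e^{M^0} \sqcup G_e^{M^1}$. As sets,
\[
B \;=\; M \times (0,1] \;\sqcup\; (M^0 \sqcup M^1) \times \{0\},\qquad \mathcal{G} \;=\; G_e^M \times (0,1] \;\sqcup\; (G_e^{M^0} \sqcup G_e^{M^1}) \times \{0\},
\]
equipped with the smooth structure induced by blowing up $N \times \{0\}$ in $M \times [0,1]$ and $(N \times_{\pi_N} N) \times \{0\}$ in $G_e^M \times [0,1]$ respectively; a direct local calculation at the neck confirms that $\mathcal{G}$ is a Lie groupoid with the prescribed restrictions at $t = 0$ and $t = 1$, together with a new cylinder face $N \times_{\pi_N} N \times_{\pi_N}(TY_N \rtimes \mathbb{R}^*_+)$ joining the two $e$-faces of $G_e^{M^0}$ and $G_e^{M^1}$ at $t = 0$. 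I would then equip $\mathfrak{A}\mathcal{G}$ with the obvious pre-spin structure and an interpolating metric, modelled on the neck by
\[
g_\mathcal{G} \;=\; g_{\pi_N}\ \oplus\ \frac{\pi_N^* g_{Y_N}}{x^2 + t^2}\ \oplus\ \frac{dx^2}{x^2 + t^2},
\]
smoothly glued to $g_e^M$, $g_e^{M^0}$, $g_e^{M^1}$ away from the neck. A Serre--Swan argument identical to that of Proposition \ref{equality} shows this extends to a smooth metric on $\mathfrak{A}\mathcal{G}$, giving a spin Dirac operator $\mathcal{D}$ on $\mathcal{G}$ restricting to $D_e^M$, $D_e^{M^0}$, $D_e^{M^1}$ at the appropriate boundary fibers.

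The main obstacle, which I expect to be the most delicate step, is to verify that $\mathcal{D}$ is invertible on \emph{every} singular stratum of $\mathcal{G}$, so that it produces a fully elliptic index class on the interior $\mathring B = \mathring M \times (0,1] \sqcup (\mathring{M}^0 \sqcup \mathring{M}^1) \times \{0\}$. Strata lying over $\partial M \times [0,1]$ are handled exactly as in Proposition \ref{fredholmness}, using the uniform spectral gap of the boundary operators $D_{\pi^i} \hat\otimes 1 + 1 \hat\otimes D_{TY_i \rtimes \mathbb{R}^*_+}$. The new stratum at $t = 0$ above $N$ requires an explicit local computation, analogous to the one at the heart of Proposition \ref{equality}: one has to check that the interpolating metric produces an anticommuting-sum operator of the form $D_{\pi_N} \hat\otimes 1 + 1 \hat\otimes D_{\mathrm{cyl}}$ on this face, so that invertibility of $D_{\pi_N}$ forces invertibility of $\mathcal{D}$ there.

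Once full ellipticity is in place, $\mathcal{D}$ defines an index class $\mathrm{Ind}_{\mathring B}(\mathcal{D}) \in K_0(C^*(\mathcal{G}|_{\mathring B}))$. The interior $C^*$-algebra admits the continuous-field description
\[
C^*(\mathcal{G}|_{\mathring B}) \;\simeq\; \{f \in C([0,1], K(L^2 \mathring{M})) : f(0) \in K(L^2 \mathring{M}^0) \oplus K(L^2 \mathring{M}^1)\},
\]
which deformation-retracts onto its $t = 0$ fiber, giving $K_0 \simeq \mathbb{Z} \oplus \mathbb{Z}$. Under this identification $(ev_0^0, ev_0^1)_*$ is the identity and $(ev_1)_*$ is the summation $(a,b) \mapsto a+b$ coming from the block-diagonal inclusion $K(L^2 \mathring{M}^0) \oplus K(L^2 \mathring{M}^1) \hookrightarrow K(L^2 \mathring{M})$. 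Applying these to $\mathrm{Ind}_{\mathring B}(\mathcal{D})$ yields the desired identity
\[
\mathrm{Ind}_e(P'_M, P'_{Y'}, g_{\pi'}) \;=\; \mathrm{Ind}_e(P'_{M^0}, P'_{Y^0}, g_{\pi^0})\ +\ \mathrm{Ind}_e(P'_{M^1}, P'_{Y^1}, g_{\pi^1}).
\]
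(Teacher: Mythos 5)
Your proposal is correct and follows essentially the same route as the paper: a groupoid deformation $\mathcal{G}$ with $\mathcal{G}|_{t=0}=G_e^{M^0}\cup G_e^{M^1}$ glued along the neck face and $\mathcal{G}|_{t>0}=G_e^M$, the same interpolating metric $\frac{dx^2}{x^2+t^2}\oplus\frac{\pi^*g_Y}{x^2+t^2}\oplus g_\pi$, invertibility on the singular strata via the anticommuting product form and the spectral gap of $D_{\pi_N}$, and the identification of $[ev_0]^{-1}\otimes[ev_1]:\mathbb{Z}\oplus\mathbb{Z}\to\mathbb{Z}$ with addition via the block-diagonal inclusion $\mathcal{K}(L^2\mathring{M}^0)\oplus\mathcal{K}(L^2\mathring{M}^1)\hookrightarrow\mathcal{K}(L^2\mathring{M})$. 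The paper realizes the same deformation by writing down the almost injective Lie algebroid over $M\times[0,1]$ and integrating it, rather than by blowing up $N\times\{0\}$, but this is only a difference of presentation.
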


\begin{proof}
We use a similar argument to the one in Proposition \ref{equality}. 
For simplicity we consider the case where the boundary of each $M^0$ and $M^1$ consists of one component, and the isomormorphism is given between $\partial M^0$ and $-\partial M^1$. 
In particular, the resulting manifold $M$ is a closed manifold in this case. 
The general case can be shown in an analogous way. 
We denote the image of $\partial M^0 \simeq -\partial M^1$ in $M$ by $H \subset M$, which is a closed hypersurface. 
Also we denote $\pi : H \to Y$ the fiber bundle structure induced from the ones on $\partial M^0 \simeq -\partial M^1$ and the given fiberwise metric as $g_\pi$. 

Consider the Lie algebra $TM \times [0, 1] \to M \times [0, 1]$ with the canonical Lie bracket. 
Consider the following $C^\infty(M \times [0, 1])$-submodule of $C^\infty(M \times [0, 1] ; TM \times [0, 1])$. 
\[
\mathcal{V} := \{
V \in C^\infty(M \times [0, 1] ; TM \times [0, 1]) \ | \ V|_{H \times \{0\}} \in C^\infty(H \times \{0\}; T^VH \times \{0\})
\}. 
\]
This is a Lie subalgebra of $C^\infty(M \times [0, 1] ; TM \times [0, 1])$. 
By the Serre-Swan theorem, there exists a smooth vector bundle $\mathfrak{A} \to M \times [0, 1]$, unique up to isomorphism, such that $C^\infty(M \times [0, 1]; \mathfrak{A}) \simeq \mathcal{V}$ as a $C^\infty(M \times [0, 1])$-module. 
The map
\[
p : C^\infty(M \times [0, 1]; \mathfrak{A}) \to C^\infty(M \times [0, 1]; T(M \times [0, 1]))
\]
induced by $\mathcal{V} \hookrightarrow C^\infty(M \times [0, 1] ; TM \times [0, 1]) \hookrightarrow C^\infty(M \times [0, 1]; T(M \times [0, 1]))$, 
gives a Lie algebroid structure on $\mathfrak{A} \to M \times [0, 1]$ with anchor $p$. 
We have the following. 
\begin{itemize}
\item $\mathfrak{A}|_{M \times \{t\}} = \mathfrak{A}(M \times M)=TM$ for all $t \in (0, 1]$. 
\item $\mathfrak{A}|_{M \times \{0\}} = \mathfrak{A}G_e^0 \cup_{H} \mathfrak{A} G_e^1$. 
Here we denoted the $e$-groupoid of $M^i$ by $G_e^i $ for $i = 0, 1$. 
\item The metric $g_{\mathfrak{A}}$ on $\mathfrak{A}$, defined as
\[
g_{\mathfrak{A}} := 
\frac{dx^2}{x^2 + t^2}\oplus \frac{\pi^*g_Y}{x^2+t^2} \oplus g_\pi 
\] 
gives a smooth metric on $\mathfrak{A}$. 
Here $x$ is a defining function for $H \subset M$ and $t$ is the $[0, 1]$-coordinate in $M \times [0, 1]$. 
The metric $g_Y$ can be any metric on $Y$. 
\end{itemize}
Since $p|_{M \times [0, 1] \setminus (Y \times \{0\})}$ is injective, $(\mathfrak{A}, p)$ is an almost injective Lie algebroid and by \cite{D} we can integrate this to get a Lie groupoid $\mathcal{G} \rightrightarrows M \times [0, 1]$. 
We can describe explicitly such groupoid which can be written as 
\[
\mathcal{G} = (G_e^0 \cup_H G_e^1) \times \{0\} \sqcup M \times M \times (0, 1] \rightrightarrows M \times [0, 1]. 
\]
The description is similar to the one in the proof of the Proposition \ref{equality}. 
We consider the spin Dirac operator $\mathcal{D} \in \Psi^1(\mathcal{G}; S(\mathfrak{A}))$ with respect to the given spin structure and metric $g_{\mathfrak{A}}$. 
The submanifold $H \times \{0\} \subset M \times [0, 1]$ is a closed saturated submanifold for $\mathcal{G}$. 
The restriction $\mathcal{D}|_{H \times \{0\}}$ is of the form (\ref{fredholmness_op2}), and since we are assuming that $D_\pi$ is invertible, we see that $\mathcal{D}|_{H \times \{0\}}$ is invertible. 
Thus we get the index class
\[
\mathrm{Ind}_{M \times [0, 1]\setminus (H \times \{0\})}(\mathcal{D}) \in K_0(C^*(\mathcal{G}|_{M \times [0, 1]\setminus (H \times \{0\})})). 
\]
Note that we have
\begin{align*}
(ev_0)_*\mathrm{Ind}_{M \times [0, 1]\setminus (H \times \{0\})}(\mathcal{D}) = (\mathrm{Ind}(D_e^0), \mathrm{Ind}(D_e^1)) &\in K_0(C^*(G_e^0|_{\mathring{M_0}})) \oplus K_0(C^*(G_e^1|_{\mathring{M_0}})) \simeq \mathbb{Z} \oplus \mathbb{Z} \\
(ev_1)_*\mathrm{Ind}_{M \times [0, 1]\setminus (H \times \{0\})}(\mathcal{D}) =\mathrm{Ind}(D_M) &\in K_0(C^*(M \times M)) \simeq \mathbb{Z}. 
\end{align*}
Here we denoted $D_M$ the spin Dirac operator on $M$. 
This coincides with $D_e$ in the statement of this proposition. 
Note that $[ev_0] \in KK(C^*(\mathcal{G}|_{M \times [0, 1]\setminus (H \times \{0\})}), C^*(G_e^0|_{\mathring{M_0}}) \oplus C^*(G_e^1|_{\mathring{M_1}}))$ is a $KK$-equivalence.  
Thus, it is enough to show that $[ev_0]^{-1} \otimes [ev_1] : \mathbb{Z} \oplus \mathbb{Z} \to \mathbb{Z}$ is given by addition. 

The groupoid $\mathcal{G}|_{M \times [0, 1)\setminus (H \times \{0\})}$ is an open subgroupoid of $M \times M \times [0, 1) \rightrightarrows M \times [0, 1)$. 
We get the following commutative diagram, 
\[
\xymatrix{
0 \ar[r]  &C^*(M\times M \times (0, 1)) \ar[r] \ar[d]&C^*(\mathcal{G}|_{M \times [0, 1)\setminus (H \times \{0\})}) \ar[r]\ar[d] &C^*(G_e^0|_{\mathring{M_0}}) \oplus C^*(G_e^1|_{\mathring{M_1}}) \ar[r]\ar[d]&0
\\
0 \ar[r]  &C^*(M\times M \times (0, 1)) \ar[r]  &C^*(M\times M\times [0, 1)) \ar[r] &C^*(M\times M) \ar[r] &0,
}
\]
where the rows are exact. 
The element $[ev_0]^{-1} \otimes [ev_1] \in KK(C^*(G_e^0|_{\mathring{M_0}}) \oplus C^*(G_e^1|_{\mathring{M_1}}) , C^*(M \times M))$ coincides with the connecting element of the top row. 
By the functoriality of connecting maps, we see that $[ev_0]^{-1} \otimes [ev_1] = [j_0\oplus j_1]$, where $j_i$ denotes the inclusion $j_i : C^*(G_e^i|_{\mathring{M_i}}) \hookrightarrow C^*(M\times M)$ for $i = 0, 1$. 
Since the inclusion $G_e^i|_{\mathring{M_i}} = \mathring{M_i} \times \mathring{M_i} \hookrightarrow M \times M$ is a Morita equivalence for $i = 0, 1$, we see that $[ev_0]^{-1} \otimes [ev_1] = [j_0]\oplus [j_1] : \mathbb{Z} \oplus \mathbb{Z} \to \mathbb{Z}$ induced between the $K_0$-groups is given by addition. 
\end{proof}

\if0
\begin{proof}(Old version)
\begin{itemize}
\item $e$-case. 

Denote $G_e$ the $e$-groupoid associated to $M$. 
Note that $\sqcup_{i=1}^l H_i \times_{\pi_i} H_i$ is a subgroupoid of $G_e$. 
(We are denoting $H_i$ the image of $H_i^0 \simeq -H_i^1$ in $M$). 
We consider the groupoid
\[
\mathcal{G} = SBlup_{r, s}(G_e \times \mathbb{R}_{\geq 0}, \sqcup_{i=1}^l H_i \times_{\pi_i} H_i \times \{0\}) \rightrightarrows SBlup(M \times \mathbb{R}_{\geq 0}, \sqcup_{i=1}^l H_i \times \{0\}). 
\]
We introduce the metric on $\mathfrak{A}\mathcal{G}$, as follows. 
For symplicity we consider the case where the dividing hypersurface consists of one component, i.e., $l=1$, and denote $\pi : H \to Y$ instead of $\pi_1 : H_1\to Y_1$.   
We have $(N_{H \times \{0\}}^{M \times \mathbb{R}_+})^+ \simeq N_{H}^M \times \mathbb{R}_+$ canonically. 
Denote the defining function of $H$ by $x$. 
$\mathfrak{A}G_e|_{H} = TM|_{H}$ and $\mathfrak{A}(H \times_{\pi}H) = T^VH$. 
Recall that we are given a splitting $TM \simeq T^VH \oplus \pi^*(TY \oplus \mathbb{R})$, as well as metrics $g_{\pi}$ and $g_{Y}$ on $T^VH$ and $TY$, respectively. 
Let us fix a collar neighborhood $U \subset M$ of $H$.  
Using these, we can define a metric $g_{\mathring{\mathcal{G}}}$ on $\mathfrak{A}(G_e \times \mathbb{R}_+)|_{M \times \mathbb{R}_+ \setminus H \times \{0\}}$ whose restriction to $U \times \mathbb{R}_+ \setminus H \times \{0\}$ is written as
\[
g_{\mathring{\mathcal{G}}} = g_\pi \oplus \frac{\pi^*g_Y \oplus dx^2}{x^2 + t^2}. 
\]
Here we denoted the $\mathbb{R}_+$-coordinate on $M \times \mathbb{R}_{+}$ by $t$. 
By the Lemma \ref{sblupmet}, this metric extends smoothly to a smooth metric $g_\mathcal{G}$ on $\mathfrak{A}\mathcal{G}$. 

Let us look at the restriction of $\mathcal{G}$ and $g_{\mathcal{G}}$ to the singular part $\mathbb{S} N_{H \times \{0\}}^{M \times \mathbb{R}_+}$. 
By the description in Lemma \ref{general}, we know that there exists the decomposition into the saturated submanifold
\[
\mathbb{S} N_{H \times \{0\}}^{M \times \mathbb{R}_+} = \mathbb{S} N_H^M \sqcup N_H^M
\simeq H \times \{1, -1\} \sqcup H \times \mathbb{R}. 
\]
The restriction of $\mathcal{G}$ to $N_H^M$ is given by $N_{H \times_\pi H}^{G_e} \rightrightarrows N_H^M$, and we have 
\[
N_{H \times_\pi H}^{G_e} \simeq H \times_\pi H \times_H TY \times \mathbb{R} \times \mathbb{R} \rightrightarrows H \times \mathbb{R}. 
\]
Thus we have
\[
\mathfrak{A}N_{H \times_\pi H}^{G_e} \simeq T^VH \oplus \pi^*TY \oplus \mathbb{R}
\]
and the induced metric becomes
\begin{equation}\label{gluingmet}
g_{\mathcal{G}} = g_\pi \oplus \frac{\pi^*g_Y \oplus dx^2}{x^2 + 1}
\end{equation}
under this identification. 

Denote $\mathcal{D}$ the operator on $\mathcal{G}$ induced by the spin Dirac operator.  
By the description in Lemma \ref{general}, we know that
$\mathcal{G}|_{SBlup(M \times \{0\}, H)} \simeq G_e^0 \sqcup G_e^1$
and $\mathcal{D}$ restricted to this subgroupoid is equal to $D_e^0 \sqcup D_e^1$. 
We can also show that the restriction of $\mathcal{D}$ to the singular part $\mathbb{S}N_{H\times\{0\}}^{M \times \mathbb{R}_{\geq 0}}$ and $\partial M \times \mathbb{R}_{\geq 0}$ is invertible. 
To see this, note that the invertibility at $\mathbb{S} N_H^M \simeq H \times \{1, -1\} \mathbb{S}N_{H\times\{0\}}^{M \times \mathbb{R}_{\geq 0}}$ and $\partial M \times \mathbb{R}_{\geq 0}$ is already proved in the Proposition \ref{fredholmness}. 
On $N_H^M \subset \mathbb{S} N_{H \times \{0\}}^{M \times \mathbb{R}_+}$, recall that we have the metric of the form (\ref{gluingmet}). 
The operator $\mathcal{D}|_{N_H^M}$ is a family of operators $\{D_y\}_{y \in Y}$ parametrized by $Y$, and each $D_y$ is given by
\begin{align*}
D_y &: C_c^\infty(\pi^{-1}(y) \times T_y Y \times \mathbb{R}; S(\pi^{-1}(y)) \hat{\otimes} S(T_y Y \times \mathbb{R})) \curvearrowright \\
D_y &= D_\pi \hat{\otimes} 1 + 1 \hat{\otimes} D_{T_yY \times \mathbb{R}}
\end{align*}
Here $S(T_y Y \times \mathbb{R})$ and $D_{T_yY \times \mathbb{R}}$ is the spinor bundle and the Dirac operator with respect to the metric $(g_Y \oplus dx^2)/(x^2+1)$. 
The operators $D_\pi \hat{\otimes} 1$ and $1 \hat{\otimes} D_{T_yY \times \mathbb{R}}$ anticommutes, and since $D_\pi$ is invertible, we see that $D_y$ is invertible for all $y\in Y$. 
Thus we get the invertibility of $\mathcal{D}$ on the singular parts. 

So we get the index class $\mathrm{Ind}(\mathcal{D}) \in K_0(C^*(\mathcal{G})|_{\mathring{M}^0 \sqcup \mathring{M}^1 \sqcup \mathring{M} \times \mathbb{R}_{>0}})$. 
\[
ev_1 \circ (ev_0)^{-1} : K_0(C^*(\mathcal{G}|_{\mathring{M}^0 \sqcup \mathring{M}^1})) \simeq \mathbb{Z} \oplus \mathbb{Z}  \to K_0(C^*(\mathcal{G}|_{\mathring{M}})) \simeq  \mathbb{Z}
\]
is given by addition, so we get the result. 

\item $\Phi$-case. 

I have not worked out yet, but I think this can be proved in the similar way, using a generalization of Lemma \ref{general} for iterated version, considering the subgroupoids
\[
H\times_\pi H \times \{0\} \times \{0\} \subset H \times H \times \mathbb{R} \times \{0\} 
\subset G_\Phi \times \mathbb{R} \times \mathbb{R}. 
\]
\end{itemize}
\end{proof}
\fi

Next we show that the $\Phi$-index can be written as a limit of the Atiyah-Patodi-Singer (APS) indices. 
For a manifold with fibered boundary $(M, \pi : \partial M \to Y)$ as above, we fix riemannian metrics $g_Y$ and $g_\pi$ for $Y$ and $T^V\partial M$. 
For $\mu >0$, we consider a $b$-metric of the form
\begin{equation}\label{aps_met}
g_{b, \mu} =\frac{1}{\mu^2}(\frac{dx^2}{x^2} \oplus \pi^*(g_Y)) \oplus g_\pi
\end{equation}
on a collar neighborhood of the boundary. 
Denote the Dirac operator associated to this metric by $D_\mu$. 
As always we assume that $D_\pi$ is an invertible family. 
The boundary operator of $D_\mu$ is the Dirac operator on $\partial M$ with respect to the metric $\mu^{-2}\pi^*g_Y \oplus g_\pi$. 
It has the form
\[
D_{\mu, \partial} = D_\pi \hat{\otimes} 1 + \mu D_Y + \mu^2R
\]
where $D_{Y}$ is a first order differential operator whose principal symbol is equal to the Clifford multiplication by $TY$, and $R$ is an operator of order $0$, coming from the curvature of the fibration $\pi$. 
For the precise formula, we refer to \cite[Section 4]{BC}. 
As explained in the proof of Proposition 4.41 in \cite{BC}, the anticommutator $[D_\pi \hat{\otimes} 1, D_Y + \mu R]$ is a fiberwise operator, so using fiberwise elliptic estimate and invertibility of $D_\pi$, we see that for $0 < \mu <<1$, $D_{\mu, \partial}$ is invertible. 
When the boundary operator $D_{\mu, \partial}$ is invertible, the APS index $\mathrm{Ind}_{\mathrm{APS}}(D_\mu)$ of the $b$-operator $D_\mu$ is, by definition, the Fredholm index of $D_\mu$ as an operator on the $L^2$-space with respect to the metric $g_{b, \mu}$. 
Since $D_{\mu, \partial}$ stays invertible for $\mu > 0$ small enough, there exist a well-defined limit
\[
\lim_{\mu \to +0}\mathrm{Ind}_{\mathrm{APS}}(D_\mu) \in \mathbb{Z}. 
\]
(The existence of the limit can also be seen as a consequence of the proof of Proposition \ref{prop_lim_aps} below. )
\begin{prop}[The limit of the APS index is the $\Phi$-index]\label{prop_lim_aps}
We have
\[
\lim_{\mu \to +0}\mathrm{Ind}_{\mathrm{APS}}(D_\mu)
 = \mathrm{Ind}_\Phi(P'_M, P'_Y, g_\pi) .  
\]
\end{prop}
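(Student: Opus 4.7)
The plan is to apply the groupoid deformation technique of Proposition \ref{equality}: construct a Lie groupoid $\mathcal{G} \rightrightarrows M \times [0, \mu_0]$ interpolating $G_\Phi$ at $\mu = 0$ with $G_b$ at $\mu > 0$, equipped with a single spin Dirac operator $\mathcal{D}$ whose restriction realizes $D_\Phi$ at $\mu = 0$ and (up to an index-preserving modification of the interior metric) the $b$-operator $D_\mu$ for $\mu > 0$. Once we verify $\mathcal{D}$ is fully elliptic over $\mathring{M} \times [0, \mu_0]$, its index class lies in $K_0(C^*(\mathcal{G}|_{\mathring{M} \times [0, \mu_0]})) \cong \mathbb{Z}$ and the evaluations at $\mu = 0$ and $\mu = \mu_0$ are both isomorphisms, yielding $\mathrm{Ind}_\Phi(P'_M, P'_Y, g_\pi) = \mathrm{Ind}_{\mathrm{APS}}(D_{\mu_0})$. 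Since $\mathrm{Ind}_{\mathrm{APS}}(D_\mu)$ is an integer depending continuously on $\mu$ on the range of boundary invertibility, it is constant there, and taking $\mu \to 0^+$ gives the claim.

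To build the Lie algebroid, work in a collar $\partial M \times [0, a) \hookrightarrow M$ with boundary defining function $x$ and local coordinates $(y, z)$ adapted to $T\partial M = \pi^*TY \oplus T^V\partial M$, and consider the $C^\infty(M \times [0, \mu_0])$-submodule $\mathcal{V}$ of $C^\infty(M \times [0, \mu_0]; TM)$ locally generated by $\{x(x + \mu) \partial_x,\ (x+\mu)\partial_{y_i},\ \partial_{z_j}\}$. A direct bracket computation (e.g., $[x(x+\mu)\partial_x, (x+\mu)\partial_{y_i}] = x(x+\mu)\partial_{y_i}$) shows $\mathcal{V}$ is closed under the Lie bracket, and the Serre-Swan theorem produces a rank-$(\dim M)$ vector bundle $\mathfrak{A} \to M \times [0, \mu_0]$ with an induced Lie algebroid structure. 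Since $(x+\mu)$ is invertible for $\mu > 0$, one checks $\mathfrak{A}|_{M \times \{\mu\}} = \mathfrak{A}G_b$ for $\mu > 0$, while at $\mu = 0$ the generators specialize to $\{x^2\partial_x, x\partial_{y_i}, \partial_{z_j}\}$, giving $\mathfrak{A}|_{M \times \{0\}} = \mathfrak{A}G_\Phi$. The algebroid is almost injective, so by \cite{D} it integrates to a Lie groupoid
\[
\mathcal{G} = G_\Phi \times \{0\} \sqcup G_b \times (0, \mu_0] \rightrightarrows M \times [0, \mu_0],
\]
whose smooth structure near $\mu = 0$ is given by an exponential-type chart analogous to that of Proposition \ref{equality}. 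Equip $\mathfrak{A}$ with the metric $g_\mathfrak{A}$ declaring the above frame orthogonal with norms $(1, g_{Y,ii}, g_{\pi,jj})$; this is smooth on $[0, \mu_0]$, restricts to $g_\Phi$ at $\mu = 0$, and for $\mu > 0$ equals $\tfrac{dx^2}{x^2(x+\mu)^2} \oplus \tfrac{\pi^*g_Y}{(x+\mu)^2} \oplus g_\pi$ on the collar---a $b$-metric whose restriction to $T\partial M$ agrees with that of $g_{b, \mu}$.

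Let $\mathcal{D}$ be the spin Dirac operator on $\mathcal{G}$ associated to $g_\mathfrak{A}$. At $\mu = 0$, its boundary operator is $D_\pi \hat\otimes 1 + 1 \hat\otimes D_{TY \times \mathbb{R}}$, invertible by Proposition \ref{fredholmness}; for $\mu > 0$ it coincides with $D_{\mu, \partial}$, and the formula $D_{\mu, \partial} = D_\pi \hat\otimes 1 + \mu D_Y + \mu^2 R$ from \cite[Proposition 4.41]{BC} combined with fiberwise elliptic estimates yields a spectral gap uniform in $\mu \in [0, \mu_0]$ for $\mu_0$ small. Hence $\mathcal{D}$ is fully elliptic on $\mathring{M} \times [0, \mu_0]$, producing
\[
\mathrm{Ind}(\mathcal{D}) \in K_0(C^*(\mathcal{G}|_{\mathring{M} \times [0, \mu_0]})) \cong K_0(C([0, \mu_0]) \otimes \mathcal{K}(L^2(\mathring{M}))) \cong \mathbb{Z},
\]
on which $(ev_\mu)_*$ is an isomorphism for each $\mu \in [0, \mu_0]$. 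Letting $D'_{\mu_0}$ denote the $b$-Dirac operator associated to $g_\mathfrak{A}|_{\mu_0}$, this gives $\mathrm{Ind}_\Phi(P'_M, P'_Y, g_\pi) = (ev_0)_* \mathrm{Ind}(\mathcal{D}) = (ev_{\mu_0})_* \mathrm{Ind}(\mathcal{D}) = \mathrm{Ind}_{\mathrm{APS}}(D'_{\mu_0})$. The convex homotopy $(1-s)g_{b,\mu_0} + s\, g_\mathfrak{A}|_{\mu_0}$ runs through $b$-metrics with the same boundary restriction---hence with the same invertible boundary Dirac---so the Fredholm index is preserved along the homotopy, giving $\mathrm{Ind}_{\mathrm{APS}}(D'_{\mu_0}) = \mathrm{Ind}_{\mathrm{APS}}(D_{\mu_0})$. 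Finally, $\mathrm{Ind}_{\mathrm{APS}}(D_\mu) \in \mathbb{Z}$ is continuous in $\mu$ on the interval of boundary invertibility, hence locally constant, so $\lim_{\mu \to 0^+}\mathrm{Ind}_{\mathrm{APS}}(D_\mu) = \mathrm{Ind}_\Phi(P'_M, P'_Y, g_\pi)$. The principal technical hurdle is the explicit verification of the smooth structure on $\mathcal{G}$ near $\mu = 0$, where the groupoid type transitions from $b$ to $\Phi$, together with the uniform spectral gap for $\mathcal{D}|_{\partial M \times [0, \mu_0]}$; both are handled by direct adaptation of the methods of Proposition \ref{equality} using the fiberwise invertibility of $D_\pi$.
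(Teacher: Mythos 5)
Your proof follows essentially the same route as the paper: the deformation groupoid $\mathcal{G}=G_\Phi\times\{0\}\sqcup G_b\times(0,\mu_0]$ built from the Lie algebroid with $x(x+\mu)$-type generators, uniform invertibility of the boundary family via the Bismut--Cheeger expansion $D_{\mu,\partial}=D_\pi\hat{\otimes}1+\mu D_Y+\mu^2R$, and the evaluation $KK$-equivalences on $K_0\simeq\mathbb{Z}$. Your only deviation --- using the collar metric $\frac{dx^2}{x^2(x+\mu)^2}\oplus\frac{\pi^*g_Y}{(x+\mu)^2}\oplus g_\pi$ (which is genuinely smooth in the interpolating frame, unlike the literal family (\ref{aps_met})) and then passing to $g_{b,\mu_0}$ by a boundary-fixing homotopy --- is a careful and correct implementation of the same argument.
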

\begin{proof}
Again we use a similar argument as in Proposition \ref{equality}. 
Consider the Lie algebra $TM \times [0, 1] \to M \times [0, 1]$ with the canonical Lie bracket (not to be confused with $T(M \times [0, 1])$). 
Consider the following $C^\infty(M \times [0, 1])$-submodule of $C^\infty(M \times [0, 1] ; TM \times [0, 1])$. 
\[
\mathcal{V} := \left\{
\begin{array}{c|l}
V \in C^\infty(M \times [0, 1] ; TM \times [0, 1]) & V|_{\partial M \times \{0\}} \in C^\infty(\partial M \times \{0\}; T^V\partial M \times \{0\}), \mbox{ and } \\
& V(x) \in x(x+t)C^\infty(M \times [0, 1]_t)
\end{array}
\right\}. 
\]
This is a Lie subalgebra of $C^\infty(M \times [0, 1] ; TM \times [0, 1])$. 
As in the proof of Proposition \ref{equality}, this gives a Lie algebroid $\mathfrak{A} \to M \times [0, 1]$. 
The family of metric in (\ref{aps_met}) on $M \times (0, 1]$ and a $\Phi$-metric $g_\Phi$ on $M \times \{0\}$, which has the form $g_\Phi|_{\partial M} = g_\pi \oplus \pi^*g_{TY \oplus \mathbb{R}} $, gives a smooth metric $g_{\mathfrak{A}}$ on $\mathfrak{A} \to M \times [0, 1]_\mu$. 
We can construct a groupoid $\mathcal{G} \rightrightarrows M\times[0, 1]$ which integrates $\mathfrak{A}$ and can be written as
\[
\mathcal{G} = G_\Phi \times \{0\} \sqcup G_b \times (0, 1] \rightrightarrows M \times [0, 1]. 
\]
We denote the spin Dirac operator on $\mathcal{G}$ with respect to the metric $g_{\mathfrak{A}}$ by $\mathcal{D}$. 
As explained above, there exists a positive number $0 <\epsilon \leq 1$ such that $\mathcal{D}|_{\partial M \times [0, \epsilon]}$ is invertible. 
Thus we get the index class
\[
\mathrm{Ind}_{\mathring{M} \times [0, \epsilon]}(\mathcal{D}|_{M \times [0, \epsilon]}) \in K_0(C^*(\mathcal{G}_{\mathring{M} \times [0, \epsilon]})) \simeq \mathbb{Z}. 
\]
We have $\mathrm{Ind}_{\mathrm{APS}}(D_\mu) =(ev_\mu)_*\mathrm{Ind}_{\mathring{M} \times [0, \epsilon]}(\mathcal{D}|_{M \times [0, \epsilon]})\in K_0(C^*(\mathcal{G}_{\mathring{M} \times \{\mu\}})) \simeq \mathbb{Z}$ for all $0<\mu \leq \epsilon$. 
Moreover, the restriction of $\mathcal{D}$ to $M \times \{0\}$ is exactly the same as the operator $D_\Phi$ defining the index $\mathrm{Ind}_\Phi(P'_M, P'_Y, g_\pi) = \mathrm{Ind}_{\mathring{M}}(D_\Phi)$. 
We have $\mathrm{Ind}_{\mathring{M}}(D_\Phi) = (ev_0)_*\mathrm{Ind}_{\mathring{M} \times [0, \epsilon]}(\mathcal{D}|_{M \times [0, \epsilon]})\in K_0(C^*(\mathcal{G}_{\mathring{M} \times \{0\}})) \simeq \mathbb{Z}$. 
Since $ev_\mu$ induces the identity map on $\mathbb{Z}$ for all $\mu \in [0, \epsilon]$, we get the result. 
\end{proof}

Next we show the vanishing formula for the case where the spin fiber bundle structure (preserving the boundary) extends to the whole manifold, and the fiberwise operators are invertible for the whole family. 

\begin{prop}[The vanishing formula]\label{vanishing}
We consider the following situation. 
\begin{itemize}
\item Let $(M^{\mathrm{ev}}, \partial M, \pi : \partial M \to Y^{\mathrm{odd}})$ be a compact manifold with fibered boundary,  equipped with pre-spin structures $P'_M$ and $P'_Y$ on $TM$ and $TY$, respectively, and a riemannian metric $g_\pi$ on $T^V\partial M$, for which the fiberwise spin Dirac operator $D_\pi$ is an invertible family. 
\item There exist data $(\pi', X, P'_X, g_{\pi'})$ such that 
\begin{itemize}
\item $X$ is a compact manifold with boundary $\partial X$, with a fixed diffeormorphism $\partial X \simeq Y$. 
We identify $\partial X$ with $Y$. 
\item $\pi' : (M, \partial M) \to (X, \partial X)$ is a fiber bundle structure which preserves the boundary, and $\pi'|_{\partial M} = \pi$. 
Note that the typical fibers of $\pi$ and $\pi'$ are the same. 
\item $P'_X$ is a pre-spin structure on $TX$ which satisfies $P'_X |_Y = P'_Y$. 
\item $g_{\pi'}$ is a riemannian metric on $T^VM$ (the fiberwise tangent bundle of the fiber bundle $\pi'$) satisfying $g_{\pi'} |_{Y} = g_\pi$. 
We denote $D_{\pi'}$ the family of fiberwise spin Dirac operators for $\pi'$. 
\end{itemize}
\end{itemize}
Assume that $D_{\pi'}$ is invertible. 
Then we have
\begin{align*}
\mathrm{Ind}_\Phi(P'_M, P'_Y, g_\pi) = \mathrm{Ind}_e(P'_M, P'_Y, g_\pi)&=0 . 
\end{align*}
\end{prop}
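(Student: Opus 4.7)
By Proposition \ref{equality} it suffices to prove $\mathrm{Ind}_e(P'_M, P'_Y, g_\pi) = 0$. The idea is to exploit the global fibration $\pi' : M \to X$ to produce a one-parameter family of $e$-metrics $\{g_e(\mu)\}_{\mu \in (0, 1]}$ on $\mathfrak{A}G_e$, all satisfying condition (\ref{stabcond2}) of Proposition \ref{stability}, for which the associated spin Dirac operator $D_e(\mu)$ becomes globally invertible as $\mu \to 0$. Proposition \ref{stability} then forces $\mathrm{Ind}_e = \mathrm{Ind}_{\mathring{M}}(D_e(\mu)) = 0$, and Proposition \ref{equality} yields $\mathrm{Ind}_\Phi = 0$ as well.

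To construct the family, fix a horizontal distribution $H \subset TM$ for $\pi'$ compatible with the collar of $\partial M$ (so that $H|_{\partial M}$ matches $\pi^*TY \oplus \mathbb{R}_x$), and a metric $g_{X}'$ on $X$ of product form $dx^2 \oplus g_Y$ near $\partial X$. Choose a smooth positive function $\rho$ on $M$ equal to $x^{-2}$ in a collar of $\partial M$ and to a positive constant away from the collar. Define
\[
g_e(\mu) := g_{\pi'} \oplus \mu^{-2}\, \rho \cdot (\pi')^*g_{X}'
\]
under the decomposition $\mathfrak{A}G_e \simeq T^VM \oplus H$ in the interior (with the second summand identified near $\partial M$ with $\pi^*TY \oplus \mathbb{R}\cdot x\partial_x$). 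A direct computation in the $e$-basis $(x\partial_x, x\partial_y, \partial_z)$ shows that $g_e(\mu)$ extends to a smooth metric on $\mathfrak{A}G_e$ with boundary value $g_\pi \oplus \mu^{-2}(\pi^*g_Y \oplus 1)$, so condition (\ref{stabcond2}) holds. Hence $\mathrm{Ind}_{\mathring{M}}(D_e(\mu))$ is independent of $\mu$ and equals $\mathrm{Ind}_e(P'_M, P'_Y, g_\pi)$.

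For small $\mu$, $(M, g_e(\mu))$ becomes a Riemannian submersion along $\pi'$ with base blown up by $\mu^{-1}$. By the standard adiabatic decomposition of the spin Dirac operator for a Riemannian submersion (cf.\ \cite{BC} and the calculation in the proof of Proposition \ref{fredholmness}),
\[
D_e(\mu) = D_{\pi'} \otimes 1 + \mu\, T_\mu,
\]
where $T_\mu$ is a first-order differential operator acting transversely to the fibers whose anticommutator with $D_{\pi'}\otimes 1$ consists only of zeroth-order curvature-type terms, uniformly bounded in $\mu$. Squaring yields $D_e(\mu)^2 \geq D_{\pi'}^2 \otimes 1 - C\mu \geq c - C\mu$, where $c>0$ is the spectral gap of $D_{\pi'}^2$ furnished by the invertibility of $D_{\pi'}$. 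For $\mu$ sufficiently small, $D_e(\mu)^2 \geq c/2$, so $D_e(\mu)$ is self-adjoint with trivial kernel and cokernel, and its Fredholm index vanishes.

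The principal obstacle is the uniform adiabatic estimate of the last paragraph: while Bismut--Cheeger-type formulas are classical on closed total spaces, here one must verify that the curvature contributions to $\{D_{\pi'}\otimes 1, T_\mu\}$ remain bounded uniformly in $\mu$ across both the interior submersion region and the transition into the $e$-boundary collar, where the scaling of $g_e(\mu)$ changes character. This is the new analytic content beyond the compact setting; the remainder of the argument is just assembly of Propositions \ref{equality} and \ref{stability}.
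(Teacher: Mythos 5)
Your overall strategy --- rescale the base directions of an $e$-metric adapted to the global fibration $\pi'$ and take the adiabatic limit $\mu\to 0$ --- is the analytic shadow of what the paper actually does, but as written it has a genuine gap at the decisive step. The vanishing is made to rest on the claim that $D_e(\mu)$ is \emph{globally} invertible on the complete noncompact manifold $(\mathring{M}, g_e(\mu))$ for small $\mu$, via the estimate $D_e(\mu)^2 \geq D_{\pi'}^2\hat{\otimes}1 - C\mu$. You do not prove this estimate; you explicitly defer it as the ``principal obstacle''. It is not a routine citation: the Bismut--Cheeger anticommutator $\{D_{\pi'}\hat{\otimes}1, T_\mu\}$ is in general a fiberwise \emph{first-order} operator (fiberwise derivatives contracted with the curvature of the fibration), not a collection of zeroth-order terms as you assert, so even over a closed base one needs the fiberwise elliptic estimate together with the spectral gap of $D_{\pi'}$ (this is exactly how the paper argues for the boundary family before Proposition \ref{prop_lim_aps}), and over the $e$-collar one must additionally check that these terms stay uniformly controlled as the conformal factor $\mu^{-2}x^{-2}$ degenerates. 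Until that uniform estimate is supplied, the conclusion $\mathrm{Ind}_{\mathring{M}}(D_e(\mu))=0$ is unsupported. (The reduction to the $e$-case via Proposition \ref{equality} and the $\mu$-independence of the index via Proposition \ref{stability} are fine.)

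The paper's proof performs the same adiabatic degeneration at the level of groupoids and thereby avoids the estimate entirely: it forms $\mathcal{G} = DNC(G_e, M\times_{\pi'}M)|_{M\times[0,1]}$, whose fiber at $t=0$ is $M\times_{\pi'}M\times_{\pi'}E_X$ for a vector bundle $E_X\to X$. On that limit groupoid the Dirac operator splits \emph{exactly} as $D_{\pi'}\hat{\otimes}1 + 1\hat{\otimes}D_{E_X}$ with anticommuting summands and no curvature remainder, so invertibility of $D_{\pi'}$ gives invertibility over $M\times\{0\}$ (and over $\partial M\times[0,1]$) on the nose. The full index class of the deformation then lives in $K_0(C^*(\mathring{M}\times\mathring{M}\times(0,1]))=0$, and evaluation at $t=1$ forces $\mathrm{Ind}_e=0$ with no estimates. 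If you want to keep your analytic route, the missing uniform adiabatic estimate is exactly what you must prove; alternatively, replacing your metric family by this deformation groupoid turns the $\mu\to 0$ limit into an algebraic statement and closes the gap.
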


\begin{proof}
The first equality follows from Proposition \ref{equality}. 
Consider the subgroupoid $M \times_{\pi'} M \subset G_e$ and define
$\mathcal{G} := DNC(G_e, M \times_{\pi'}M)|_{M \times [0, 1]}$. 
Denote the closed saturated subset $M_1 := M \times \{0\} \cup\partial M \times [0, 1] \subset M \times [0, 1]$ for this groupoid. 
Note that we have $\mathcal{G}|_{\partial M \times [0, 1]} = \partial M \times_\pi \partial M \times_\pi DNC(TY \rtimes \mathbb{R}^*_+, Y)|_{Y \times [0, 1]}$.
We can also see that the restriction $\mathcal{G}|_{M \times \{0\}}$ is of the form $M \times_{\pi'} M \times_{\pi'} E_X$, where $E_X \to X$ is a vector bundle over $X$. 
In particular, there exists canonical direct sum decomposition of $\mathfrak{A}\mathcal{G}|_{M_1}$ such that one component is $T^VM_1$. 
Choose any riemannian metric $g_\mathfrak{A}$ on $\mathfrak{A}\mathcal{G}$ such that, on $M_1$, the two direct sum components are orthogonal, and $T^VM_1$-component is equal to $g_{\pi'}\cup g_\pi \times [0, 1]$. 
We consider the spin structure on $\mathfrak{A}\mathcal{G}$ defined by the given data and metric $g_\mathfrak{A}$ chosen above, and consider the spin Dirac operator $\mathcal{D} \in \mathrm{Diff}^1(\mathcal{G}; S(\mathfrak{A}\mathcal{G}))$. 

Then, the restriction of $\mathcal{D}$ to $M_1$ has the product form as in (\ref{fredholmness_op1}) and (\ref{fredholmness_op2}). 
Since we are assuming that the fiberwise operator $D_{\pi'}$ is invertible, we see that $\mathcal{D}|_{M_1}$ is invertible. 
So we get the index class 
\[
\mathrm{Ind}_{\mathring{M} \times (0, 1]}(\mathcal{D}) \in K_0(C^*(\mathcal{G}|_{\mathring{M}\times (0, 1]})), 
\]
and we see that $\mathrm{Ind}_e(P'_M, P'_Y, g_\pi) = (ev_1)_* \mathrm{Ind}_{\mathring{M} \times (0, 1]}(\mathcal{D})$. 
However, since $C^*(\mathcal{G}|_{\mathring{M}\times (0, 1]}) = C^*(\mathring{M} \times \mathring{M}) \otimes C_0((0, 1])$ is contractible, its $K$-group is trivial and we get the result. 
\end{proof}

\subsection{The cases of twisted $spin^c$ and signature operators}
The above argument easily generalizes to the cases of twisted $spin^c$-Dirac operators and twisted signature operators, as follows. 
Let $(M^{\mathrm{ev}}, \pi : \partial M \to Y^{\mathrm{odd}})$ be a compact manifold with fibered boundary, and $E \to M$ be a $\mathbb{Z}_2$-graded complex vector bundle. 

\subsubsection{Twisted $spin^c$ Dirac operators}\label{subsubsec_twisted_spinc}
For our conventions on $spin^c$/pre-$spin^c$/differential $spin^c$ structures, see Definition \ref{convention_spinc}. 
In order to define the $\Phi$ and $e$-indices of the $spin^c$-Dirac operator on $M$ twisted by $E$, we need the following data. 
\begin{enumerate}
\item[(D1)] Pre-$spin^c$ structures $P'_M$ and $P'_Y$ on $TM$ and $TY$, respectively. 
\label{twistedD1}
\item[(D2)] A differential $spin^c$ structure $P_\pi$ on $T^V\partial M$, which is compatible with the pre-$spin^c$-structure induced from $P'_M$ and $P'_Y$ (see Remark \ref{rem_spin_str}). 
\label{twistedD2}
\item[(D3)] A hermitian structure on $E|_{\partial M}$ as well as a smooth family of fiberwise unitary connection for the boundary fibration, i.e., a continuous map
\[
\nabla^E_\pi : C^\infty(\partial M; E|_{\partial M}) \to C^\infty(\partial M; E|_{\partial M} \otimes (T^{V}\partial M)^*)
\]
given by a family of unitary connections $\{\nabla_y^E\}_{y \in Y}$ on the vector bundle $E|_{\pi^{-1}(y)} \to \pi^{-1}(y)$ for each $y\in Y$. 
\label{twistedD3}
\item[(D4)] Denote the fiberwise twisted $spin^c$-Dirac operators for $\pi$ by $D^E_\pi = \{D^E_{\pi^{-1}(y)}\}_{y \in Y}$. 
Here $D^E_{\pi^{-1}(y)}$ acts on $ C^\infty(\pi^{-1}(y); S(\pi^{-1}(y)) \hat{\otimes} E)$. 
We assume that $D^E_\pi$ forms an invertible family. 
\label{twistedD4}
\end{enumerate}

Additional data which are needed to define an operator are as follows. 
\begin{enumerate}
\item[(d1)] A differential $spin^c$ structure on $\mathfrak{A}G_\Phi$ ($\mathfrak{A}G_e$) such that  
\begin{itemize}
\item it is compatible with the pre-$spin^c$ structures in (D1). 
\item it has a product structure with respect to the decomposition $\mathfrak{A}G_\Phi|_{\partial M} = T^V\partial M \oplus \pi^*TY \oplus \mathbb{R}$ ($\mathfrak{A}G_e|_{\partial M} = T^V\partial M \oplus \pi^*TY \oplus \mathbb{R}$) at the boundary. 
\item the $T^V\partial M$-component coincides with the one in (D2). 
\end{itemize}
\label{twistedd1}
\item[(d2)] A hermitian structure on $E$ which restricts to the one given in (D3), and a unitary connection $\nabla^E$ which restricts to $\nabla^E_\pi$ in (D3).  
\label{twistedd2}
\end{enumerate}
From these data, we get the twisted $spin^c$-Dirac operators $D_\Phi^{S \hat{\otimes}E} \in \mathrm{Diff}^1(G_\Phi; S(\mathfrak{A}G_\Phi) \hat{\otimes}E)$ and $D_e^{S \hat{\otimes}E} \in \mathrm{Diff}^1(G_e; S(\mathfrak{A}G_e) \hat{\otimes}E)$. 
By the assumption on the invertibility of $D^E_\pi$ in (D4), we get the fredholmness of these operators as in Proposition \ref{fredholmness}, as follows. 
We only explain it in the $\Phi$-case. 
It is enough to see that the restriction to the boundary, $D_\Phi^{S \hat{\otimes}E}|_{\partial M} \in \mathrm{Diff}^1(G_\Phi|_{\partial M}; S(\mathfrak{A}G_\Phi)|_{\partial M} \hat{\otimes}E|_{\partial M})$, is invertible. 
This operator is given by a family $\{D^E_y\}_{y \in Y}$ parametrized by $Y$, and
each $D^E_y$ is the $spin^c$-Dirac operator twisted by $E$ on the groupoid $G_\Phi|_{\pi^{-1}(y)} = \pi^{-1}(y) \times \pi^{-1}(y) \times T_yY \times \mathbb{R}$, in the sense of Example \ref{ex_twisted_spinc}. 
We have the isomorphism $S(\mathfrak{A}G_\Phi|_{\pi^{-1}(y)})\simeq S(\pi^{-1}(y)) \hat{\otimes} S(T_yY \times \mathbb{R})$ by the assumption (d1). 
Define
\[
r : \pi^{-1}(y)  \times T_yY \times \mathbb{R} \to \pi^{-1}(y), \ 
(x, \xi, t) \mapsto x. 
\]
By the construction of twisted $spin^c$-Dirac operators on groupoids explained in Example \ref{ex_twisted_spinc}, we introduce the connection on the hermitian vector bundle $r^*(E|_{\pi^{-1}(y)}) \to \pi^{-1}(y)\times T_y Y \times \mathbb{R}$ as the pullback $r^*\nabla^E$ of the connection $\nabla^E$ on $E$. 
By the assumption in (d2), it coincides with the pullback $r^*\nabla^E_\pi$. 
Thus the operator $D^E_y$ is written as 
\begin{align}\label{twisted_spinc_op}
D^E_y &: C_c^\infty(\pi^{-1}(y) \times T_y Y \times \mathbb{R}; (S(\pi^{-1}(y)) \hat{\otimes}E|_{\pi^{-1}(y)})\hat{\otimes}S(T_yY \times \mathbb{R}))  \notag \\
&\to C_c^\infty(\pi^{-1}(y) \times T_y Y \times \mathbb{R}; (S(\pi^{-1}(y)) \hat{\otimes}E|_{\pi^{-1}(y)})\hat{\otimes}S(T_yY \times \mathbb{R})) \notag \\
D^E_y &= D^E_{\pi^{-1}(y)} \hat{\otimes} 1 + 1 \hat{\otimes} D_{T_yY \times \mathbb{R}}. 
\end{align}
Here the operator $D_{T_yY \times \mathbb{R}}$ is the $spin^c$-Dirac operator on the Euclidean space $T_yY\times \mathbb{R}$ defined by (d1). 
The operators $D^E_\pi \hat{\otimes} 1$ and $ 1 \hat{\otimes} D_{T_yY \times \mathbb{R}}$ anticommute, and by the assumption (D4), we get the invertibility of the family $D_\Phi^{S \hat{\otimes}E}|_{\partial M} =\{D^E_y\}_{y \in Y}$. 

So we get their indices
\[
\mathrm{Ind}_{\mathring{M}}(D_\Phi^{S \hat{\otimes}E}), \ \mathrm{Ind}_{\mathring{M}}(D_e^{S \hat{\otimes}E}) \in \mathbb{Z}. 
\]

These indices depend only on the data (D1)$\sim$(D4) and do not depend on the additional data (d1) or (d2), as in Proposition \ref{stability}. 
So we can define the $\Phi$ and $e$-indices of the data $(P'_M, P'_Y, P_\pi, E, \nabla^E_\pi)$
 as follows. 

\begin{defn}\label{def_index_perturbation}

Given data $(P'_M, P'_Y, P_\pi, E, \nabla^E_\pi)$ as in (D1)$\sim$(D4) above, we choose additional data (d1) and (d2) arbitrarily and define
\begin{align*}
\mathrm{Ind}_\Phi(P'_M,P'_Y, P_\pi, E, \nabla^E_\pi)&:=\mathrm{Ind}_{\mathring{M}}(D_\Phi^{S \hat{\otimes}E}) ,\\
\mathrm{Ind}_e(P'_M, P'_Y, P_\pi, E, \nabla^E_\pi)&:=\mathrm{Ind}_{\mathring{M}}(D_e^{S \hat{\otimes}E}).
\end{align*}
These do not depend on the choice in (d1) or (d2). 
\end{defn}

We can show the equality $\mathrm{Ind}_\Phi(P'_M,P'_Y, P_\pi, E, \nabla^E_\pi) = \mathrm{Ind}_e(P'_M, P'_Y, P_\pi, E, \nabla^E_\pi)$ as in Proposition \ref{equality}. 
The gluing formula as in Proposition \ref{gluing} and the vanishing property as in Proposition \ref{vanishing} hold analogously. 

\subsubsection{Twisted signature operators}\label{subsubsec_twistedsign}
For twisted signature operators, we need the following data. 
Let $(M^{\mathrm{ev}}, \pi : \partial M \to Y^{\mathrm{odd}})$ be a compact manifold with fibered boundaries, where both $M$ and $Y$ are oriented. 
We call such $(M, \pi : \partial M \to Y)$ {\it oriented} ; note that this includes the orientation on $Y$. 
These orientations induce an orientation on $T^V\partial M$.
The data needed to define the $\Phi$ and $e$-signature are as follows. 
\begin{enumerate}
\item[(D1)] A riemanian metric $g_\pi$ on $T^V\partial M$. 
\label{signD1}
\item[(D2)] A hermitian structure on $E|_{\partial M}$ as well as a smooth family of fiberwise unitary connection for the boundary fibration, i.e., a continuous map
\[
\nabla^E_\pi : C^\infty(\partial M; E|_{\partial M}) \to C^\infty(\partial M; E|_{\partial M} \otimes (T^{V}\partial M)^*)
\]
given by a smooth family of unitary connections $\{\nabla_y^E\}_{y \in Y}$ on $E|_{\pi^{-1}(y)} \to \pi^{-1}(y)$ for each $y\in Y$. 
\label{signD2}
\item[(D3)] Denote the fiberwise twisted signature operators for $\pi$ by $D^{\mathrm{sign}, E}_\pi = \{D^{\mathrm{sign}, E}_{\pi^{-1}(y)}\}_{y \in Y}$. 
Here $D^{\mathrm{sign}, E}_{\pi^{-1}(y)}$ acts on $ C^\infty(\pi^{-1}(y); \wedge_{\mathbb{C}} T^*(\pi^{-1}(y)) \hat{\otimes} E)$. 
We assume that $D^{\mathrm{sign}, E}_\pi$ forms an invertible family. 
\label{signD3}
\end{enumerate}

Additional data which are needed to define an operator are as follows. 
\begin{enumerate}

\item[(d1)] A smooth riemannian metric $g_\Phi$ for $\mathfrak{A}G_\Phi \simeq T^\Phi M \to M$, whose restriction to $\mathfrak{A}G_\Phi |_{\partial M} = T^V\partial M \oplus \pi^*TY \oplus \mathbb{R}$ can be written as
\[
g_\Phi|_{\partial M} = g_\pi \oplus \pi^*g_{TY \oplus \mathbb{R}}  ,
\] 
where $g_{TY \oplus \mathbb{R}}$ is some riemannian metric on $TY \oplus \mathbb{R} \to Y$. 

\item[(d1)$'$] A smooth riemannian metric $g_e$ for $\mathfrak{A}G_e \simeq T^e M \to M$, whose restriction to $\mathfrak{A}G_e |_{\partial M} = T^V\partial M \oplus \pi^*TY \oplus \mathbb{R}$ can be written as
\[
g_e|_{\partial M} = g_\pi \oplus \pi^*g_{TY \oplus \mathbb{R}}  ,
\] 
where $g_{TY \oplus \mathbb{R}}$ is some riemannian metric on $TY \oplus \mathbb{R} \to Y$. 

\label{signd1}
\item[(d2)] A hermitian structure on $E$ which restricts to the one given in (D3), and a unitary connection $\nabla^E$ which restricts to $\nabla^E_\pi$ in (D3). 
\label{signd2}
\end{enumerate}
From these data, we get the twisted signature operators $D_\Phi^{\mathrm{sign}, E} \in \mathrm{Diff}^1(G_\Phi; \wedge_{\mathbb{C}} \mathfrak{A}^*G_\Phi \hat{\otimes}E)$ and $D_e^{\mathrm{sign}, E} \in \mathrm{Diff}^1(G_e; \wedge_{\mathbb{C}}\mathfrak{A}^*G_e\hat{\otimes}E)$. 
By the assumption on the invertibility of $D^{\mathrm{sign}, E}_\pi$ in (D4), we get the fredholmness of these operators as in the subsubsection \ref{subsubsec_twisted_spinc}. 
Note that in this case, the boundary operator $D_\Phi^{\mathrm{sign}, E}|_{\partial M}= \{D^{\mathrm{sign}, E}_y\}_{y \in Y}$ is given by
\begin{align*}
D^{\mathrm{sign}, E}_y &: C_c^\infty(\pi^{-1}(y) \times T_y Y \times \mathbb{R}; (\wedge_{\mathbb{C}} T^*(\pi^{-1}(y)) \hat{\otimes}E|_{\pi^{-1}(y)})\hat{\otimes}\wedge_{\mathbb{C}} (T_yY \times \mathbb{R})^*) \\
&\to  C_c^\infty(\pi^{-1}(y) \times T_y Y \times \mathbb{R}; (\wedge_{\mathbb{C}} T^*(\pi^{-1}(y)) \hat{\otimes}E|_{\pi^{-1}(y)})\hat{\otimes}\wedge_{\mathbb{C}} (T_yY \times \mathbb{R})^*) \\
D_y &= D^{\mathrm{sign}, E}_{\pi^{-1}(y)} \hat{\otimes} 1 + 1 \hat{\otimes} D^{\mathrm{sign}}_{T_yY \times \mathbb{R}}. 
\end{align*}
Here $D^{\mathrm{sign}}_{T_yY \times \mathbb{R}}$ is the Euclidean signature operator defined by the metric $g_{TY \oplus \mathbb{R}}$ in (d1).  
 
So we get their indices
\[
\mathrm{Ind}_{\mathring{M}}(D_\Phi^{\mathrm{sign}, E}), \ \mathrm{Ind}_{\mathring{M}}(D_e^{\mathrm{sign}, E}) \in \mathbb{Z}. 
\]

These indices depend only on the data (D1)$\sim$(D3) and do not depend on the additional data (d1), (d1)$'$ or (d2), as in Proposition \ref{stability}. 
So we can define the $\Phi$ and $e$-indices of the data $( g_\pi, E, \nabla^E_\pi)$
 as follows. 

\begin{defn}
Given a compact oriented manifold with boundary $(M^{\mathrm{ev}}, \pi : \partial M \to Y^{\mathrm{odd}})$ with data $(g_\pi, E, \nabla^E_\pi)$ as in (D1)$\sim$(D3) above, we choose additional data (d1), (d1)$'$ and (d2) arbitrarily and define
\begin{align*}
\mathrm{Sign}_\Phi(M, g_\pi, E, \nabla^E_\pi)&:=\mathrm{Ind}_{\mathring{M}}(D_\Phi^{\mathrm{sign}, E}) ,\\
\mathrm{Sign}_e(M, g_\pi, E, \nabla^E_\pi)&:=\mathrm{Ind}_{\mathring{M}}(D_e^{\mathrm{sign}, E}).
\end{align*}
This does not depend on the choice in (d1), (d1)$'$ or (d2).  
\end{defn}

We can show the equality $\mathrm{Sign}_\Phi( M, g_\pi, E, \nabla^E_\pi) = \mathrm{Sign}_e(M,  g_\pi, E, \nabla^E_\pi)$ as in Proposition \ref{equality}. 
The gluing formula as in Proposition \ref{gluing} and the vanishing property as in Proposition \ref{vanishing} holds analogously. 

\section{Indices of geometric operators on manifolds with fibered boundaries : the case with fiberwise invertible perturbations}\label{sec_index_perturbation}
Next we consider operators with fiberwise invertible perturbations on the boundary family. 
The idea is that, if we are given a pair $(P'_M, P'_Y, g_\pi)$ as in Definition \ref{def_index_spin}, even if we do not have the invertibility of fiberwise Dirac operator $D_\pi$ for the boundary fibration, if we are given an invertible perturbation $\tilde{D}_\pi$ by a lower order family, then we can construct fully elliptic $\Phi$/$e$-operators $\tilde{D}$ such that
\begin{itemize}
\item on the interior $\mathring{M}$, $\tilde{D}$ differs from $D_\Phi$ ($D_e$) by an operator of order $0$. 
\item the boundary operator of $\tilde{D}$ is given by $\tilde{D}_\pi \hat{\otimes} 1 + 1 \hat{\otimes}D_{TY \times \mathbb{R}}$ ($\tilde{D}_\pi \hat{\otimes} 1 + 1 \hat{\otimes}D_{TY \rtimes \mathbb{R}}$). 
\end{itemize} 

We would like to define the index of this operator as the index of the pair $(P'_M, P'_Y, g_\pi)$ defined by the fiberwise invertible perturbation $\tilde{D}_\pi$. 
This index has a simpler description, as below. 

\subsection{The general situation}
In this subsection, we recall the well-known general construction of indices, defined using invertible perturbations of an operator on a closed saturated subset for a Lie groupoid. 
We start with a general setting as follows. 
\begin{itemize}
\item Let $M$ be a compact manifold possibly with boundaries and corners. 
\item Let $G \rightrightarrows M$ be a Lie groupoid. 
\item Let $V \subset M$ be a closed saturated subset for $G$. 
\item Let $(\sigma_M, \tilde{F}_V) \in C(\mathfrak{S}^*G_M) \oplus_V \overline{\Psi_c^0(G_V)}$ be an invertible element. 
\end{itemize}
Denote the full symbol algebra $\Sigma^{M \setminus V}(G) :=C(\mathfrak{S}^*G_M) \oplus_V \overline{\Psi_c^0(G_V)}$ as in subsubsection \ref{subsubsec_ellipticity}. 
We consider the following exact sequence. 
\[
0 \to C^*(G_{M \setminus V}) \to \overline{\Psi^0_c(G)}  \stackrel{\sigma_{f, V}}{\to} \Sigma^{M \setminus V}(G) \to 0. 
\]
We denote the connecting element for this short exact sequence as $\mathrm{ind}^{M \setminus V}(G) \in KK^1(\Sigma^{M \setminus V}(G) , C^*(G_{M \setminus V}))$. 
The element $(\sigma_M, \tilde{F}_V)$ gives a class in $K_1(\Sigma^{M \setminus V}(G))$, so defines the index class as
\[
\mathrm{Ind}_{M \setminus V}((\sigma_M, \tilde{F}_V)) := [(\sigma_M, \tilde{F}_V)] \otimes \mathrm{ind}^{M \setminus V}(G) \in K_0(C^*(G_{M \setminus V})).  
\]

This index can be generalized to the case where we are given a path from the operator $F_V$ to an invertible operator.  
The settings are as follows. 
\begin{itemize}
\item Let $(\sigma_M, F_V) \in \Sigma^{M \setminus V}(G)$ be an element such that $\sigma_M \in C(\mathfrak{S}^*G_M)$ is invertible. 
\item Let $F_{V \times [0, 1]} = \{F_{V \times \{t\}}\}_{t \in [0, 1]}$ be a continuous path of operators $F_{V \times \{t\}} \in \overline{\Psi_c^0(G_V)}$ parametrized by $t \in [0, 1]$ such that
\begin{itemize}
\item $F_{V\times \{0\}} = F_V$. 
\item $F_{V\times\{t\}}$ is elliptic for all $t \in [0, 1]$. 
\item $F_{V \times \{1\}}$ is invertible. 
\end{itemize}
We call such a path ``an {\it invertible perturbation} for $F_V$''.
\end{itemize}

\begin{rem}\label{rem_cpt_perturbation}
In the following, we often work in the situation where we are given 
\begin{itemize}
\item An element $F_V \in \overline{\Psi^0_c(G_V)}$ for which $\sigma(F_V) \in C(\mathfrak{S}^*G_V)$ is invertible, and
\item An invertible element $\tilde{F}_V \in \overline{\Psi^0_c(G_V)}$ which satisfies $\tilde{F}_V - F_V \in C^*(G_V)$. 
\end{itemize}
In this case, we have a canonical choice, up to homotopy, of path $F_{V \times [0, 1]} = \{F_{V \times \{t\}}\}_{t \in [0, 1]}$ such that $F_{V\times \{0\}}=F_V$ and $F_{V\times \{1\}}=\tilde{F}_V$. 
Namely, we choose any such continuous path which satisfies $F_{V \times \{t\}} - F_V \in C^*(G_V)$ for all $t\in [0,1]$. 
With the abuse of notation we also call such $\tilde{F}_V$ ``an {\it invertible perturbation} for $F_V$'' and actually consider such path of operators. 
\end{rem}

From the data above, we define $\mathrm{Ind}_{M \setminus V}(\sigma_M, F_{V \times [0, 1]}) \in K_0(C^*(G_{M \setminus V}))$ as follows. 
Denote
\begin{equation}
\begin{aligned}\label{mappingcone}
M_1 &:= M \cup_{V \times \{0\}} V \times [0, 1] \mbox{ and } \mathring{M}_1 := M \cup_{V \times \{0\}} V \times [0, 1) \\
G_1 &:= G \cup_{V \times \{0\}} G_V \times [0, 1] \rightrightarrows M_1 \mbox{ and }\mathring{G}_1 := G_1|_{\mathring{M}_1}. 
\end{aligned}
\end{equation}
Although $M_1$ is not a manifold, $G_1$ is a longitudinally smooth groupoid, so we abuse the notations such as $C^*(G_1) :=C^*(G) \oplus_{V \times \{0\}}C^*(G_V \times [0, 1]) $. 

We have the following exact sequence. 
\[
0 \to C^*(\mathring{G}_1) \to \overline{\Psi^0_c(G_1)} 
\xrightarrow{\sigma_{f, V \times \{1\}}} \Sigma^{\mathring{M}_1}(G_1)\to 0. 
\]
We denote the associated connecting element as $\mathrm{ind}^{\mathring{M}_1}(G_1) \in KK^1(\Sigma^{\mathring{M}_1}(G_1), C^*(\mathring{G}_1))$. 

Consider the canonical $*$-homomorphism
\[
\sigma'_{f, V \times \{1\}} : \Sigma^{M_1 \setminus V \times [0, 1]}(G_1) \to \Sigma^{\mathring{M}_1}(G_1), 
\]
defined by applying the symbol map on $G_{V} \times [0, 1]$. 
Here we have $\Sigma^{M_1 \setminus V \times [0, 1]}(G_1) = C(\mathfrak{S}^*G_1) \oplus_{V \times [0, 1]} \overline{\Psi_c^0(G_V \times [0, 1])} \simeq C(\mathfrak{S}^*G) \oplus_{V \times \{0\}} \overline{\Psi_c^0(G_V \times [0, 1])}$. 

Given a pair $(\sigma_M, F_{V\times[0,1]}) \in \Sigma^{M_1 \setminus V \times [0, 1]}(G_1)$ as above, by the conditions, the element $\sigma'_{f, V \times \{1\}}(\sigma_M, F_{V\times[0,1]}) \in \Sigma^{\mathring{M}_1}(G_1)$ is invertible. 
So we get a class
\[
[\sigma'_{f, V \times \{1\}}(\sigma_M, F_{V\times[0,1]})]\in K_1(\Sigma^{\mathring{M}_1}(G_1)). 
\]
Furthermore the inclusion $i : C^*(G_{M \setminus V}) \to C^*(\mathring{G}_1)$ gives a $KK$-equivalence $[i] \in KK(C^*(G_{M \setminus V}) , C^*(\mathring{G}_1))$. 
So we define the index as follows. 
\begin{defn}
\[
\mathrm{Ind}_{M \setminus V}(\sigma_M, F_{V \times [0, 1]}) =
[\sigma'_{f, V \times \{1\}}(\sigma_M, F_{V\times[0,1]})] \otimes \mathrm{ind}^{\mathring{M}_1}(G_1) \otimes [i]^{-1} \in K_0(C^*(G_{M \setminus V})). 
\]
\end{defn}

Next we prove the following relative formula for this index. 
Recall that, if we are given two invertible perturbations $F_{V \times [0, 1]}^i$, $i = 0, 1$ for an operator $F_V$, they define the difference class in $K_1(C^*(G_V))$ as follows. 
Let $F'_{V \times [0, 1]} = \{F'_{V \times \{t\}}\}_{t \in [0, 1]}$ be a continuous path of operators $F'_{V \times \{t\}} \in \overline{\Psi_c^0(G_V)}$ defined by
\begin{equation}\label{diffpath}
F'_{V\times \{t\}} = \begin{cases}
F^0_{V \times \{1-2t\}} & \mbox{ if }t \in [0, 0.5] \\
F^1_{V \times \{2t-1\}} & \mbox{ if }t \in [0.5, 1]. 
\end{cases}
\end{equation}
i.e., first follow the path $F_{V \times [0. 1]}^0$ in the reversed direction and next follow $F_{V \times [0,1]}^1$. 
This operator satisfies $F'_{V \times [0, 1]} \in \overline{\Psi_c^0(G_V \times [0, 1])}$. 
Consider the exact sequence
\[
0 \to C^*(G_V \times (0, 1))\to \overline{\Psi_c^0(G_V \times [0, 1])} \xrightarrow{\sigma_{f, V \times \{0, 1\}}}
 \Sigma^{V \times (0, 1)}(G_V \times [0,1]) \to 0. 
\]
By assumption $\sigma_{f, V \times \{0, 1\}}(F'_{V \times [0, 1]})$ is invertible. 
Thus we get the index class
\[
\mathrm{Ind}_{V \times (0, 1)}(F'_{V \times [0, 1]}) \in K_0(C^*(G_V \times (0, 1))) \simeq K_1(C^*(G_V)). 
\]
We define this class as the difference class of the invertible perturbations $F^0_{V \times [0, 1]}$ and $F^1_{V \times [0, 1]}$: 
\[
[F^1_{V \times [0, 1]} - F^0_{V \times [0, 1]}]:=\mathrm{Ind}_{V\times (0,1)}(F'_{V \times [0, 1]}) \in K_1(C^*(G_V)). 
\]

\begin{rem}
As in subsection \ref{subsec_invertibleperturbation},
we denote by $\tilde{\mathcal{I}}(F_V)$ the set of invertible perturbations for the operator $F_V$. 
This set has the obvious homotopy relation, and we denote $\mathcal{I}(F_V)$ the set of homotopy classes of elements in $ \tilde{\mathcal{I}}(F_V)$. 
We can show that $\mathcal{I}(F_V)$ is nonempty if and only if $\mathrm{Ind}(F_V) = 0 \in K_0(C^*(G_V))$. 
The above definition of the difference class induces the affine space structure on $\mathcal{I}(F_V)$ modeled on $K_1(C^*(G_V))$. 
\end{rem}

\begin{rem}\label{rem_cpt_diff_class}
In Remark \ref{rem_cpt_perturbation}, we explained that an operator $\tilde{F}_V$ such that $\tilde{F}_V - F_V \in C^*(G_V)$ can be regarded as an invertible perturbation of $F_V$. 
Assume we have two invertible perturbations $\tilde{F}^0_V$ and $\tilde{F}^1_V$ of $F_V$ in this sense. 
Then the difference class defined above between these perturbations, which we denote by $[\tilde{F}^1_V - \tilde{F}^0_V]$, can be described as follows. 
We take any path $F'_{V \times [0, 1]} \in \overline{\Psi_c^0(G_V \times [0, 1])}$ satisfying $F'_{V\times \{i\}} = \tilde{F}_V^i$ for $i = 0, 1$ and $F'_{V \times \{t\}} - \tilde{F}_V^0 \in C^*(G_V)$ for all $t \in [0,1]$. 
Then we get
\[
[\tilde{F}^1_V - \tilde{F}^0_V] = \mathrm{Ind}_{V\times (0,1)}(F'_{V \times [0, 1]}) \in K_1(C^*(G_V)). 
\]
Two different choices of such path are homotopic, and the one which is obtained by the construction in (\ref{diffpath}) is one of such choices. 
\end{rem}

\begin{prop}[The general relative formula]\label{gen_relative}
Let $G \rightrightarrows M$ be a longitudinally smooth Lie groupoid over a compact manifold $M$, and $V \subset M$ be a closed saturated subset. 
Let $(\sigma_M, F_V) \in \Sigma^{M \setminus V}$ be an element such that $\sigma_M \in C(\mathfrak{S}^*G_M)$ is invertible. 
Suppose we are given two invertible perturbations $F_{V \times [0, 1]}^i$, $i = 0, 1$ for $F|_V$. 
Then we have
\[
\mathrm{Ind}_{M \setminus V}(\sigma_M, F_{V \times [0, 1]}^1) - \mathrm{Ind}_{M \setminus V}(\sigma_M, F_{V \times [0, 1]}^0 )
= [F^1_{V \times [0, 1]} - F^0_{V \times [0, 1]}] \otimes \partial^{M \setminus V}(G) \in K_0(C^*(G_{M \setminus V})). 
\]
Here, the element $\partial^{M\setminus V}(G) \in KK^1(C^*(G_V), C^*(G_{M \setminus V}))$ is the connecting element of the short exact sequence, 
\[
0 \to C^*(G_{M \setminus V}) \to C^*(G) \to C^*(G_V) \to 0. 
\]
as defined in subsection \ref{subsubsec_ellipticity}. 
In particular, the element $\mathrm{Ind}_{M \setminus V}(\sigma_M, F_{V \times [0, 1]}) \in K_0(C^*(G_{M\setminus V}))$ only depends on the class of $ F_{V \times [0, 1]}$ in $\mathcal{I}(F_V)$. 
\end{prop}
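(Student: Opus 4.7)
The plan is to reduce the formula to a $KK$-theoretic identity by recognizing the difference of the symbol classes as lying in an explicit kernel ideal.

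By construction,
\[
\mathrm{Ind}_{M\setminus V}(\sigma_M, F_{V\times[0,1]}) = [u(F)] \otimes_{\Sigma^{\mathring{M}_1}(G_1)} \mathrm{ind}^{\mathring{M}_1}(G_1) \otimes_{C^*(\mathring{G}_1)} [i]^{-1},
\]
where $u(F) := \sigma'_{f, V\times\{1\}}(\sigma_M, F_{V\times[0,1]}) \in \Sigma^{\mathring{M}_1}(G_1)$ is the invertible element built in the preceding construction. The two invertibles $u(F^0)$ and $u(F^1)$ have identical image under the restriction $\Sigma^{\mathring{M}_1}(G_1) \to C(\mathfrak{S}^*G)$ that reads off the principal symbol on the $M$-part (both equal $\sigma_M$). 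Consequently, their $K_1$-difference $[u(F^1)] - [u(F^0)]$ lifts canonically to $K_1(J)$, where $J \subset \Sigma^{\mathring{M}_1}(G_1)$ is the kernel of this restriction.

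I would then identify $J$ explicitly. Using the pullback description $\Sigma^{\mathring{M}_1}(G_1) = \overline{\Psi^0_c(G_V)} \oplus_V C(\mathfrak{S}^*G_1)$, the kernel $J$ becomes the fiber product
\[
\overline{\Psi^0_c(G_V)} \times_{C(\mathfrak{S}^*G_V)} C_0((0,1], C(\mathfrak{S}^*G_V)),
\]
which, after reparametrizing $(0,1] \to [0,1)$, is the standard $C^*$-algebraic mapping cone of the principal symbol $\sigma_V : \overline{\Psi^0_c(G_V)} \to C(\mathfrak{S}^*G_V)$. Since $\sigma_V$ is surjective with kernel $C^*(G_V)$, the natural embedding $C^*(G_V) \hookrightarrow J$, $F \mapsto (F, 0)$, induces an isomorphism $K_*(J) \simeq K_*(C^*(G_V))$. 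By directly tracing through the construction of the concatenated path $F'_{V\times[0,1]}$ in (\ref{diffpath}), the lifted class $[u(F^1)] - [u(F^0)] \in K_1(J)$ is shown to correspond to the difference class $[F^1_{V\times[0,1]} - F^0_{V\times[0,1]}] \in K_1(C^*(G_V))$ under this isomorphism.

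It remains to identify the composition
\[
K_1(C^*(G_V)) \simeq K_1(J) \hookrightarrow K_1(\Sigma^{\mathring{M}_1}(G_1)) \xrightarrow{\otimes \mathrm{ind}^{\mathring{M}_1}(G_1) \otimes [i]^{-1}} K_0(C^*(G_{M\setminus V}))
\]
with $\partial^{M\setminus V}(G)$. This is a naturality argument: both boundary maps are ultimately induced by the same short exact sequence $0 \to C^*(G_{M\setminus V}) \to C^*(G) \to C^*(G_V) \to 0$, just presented through different auxiliary groupoids. The comparison uses the commutative diagram
\[
\xymatrix{
0 \ar[r] & C^*(G_{M\setminus V}) \ar[r] \ar[d]_{[i]} & C^*(G) \ar[r] \ar[d] & C^*(G_V) \ar[r] \ar@{=}[d] & 0 \\
0 \ar[r] & C^*(\mathring{G}_1) \ar[r] & C^*(G_1) \ar[r] & C^*(G_V) \ar[r] & 0
}
\]
(with $[i]$ a $KK$-equivalence because its ``complement'' $C^*(G_V \times [0,1))$ is contractible), together with the fact that, when restricted to $J \hookrightarrow \Sigma^{\mathring{M}_1}(G_1)$, the pseudodifferential boundary $\mathrm{ind}^{\mathring{M}_1}(G_1)$ reduces to the $C^*$-algebraic boundary of the bottom row. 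The main obstacle is precisely this last identification, which requires a careful $KK$-theoretic diagram chase to match two a priori different boundary maps; however no analysis is needed, only formal manipulations with mapping cones and six-term exact sequences.
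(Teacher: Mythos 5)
Your overall strategy is sound and genuinely different from the paper's. The paper argues geometrically: it concatenates $F^0_{V\times[0,1]}$ with the reversed-then-forward path $F'_{V\times[1,2]}$ into a single fully elliptic element over $M_2 = M\cup V\times[0,2]$, reads off the two components of its index in $K_0(C^*(\mathring{G}_1))\oplus K_0(C^*(G_V\times(1,2)))$ (giving $\mathrm{Ind}(\sigma_M,F^0)$ and the difference class respectively), and then pushes forward along the inclusion $j$ into $C^*(\mathring{G}_2)$ to recover $\mathrm{Ind}(\sigma_M,F^1)$; the identity $\partial^{M\setminus V}(G)=[j]\otimes[i_2]^{-1}$ is then quoted from the standard mapping-cone description of connecting elements (Blackadar). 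You instead work entirely at the level of $K_1$ of the full symbol algebra, lift $[u(F^1)]-[u(F^0)]$ to $K_1(J)$ for $J=\ker\bigl(\Sigma^{\mathring{M}_1}(G_1)\to C(\mathfrak{S}^*G)\bigr)$, identify $J$ with the mapping cone of $\sigma_V$ so that $K_1(J)\simeq K_1(C^*(G_V))$, and conclude by naturality of index maps for the nested extensions with common ideal $C^*(\mathring{G}_1)$. Your route is arguably cleaner in that it isolates exactly where the difference class lives; the paper's route is more concrete and makes the role of the concatenated path (\ref{diffpath}) visible. Note, however, that your step identifying the lifted class with $[F^1_{V\times[0,1]}-F^0_{V\times[0,1]}]$ is not merely ``tracing through'': the difference class is \emph{defined} as an index over $G_V\times(0,1)$, so you must match the mapping-cone isomorphism $K_1(J)\simeq K_1(C^*(G_V))$ against the connecting map of $0\to C^*(G_V\times(0,1))\to\overline{\Psi^0_c(G_V\times[0,1])}\to\Sigma^{V\times(0,1)}(G_V\times[0,1])\to 0$; this is standard but is precisely the content of the concatenation construction and should be spelled out.

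One ingredient is concretely wrong as written: in your final comparison diagram the middle vertical arrow $C^*(G)\to C^*(G_1)$ is not a $*$-homomorphism. Since $M\subset M_1$ is a \emph{closed} saturated subset, the natural map is the restriction $C^*(G_1)\to C^*(G)$, going the other way; and if you reverse all the arrows, the right-hand square fails to commute because the top quotient restricts to $V=V\times\{0\}$ while the bottom quotient restricts to $V\times\{1\}$. The correct replacement is exactly the fact the paper invokes: $C^*(\mathring{G}_1)$ is the mapping cone of $C^*(G)\to C^*(G_V)$, and for any semisplit extension $0\to J\to A\xrightarrow{\phi}B\to 0$ one has $[\partial_\phi]=[j]\otimes[i]^{-1}$ with $j:B\otimes C_0((0,1))\to C_\phi$ the inclusion of the suspension and $[i]\in KK(J,C_\phi)$ the canonical $KK$-equivalence. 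With that substitution (and with the naturality argument for the inclusion of extensions $0\to C^*(\mathring{G}_1)\to C^*(G_1)\to C^*(G_V)\to 0$ into the pseudodifferential extension, which does commute on the nose), your proof closes up.
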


\begin{proof}
We use the notations 
\begin{itemize}
\item $M_t := M \cup_{V \times \{0\}} V \times [0, t]$ and $\mathring{M}_t := M \cup_{V \times \{0\}} V \times [0, t)$, 
\item $G_t := G \cup_{V \times \{0\}} G_V \times [0, t] \rightrightarrows M_t$ and $\mathring{G}_t := G_t|_{\mathring{M}_t}$
\item The inclusion which gives a $KK$-equivalence
$
i_t : C^*(G_{M\setminus V}) \to C^*(\mathring{G}_t)
$. 
\end{itemize}
for $t > 0$. 
Consider the path of operators $F'_{V \times [0, 1]}$ defined in (\ref{diffpath}). 
We change the parameters $t\in [0, 1] \mapsto t+1 \in [1, 2]$ and consider it as an operator $F'_{V \times [1, 2]} \in \overline{\Psi_c^0(G_V \times [1, 2])}$. 
By construction the union $F^0_{V \times [0, 1]} \cup_{V \times \{1\}} F'_{V \times [1, 2]}$ is a continuous path of elliptic operators and defines an element in $\overline{\Psi_c^0(G_V \times [0, 2])}$. 
Denote $\sigma_{M_2} = \sigma_M \cup \sigma_{V \times [0, 2]}(F^0_{V \times [0, 1]} \cup F'_{V \times [1, 2]}) \in C(\mathfrak{S}^*(G_2))$. 
The pair $(\sigma_{M_2}, F^0_{V \times \{1\}} \sqcup F'_{V \times \{2\}})$ gives an element in $\Sigma^{M_2 \setminus V \times \{1, 2\}}(G_2)$. 
By construction, this is invertible. 
Thus we get the index class
\begin{align}\label{relative_eq1}
\mathrm{Ind}_{M_2 \setminus V \times \{1, 2\}}(\sigma_{M_2}, F^0_{V \times \{1\}} \sqcup F'_{V \times \{2\}}) &\in K_0(C^*(G_2|_{M_2 \setminus V \times \{1, 2\}})) \notag \\
&= K_0(C^*(\mathring{G}_1)) \oplus K_0(C^*(G_V \times (1, 2))). 
\end{align}
We denote by $p_{\mathring{M}_1}$ and $p_{V \times (1,2)}$ the projections to the first and second factor on the group appearing in the right hand side of the above equation (\ref{relative_eq1}). 
By construction, we have
\begin{align*}
p_{\mathring{M}_1} (\mathrm{Ind}_{M_2 \setminus V \times \{1, 2\}}(\sigma_{M_2}, F^0_{V \times \{1\}} \sqcup F'_{V \times \{2\}})) &= 
\mathrm{Ind}_{M \setminus V}(\sigma_M, F_{V \times [0, 1]}^0) \otimes [i_1] \in K_0(C^*(\mathring{G}_1)). \\
p_{V \times (1,2)}(\mathrm{Ind}_{M_2 \setminus V \times \{1, 2\}}(\sigma_{M_2}, F^0_{V \times \{1\}} \sqcup F'_{V \times \{2\}})) &=
[F^1_{V \times [0, 1]} - F^0_{V \times [0, 1]}] \in K_0(C^*(G_V \times (1,2))). 
\end{align*}
Moreover, we see that under the inclusion
\[
j: C^*(\mathring{G}_1) \oplus C^*(G_V \times (1,2)) \to 
C^*(\mathring{G}_2), 
\]
we have
\begin{align*}
\mathrm{Ind}_{M \setminus V}(\sigma_M, F_{V \times [0, 1]}^1) \otimes [i_2] &=\mathrm{Ind}_{\mathring{M}_2}(\sigma_M, F'_{V \times \{2\}}) \\
&= \mathrm{Ind}_{M_2 \setminus V \times \{1, 2\}}(\sigma_{M_2}, F^0_{V \times \{1\}} \sqcup F'_{V \times \{2\}})
\otimes [j] . 
\end{align*}
So we have
\begin{align*}
\mathrm{Ind}_{M \setminus V}(\sigma_M, F_{V \times [0, 1]}^1) &= \mathrm{Ind}_{M_2 \setminus V \times \{1, 2\}}(\sigma_{M_2}, F^0_{V \times \{1\}} \sqcup F'_{V \times \{2\}})
\otimes [j] \otimes [i_2]^{-1} \\
&= \mathrm{Ind}_{M \setminus V}(\sigma_M, F_{V \times [0, 1]}^0)  +  [F^1_{V \times [0, 1]} - F^0_{V \times [0, 1]}]\otimes [j] \otimes [i_2]^{-1}.  
\end{align*}
Thus it is enough to show that $\partial^{M \setminus V}(G) = [j] \otimes [i_2]^{-1} \in KK(C^*(G_V \times ( 1,2)), C^*(G_{M \setminus V}))$. 
But this is well-known, since in general the element $[\partial_{\phi}] \in KK^1(B, J)$ associated to an extention of $C^*$-algebra
\[
0 \to J \to A \xrightarrow{\phi} B \to 0,
\]
where $B$ is nuclear, is given by $[\partial_\phi] =[j]\otimes [i]^{-1}$, where
$
j :  B \otimes C_0((0, 1)) \to A \oplus_{\phi} (B \otimes C_0([0, 1))) 
$
is the inclusion and $[i] \in KK(J, A \oplus_{\phi} (B \otimes C_0([0, 1))))$ is the $KK$-equivalence (see \cite{Bla}). 

If we have two invertible perturbations $F^i_{V \times [0, 1]}$ ($i = 0, 1$) which define the same class in $\mathcal{I}(F_V)$, the difference class $[F^1_{V \times [0, 1]} - F^0_{V \times [0, 1]}]$ vanishes, so we have $\mathrm{Ind}_{M \setminus V}(\sigma_M, F_{V \times [0, 1]}^1) = \mathrm{Ind}_{M \setminus V}(\sigma_M, F_{V \times [0, 1]}^0 )$
\end{proof}

\begin{rem}\label{bdd_trans}
If we deal with a positive order elliptic operator $D\in \Psi^*_c(G)$, we consider the bounded transform $\psi(D) := D / (1 + D^*D)^{-1/2} \in \overline{\Psi^0_c(G)}$ and do the same arguments. 
More generally we can deal with an elliptic operator $F' \in \overline{\Psi^0_c(G; E_0, E_1)}$ acting between two vector bundles in an essentially the same way. 
Namely, we consider the vector bundle $E_0 \oplus E_1$ with the $\mathbb{Z}_2$ grading so that $E_0$ is the even part and $E_1$ is the odd part. 
Consider the odd self-adjoint operator on $E_0\oplus E_1$ defined as
\[
F := 
\begin{pmatrix}
0 & F' \\
F'^* & 0
\end{pmatrix}. 
\]
We construct the $C^*$-algebras with coefficients in $E_0 \oplus E_1$, such as $C^*(G; E_0\oplus E_1)$ and $\Sigma^{M \setminus V}(G; E_0 \oplus E_1)$, with the $\mathbb{Z}_2$-grading associated to the grading on $E_0\oplus E_1$. 
These $C^*$-algebras are Morita equivalent to the corresponding algebras with trivial coefficients. 
Associated to an elliptic symbol and an invertible perturbation as before, we get an odd self-adjoint invertible element
\footnote{Given a unital graded $C^*$-algebra $A$ and an odd self-adjoint unitary operator $u \in A$, we can construct a unital graded $*$-homomorphism $\mathbb{C}l_1 \to A$ by sending the generator $\epsilon$ to $u$. 
The $K_1$ class of this element, $[u] \in K_1(A) \simeq KK(\mathbb{C}l_1, A)$ is defined to be the class given by this graded $*$-homomorphism. 
The space of odd self-adjoint invertible elements on $A$ retracts to the space of odd self-adjoint unitary elements, so an odd self-adjoint invertible element also defines the class in $K_1(A)$ this way. }
$(\sigma_M, F_{V\times[0,1]}) \in \Sigma^{M \setminus V}(G_1; E_0\oplus E_1)$ (c.f. \cite[Definition 1.3]{CS}). 
Thus we get the class $[(\sigma_M, F_{V\times[0,1]})]\in K_1(\Sigma^{M_1 \setminus V \times [0, 1]}(G_1; E_0\oplus E_1))= K_1(\Sigma^{M_1 \setminus V \times [0, 1]}(G_1))$ and the same argument applies. 
\end{rem}
\subsection{The connecting elements of $G_\Phi$ and $G_e$}
In this preparatory subsection, we show that the connecting elements of the exact sequences
\begin{align*}
0 \to C^*(G_\Phi|_{\mathring{M}}) \to C^*(G_\Phi) &\to C^*(G_\Phi|_{\partial M}) \to 0 \\
0 \to C^*(G_e|_{\mathring{M}}) \to C^*(G_e) &\to C^*(G_e|_{\partial M}) \to 0. 
\end{align*}
correspond to the Poincar\'e dual to the element $[\underline{\mathbb{C}}_Y] \in KK(\mathbb{C}, C(Y))$. 
This result is used in the proof of relative formulas for $\Phi$ and $e$-indices in Proposition \ref{relative_spinc} and Proposition \ref{relative_signature}. 

\begin{lem}[The connecting elements of $G_\Phi$ and $G_e$]\label{connecting}
Consider the exact sequences
\begin{align}
0 \to C^*(G_\Phi|_{\mathring{M}}) \to C^*(G_\Phi) &\to C^*(G_\Phi|_{\partial M}) \to 0 \label{connecting_eq1} \\
0 \to C^*(G_e|_{\mathring{M}}) \to C^*(G_e) &\to C^*(G_e|_{\partial M}) \to 0. \label{connecting_eq2}
\end{align}
Denote by $\partial^{\mathring{M}}(G_\Phi) \in KK^1(C^*(G_\Phi|_{\partial M}) , C^*(G_\Phi|_{\mathring{M}}))$ and
$\partial^{\mathring{M}}(G_e) \in KK^1(C^*(G_e|_{\partial M}) , C^*(G_e|_{\mathring{M}}))$ 
the connecting elements associated to the above exact sequences. 
Denote by $\underline{\mathbb{C}}_Y : \mathbb{C} \to C(Y)$ the canonical $*$-homomorphism. 
Denote by $[\sigma_Y] \in KK(C^*(TY), \mathbb{C})$ the element which is Poincar\'e dual to $[\underline{\mathbb{C}}_Y] \in KK(\mathbb{\mathbb{C}}, C(Y))$. 
\begin{enumerate}
\item[($\Phi$)]
Under the Morita equivalence between $G_\Phi|_{\partial M} $ and $ TY \times \mathbb{R}$, the element $\partial^{\mathring{M}}(G_\Phi) \in KK^1(C^*(G_\Phi|_{\partial M}) , C^*(G_\Phi|_{\mathring{M}})) \simeq KK(C^*(TY), \mathbb{C})$ identifies with the element $[\sigma_Y]$. 
\item[($e$)]
Under the Morita equivalence between $G_\Phi|_{\partial M}$ and $ TY \rtimes \mathbb{R}_+^*$ and the $KK^1$-equivalence between $C^*(TY \rtimes \mathbb{R}_+^*)$ and $ C^*(TY) $ given by the Connes-Thom isomorphism, the element $\partial^{\mathring{M}}(G_e) \in KK^1(C^*(G_e|_{\partial M}) , C^*(G_e|_{\mathring{M}})) \simeq KK(C^*(TY), \mathbb{C})$ identifies with the element $[\sigma_Y]$. 
\end{enumerate}
\end{lem}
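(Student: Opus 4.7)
The plan is to handle case ($\Phi$) first and reduce case ($e$) to it via a Connes--Thom argument, analogous to the deformation in Proposition \ref{equality}.

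For case ($\Phi$), the submersion $\pi : \partial M \to Y$ induces a Morita equivalence of Lie groupoids $\partial M \times_\pi \partial M \times_\pi TY \times \mathbb{R} \sim TY \times \mathbb{R}$, while $\mathring M \times \mathring M \sim \{*\}$. Combined with the Bott suspension $KK^1(A \otimes C_0(\mathbb{R}), \mathbb{C}) \simeq KK(A, \mathbb{C})$, the statement reduces to identifying a specific class in $KK(C^*(TY), \mathbb{C})$ with the Poincar\'e dual $[\sigma_Y]$. My plan is to construct a smooth Lie groupoid $\mathcal{G} \rightrightarrows M \times [0,1]$, via a $DNC$-type construction applied to an appropriate saturated subgroupoid, such that $\mathcal{G}|_{M \times \{1\}} \cong G_\Phi$ and $\mathcal{G}|_{M \times \{0\}}$ is Morita equivalent to the product of the adiabatic groupoid $\mathrm{tan}(Y)$ of $Y$ with a trivial factor. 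At the $t = 0$ end the connecting element of the analogous exact sequence is, by the standard tangent-groupoid computation (see e.g.\ \cite{DL}), exactly $[\sigma_Y]$ up to the Bott element $[\beta]$ for $\mathbb{R}$. Naturality of connecting elements under the evaluation $KK$-morphisms induced by $\mathcal{G}$ at $t = 0$ and $t = 1$ then transports this identification back to $\partial^{\mathring M}(G_\Phi)$.

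For case ($e$), the boundary stratum of $G_e$ differs from that of $G_\Phi$ only by replacing the abelian factor $TY \times \mathbb{R}$ with the semidirect product $TY \rtimes \mathbb{R}_+^*$, and at the level of $C^*$-algebras this change is precisely the one implementing the Connes--Thom isomorphism $C^*(TY \rtimes \mathbb{R}_+^*) \simeq \Sigma C^*(TY)$. I would then invoke the bridging groupoid over $M \times [0,1]$ from the proof of Proposition \ref{equality}, which connects $G_\Phi$ and $G_e$, and check that it induces a $KK$-equivalence of the two extensions compatible on the boundary factor with the Connes--Thom $KK^1$-equivalence. The image of $\partial^{\mathring M}(G_\Phi) = [\sigma_Y]$ under this identification is then $\partial^{\mathring M}(G_e)$, regarded as a class in $KK(C^*(TY), \mathbb{C})$.

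The main technical obstacle I expect is the rigorous construction of the deformation groupoid $\mathcal{G}$ in case ($\Phi$): producing an honest Lie groupoid structure whose $t = 0$ fibre is Morita equivalent to $\mathrm{tan}(Y) \times \mathbb{R}$, and verifying that the resulting $KK$-morphisms intertwine the two six-term exact sequences so that naturality transports the computed KK-class, rather than only its image in $K$-theory. Once this is in place, the rest of the argument — Bott/suspension bookkeeping, functoriality of boundary maps under Morita equivalence, and, for ($e$), matching with the Connes--Thom $KK^1$-equivalence — is standard.
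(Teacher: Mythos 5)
Your strategy is sound and rests on the same two pillars as the paper's proof---the standard computation of the connecting element for the Connes tangent groupoid $\mathbb{T}Y$, and the Connes--Thom isomorphism---but it assembles them in the reverse order and with different deformations. The paper first reduces to the model case $(Y\times\mathbb{R}_+,\, id_Y)$ by observing that a collar $U\simeq\partial M\times\mathbb{R}_+$ is transverse to both groupoids and that $G_\Phi|_U$, $G_e|_U$ are pullback groupoids of the model ones, hence Morita equivalent to them; you skip this reduction, which makes your constructions over all of $M$ heavier than they need to be. The paper then treats the $e$-case \emph{first} and without any auxiliary deformation, by recognizing $G_e\simeq\mathbb{T}Y\rtimes\underline{\mathbb{R}}_+^*$ and comparing the extension for $G_e$ with that of $\mathbb{T}Y$ through the Connes--Thom $KK^1$-equivalences; the $\Phi$-case is then handled by a $3\times 3$ diagram built from $DNC_+(G_b,Y)$ together with the gauge $\mathbb{R}_+^*$-action defining $G_\Phi=SBlup_{r,s}(G_b,Y)$. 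Your route (do $\Phi$ first, then transport to $e$ via the bridging groupoid of Proposition \ref{equality}) also works: the restriction of that bridging groupoid to $\partial M\times[0,1]$ is $\partial M\times_\pi\partial M\times_\pi DNC(TY\rtimes\mathbb{R}_+^*,TY)|_{Y\times[0,1]}$, and $[ev_0]^{-1}\otimes[ev_1]$ on this stratum is precisely the $DNC$ description of the Connes--Thom element (cf.\ \cite{DS2}), which is the extra identification your version must supply and the paper's version avoids.

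The one genuinely under-specified step is your deformation groupoid for the $\Phi$-case. A one-parameter Lie groupoid $\mathcal{G}\rightrightarrows M\times[0,1]$ with $\mathcal{G}|_1=G_\Phi$ and $\mathcal{G}|_0$ Morita equivalent to $\mathbb{T}Y\times\mathbb{R}$ is delicate to write down directly, because over the interior the fibres must interpolate between the pair groupoid $\mathring M\times\mathring M$ (a single orbit, trivial isotropy) and $Y\times Y\times(0,1]\times\mathbb{R}$ (orbits $Y\times\{t\}$ with isotropy $\mathbb{R}$); this is exactly the jump that forces the extra Bott factor you mention. The paper sidesteps this by deforming the $b$-groupoid rather than $G_\Phi$ itself: the rows and columns of its $3\times3$ diagram for $\widetilde{DNC}_+(G_b,Y)$, followed by the quotient by the gauge action (another Connes--Thom/Morita step), carry out your ``naturality transports the $KK$-class'' argument in a form where every connecting element is explicitly computable. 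If you adopt the reduction to the model case and replace your hypothetical $\mathcal{G}$ by $DNC_+(G_b,Y)$ with its gauge action, your outline becomes the paper's proof of case $(\Phi)$.
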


\begin{proof}
First we show that it is enough to consider the case $M = Y \times \mathbb{R}_+$ and $\pi : \partial M = Y \times \{0\} \to Y$ is the identity map. 
Indeed, fixing a tubular neighborhood $U \simeq \partial M \times \mathbb{R}_+$ of $\partial M$ in $M$, 
$U \subset M$ is a transverse submanifold of both $G_e$ and $G_\Phi$. 
Thus the connecting element of (\ref{connecting_eq1}) is equal to the connecting element of the exact sequence
\begin{equation}\label{connecting_eq3}
0 \to C^*(G_\Phi|_{\mathring{U}}) \to C^*(G_\Phi|_U) \to C^*(G_\Phi|_{\partial M}) \to 0, 
\end{equation}
and analogously for (\ref{connecting_eq2}). 
We consider the manifold with fibered boundary $(Y \times \mathbb{R}_+, id_Y: Y \times \{0\}  \to Y)$ and denote its $\Phi$ and $e$-groupoids as $\tilde{G}_\Phi$ and $\tilde{G}_e$. 
Denote $\tilde{\pi} := \pi \times id_{\mathbb{R}_+} : U \simeq \partial M \times \mathbb{R}_+ \to Y \times \mathbb{R}_+$. 
We easily see that $G_\Phi|_U \simeq ^*\!\!\tilde{\pi}^*\tilde{G}_\Phi$ and $G_e|_U \simeq ^*\!\!\tilde{\pi}^*\tilde{G}_e$. 
Under this Morita equivalence, the connecting element of the exact sequence (\ref{connecting_eq3}) is equal to the connecting element of the corresponding exact sequence of $\tilde{G}_\Phi$, and analogously for the $e$-case. 
Thus it is enough to consider the case of manifold with fibered boundary $(Y \times \mathbb{R}_+, id_Y : Y \times \{0\} \to Y)$, as stated. 
From now on, in this proof we denote the $b$, $\Phi$ and $e$ groupoids of $(Y \times \mathbb{R}_+, id_Y : Y \times \{0\} \to Y)$ by $G_b$, $G_\Phi$ and $G_e$, respectively. 

From now on in this proof, we use symbols such as $\underline{\mathbb{R}}^*_+$ or $\hat{\mathbb{R}}_+$ in order to distinguish various $\mathbb{R}$-factors which have different roles. 
First we show in the $e$-case. 
Recall the definition of $G_e$ given in subsubsection \ref{subsubsec_groupoid_approach}; $G_e$ is defined by the spherical blowup construction of the pair groupoid $Y \times Y \times \mathbb{R}_+ \times \mathbb{R}_+ \rightrightarrows Y \times \mathbb{R}_+$ by the subgroupoid $Y \times \{(0, 0)\} \rightrightarrows Y \times \{0\}$, i.e., 
$G_e = SBlup_{r, s}(Y \times Y \times \mathbb{R}_+ \times \mathbb{R}_+, Y \times \{(0, 0)\})$. 
Recall the Connes tangent groupoid (\cite{Con}) of $Y$, $\mathbb{T}Y = TY \times \{0\} \sqcup Y \times Y \times \mathbb{R}_+^* \rightrightarrows Y \times \mathbb{R}_+$. 
Its Lie groupoid structure is described as $\mathbb{T}Y = DNC_+(Y \times Y, Y)$ (cf. \cite[section 5.3.2]{DS}). 
We easily see that $G_e \simeq \mathbb{T}Y \rtimes \underline{\mathbb{R}}_+^*$, where $\underline{\mathbb{R}}_+^* \ni \lambda$ acts on $TY$ as multiplication by $\lambda$ and on $\mathbb{R}_+^*$ as multiplication by $1/\lambda$ (cf. \cite[section 5.3.3]{DS}. Apply the construction there for $G=Y \times Y$). 
Thus we have a commutative diagram in $KK$-theory, 
\[
\xymatrix{
0 \ar[r]  &C^*(G_e|_{Y \times \mathbb{R}_+^*}) \ar[r] \ar@{-}[d]^{\simeq}&C^*(G_e) \ar[r]\ar@{-}[d]^{\simeq}  &C^*(G_e|_{Y \times \{0\}}) \ar[r]\ar@{-}[d]^{\simeq}&0
\\
0 \ar[r]  &C^*(Y \times Y \times \mathbb{R}_+^* \rtimes \underline{\mathbb{R}}_+^*) \ar[r] \ar@{-}[d]^{KK^1}&C^*(\mathbb{T}Y \rtimes \underline{\mathbb{R}}_+^*) \ar[r]\ar@{-}[d]^{KK^1}  &C^*(TY \rtimes \underline{\mathbb{R}}_+^*) \ar[r]\ar@{-}[d]^{KK^1}&0
\\
0 \ar[r]  &C^*(Y \times Y \times \mathbb{R}^*_+) \ar[r]  &C^*(\mathbb{T}Y) \ar[r] &C^*(TY) \ar[r] &0,
}
\]
where the rows are exact and the vertical maps between the middle and the bottom rows are $KK^1$-equivalences given by the Connes-Thom isomorphism. 
The connecting element of the bottom row is equal to $[\sigma_Y] \otimes [Bott]\in KK^1(C^*(TY), C_0(\mathbb{R}_+^*))$ (see \cite[Lemma 6 in Chapter 2, Section 5]{Con}), so we get the result. 

Next we prove the $\Phi$-case. 
Recall that $G_\Phi$ is defined as $G_\Phi = SBlup_{r, s}(G_b, Y)$, where we regard $Y\rightrightarrows Y$ as a subgroupoid $Y \times \{0\} \times \{0\} \rightrightarrows Y \times \{0\}$ of the groupoid $G_b = Y \times Y \times \mathbb{R} \times \{0\} \sqcup Y\times Y \times \mathbb{R}^*_+ \times \mathbb{R}^*_+ \rightrightarrows Y \times \mathbb{R}_+$.  
We define $\partial G_b := G_b|_{Y \times \{0\}} = Y \times Y \times \mathbb{R}$ and $\mathring{G}_b = G_b|_{Y \times \mathbb{R}_+^*} = Y\times Y \times \mathbb{R}^*_+ \times \mathbb{R}^*_+ $. 
Noting that $Y \times \{0\}$ is a closed saturated submanifold for $G_b$, we have a commutative diagram
\[
\xymatrix{
&0 \ar[d] &0 \ar[d] &0 \ar[d] & \\
0 \ar[r] & C^*(\mathring{G}_b \times {\mathbb{R}}_+^*) \ar[r] \ar[d] &C^*(\widetilde{DNC}_+(G_b, Y)) \ar[r]\ar[d] &C^*(\mathring{N}_Y^{G_b}) \simeq (C^*(TY \times \underline{\mathbb{R}}) \otimes C_0(\hat{\mathbb{R}}_+^*)) \ar[r] \ar[d] &0 \\
0 \ar[r] & C^*(G_b \times {\mathbb{R}}_+^*) \ar[r] \ar[d] &C^*({DNC}_+(G_b, Y))  \ar[r]\ar[d] &C^*({N}_Y^{G_b}) \simeq (C^*(TY \times \underline{\mathbb{R}}) \otimes C_0(\hat{\mathbb{R}}_+))\ar[r] \ar[d] &0 \\
0 \ar[r] & C^*(\partial G_b \times {\mathbb{R}}_+^*) \ar[r] \ar[d] & C^*({DNC}_+(\partial G_b, Y)) \ar[r]\ar[d] & C^*({N}_Y^{\partial G_b}) \simeq C^*(TY \times \underline{\mathbb{R}})\ar[r] \ar[d] &0 \\
&0 & 0 & 0 & 
}
\]
where the rows and columns are exact. 
We easily see that $DNC_+(\partial G_b, Y)\simeq \mathbb{T}Y \times \underline{\mathbb{R}} \rightrightarrows Y \times \mathbb{R}_+$, where the factor $\underline{\mathbb{R}}$ does not act on the base.
Thus the connecting element of the bottom row is equal to $[\sigma_Y]\otimes_{\mathbb{C}} id_{\underline{\mathbb{R}}} \otimes_{\mathbb{C}} [Bott] \in KK^1(C^*(TY \times \underline{\mathbb{R}}), C^*(Y \times Y  \times \underline{\mathbb{R}}\times \mathbb{R}_+^*))$. 
The connecting element of the right column is equal to $id_{C^*(TY\times\underline{\mathbb{R}})}\otimes_{\mathbb{C}} [\widehat{Bott}] \in KK^1(C^*(TY \times \underline{\mathbb{R}}), C^*(TY \times \underline{\mathbb{R}}) \otimes C_0(\hat{\mathbb{R}}_+^*))$. 
The connecting element of the left column is equal to $[\underline{Bott}]^{-1} \otimes_{\mathbb{C}} id_{\mathbb{R}_+^*}\in KK^1(C^*(\partial G_b \times {\mathbb{R}}_+^*), C^*(\mathring{G}_b \times {\mathbb{R}}_+^*)) = KK^1(C^*(\underline{\mathbb{R}})\otimes C_0(\mathbb{R}_+^*), C_0(\mathbb{R}_+^*))$ (this well-known fact is a special case of the $e$-case above). 

On the other hand, recalling that $SBlup_{r, s}$ is defined as the quotient by the $\mathbb{R}_+^*$-action on $\widetilde{DNC}_+$ (see subsubsection \ref{dnc_blup}), we have a commutative diagram in $KK$-theory, 
\[
\xymatrix{
0 \ar[r]  &C^*(G_\Phi|_{Y \times \mathbb{R}_+^*}) \ar[r] \ar@{-}[d]^{KK^1}&C^*(G_\Phi) \ar[r]\ar@{-}[d]^{KK^1}  &C^*(G_\Phi|_{Y \times \{0\}}) \ar[r]\ar@{-}[d]^{KK^1}&0
\\
0 \ar[r]  &C^*(\mathring{G}_b \times {\mathbb{R}}_+^*) \ar[r]  &C^*(\widetilde{DNC}_+(G_b, Y)) \ar[r] &C^*(\mathring{N}_Y^{G_b}) \ar[r] &0,
}
\]
where the rows are exact and vertical arrows are $KK^1$-equivalences by the composition of the Connes-Thom isomorphism and the Morita equivalence between the crossed product and the quotient. 
Combining these, we get the result. 
\end{proof}

\subsection{The definitions and relative formulas for the $\Phi$ and $e$-indices}
We apply this general construction to our settings. 

\subsubsection{Twisted $spin^c$-Dirac operators}
Here we explain the case for twisted $spin^c$-Dirac operator. 
First we give a fundamental remark on the space of $\mathbb{C}l_1$-invertible perturbations of geometric operators. 

\begin{rem}\label{rem_twisted_spinc}
Let $X$ be a closed manifold equipped with a pre-$spin^c$ structure, 
and $E \to X$ be a $\mathbb{Z}_2$-graded complex vector bundle. 
In order to define the twisted $spin^c$-Dirac operator $D^E$, we have to specify a differential $spin^c$-structure, a hermitian metric on $E$ and a unitary connection on $E$. 
However, since the space of these choices is contractible, the sets of homotopy classes of $\mathbb{C}l_1$-invertible perturbations, $\mathcal{I}(D^E)$, for two different choices are {\it canonically} isomorphic.  

An analogous remark applies when we consider a family of twisted $spin^c$-Dirac operators. 
Suppose we are given a fiber bundle $\pi : N \to Y$ whose typical fiber is a closed manifold,  a pre-$spin^c$-structure $P'_\pi$ for $\pi$, and a complex vector bundle $E\to N$. 
Choosing the additional data to define a twisted $spin^c$-Dirac operator $D^E_\pi$, we define
\[
\mathcal{I}(P'_\pi, E):= \mathcal{I}(D_\pi^E). 
\]
These sets for two different choices of additional data are canonically isomorphic. 

For a family of signature operators analogous remark applies. 
Suppose a fiber bundle $\pi : N \to Y$ whose typical fiber is a closed manifold, is oriented, and
let $E \to N$ be a $\mathbb{Z}_2$-graded hermitian vector bundle. 
We define 
\[
\mathcal{I}^{\mathrm{sign}}(\pi, E):= \mathcal{I}(D^{\mathrm{sign}, E}_\pi)
\]
where $D_\pi^{\mathrm{sign}, E}$ is the twisted signature operator defined by any fiberwise metric, hermitian metric on $E$ and unitary connection on $E$. 
 
\end{rem}

Let $(M, \pi : \partial M^{\mathrm{ev}} \to Y^{\mathrm{odd}}, E \to M)$ be a compact manifold with fibered boundaries, equipped with a complex vector bundle. 
The data needed to define the index are the following. 
\begin{enumerate}
\item[(D1)] Pre-$spin^c$ structures $P'_M$ and $P'_Y$ on $TM$ and $TY$, respectively. 
These induce a pre-$spin^c$ structure on $T^V\partial M$, denoted by $P'_\pi$. 
\item[(D2)]A  homotopy class of $\mathbb{C}l_1$-invertible perturbation $Q_\pi \in \mathcal{I}(P'_\pi, E)$. 
\end{enumerate}

The additional data needed to construct operators are as follows. 
\begin{enumerate}
\item[(d1)] A differential $spin^c$ structure on $\mathfrak{A}G_\Phi$ ($\mathfrak{A}G_e$) such that  
\begin{itemize}
\item it is compatible with the pre-$spin^c$ structures in (D1). 
\item it has a product structure with respect to the decomposition $\mathfrak{A}G_\Phi|_{\partial M} = T^V\partial M \oplus \pi^*TY \oplus \mathbb{R}$ ($\mathfrak{A}G_e|_{\partial M} = T^V\partial M \oplus \pi^*TY \oplus \mathbb{R}$) at the boundary. 
\end{itemize}
\label{twistedd1}
\item[(d2)] A hermitian structure on $E$ and a unitary connection $\nabla^E$.  
Denote the fiberwise twisted $spin^c$-Dirac opeartor $D_\pi^E$. 
\label{twistedd2}
\item[(d3)] A family of operators $\tilde{D}^E_\pi \in \tilde{\mathcal{I}}_{\mathrm{sm}}(D^E_\pi)$ which is a representative of the class $Q_{\pi} \in \mathcal{I}(P'_\pi, E)$ in (D2). 
\end{enumerate}

Let us denote by $D^{S\hat{\otimes}E }_\Phi$ and $D^{S\hat{\otimes}E }_e$ the twisted spin Dirac operators constructed from the above data, respectively. 
Recall that, under the assumption (d1) above, the restriction of $D^{S\hat{\otimes}E}_\Phi$ to $G_\Phi|_{\partial M}$ is given by a family $\{D^E_y\}_{y \in Y}$ parametrized by $Y$, of the form
\begin{align*}
D^E_y &: C_c^\infty(\pi^{-1}(y) \times T_y Y \times \mathbb{R}; (S(\pi^{-1}(y)) \hat{\otimes}E|_{\pi^{-1}(y)})\hat{\otimes}S(T_yY \times \mathbb{R})) \\
&\to C_c^\infty(\pi^{-1}(y) \times T_y Y \times \mathbb{R}; (S(\pi^{-1}(y)) \hat{\otimes}E|_{\pi^{-1}(y)})\hat{\otimes}S(T_yY \times \mathbb{R})) \\
D^E_y &= D^E_{\pi^{-1}(y)} \hat{\otimes} 1 + 1 \hat{\otimes} D_{T_yY \times \mathbb{R}}. 
\end{align*}
as in (\ref{twisted_spinc_op}). 

Using the $\mathbb{C}l_1$-invertible perturbation $\tilde{D}^E_\pi=\{\tilde{D}^E_{\pi^{-1}(y)}\}_{y \in Y}$ in the data (d3) above, we define an operator $\tilde{D}^{S\hat{\otimes}E}_{\Phi, \partial M} \in \Psi^1_c(G_\Phi|_{\partial M}; S|_{\partial M} \times E|_{\partial M})$ as a family $\{\tilde{D}^E_y\}_{y \in Y}$, given by
\[
\tilde{D}^E_y := \tilde{D}^E_{\pi^{-1}(y)} \hat{\otimes} 1 + 1 \hat{\otimes} D_{T_yY \times \mathbb{R}}. 
\]
This gives an invertible operator on $G_\Phi|_{\partial M}$, which satisfies $D^{S \hat{\otimes} E}_\Phi|_{\partial M} - \tilde{D}^{S\hat{\otimes}E}_{\Phi, \partial M} \in\Psi^0_c(G_\Phi|_{\partial M}; S|_{\partial M})$. 
It is easy to see that the class $[\tilde{D}^{S\hat{\otimes}E}_{\Phi, \partial M}] \in \mathcal{I}(D^{S\hat{\otimes}E}_\Phi|_{\partial M})$ does not depend on the choice of the explicit operator $\tilde{D}^E_\pi$ representing the class $Q_\pi \in \mathcal{I}(P_\pi, E)$. 
Applying the bounded transform, it defines a class
\begin{equation}\label{fullsymb_perturbation_spinc}
[(\sigma_M(D^{S\hat{\otimes}E}_\Phi), \psi(\tilde{D}^{S\hat{\otimes}E}_{\Phi, \partial M}))] \in K_1(\Sigma^{\mathring{M}}(G_\Phi)). 
\end{equation}
This class only depends on the data (D1) and (D2), and does not depend on the additional data (d1), (d2), or (d3). 

In the $e$-case, $D_e|_{\partial M}$ also has the product form as in (\ref{fredholmness_op2}), so we define an invertible operator $\tilde{D}^{S\hat{\otimes}E}_{e, \partial M}$ in an analogous way. 

\begin{defn}\label{def_index_perturbation_spinc}
Given the data (D1) and (D2) as above, choose any additional data (d1), (d2) and (d3). 
We define the $\Phi$ and $e$-indices, defined by the boundary fiberwise invertible perturbations as 
\begin{align*}
\mathrm{Ind}_\Phi(P'_M,P'_Y,E, Q_\pi) &:= \mathrm{Ind}_{\mathring{M}}(\sigma_M(D^{S\hat{\otimes}E}_\Phi), \psi(\tilde{D}^{S\hat{\otimes}E}_{\Phi, \partial M})) \in K_0(C^*(G_\Phi|_{\mathring{M}})) \simeq \mathbb{Z}, \\
\mathrm{Ind}_e(P'_M,P'_Y,E, Q_\pi)&:= \mathrm{Ind}_{\mathring{M}}(\sigma_M(D^{S\hat{\otimes}E}_\Phi), \psi(\tilde{D}^{S\hat{\otimes}E}_{\Phi, \partial M})) \in K_0(C^*(G_e|_{\mathring{M}})) \simeq \mathbb{Z}. 
\end{align*} 
This number only depends on the data (D1) and (D2), and does not depend on the additional data (d1), (d2), or (d3).
\end{defn}

For this index we also have the equality 
\begin{equation}\label{equality_perturbation}
\mathrm{Ind}_\Phi(P'_M,P'_Y, E, Q_\pi) = \mathrm{Ind}_e(P'_M,P'_Y,E, Q_\pi)
\end{equation}
as in Proposition \ref{equality}. 
Also, similar results to Proposition \ref{gluing} and \ref{vanishing} hold in this case. 
For the vanishing formula, the assumption becomes that ``the fibration extends to the whole manifold and the fiberwise invertible perturbation extends to the whole family''. 
We give the precise formulation of these properties, as follows. 

\begin{prop}[The gluing formula]\label{gluing_perturbation}

We consider the following situations. 
\begin{itemize}
\item Let $(M^0, \pi^0 : \partial M^0 \to Y^0, E^0 \to M^0)$ and $(M^1, \pi^1 : \partial M^1 \to Y^1, E^1 \to M^1)$ be manifolds with fibered boundaries equipped with complex vector bundles. 
\item Assume we are given data $(P'_{M^i}, P'_{Y^i},  Q_{\pi^i})$ satisfying the conditions (D1) and (D2) above for each $i = 0, 1$.  
\item Assume that on some components of $\partial M^0$ and $-\partial M^1$, we are given isomorphisms of the data $(\pi^i, P'_{M^i}, P'_{Y^i}, E^i, Q_{\pi^i})$ restricted there.  
\item Let us denote $(M, \pi' : \partial M \to Y')$ the manifold with fibered boundary obtained by identifying isomorphic boundary components.  
This manifold is equipped with data $( P'_{M}, P'_{Y'},  E, Q_{\pi'})$ induced from those on $M^0$ and $M^1$. 
\end{itemize}
Then, we have
\begin{align*}
\mathrm{Ind}_\Phi(P'_M,P'_{Y'}, E, Q_{\pi'})&=
\mathrm{Ind}_e(P'_M,P'_{Y'}, E, Q_{\pi'})\\
 &= \mathrm{Ind}_e(P'_{M^0},P'_{Y^0}, E^0, Q_{\pi^0}) + \mathrm{Ind}_e(P'_{M^1},P'_{Y^1},E^1, Q_{\pi^1}) 
\end{align*}
\end{prop}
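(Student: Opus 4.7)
The first equality is an instance of \eqref{equality_perturbation} applied to $(M, \pi')$, so I focus on the second. The strategy mimics the groupoid deformation argument of Proposition \ref{gluing}, the novelty being that invertibility on the boundary region is furnished by the fiberwise perturbation rather than by the fiberwise Dirac operator itself. For notational clarity assume the identification glues one boundary component $H^0 \simeq -H^1$, and let $H$ denote its image in $M$; then $H$ is an interior hypersurface with a fiber bundle $\pi_H : H \to Y_H$ induced from the $\pi^i$. The general case is identical.

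The first step is to build a deformation groupoid $\mathcal{G} \rightrightarrows M \times [0,1]$ from the Lie algebroid $\mathfrak{A}$ whose sections are the vector fields $V \in C^\infty(M \times [0,1]; TM \times [0,1])$ which are tangent to $T^V\partial M$ along $\partial M \times [0,1]$ and tangent to $T^VH$ along $H \times \{0\}$. This Lie algebroid is almost injective, so by \cite{D} it integrates to a Lie groupoid $\mathcal{G}$ with $\mathcal{G}|_{M \times \{t\}} \simeq G_e$ of $(M, \pi')$ for $t \in (0,1]$ and $\mathcal{G}|_{M \times \{0\}} \simeq G_e^0 \cup_H G_e^1$. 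I choose any extension of the pre-$spin^c$-structures and of the hermitian bundle with connection over $M \times [0,1]$, together with a compatible differential $spin^c$-structure on $\mathfrak{A}$; this yields a twisted $spin^c$-Dirac operator $\mathcal{D}^{S \hat{\otimes} E}$ on $\mathcal{G}$.

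Next set $V := \partial M \times [0,1] \cup H \times \{0\}$, a closed saturated subset of $M \times [0,1]$ for $\mathcal{G}$. I construct a $\mathbb{C}l_1$-invertible perturbation of $\mathcal{D}^{S \hat{\otimes} E}|_V$ as follows: on $\partial M \times [0,1]$ extend $\tilde{D}^{S \hat{\otimes} E}_{\pi', \partial M}$ constantly in $t$, and on $H \times \{0\}$ use the common perturbation $\tilde{D}^E_{\pi_H}$, which is well-defined because the gluing hypothesis forces $Q_{\pi^0}|_H = Q_{\pi^1}|_H$. At $H \times \{0\}$ the restricted operator has the product form $\tilde{D}^E_{\pi_H} \hat{\otimes} 1 + 1 \hat{\otimes} D_{TY_H \rtimes \mathbb{R}_+^*}$ and is therefore invertible. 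The general construction of subsection 4.1 then produces an index class in $K_0(C^*(\mathcal{G}|_{(M \times [0,1]) \setminus V}))$ whose image under $(ev_0)_*$ is the pair $(\mathrm{Ind}_e(P'_{M^0}, P'_{Y^0}, E^0, Q_{\pi^0}), \mathrm{Ind}_e(P'_{M^1}, P'_{Y^1}, E^1, Q_{\pi^1})) \in \mathbb{Z} \oplus \mathbb{Z}$ and whose image under $(ev_1)_*$ is $\mathrm{Ind}_e(P'_M, P'_{Y'}, E, Q_{\pi'}) \in \mathbb{Z}$.

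It remains to show that $[ev_0]^{-1} \otimes [ev_1] : \mathbb{Z} \oplus \mathbb{Z} \to \mathbb{Z}$ is addition, which is exactly the concluding argument of Proposition \ref{gluing}: $[ev_0]$ is a $KK$-equivalence because $C^*(\mathcal{G}|_{\mathring{M} \times (0,1]}) \simeq \mathbb{K} \otimes C_0((0,1])$ is contractible, and the open-subgroupoid inclusions $\mathring{M}^i \times \mathring{M}^i \hookrightarrow \mathring{M} \times \mathring{M}$ induce the identity on $K_0 \simeq \mathbb{Z}$, so their direct sum realizes addition. The main obstacle, compared with Proposition \ref{gluing}, is precisely the construction of an invertible perturbation of $\mathcal{D}^{S \hat{\otimes} E}$ on all of $\mathcal{G}|_V$ from boundary data alone: one must extend $\tilde{D}^E_{\pi'}$ constantly in $t$ and simultaneously use $\tilde{D}^E_{\pi_H}$ at $H \times \{0\}$, and then verify, in the spirit of the proof of Proposition \ref{equality}, that the product-form operator remains invertible uniformly along the deformation parameter. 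Once this invertibility is secured the remainder of the argument is a verbatim adaptation of Proposition \ref{gluing} within the relative-index framework of subsection 4.1.
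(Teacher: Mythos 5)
The paper states Proposition \ref{gluing_perturbation} without proof, only asserting that ``similar results to Proposition \ref{gluing} \dots hold in this case,'' and your proposal is exactly the intended adaptation: the same deformation groupoid over $M\times[0,1]$ as in Proposition \ref{gluing}, with the closed saturated set enlarged to $\partial M\times[0,1]\cup H\times\{0\}$, invertibility at $H\times\{0\}$ supplied by the common perturbation $\tilde D^E_{\pi_H}$ via the product form, and the concluding $[ev_0]^{-1}\otimes[ev_1]=$ addition computation carried over verbatim. This is correct and essentially the same approach the paper relies on.
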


\begin{prop}[The vanishing formula]\label{vanishing_perturbation}

We consider the following situations. 
\begin{itemize}
\item Let $(M^{\mathrm{ev}}, \partial M, \pi : \partial M \to Y^{\mathrm{odd}})$ be a compact manifold with fibered boundary, equipped with a complex vector bundle $E \to M$. 
\item Let $(P'_M, P'_{Y},  Q_{\pi})$ be data satisfying the conditions in (D1) and (D2). 
\item Assume that there exists data $(\pi', X, P'_X, Q_{\pi'})$ such that 
\begin{itemize}
\item A compact manifold $X$ with boundary $\partial X$, with a fixed diffeormorphism $\partial X \simeq Y$. 
We identify $\partial X$ with $Y$. 
\item A fiber bundle structure $\pi' : (M, \partial M) \to (X, \partial X)$ which preserves the boundary, and $\pi'|_{\partial M} = \pi$. 
Note that the typical fibers of $\pi$ and $\pi'$ are the same. 
\item A pre-$spin^c$ structure $P'_X$ on $TX$ which restricts to $P'_Y$. 
\item Assume that the induced pre-$spin^c$-structure induced on $T^VM$ restricts to $P'_\pi$ at the boundary. 
\item An element $Q_{\pi'}$ in $\mathcal{I}(P'_{\pi'}, E)$ which satisfies $Q_{\pi'}|_{\partial M} = Q_\pi$. 
\end{itemize}
\end{itemize}

Then we have
\begin{align*}
\mathrm{Ind}_\Phi(P'_M,P'_Y, E, Q_\pi)=
\mathrm{Ind}_e(P'_M,P'_Y, E, Q_\pi)&=0 . 
\end{align*}
\end{prop}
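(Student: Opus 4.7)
The plan is to extend the groupoid deformation argument of Proposition \ref{vanishing} by incorporating the $\mathbb{C}l_1$-invertible perturbations. Since the $\Phi$ and $e$-indices agree by the generalization (\ref{equality_perturbation}) of Proposition \ref{equality}, it suffices to prove vanishing in the $e$-case.

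First, I would construct the same deformation groupoid as in Proposition \ref{vanishing}, namely $\mathcal{G} := DNC(G_e, M \times_{\pi'} M)|_{M \times [0, 1]} \rightrightarrows M \times [0, 1]$, satisfying $\mathcal{G}|_{M \times \{1\}} \simeq G_e$, $\mathcal{G}|_{M \times \{0\}} \simeq M \times_{\pi'} M \times_{\pi'} E_X$ for a vector bundle $E_X \to X$, and with boundary part $\mathcal{G}|_{\partial M \times [0, 1]} \simeq \partial M \times_\pi \partial M \times_\pi DNC(TY \rtimes \mathbb{R}_+^*, Y)|_{Y \times [0, 1]}$. Choose auxiliary data (d1)--(d3) for $\mathcal{G}$: a differential $spin^c$ structure on $\mathfrak{A}\mathcal{G}$ extending $P'_X$ and compatible with the product decomposition on the closed saturated subset $M_1 := M \times \{0\} \cup \partial M \times [0, 1]$, together with a hermitian metric and unitary connection on $E$. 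These yield a twisted $spin^c$-Dirac operator $\mathcal{D}^{S \hat{\otimes} E} \in \mathrm{Diff}^1(\mathcal{G}; S(\mathfrak{A}\mathcal{G})\hat{\otimes} E)$.

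Next, I would use the extended perturbation $Q_{\pi'}$ to construct a fully elliptic perturbation on $M_1$. Choose a representative $\tilde{D}^E_{\pi'} \in \tilde{\mathcal{I}}_{\mathrm{sm}}(D^E_{\pi'})$ of $Q_{\pi'}$; by the compatibility assumption $Q_{\pi'}|_{\partial M} = Q_\pi$, we may arrange that $\tilde{D}^E_{\pi'}|_{\partial M}$ represents $Q_\pi$. On $M \times \{0\}$ the operator $\mathcal{D}^{S\hat{\otimes} E}|_{M \times \{0\}}$ is of product form $D^E_{\pi'} \hat{\otimes} 1 + 1 \hat{\otimes} D_{E_X}$, and on $\partial M \times [0,1]$ it is a family of product-form operators as in (\ref{twisted_spinc_op}) (with the Euclidean factor coming from $DNC(TY \rtimes \mathbb{R}_+^*, Y)$). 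Replacing the $D^E_{\pi'}$ factor by $\tilde{D}^E_{\pi'}$ produces an invertible operator $\tilde{\mathcal{D}}^{S \hat{\otimes} E}_{M_1}$ on $\mathcal{G}|_{M_1}$ consistent with the perturbation $\tilde{D}^{S \hat{\otimes} E}_{e, \partial M}$ at $t = 1$. This yields a class $[(\sigma_{M \times [0,1]}(\mathcal{D}^{S \hat{\otimes} E}), \psi(\tilde{\mathcal{D}}^{S \hat{\otimes} E}_{M_1}))] \in K_1(\Sigma^{\mathring{M} \times (0, 1]}(\mathcal{G}))$, and hence an index class
\[
\mathrm{Ind}_{\mathring{M} \times (0, 1]}(\mathcal{D}^{S\hat{\otimes} E}, \tilde{\mathcal{D}}^{S\hat{\otimes} E}_{M_1}) \in K_0(C^*(\mathcal{G}|_{\mathring{M} \times (0, 1]})).
\]
Since $\mathcal{G}|_{\mathring{M} \times (0, 1]} \simeq \mathring{M} \times \mathring{M} \times (0, 1]$, the $K$-group on the right is $K_0(\mathcal{K}(L^2(\mathring{M})) \otimes C_0((0, 1])) = 0$, so this index class is zero.

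Finally, I would use the evaluation $\mathrm{ev}_1 : C^*(\mathcal{G}|_{\mathring{M} \times (0, 1]}) \to C^*(G_e|_{\mathring{M}})$ and the compatible restriction of the full symbols to conclude $(\mathrm{ev}_1)_* \mathrm{Ind}_{\mathring{M} \times (0, 1]}(\mathcal{D}^{S\hat{\otimes} E}, \tilde{\mathcal{D}}^{S\hat{\otimes} E}_{M_1}) = \mathrm{Ind}_e(P'_M, P'_Y, E, Q_\pi)$, giving the desired vanishing. The main obstacle is verifying that the invertible perturbation on $M_1$ can be chosen consistently across the two strata $M \times \{0\}$ and $\partial M \times [0, 1]$, and that the evaluation at $t = 1$ of the full symbol together with the $M_1$-perturbation recovers exactly the element of $\Sigma^{\mathring{M}}(G_e)$ used to define $\mathrm{Ind}_e(P'_M, P'_Y, E, Q_\pi)$ in Definition \ref{def_index_perturbation_spinc}; both issues reduce to matching product decompositions coming from the two $DNC$ constructions and are ensured by choosing the metrics and connections appropriately on $M_1$.
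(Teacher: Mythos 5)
Your proposal is correct and follows exactly the route the paper intends: the paper gives no separate proof of Proposition \ref{vanishing_perturbation}, stating only that it holds analogously to Proposition \ref{vanishing}, and your argument is precisely that adaptation — the same deformation groupoid $DNC(G_e, M\times_{\pi'}M)|_{M\times[0,1]}$, with the invertibility of $\mathcal{D}|_{M_1}$ now supplied by the extended perturbation $Q_{\pi'}$ rather than by invertibility of $D_{\pi'}$ itself, and vanishing because $C^*(\mathcal{G}|_{\mathring{M}\times(0,1]})$ is contractible. You also correctly flag the only points requiring care (consistency of the perturbation across the strata of $M_1$ and matching the full symbol at $t=1$ with Definition \ref{def_index_perturbation_spinc}), which are handled exactly as you indicate.
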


Next we show the relative formula for such indices. 
Recall that, for a family $D_\pi$ of $\mathbb{Z}_2$-graded self-adjoint operators parametrized by $Y$, if we are given two elements $Q_\pi^0$ and $Q_\pi^1$ in $\mathcal{I}(D_\pi)$, their difference class $[Q_\pi^1 - Q_\pi^0]$ is defined in $K^{-1}(Y)$. 
\begin{prop}[The relative formula]\label{relative_spinc}
Let $(M, \partial M, \pi)$ as before, and $Q_\pi^0$ and $Q_\pi^1$ be two elements in $\mathcal{I}(P'_\pi ,E)$. 
Then we have
\begin{align*}
&\mathrm{Ind}_\Phi(P'_M, P'_Y,E, Q_\pi^1) - \mathrm{Ind}_\Phi(P'_M, P'_Y, E, Q_\pi^0) \\
&=\mathrm{Ind}_e(P'_M, P'_Y,  E, Q_\pi^1) - \mathrm{Ind}_e(P'_M, P'_Y, E, Q_\pi^0) =
\langle [Q_\pi^1 - Q_\pi^0], [D_Y]\rangle. 
\end{align*}
Here $[D_Y] \in K_1(Y)$ is the class of $spin^c$-Dirac operator on $Y$ defined by the data (D1) and (D2), and $\langle {\cdot},{\cdot}\rangle : K^1(Y) \otimes K_1(Y) \to \mathbb{Z}$ denotes the index pairing. 
\end{prop}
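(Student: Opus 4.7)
The strategy is to apply the general relative formula (Proposition \ref{gen_relative}) to $G=G_\Phi$ and $V = \partial M$, and then unpack the boundary difference class using the product structure of the boundary operator together with Lemma \ref{connecting}. Setting $\tilde F^i_{\partial M} := \psi(\tilde D^{S\hat\otimes E, i}_{\Phi, \partial M})$, Proposition \ref{gen_relative} gives
\[
\mathrm{Ind}_\Phi(Q_\pi^1) - \mathrm{Ind}_\Phi(Q_\pi^0)
= [\tilde F^1_{\partial M} - \tilde F^0_{\partial M}] \otimes_{C^*(G_\Phi|_{\partial M})} \partial^{\mathring{M}}(G_\Phi) \in K_0(C^*(G_\Phi|_{\mathring M})) \simeq \mathbb{Z}.
\]
I would then compute the two factors on the right separately.

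For the boundary difference class, the idea is to exhibit it as an exterior Kasparov product. By Fact \ref{affinestr}, choose a smooth path $\tilde D^{E,t}_\pi$ of self-adjoint odd perturbations of $D^E_\pi$ over $Y \times [0,1]$ joining $\tilde D^{E,0}_\pi$ and $\tilde D^{E,1}_\pi$, whose family index class on $\partial M \times_\pi \partial M \times (0,1)$ realizes $[Q^1_\pi - Q^0_\pi] \in K^{-1}(Y)$. Because the construction of $\tilde D^{S\hat\otimes E,i}_{\Phi, \partial M}$ from $\tilde D^{E,i}_\pi$ depends only on $\tilde D^{E,i}_\pi$ plus the fixed second factor, the path $\psi(\tilde D^{E,t}_\pi \hat\otimes 1 + 1 \hat\otimes D_{TY\times\mathbb{R}})$ is an admissible interpolation in the sense of Remark \ref{rem_cpt_diff_class} between $\tilde F^0_{\partial M}$ and $\tilde F^1_{\partial M}$. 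Since the second tensor factor is independent of $t$, the difference class factors, under the $C(Y)$-linear Morita equivalence $C^*(G_\Phi|_{\partial M}) \sim_{C(Y)} C^*(TY \times \mathbb{R})$, as the exterior Kasparov product
\[
[\tilde F^1_{\partial M} - \tilde F^0_{\partial M}] = [Q^1_\pi - Q^0_\pi] \hat\otimes_{C(Y)} [D_{TY\times\mathbb{R}}] \in K_1(C^*(G_\Phi|_{\partial M})),
\]
where $[D_{TY\times\mathbb{R}}]$ is the symbol class of the fiberwise Euclidean Dirac operator over $Y$.

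Now apply Lemma \ref{connecting}($\Phi$) to identify $\partial^{\mathring{M}}(G_\Phi)$ with $[\sigma_Y] \in KK(C^*(TY), \mathbb{C})$, the Poincar\'e dual of $[\underline{\mathbb{C}}_Y]$. The Kasparov composition $[D_{TY\times\mathbb{R}}] \otimes_{C^*(TY)} [\sigma_Y] \in KK^1(C(Y),\mathbb{C}) = K_1(Y)$ is by construction the $K$-homology class $[D_Y]$ of the $spin^c$-Dirac operator on $Y$: this is the standard $K$-theoretic incarnation of Poincar\'e duality, in which the Dirac symbol on the cotangent bundle, composed with the fundamental class, recovers the Dirac operator itself. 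Composing everything,
\[
\mathrm{Ind}_\Phi(Q^1) - \mathrm{Ind}_\Phi(Q^0)
= [Q^1_\pi - Q^0_\pi] \otimes_{C(Y)} [D_Y]
= \langle [Q^1_\pi - Q^0_\pi], [D_Y]\rangle,
\]
as required. For the $e$-case, the same argument works using Lemma \ref{connecting}($e$) in place of ($\Phi$), with the inserted Connes-Thom isomorphism being absorbed into the pairing; alternatively, one simply invokes the equality $\mathrm{Ind}_\Phi = \mathrm{Ind}_e$ recorded in \eqref{equality_perturbation}.

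The main obstacle is justifying the factorization of the boundary difference class as an exterior Kasparov product and then the identification $[D_{TY\times\mathbb{R}}] \otimes [\sigma_Y] = [D_Y]$ via Poincar\'e duality; once these are in hand, the rest is essentially a formal consequence of Proposition \ref{gen_relative} and Lemma \ref{connecting}.
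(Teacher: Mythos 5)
Your proposal is correct and follows the same skeleton as the paper's proof: reduce to the boundary via Proposition \ref{gen_relative}, factor the boundary difference class as a product over $C(Y)$, and convert $\partial^{\mathring{M}}$ into the Poincar\'e duality element via Lemma \ref{connecting}. The one substantive difference is that you work directly with $G_\Phi$, whose boundary restriction is $\partial M\times_\pi\partial M\times_\pi TY\times\mathbb{R}$, whereas the paper first invokes (\ref{equality_perturbation}) and then carries out the computation on $G_e$, where the boundary is $TY\rtimes\mathbb{R}^*_+$; this forces the paper to insert the Connes--Thom element $[th]$ to pass from $D_{TY\rtimes\mathbb{R}^*_+}$ to $D_{TY}$ before factoring. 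Your route avoids $[th]$ entirely and is, if anything, slightly cleaner, and Lemma \ref{connecting}($\Phi$) supports it. Be aware, though, that the step you flag as ``the main obstacle''---the factorization of the boundary difference class as $[Q^1_\pi-Q^0_\pi]\hat\otimes_{C(Y)}\mathrm{Ind}^Y(D_{TY\times\mathbb{R}})$---is exactly where the paper's proof does its real work: it writes out both sides as explicit Kasparov modules (cutting off with an odd function $\psi'$ that is $\pm1$ outside the spectral gap of the $\tilde{D}^{E,i}_\pi$), observes that the product is computed fiberwise in $\mathcal{R}KK(Y;\cdot,\cdot)$ because everything commutes with $C(Y)$, and then invokes the standard external-product criterion of \cite[Sections 10.7--10.8]{HR}. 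Your final identification $\mathrm{Ind}^Y(D_{TY\times\mathbb{R}})\otimes_{C^*(TY)}[\sigma_Y]=[D_Y]$ matches the paper's computation via the multiplication element $m$ and the fact that $\mathrm{Ind}(D_{TY})$ is Poincar\'e dual to $[D_Y]$; just note that some care with gradings (the paper passes to ungraded modules for the odd-dimensional $Y$) is needed to make that line rigorous.
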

\begin{proof}
\if0
By the general relative formula \ref{gen_relative}, it is enough to show that the difference class of the invertible perturbations $(D_\pi + tA^i_\pi) \hat{\otimes} 1 + 1 \hat{\otimes}D_{TY \times \mathbb{R}}$ for $i = 0, 1$, defined in $K_1(C^*(G_\Phi|_{\partial M})) (\simeq K_1(C^*(TY \times \mathbb{R})) \simeq K_0(Y))$, maps to $<[\tilde{D}_{\pi, 1} - \tilde{D}_{\pi, 1}], [D_Y]>$ under the boundary map $\partial_{M, \partial M} : K_1(C^*(G_\Phi|_{\partial M})) \to K_0(C^*(G_\Phi|_{\mathring{M}}))$. 
\fi
\if0
If we denote the Kasparov product $\otimes_{Y \times (0, 1)} : KK(C(Y) , C_0(Y \times(0, 1)) \otimes KK(C_0(Y \times (0, 1)), \mathbb{C})\to KK(C(Y), \mathbb{C})$ the difference class of these paths is by definition 
\[
[\tilde{D}_{\pi, 1} - \tilde{D}_{\pi, 1}] \otimes_{Y\times (0, 1)} [D_Y] \in KK(C(Y), \mathbb{C}). 
\]
\textcolor{red}{And if we can show that the boundary element $\partial_{M, \partial M}$ is given by $\mathbb{C} \to C(Y)$, we get the result. }
\fi

\if0 
By the gluing formula Proposition \ref{gluing}, it is enough to show the following. 

Let us consider the manifold $\partial M \times [0,1]$ with boundary $-\partial M \sqcup \partial M$ with fiber bundle structure $\pi \sqcup \pi$. 
Consider a $\Phi$-metric, whose fiberwise direction is constant : $g_\pi$. 
Denote $G_\Phi$ the $\Phi$-groupoid for this metric and $D_\Phi$ the spin Dirac operator on it.  
We introduce the $\mathbb{C}l_1$-invertible perturbation $\tilde{D}_{\pi, 0} \sqcup \tilde{D}_{\pi, 1}$ for the boundary family $D_\pi \sqcup D_\pi$. 
Then we need to show $\mathrm{Ind}(D_\Phi, \tilde{D}_{\pi, 0} \sqcup \tilde{D}_{\pi, 1}) = <[\tilde{D}_{\pi, 1} - \tilde{D}_{\pi, 0}], [D_Y]>$. 

Consider the subgroupoid $\Gamma := \partial M \times_{\pi} \partial M \times [0, 1] \rightrightarrows \partial M \times [0, 1]$ of $G_\Phi$. 
We consider 
\[
\mathcal{G} = DNC^+(G_\Phi, \Gamma) \rightrightarrows M \times \mathbb{R}_{\geq 0}
\]
Recall that we are given a splitting $T\partial M \simeq T^V\partial M \oplus \pi^*TY$. 
We get a splitting $T(\partial M \times [0,1]) \simeq T^V\partial M \oplus \pi^*T(Y \times [0, 1])$. 
This gives a splitting $T^\Phi (\partial M \times [0,1]) \simeq T^V\partial M \oplus \pi^*T^c (Y \times [0, 1])$. 
Fix a riemannian metric $g_Y$ on $Y$. 
Consider a smooth metric on $T^c(Y \times [0, 1])$ given by
\[
g_{c, Y \times[0, 1]} := \frac{dx^2}{(1-x)^4x^4} \oplus \frac{g_Y}{(1-x)^2x^2}. 
\]
Here we denoted the $[0,1]$-coordinate by $x$. 
Consider a metric $g_{\mathring{\mathcal{G}}}$ on $T^\Phi (\partial M \times [0,1]) \times \mathbb{R}^*_+ \to \partial M \times [0, 1] \times \mathbb{R}^*_+$ given by
\[
g_{\mathring{\mathcal{G}}} := g_{\pi} \oplus \frac{g_{c, Y \times [0, 1]}}{\lambda^2} \ \mbox{ on } \partial M \times [0,1] \times \{\lambda \}
\]
under the above splitting.  
By the Lemma \ref{dncmet}, this metric on $\mathfrak{A}\mathcal{G}|_{\partial M \times [0,1] \times \mathbb{R}^*_+}$ induces a smooth metric $g_{\mathcal{G}}$ on $\mathfrak{A}\mathcal{G}$, and its restriction to $\mathfrak{A}\mathcal{G}|_{\partial M \times [0,1] \times \{0\}} \simeq T^V\partial M \oplus \pi'^*T^c(Y \times [0,1])$ is given by
\[
g_{\mathcal{G}}|_{\partial M \times [0,1]\times \{0\}} = g_{\pi} \oplus g_{c, Y \times [0,1]}. 
\]

Consider the spin Dirac operator $\mathcal{D}$ on $\mathcal{G}$ associated to this metric. 
Using the given perturbation $\tilde{D}_{\pi, 0} \sqcup \tilde{D}_{\pi, 1}$, we can deform $\mathcal{D}$ to get $\tilde{\mathcal{D}}$ which satisfies
\begin{itemize}
\item The restriction to $\partial M \times [0,1]\times \{1\}$ is equal to $\tilde{D}_\Phi$. 
\item The restriction to $(-\partial M \sqcup \partial M) \times \{t\}$ is of the form $\tilde{D}_{\pi} \hat{\otimes} 1 + 1 \hat{\otimes} D_{TY \times \mathbb{R}}$, which is invertible. 
Here we denoted $\tilde{D}_{\pi} = \tilde{D}_{\pi, 0} \sqcup \tilde{D}_{\pi, 1}$. 
\end{itemize}
So the index class of $\tilde{\mathcal{D}}$ lives in
\[
\mathrm{Ind}(\tilde{\mathcal{D}}) \in K_0(C^*(\mathcal{G}|_{\partial M \times (0, 1) \times \mathbb{R}_{\geq 0}})). 
\]
and by $ev_1$ this class maps to $\mathrm{Ind}(D_\Phi, \tilde{D}_{\pi, 0} \sqcup \tilde{D}_{\pi, 1})$. 
Also we know $ev_0$ gives the $KK$-equivalence. 
The restriction of $\tilde{\mathcal{D}}$ to $\partial M \times (0, 1) \times \{0\}$ is exactly the unbounded Kasparov product of
\begin{itemize}
\item the operator $D_{T(Y \times (0, 1))}$ on the Lie groupoid $T(Y \times (0, 1)) \rightrightarrows Y \times (0, 1)$, which produces the class $[D_Y] \in K_1(Y)$ under the Poincar\'e duality and suspension, 
\item an operator $D_{\pi \times (0, 1)}$ on the Lie groupoid $\partial M \times_\pi \partial M \times (0, 1) \rightrightarrows \partial M \times (0, 1) $ which produces the class $[\tilde{D}_{\pi, 1} - \tilde{D}_{\pi, 1}] \in K^1(Y)$ under the suspension. 
\end{itemize}
Let us explain this in detail. 
Under the suspension isomorphism, the class $[D_Y] \in K_1(Y)$ corresponds to the class $[D_{Y \times (0, 1)}] \in K_0(Y \times (0, 1))$, where $D_{Y \times (0, 1)}$ is the spin Dirac operator on $Y \times (0, 1)$ with respect to the spin structure on $Y \times (0, 1)$ induced from the one on $Y$. 
Note that this class does not depend on the choice of complete riemannian metric on $Y \times (0, 1)$. 
A spin structure and a metric on $Y \times (0, 1)$ give a spin structure and metric on the Lie groupoid $T(Y \times (0, 1) ) \rightrightarrows Y \times (0, 1)$ by definition. 
Let $D_{T(Y \times (0, 1))}$ be the associated spin Dirac operator on this groupoid. 
This is a family, parametrized by $Y \times (0, 1)$, given by the Dirac operator on the spinor bundle of the Euclidean space $T_{(y, t)}(Y \times (0, 1))$ for each $(y, t) \in Y \times (0,1)$.

Since $ev_1 \circ ev_0^{-1}$ is given by the $K$-homology pushforward $K_1(Y) \to K_0(*)$,  we get the result. 
\fi

The first equality follows from (\ref{equality_perturbation}). 
Choose any additional data (d1), (d2) and (d3) to define the operator $D^E_e$. 
For each $i = 0, 1$, choose any representative $\tilde{D}^{E, i}_\pi\in \tilde{\mathcal{I}}_{\mathrm{sm}}(P'_\pi, E)$ for the class $Q_\pi^i \in \mathcal{I}(P'_\pi, E)$. 
By the general relative formula, Proposition \ref{gen_relative}, it is enough to show that the difference class of the invertible perturbations $D^{E,i}_{\partial M}:=\tilde{D}^{E, i}_\pi \hat{\otimes} 1 + 1 \hat{\otimes}D_{TY \rtimes \mathbb{R}^*_+}$ for $i = 0, 1$, defined in $K_1(C^*(G_e|_{\partial M})) (\simeq K_1(C^*(TY \rtimes \mathbb{R}^*_+)) \simeq K_0(Y))$, maps to $\langle [Q_\pi^1-Q_\pi^0], [D_Y]\rangle$ under the boundary map $\partial^{\mathring{M}} (G_e): K_1(C^*(G_e|_{\partial M})) \to K_0(C^*(G_e|_{\mathring{M}}))$. 

Consider the operator $\mathcal{D}$ on the groupoid $G_e|_{\partial M} \times [0, 1]_t \rightrightarrows \partial M \times [0, 1]_t$ defined by the family
\[
\mathcal{D}|_{\partial M \times \{t\}} := (t\tilde{D}^{E, 0}_{\pi} + (1-t)\tilde{D}^{E, 1}_{\pi}) \hat{\otimes} 1 + 1 \hat{\otimes} D_{TY \rtimes \mathbb{R}^*_+}. 
\]
The restriction to $\partial M \times \{0, 1\}$ is invertible. 
Thus we get the index class of $\mathcal{D}$ in $K_0(C^*(G_e|_{\partial M}\times (0, 1)))$, and by Remark \ref{rem_cpt_diff_class} (and also Remark \ref{bdd_trans}), the difference class of invertible perturbations $D^{E,i}_{\partial M}$ coincides with this class:
\[
[D^{E,1}_{\partial M} - D^{E,0}_{\partial M}] = \mathrm{Ind}_{\partial M \times (0, 1)}(\mathcal{D}) \in K_0(C^*(G_e|_{\partial M}\times (0, 1))). 
\]

Denote the Connes-Thom element $[th] \in KK^1(C^*(G_e|_{\partial M}), C^*(\partial M \times_\pi \partial M \times_\pi TY ))$. 
Consider the following self-adjoint ungraded operator on the groupoid $\partial M \times_\pi \partial M \times_\pi TY \times (0, 1) \rightrightarrows \partial M \times (0, 1)$:
\begin{equation}\label{eq_tildeD}
\mathcal{D}'|_{\partial M \times \{t\}} := (t\tilde{D}^{E, 0}_{\pi} + (1-t)\tilde{D}^{E, 1}_{\pi}) \hat{\otimes} 1 + 1 \hat{\otimes} D_{TY}, 
\end{equation}
for each $t\in[0, 1]$. 
This operator defines a class $\mathrm{Ind}_{\partial M \times (0, 1)}(\mathcal{D}') \in K_1(C^*(\partial M \times_\pi \partial M \times_\pi TY \times (0, 1)))$, and it satisfies
\begin{equation}\label{relative_eq2}
\mathrm{Ind}_{\partial M \times (0, 1)}(\mathcal{D}) \otimes [th] = \mathrm{Ind}_{\partial M \times (0, 1)}(\mathcal{D}'). 
\end{equation}
We consider the following elements. 
\begin{itemize}
\item $[\tilde{D}^{E, 1}_{\pi}- \tilde{D}^{E, 0}_{\pi}] \in K_0(C^*((\partial M \times_\pi \partial M) \times (0, 1)))\simeq K^1(Y)$. 
\item $[D_{Y}] \in K_1(Y)$. 
\item $\mathrm{Ind}^Y(D_{TY}) \in KK^1(C(Y), C^*(TY))$ represented by the ungraded Kasparov $C(Y)$-$C^*(TY)$ bimodule 
$(C^*(TY ; S(TY)), \mathrm{multi}, \psi(D_{TY}))$,
where multi is the multiplication by $C(Y)$. 
This is an ungraded version of (\ref{eq_ind_y}).
\item $m \in KK(C(Y)\otimes C^*(TY), C^*(TY))$ represented by the Kasparov $C(Y)\otimes C^*(TY)$-$C^*(TY)$ bimodule
$(C^*(TY), \mathrm{multi} \otimes id_{C^*(TY)}, 0)$. 
\item $[\sigma_Y] \in KK(C^*(TY), \mathbb{C})$. 
\end{itemize}
The element $m \otimes_{C^*(TY)}\sigma_Y \in KK(C(Y)\otimes C^*(TY), \mathbb{C})$ is the element which gives the Poincar\'e duality between $C^*(TY)$ and $C(Y)$. 
Also we have $\mathrm{Ind}(D_{TY})\otimes_{C^*(TY)}m  = \mathrm{Ind}^Y(D_{TY})$, since the element $\mathrm{Ind}(D_{TY}) \in KK^1(\mathbb{C}, C^*(TY))$ is represented by the Kasparov module $(C^*(TY; S(TY)), 1, D_{TY})$. 
Since $\mathrm{Ind}(D_{TY})$ is the Poincar\'e dual to $[D_Y]$, we have 
\[
[D_Y] = \mathrm{Ind}(D_{TY})\otimes_{C^*(TY)}m \otimes_{C^*(TY)}\sigma_Y = \mathrm{Ind}^{Y}(D_{TY})\otimes_{C^*(TY)}\sigma_Y.   
\]

Next, we show the following equality. 
\begin{equation}\label{eq_kas_prod}
\mathrm{Ind}_{\partial M \times (0, 1)}(\mathcal{D}') = [\tilde{D}^{E, 1}_{\pi} - \tilde{D}^{E, 0}_{\pi}] \otimes_{C(Y)}\mathrm{Ind}^Y(D_{TY}) \in KK(\mathbb{C}, C^*(TY)). 
\end{equation}
Let $c > 0$ be a positive number such that $\mathrm{Spec}(\tilde{D}^{E, i}_\pi)\cap [-c, c]$ is empty for $i = 0, 1$. 
Choose an odd continuous function $\psi'\in C([-\infty, \infty])$ such that $\psi'\equiv 1$ on $[c, \infty]$ and $\psi' \equiv -1$ on $[-\infty, -c]$. 
We see that $\psi'(\tilde{D}^{E, i}_\pi)$ and $\psi'(\tilde{D}^{E, i}_\pi\hat{\otimes} 1 + 1 \hat{\otimes} D_{TY})$ are self-adjoint unitaries for $i = 0, 1$. 
Using this, the classes appearing in (\ref{eq_kas_prod}) are represented by the Kasparov modules, 
\begin{align*}
[\tilde{D}^{E, 1}_{\pi} - \tilde{D}^{E, 0}_{\pi}] &= [(C^*(\partial M \times_\pi \partial M; S(T^V \partial M)) \otimes C_0(0, 1), 1, \{\psi'(t\tilde{D}^{E, 0}_{\pi} + (1-t)\tilde{D}^{E, 1}_{\pi})\}_t)] \\
\mathrm{Ind}_{\partial M \times (0, 1)}(\mathcal{D}') &= [( C^*(\partial M \times_\pi \partial M \times_\pi TY; S(T^VM) \hat{\otimes}_{C(Y)}S(TY))\otimes C_0(0, 1) , 1, \psi'(\mathcal{D}'))], 
\end{align*}
where the first one is graded and the second one is ungraded. 
We have $C^*(\partial M \times_\pi \partial M; S(T^V \partial M))\otimes_{C(Y)} C^*(TY ; S(TY))= C^*(\partial M \times_\pi \partial M \times_\pi TY; S(T^VM) \otimes_{C(Y)}S(TY)) $. 
Since the above operators commute with the multiplication by elements in $C(Y)$, the computation of this Kasparov product is the family version of the product over $\mathbb{C}$ (more precisely, it is the product in $\mathcal{R}KK(Y; \cdot, \cdot)$; see the paragraph preceeding Proposition \ref{prop_asymp_var} below). 
Here operators satisfy the relation (\ref{eq_tildeD}), by the same argument as in \cite[Section 10.7 and 10.8]{HR}, we get the equality (\ref{eq_kas_prod}). 

By Lemma \ref{connecting}, we know that the connecting element $\partial^{\mathring{M}}(G_e) \in KK^1(G_e|_{\partial M}, G_e|_{\mathring{M}})$ satisfies
\[
[\sigma_Y] = [th]^{-1}\otimes \partial^{\mathring{M}}(G_e)  \in KK(C^*(TY), \mathbb{C}), 
\]
Thus we have 
\begin{align*}
[D^{E,1}_{\partial M} - D^{E,0}_{\partial M}]\otimes_{C^*(TY)} \partial^{\mathring{M}}(G_e)
 &= \mathrm{Ind}_{\partial M \times (0, 1)}(\mathcal{D}) \otimes_{C^*(TY)} \partial^{\mathring{M}}(G_e) \\
&=\mathrm{Ind}_{\partial M \times (0, 1)}(\mathcal{D}') \otimes_{C^*(TY)} [th]^{-1} \otimes[th] \otimes [\sigma_Y] \\
&=[\tilde{D}^{E, 1}_{\pi} - \tilde{D}^{E, 0}_{\pi}] \otimes_{C(Y)}\mathrm{Ind}^Y(D_{TY})\otimes_{C^*(TY)} [\sigma_Y]\\
&=[\tilde{D}^{E, 1}_{\pi} - \tilde{D}^{E, 0}_{\pi}] \otimes_{C(Y)} [D_Y] \\
&=\langle [\tilde{D}^{E, 1}_{\pi} - \tilde{D}^{E, 0}_{\pi}], [D_Y]\rangle. 
\end{align*}
So we get the result. 
\end{proof}

\subsubsection{Twisted signature operators}
Here we explain in the case of twisted signature operators. 
The argument is parallel to that in the case for twisted $spin^c$-Dirac operators. 
Let $(M^{\mathrm{ev}}, \pi : \partial M \to Y^{\mathrm{odd}})$ be a compact oriented manifold with fibered boundaries equipped with a $\mathbb{Z}_2$-graded complex vector bundle $E \to M$. 
Assume we are given an element $Q_\pi \in \mathcal{I}^{\mathrm{sign}}(\pi, E)$. 
We choose additional data as in subsubsection \ref{subsubsec_twistedsign}, and define the twisted signature with respect to the fiberwise invertible perturbation, analogously as in the twisted $spin^c$ Dirac operator case. 

\begin{defn}\label{twisted_sign_perturbation_def}
Given an element $Q_\pi \in \mathcal{I}^{\mathrm{sign}}(\pi, E)$, we define 
\[
\mathrm{Sign}_\Phi(M, E, Q_\pi) \ \mbox{and }
\mathrm{Sign}_e(M, E, Q_\pi) \in \mathbb{Z},
\]
in an analogous way to that in Definition \ref{def_index_perturbation}. 
\end{defn}

We also have the equality of $\Phi$ and $e$-signatures as
\begin{equation}\label{equality_perturbation_signature}
\mathrm{Sign}_\Phi(M, E,  Q_\pi) = \mathrm{Sign}_e( M, E,  Q_\pi). 
\end{equation}
The gluing formula analogous to Proposition \ref{gluing_perturbation}, as well as the vanishing proposition analogous to Proposition \ref{vanishing_perturbation} also holds for this case. 

The relative formula for the twisted signature case is as follows. 
\begin{prop}\label{relative_signature}
Let $(M^{\mathrm{ev}}, \pi : \partial M\to Y^{\mathrm{odd}})$ as before, and $Q_\pi^0$ and $Q_\pi^1$ be two elements in $\mathcal{I}^{\mathrm{sign}}(\pi, E)$. 
Then we have
\begin{align*}
&\mathrm{Sign}_\Phi(M, E, Q_\pi^1) - \mathrm{Sign}_\Phi(M, E,  Q_\pi^0) \\
&=\mathrm{Sign}_e(M, E, Q_\pi^1) - \mathrm{Sign}_e(M,E,  Q_\pi^0) =
2\langle [Q_\pi^1 - Q_\pi^0], [D^{\mathrm{sign}}_Y]\rangle. 
\end{align*}
Here $[D^{\mathrm{sign}}_Y] \in K_1(Y)$ is the class of odd signature operator on $Y$, and $\langle{\cdot},{\cdot}\rangle : K^1(Y) \otimes K_1(Y) \to \mathbb{Z}$ denotes the index pairing. 
\end{prop}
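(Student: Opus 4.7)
The plan is to follow the proof of Proposition \ref{relative_spinc} essentially verbatim, substituting the twisted signature operators for the twisted $spin^c$-Dirac operators, and then isolate where the factor $2$ comes from. First, the equality between $\mathrm{Sign}_\Phi$ and $\mathrm{Sign}_e$ is immediate from \eqref{equality_perturbation_signature}, so only the second equality needs attention. I would choose additional data as in subsubsection \ref{subsubsec_twistedsign} and representatives $\tilde{D}^{\mathrm{sign},E,i}_\pi \in \tilde{\mathcal{I}}_{\mathrm{sm}}$ for $Q_\pi^i$, $i=0,1$, so that the $e$-boundary operators are $D^{\mathrm{sign},E,i}_{\partial M} := \tilde{D}^{\mathrm{sign},E,i}_\pi\,\hat{\otimes}\,1 + 1\,\hat{\otimes}\, D^{\mathrm{sign}}_{TY \rtimes \mathbb{R}^*_+}$. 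Applying Proposition \ref{gen_relative} exactly as in the proof of Proposition \ref{relative_spinc} reduces the problem to computing the pairing
\[
\bigl[D^{\mathrm{sign},E,1}_{\partial M} - D^{\mathrm{sign},E,0}_{\partial M}\bigr] \otimes_{C^*(G_e|_{\partial M})} \partial^{\mathring{M}}(G_e) \in K_0(C^*(G_e|_{\mathring{M}})) \simeq \mathbb{Z}.
\]

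Next, I would form the groupoid family $\mathcal{D}|_{\partial M \times \{t\}} := (t\tilde{D}^{\mathrm{sign},E,0}_\pi + (1-t)\tilde{D}^{\mathrm{sign},E,1}_\pi)\,\hat{\otimes}\,1 + 1\,\hat{\otimes}\, D^{\mathrm{sign}}_{TY \rtimes \mathbb{R}^*_+}$ on $G_e|_{\partial M} \times [0,1]$, so that its full index class realizes the boundary difference class as in Remark \ref{rem_cpt_diff_class}. Applying the Connes--Thom isomorphism $[th] \in KK^1(C^*(G_e|_{\partial M}), C^*(\partial M \times_\pi \partial M \times_\pi TY))$ converts this to an analogous class $\mathrm{Ind}_{\partial M \times (0,1)}(\mathcal{D}')$, where $\mathcal{D}'|_{\partial M \times \{t\}} = (t\tilde{D}^{\mathrm{sign},E,0}_\pi + (1-t)\tilde{D}^{\mathrm{sign},E,1}_\pi)\,\hat{\otimes}\,1 + 1\,\hat{\otimes}\, D^{\mathrm{sign}}_{TY}$ is now an ungraded family on $\partial M \times_\pi \partial M \times_\pi TY \times (0,1)$. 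The same Kasparov-product identification as in \eqref{eq_kas_prod}, using the bounded transform by a suitable odd function $\psi'$ exploiting the uniform spectral gap of $\tilde{D}^{\mathrm{sign},E,i}_\pi$, yields
\[
\mathrm{Ind}_{\partial M \times (0,1)}(\mathcal{D}') = [\tilde{D}^{\mathrm{sign},E,1}_\pi - \tilde{D}^{\mathrm{sign},E,0}_\pi] \otimes_{C(Y)} \mathrm{Ind}^Y(D^{\mathrm{sign}}_{TY}).
\]
Combining this with Lemma \ref{connecting}, which identifies $\partial^{\mathring{M}}(G_e) = [th] \otimes [\sigma_Y]$, reduces the whole expression to $[Q_\pi^1 - Q_\pi^0] \otimes_{C(Y)} \bigl(\mathrm{Ind}^Y(D^{\mathrm{sign}}_{TY}) \otimes_{C^*(TY)} [\sigma_Y]\bigr)$.

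The main remaining point, and the only place where the signature case diverges from the $spin^c$ case, is the Poincar\'e-duality type identity
\[
\mathrm{Ind}^Y(D^{\mathrm{sign}}_{TY}) \otimes_{C^*(TY)} [\sigma_Y] = 2\,[D^{\mathrm{sign}}_Y] \in K_1(Y).
\]
This is the expected source of the factor $2$. The mechanism is the pointwise Clifford module isomorphism $\Lambda^*_{\mathbb{C}}(\mathbb{R}^{2k+1}) \simeq 2\cdot (S \hat{\otimes} S^*)$ in odd dimensions, which forces the signature K-homology class on the odd-dimensional base $Y$ to be twice the corresponding spin$^c$-type class, even though in even fiber dimension (as in the $spin^c$ proof) there is no such doubling. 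I would derive this identity either directly by decomposing the signature Dirac operator on $TY$ into two copies of the analogous $spin^c$-twisted operator at each point of $Y$, or by invoking the standard K-theoretic relation $[D^{\mathrm{sign}}] = 2\,[D^{S \hat{\otimes} S^*}]$ in the odd-dimensional case and transferring it through the Poincar\'e duality diagram of Proposition \ref{relative_spinc}. This last step, i.e.\ pinning down the precise factor $2$ rather than some power of $2$, is the main obstacle; once established, the rest of the argument is a mechanical transcription of the proof of Proposition \ref{relative_spinc}.
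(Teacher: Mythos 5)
Your skeleton --- reduce to the boundary difference class via Proposition \ref{gen_relative}, pass through the Connes--Thom element, identify the result as a Kasparov product with $\mathrm{Ind}^Y$ of a model operator on $TY$, and finish with Lemma \ref{connecting} --- is exactly the paper's route; the paper simply declares the proof analogous to that of Proposition \ref{relative_spinc} and then isolates the factor $2$. The gap is in where you locate that factor and how you propose to justify it. You place it in the Poincar\'e-duality step, via the claimed identity $\mathrm{Ind}^Y(D^{\mathrm{sign}}_{TY})\otimes_{C^*(TY)}[\sigma_Y]=2\,[D^{\mathrm{sign}}_Y]$. But $[D^{\mathrm{sign}}_Y]\in K_1(Y)$ is by definition the class of the \emph{odd} signature operator ($d+d^*$ restricted to the $+1$-eigenbundle of the Hodge involution, as in \cite{RW}), and Poincar\'e duality carries the translation-invariant odd signature family on $TY$ to $[D^{\mathrm{sign}}_Y]$ with coefficient $1$, exactly as in the $spin^c$ case; there is no doubling in that step. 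If instead your $D^{\mathrm{sign}}_{TY}$ denotes the full ungraded $d+d^*$ on $\wedge_{\mathbb{C}}T^*_yY$, the identity becomes true only because ``full $=2\times$ odd'', and you would then owe a proof that your Connes--Thom step lands on the full operator with coefficient $1$ --- the same unproved claim in disguise.

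The factor $2$ actually enters one step earlier, in the analogue of (\ref{relative_eq2}). Under Connes--Thom, $\mathrm{Ind}(D^{\mathrm{sign}}_{TY\rtimes\mathbb{R}^*_+})$ corresponds to $\mathrm{Ind}(D^{\mathrm{sign}}_{TY\times\mathbb{R}})$, and the even signature operator on the odd$\times$odd product $TY\times\mathbb{R}$ decomposes as \emph{two} copies of the external product of the odd signature operators on the two factors (\cite[Lemma 6]{RW}); since $\mathrm{Ind}(D^{\mathrm{sign}}_{\mathbb{R}})$ is the Bott element, which the Connes--Thom equivalence absorbs, one is left with $2\cdot\mathrm{Ind}(D^{\mathrm{sign}}_{TY})$ for the odd signature operator on $TY$. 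Your proposed mechanisms --- the comparison $\Lambda^*_{\mathbb{C}}(\mathbb{R}^{2k+1})\simeq 2\,(S\hat{\otimes}S^*)$ and the relation $[D^{\mathrm{sign}}]=2[D^{S\hat{\otimes}S^*}]$ --- compare signature to $spin^c$ and do not address this suspension/K\"unneth issue, which is the actual content of the step; and you acknowledge yourself that you have not pinned the factor down. So the argument is incomplete precisely at its one nontrivial point, and the missing ingredient is the odd$\times$odd K\"unneth formula for signature operators.
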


\begin{proof}
The proof is analogous to that for Proposition \ref{relative_spinc}. 
The factor $2$ in the above formula is due to the following observation. 

First of all, recall the definition of odd signature operators acting on odd dimensional manifolds (\cite[Definition and Notation 1]{RW}). 
On an odd dimensional riemannian manifold $Y$, the essentially self-adjoint operator $d + d^*$ acting on $\wedge_{\mathbb{C}}T^*Y$ commutes with the Hodge star $\tau$, so we define the odd signature operator $D_Y^{\mathrm{sign}}$ to be the operator $d + d^*$ restricted to the $+1$-eigenbundle of $\tau$. 
So the total signature operator is isomorphic to the direct sum of two copies of $D_Y^{\mathrm{sign}}$. 
We define odd signature operators for Lie groupoids whose dimensions of $s$-fibers are odd dimensional analogously. 
 
The signature operator $D^{\mathrm{sign}}_{TY \rtimes \mathbb{R}^*_+}$ on the groupoid $TY \rtimes \mathbb{R}^*_+ \rightrightarrows Y$ defines a class $\mathrm{Ind}(D_{TY \rtimes \mathbb{R}^*_+}^{\mathrm{sign}})\in K_0(C^*(TY \rtimes \mathbb{R}^*_+))$. 
The signature operator $D^{\mathrm{sign}}_{TY}$ on the groupoid $TY  \rightrightarrows Y$ defines a class $\mathrm{Ind}(D_{TY}^{\mathrm{sign}})\in K_1(C^*(TY))$. 
We denote the Connes-Thom element $[th] \in KK^1(C^*(TY\rtimes \mathbb{R}^*_+), C^*(TY))$. 
Then these elements are related by
\begin{equation}\label{eq_sign_2}
\mathrm{Ind}(D_{TY \rtimes \mathbb{R}^*_+}^{\mathrm{sign}})\otimes [th] =2 \cdot\mathrm{Ind}(D_{TY}^{\mathrm{sign}})\in K_1(C^*(TY)). 
\end{equation}
Indeed, under Connes-Thom isomorphism $K_0(C^*(TY \rtimes \mathbb{R}^*_+)) \simeq K_0(C^*(TY\times \mathbb{R}))$, the element $\mathrm{Ind}(D_{TY \rtimes \mathbb{R}^*_+}^{\mathrm{sign}})$ maps to $\mathrm{Ind}(D_{TY \times \mathbb{R}}^{\mathrm{sign}})$. 
By the same argument as in the proof of \cite[Lemma 6]{RW}, we see that $2 \cdot \mathrm{Ind}(D_{TY}^{\mathrm{sign}}) \otimes \mathrm{Ind}(D_{\mathbb{R}}^{\mathrm{sign}}) = \mathrm{Ind}(D_{TY \times \mathbb{R}}^{\mathrm{sign}}) \in K_0(C^*(TY \times \mathbb{R}))$. 
Since by definition $\mathrm{Ind}(D_{\mathbb{R}}^{\mathrm{sign}})\in K_1(C^*(\mathbb{R}))$ is equal to the Bott element, (\ref{eq_sign_2}) follows. 

So the factor $2$ appears in the equation corresponding to (\ref{relative_eq2}).  
\end{proof}
\section{The index pairing}\label{sec_K}

In this section, we give a description of the indices defined above, as the index pairing on the $K$-theory ``relative to the boundary pushforward''. 
In the following, we use the following notations. 
\begin{itemize}
\item For a $C^*$-algebra $A$, the symbol $\mathcal{M}(A)$ denotes its multiplier algebra. 
\item For a $C^*$-algebra $A$ and a Hilbert $A$-module $H_A$, the symbols $\mathcal{B}(H_A)$ and $\mathcal{K}(H_A)$ denote the $C^*$-algebras of adjointable operators and compact operators on $H_A$, respectively. 
\item For a Euclidean space $E$, let us denote by $\mathbb{C}l(E)$ the $*$-algebra over $\mathbb{C}$, generated by the elements of $E$ and relations
\[
e = e^* \mbox{ and } e^2 = ||e||^2\cdot 1 \mbox{ for all } e \in E. 
\]
This construction applies to Euclidean vector bundles as well. 
\item Let us denote by $\epsilon \in \mathbb{C}l_1 = \mathbb{C}l(\mathbb{R})$ the element corresponding to the unit vector in $\mathbb{R}$. 
In other words, this element is a generator of $\mathbb{C}l_1$, which is odd, self-adjoint and unitary. 
\end{itemize}

\subsection{The case of $spin^c$-Dirac operators}\label{sec_relativeK_spin}

In this subsection, we consider the case of $spin^c$-Dirac operators. 
First we consider the following setting. 

\begin{itemize}
\item The pair $(M,\pi : \partial M \to Y)$ is a compact manifold with fibered boundary. 
\item The fiber bundle $\pi$ is equipped with a pre-$spin^c$ structure $P'_\pi$. 
\end{itemize}

In order to formulate the index pairing in this setting, we proceed in the following four steps. 
In the following, let $n$ be the dimension of the fiber of $\pi$. 
\begin{enumerate}
\item We define a $C^*$-algebra $\mathcal{A}_\pi$ whose $K$-groups fit in the exact sequence
\[
\cdots   \to K^{*}(M) \xrightarrow{\pi ! \circ i^*} {K}^{*-n}(Y) \to {K}_{*-n}(\mathcal{A}_\pi) \to {K}^{*+1}(M) \xrightarrow{\pi ! \circ i^*}  \cdots. 
\]
(Definition \ref{def_A_pi} and Proposition \ref{A_pi_longexact}). 
The groups $K_*(\mathcal{A}_\pi)$ are regarded as $K$-groups relative to the boundary pushforward. 
\item For a pair $(E, Q_\pi)$ where $E$ is a $\mathbb{Z}_2$-graded complex vector bundle over $M$ and $Q_\pi \in \mathcal{I}(P_\pi, E)$, we show that it naturally defines a class $[(E, Q_\pi)] \in K_{n-1}(\mathcal{A}_\pi)$ (Lemma \ref{lem_K_class}). 
\item Assume $n$ is even. 
For a pair $(P'_M, P'_Y)$ of pre-$spin^c$ structures on $TM$ and $TY$ which satisfies $P'_M|_{\partial M} = \pi^*P'_Y \oplus P'_\pi$, we show that it naturally defines a class $[(P'_M, P'_Y)] \in KK(\mathcal{A}_\pi, \Sigma^{\mathring{M}}(G_\Phi))$ (Definition \ref{def_Khomology_K}). 
\item We show the equality 
\[
\mathrm{Ind}_\Phi(P'_M, P'_Y, E, Q_\pi) = [(E, Q_\pi)] \otimes_{\mathcal{A}_\pi} [(P'_M, P'_Y)] \otimes_{\Sigma^{\mathring{M}}(G_\Phi)} \mathrm{ind}^{\mathring{M}}(G_\Phi) \in \mathbb{Z}. 
\]
(Theorem \ref{main_thm_K}). 
This is the desired index pairing formula. 
\end{enumerate}
The most difficult point of the proof of Theorem \ref{main_thm_K} is to relate the invertible operators $\psi(\tilde{D}_\pi^E)$ and $\psi(\tilde{D}_\pi^E \hat{\otimes}1 + 1 \hat{\otimes} D_{TY \times \mathbb{R}})$, since they are not ``directly related'', for example by a $*$-homomorphism. 
In order to overcome this difficulty, we construct a $C^*$-algebra $\mathcal{D}_\pi$ which ``connects $\mathcal{A}_\pi$ and $\Sigma^{\mathring{M}}(G_\Phi)$'' using an asymptotic morphism giving the $KK$-equivalence between $C_0(TY \oplus \mathbb{R}; \mathbb{C}l(TY \oplus \mathbb{R}))$ and $C(Y)$, 
and construct an invertible element in $\mathcal{D}_\pi$ which, under suitable $*$-homomorphisms, maps to $[(E, [\tilde{D}_\pi^E])] \in K_1(\mathcal{A}_\pi)$ and $[(\sigma(D_\Phi), \psi(\tilde{D}_\pi^E \hat{\otimes}1 + 1 \hat{\otimes} D_{TY \times \mathbb{R}})] \in K_1(\Sigma^{\mathring{M}}(G_\Phi))$. 

Let $N$ be a compact space. 
Let $\pi : N \to Y$ be a fiber bundle whose fibers have closed manifold structure, and $\pi$ is equipped with a pre-$spin^c$-structure. 
Choose any differential $spin^c$-structure representing the given pre-$spin^c$ structure (choosing any other choice, we get canonically $KK$-equivalent $C^*$-algebras below). 
Let $S(T^VN)\to N$ denote the spinor bundle of vertical tangent bundle and $D_\pi$ denote the fiberwise $spin^c$-Dirac operators acting on $S(T^VN)$. 
Let $L_Y^2(N; S(T^VN))$ denote the Hilbert $C(Y)$-module which is obtained by the completion of $C_c^\infty(N; S(T^VN))$ with the natural $C(Y)$-valued inner product. 
Note that $L_Y^2(N; S(T^VN))$ is naturally $\mathbb{Z}_2$-graded if the typical fiber of $\pi$ is even dimensional. 
In this setting we define a $C^*$-algebra $\Psi(D_\pi)$. 
We separate the definition in two cases, depending on the parity of the dimension of the fiber of $\pi$. 
\begin{enumerate}
\item Assume that the typical fiber of $\pi$ is odd dimensional. 
Define $\chi \in C([-\infty, \infty])$ as $\chi(x) :=\frac{1}{2}(1 + x/\sqrt{1 + x^2})$.  
Let $\Psi(D_\pi)$ denote the $C^*$-subalgebra of $\mathcal{B}(L_Y^2(N; S(T^VN)))$ generated by $\{\chi(D_\pi)\}$, $C(N)$ and $\mathcal{K}(L_Y^2(N; S(T^VN)))$. 
\item Assume that the typical fiber of $\pi$ is even dimensional. 
Define the odd function $\psi\in C([-\infty, \infty])$ by $\psi(x) := x/\sqrt{1+x^2}$. 
Let $\Psi(D_\pi)$ denote the $\mathbb{Z}_2$-graded $C^*$-subalgebra of $\mathcal{B}(L_Y^2(N; S(T^VN)))$ generated by $\{\psi(D_\pi)\}$, $C(N)$ and $\mathcal{K}(L_Y^2(N; S(T^VN)))$.
\end{enumerate}

\begin{lem}\label{lemK_ext}
\begin{enumerate}
\item When the typical fiber of $\pi$ is odd dimensional, 
the algebra $\Psi(D_\pi)$ fits into the exact sequence
\[
0 \to \mathcal{K}(L_Y^2(N; S(T^VN))) \to \Psi(D_\pi) \to C(N) \to 0. 
\]
The connecting element of this extension coincides with the class $\pi_! \in KK^1(C(N), C(Y))$. 
\label{lemK_odd}
\item When the typical fiber of $\pi$ is even dimensional,  
the algebra $\Psi(D_\pi)$ fits into the exact sequence of graded $C^*$-algebras
\begin{equation}\label{lemK_exact}
0 \to \mathcal{K}(L_Y^2(N; S(T^VN))) \to \Psi(D_\pi) \to C(N)\otimes \mathbb{C}l_1 \to 0. 
\end{equation}
The connecting element of this extension coincides with the class $\pi_! \in KK(C(N), C(Y))$.
\label{lemK_even} 
\end{enumerate}
\end{lem}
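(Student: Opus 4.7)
The plan is to handle the two cases in parallel, proceeding in three steps: establishing compactness properties for the generators of $\Psi(D_\pi)$, identifying the quotient $\Psi(D_\pi)/\mathcal{K}$, and identifying the connecting class with $\pi_!$.

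For the first step, one verifies that $\chi(D_\pi)^2-\chi(D_\pi)\in\mathcal{K}(L_Y^2(N;S(T^VN)))$ (case (1)) and $\psi(D_\pi)^2-1\in\mathcal{K}(L_Y^2(N;S(T^VN)))$ (case (2)), together with $[\chi(D_\pi),f]\in\mathcal{K}$ and $[\psi(D_\pi),f]\in\mathcal{K}$ for all $f\in C(N)$. These operator identities rest on continuous functional calculus applied to $\chi^2-\chi,\psi^2-1\in C_0(\mathbb{R})$, combined with the fact that $g(D_\pi)\in\mathcal{K}(L_Y^2(N;S(T^VN)))$ for every $g\in C_0(\mathbb{R})$, which follows from the fiberwise ellipticity of $D_\pi$ on the closed fibers of $\pi$ (hence fiberwise-compact resolvent) together with uniform estimates in $y\in Y$. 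For the commutators, one approximates $f$ uniformly by fiberwise-smooth functions, and uses the classical pseudodifferential fact that the commutator of a fiberwise order $0$ operator with a smooth fiberwise multiplication is fiberwise of order $-1$, hence $C(Y)$-compact on the Hilbert module.

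Once these compactness facts are in hand, the quotient $\Psi(D_\pi)/\mathcal{K}$ is identified as follows. In case (1), the image of $\chi(D_\pi)$ is a self-adjoint idempotent commuting modulo $\mathcal{K}$ with $C(N)$, and the ambient algebraic structure forces the quotient to be $C(N)$ as claimed. In case (2) the image of $\psi(D_\pi)$ is an odd self-adjoint unitary commuting with $C(N)$, which realizes the graded quotient as $C(N)\otimes\mathbb{C}l_1$ with $\psi(D_\pi)\bmod\mathcal{K}$ corresponding to $1\otimes\epsilon$; surjectivity onto the universal object follows because no further relations arise.

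The third step, identifying the connecting map with $\pi_!$, proceeds through the Kasparov--Brown--Douglas--Fillmore correspondence between (graded) $C^*$-extensions by $\mathcal{K}(L_Y^2(N;S(T^VN)))$ and bounded Kasparov $(C(N),C(Y))$-bimodules, using the Morita equivalence $\mathcal{K}(L_Y^2(N;S(T^VN)))\simeq C(Y)$. By definition $\pi_!$ is represented by the unbounded Kasparov bimodule $(L_Y^2(N;S(T^VN)),\mathrm{mult},D_\pi)$; its bounded transform is $(L_Y^2(N;S(T^VN)),\mathrm{mult},\psi(D_\pi))$, odd in case (2). The $C^*$-extension produced from this bimodule by the BDF/Paschke construction is the algebra generated by $\mathrm{mult}(C(N))$, $\psi(D_\pi)$, and $\mathcal{K}$, which coincides with $\Psi(D_\pi)$ since $\chi(D_\pi)=(1+\psi(D_\pi))/2$.

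The main obstacle will lie in this last step: carefully tracking gradings and the $C(Y)$-linear structure through the BDF correspondence (especially in case (2), to ensure one recovers exactly $\pi_!$ rather than a Clifford-twisted or sign-reversed version) and verifying that the natural isomorphisms $\mathrm{Ext}^{-1}(C(N),C(Y))=KK^1(C(N),C(Y))$ and $\mathrm{Ext}^{-1}(C(N)\otimes\mathbb{C}l_1,C(Y))\cong KK(C(N),C(Y))$ intertwine the two classes as desired.
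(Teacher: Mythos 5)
Your argument for case (2) is correct, but it identifies the connecting element by a genuinely different route from the paper. The paper does not verify the compactness relations by hand: it realizes $\Psi(D_\pi)$ as a $C^*$-subalgebra of the closure $\overline{\Psi^0_c(\Gamma; S(\mathfrak{A}\Gamma))}$ of the pseudodifferential calculus of the groupoid $\Gamma = N\times_\pi N$, reads off the quotient as the image of $\Psi(D_\pi)$ under the principal symbol map (where injectivity of $C(N)\otimes\mathbb{C}l_1 \to \Psi(D_\pi)/\mathcal{K}$ is immediate because $\sigma(\psi(D_\pi))$ is an odd self-adjoint unitary in $C(\mathfrak{S}^*\Gamma;\mathrm{End}(S(\mathfrak{A}\Gamma)))$), and then computes the connecting element by naturality of boundary maps with respect to this inclusion of extensions: it equals $[\phi]\otimes_{C(\mathfrak{S}^*\Gamma)}\mathrm{ind}^N(\Gamma)$, which is converted into $\pi_!$ via the Connes--Skandalis identity $[m]\otimes\mathrm{ind}^N(\Gamma)=[q]\otimes[p.d.]$. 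Your route — compute the Busby invariant directly and invoke the (graded) correspondence between extensions by $\mathcal{K}(L^2_Y(N;S(T^VN)))$ and Kasparov $(C(N),C(Y))$-bimodules — avoids the groupoid calculus and the appeal to the family-index/Poincar\'e-duality computation, at the cost of the grading bookkeeping in the isomorphism $KK^1(C(N)\otimes\mathbb{C}l_1,C(Y))\cong KK(C(N),C(Y))$, which you rightly single out as the delicate point; note that the paper itself uses exactly this dictionary elsewhere (Remark \ref{bdd_trans} and the identity $[\partial_\phi]=[j]\otimes[i]^{-1}$ in the proof of Proposition \ref{gen_relative}), so your approach is consistent with its toolkit. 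Both arguments are legitimate, and yours is the more self-contained of the two.

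There is, however, a genuine gap in your case (1): the claim that ``the ambient algebraic structure forces the quotient to be $C(N)$'' does not hold for the algebra as defined. The image $\bar\chi$ of $\chi(D_\pi)$ in $\Psi(D_\pi)/\mathcal{K}$ is a projection commuting with $C(N)$ but not belonging to it — its symbol is the positive spectral projection of $c(\xi)$, which genuinely depends on $\xi$ (already for $N=S^1$, $Y=\mathrm{pt}$ it is the Hardy projection) — so the quotient generated by $C(N)$ and $\bar\chi$ is $C(N)\bar\chi\oplus C(N)(1-\bar\chi)\cong C(N)\oplus C(N)$, not $C(N)$. In the graded case (2) this problem evaporates because $f+g\bar\psi=0$ splits by parity into $f=0$ and $g=0$; in the ungraded case it does not, and one must instead pass to the Toeplitz-type compression (Busby invariant $f\mapsto\pi(\chi(D_\pi)\,f\,\chi(D_\pi))$) to obtain an extension of $C(N)$ itself. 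The paper offers no proof of case (1) (it is declared ``analogous''), and only the even-fiber case is used in the sequel, so this does not propagate; but your justification for case (1) as written would fail, and you should not present the two cases as parallel.
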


\begin{proof}
We prove the case (\ref{lemK_even}). 
The case (\ref{lemK_odd}) can be proved analogously. 
Denote by $\Gamma$ the groupoid $N \times_\pi N \rightrightarrows N$. 
	Recall that we have a $\mathbb{Z}_2$-graded exact sequence
\[
0 \to C^*(\Gamma; S(\mathfrak{A}\Gamma)) \to \overline{\Psi^0_c(\Gamma; S(\mathfrak{A}\Gamma))} \xrightarrow{\sigma} C(\mathfrak{S}^*\Gamma; \mathrm{End}(S(\mathfrak{A}\Gamma)))\to 0. 
\] 
Of course we have $C^*(\Gamma; S(\mathfrak{A}\Gamma)) = \mathcal{K}(L_Y^2(N; S(T^VN)))$. 
Consider the restriction of the symbol map $\sigma$ to the $C^*$-subalgebra $\Psi(D_\pi) \subset\overline{\Psi^0_c(\Gamma; S(\mathfrak{A}\Gamma))} $. 
Its image is the $C^*$-subalgebra of $C(\mathfrak{S}^*\Gamma; \mathrm{End}(S(\mathfrak{A}\Gamma)))$, generated by $\{\sigma(\psi(D_\pi))\}$ and $C(N)$. 
Since $\sigma(\psi(D_\pi))$ is an odd self-adjoint unitary element commuting with elements in $C(N)$, we get the canonical isomorphism between this $C^*$-algebra and $C(N) \otimes \mathbb{C}l_1$, by mapping $\sigma(\psi(D_\pi))$ to the odd self-adjoint unitary generator $\epsilon \in \mathbb{C}l_1$. 
Thus we get the desired graded exact sequence (\ref{lemK_exact}).

Next we describe the connecting element of (\ref{lemK_exact}). 
We have the following commutative diagram, 
\begin{equation*}
\xymatrix{
0 \ar[r] &\mathcal{K}(H_Y)\ar[r] \ar@{=}[d] &\Psi(D_\pi) \ar[r]\ar@{^{(}->}[d]& C(N)\otimes \mathbb{C}l_1\ar[r]\ar@{^{(}->}[d]_\phi &0
\\
0 \ar[r]  &\mathcal{K}(H_Y)\ar[r]  & \overline{\Psi^0_c(\Gamma; S(\mathfrak{A}\Gamma))} \ar[r] &C(\mathfrak{S}^*\Gamma; \mathrm{End}(S(\mathfrak{A}\Gamma)))\ar[r] &0, 
}
\end{equation*}
where the rows are exact and the inclusion $\phi$ is explained above. 
The bottom row is Morita equivalent to the pseudodifferential extension for the groupoid $\Gamma$, so the connecting element is the element $\mathrm{ind}^N(\Gamma) \in KK(C(\mathfrak{S}^*\Gamma), C^*(\Gamma))$. 
By the naturality of connecting elements, the connecting element of (\ref{lemK_exact}) is equal to $[\phi]\otimes_{C(\mathfrak{S}^*\Gamma)} \mathrm{ind}^N(\Gamma)$. 

Let us consider the following $KK$-elements. 
\begin{itemize}
\item The element $[\sigma(D_\pi)] \in K^1(C(\mathfrak{S}^*\Gamma))$. 
This element coincides with the element in $KK(\mathbb{C}l_1, C(\mathfrak{S}^*\Gamma; \mathrm{End}(S(\mathfrak{A}\Gamma)))$ given by the unital $*$-homomorphism which maps $\epsilon \in \mathbb{C}l_1$ to $\sigma(\psi(D_\pi))$ (see Remark \ref{bdd_trans}). 
\item The element $[m] \in KK(C(N)\otimes C(\mathfrak{S}^*\Gamma), C(\mathfrak{S}^*\Gamma)))$ given by the $*$-homomorphism $f \otimes \xi \mapsto f\cdot \xi$. 
\end{itemize}
We see the equality $[\phi] = [\sigma(D_\pi)]\otimes_{C(\mathfrak{S}^*\Gamma)}[m]$. 
On the other hand, the element $\mathrm{ind}^N(\Gamma) \in KK^1(C(\mathfrak{S}^*\Gamma), \mathcal{K}(H_Y))\simeq KK(C_0(\mathfrak{S}^*\Gamma \times \mathbb{R}^*_+), C(Y))$ is the element giving the family index map.  
If we denote by $[p.d.]\in KK(C(N)\otimes C_0((T^VN)^*), C(Y) )$ the element which gives the fiberwise Poincar\'e duality and by $q : C_0(\mathfrak{S}^*\Gamma \times \mathbb{R}^*_+) \to C_0((T^VN)^*)$ the inclusion, we have the equation
\[
[m] \otimes_{C(\mathfrak{S}^*\Gamma)}\mathrm{ind}^N(\Gamma) = [q]\otimes_{C_0((T^VN)^*)}[p.d.] \in KK^1(C(N)\otimes C(\mathfrak{S}^*\Gamma), C(Y)).  
\]
(see \cite[pp.1159--1162]{CS}). 
Thus we see that the product $[\phi]\otimes_{C(\mathfrak{S}^*\Gamma)} \mathrm{ind}^N(\Gamma) = [\sigma(D_\pi)]\otimes_{C(\mathfrak{S}^*\Gamma)}[q]\otimes_{C_0((T^VN)^*)}[p.d.] $ is the element $\pi_!\in KK^1(C(N), C(Y))$, namely the element given by the Kasparov module
$(L_Y^2(N; S(T^VN)), \mathrm{multi}, \psi(D_\pi))$, where multi denotes the multiplication by $C(N)$. 
\end{proof}

\begin{rem}\label{rem_rigid}
As we work in $KK$-theory in this section, we only need $C^*$-algebras to be defined up to  $KK$-equivalence. 
As in Lemma \ref{lemK_ext}, in order to define $C^*$-algebras in terms of operators, we need to fix rigid structures, such as differential $spin^c$-structures. 
However, the $KK$-equivalence class is determined by homotopy equivalence class of those structures, such as pre-$spin^c$-structure (c.f. Remark \ref{rem_twisted_spinc}). 
In order to simplify the arguments, we often omit this procedure of ``choosing a rigid structure, defining algebras and forgetting the structure to get a $KK$-equivalence class'', but the reader should note that we always need such steps.  
\end{rem}

\begin{defn}[$\mathcal{A}_\pi$]\label{def_A_pi}
Let $(M,\pi : \partial M \to Y)$ be a compact manifold with fibered boundary.
Assume that $\pi$ is equipped with a pre-$spin^c$-structure. 
Denote $i : \partial M \to M$ the inclusion. 
\begin{enumerate}
\item Assume that the typical fiber of $\pi$ is odd dimensional. 
We define $\mathcal{A}_\pi$ to be the $C^*$-algebra defined by the pullback (c.f. Remark \ref{rem_rigid})
\[
\xymatrix{
\mathcal{A}_\pi \ar[r] \ar[d] \pullbackcorner & \Psi(D_\pi) \ar[d] \\
C(M) \ar[r]^{i^*} &  C(\partial M) 
}
\]

\item Assume that the typical fiber of $\pi$ is even dimensional.  
We define $\mathcal{A}_\pi$ to be the $\mathbb{Z}_2$-graded $C^*$-algebra defined by the pullback (c.f. Remark \ref{rem_rigid})
\[
\xymatrix{
\mathcal{A}_\pi \ar[r] \ar[d] \pullbackcorner & \Psi(D_\pi) \ar[d] \\
C(M)\otimes \mathbb{C}l_1 \ar[r]^{i^*} &  C(\partial M) \otimes \mathbb{C}l_1
}
\]
\end{enumerate}
\end{defn}

\begin{prop}\label{A_pi_longexact}
Let $(M,\pi : \partial M \to Y)$ be a compact manifold with fibered boundary.
Assume that $\pi$ is equipped with a pre-$spin^c$-structure. 
The $K$-groups of the $C^*$-algebra $\mathcal{A}_\pi$ naturally fits in the exact sequence
\[
\cdots  \to K^{*}(M) \xrightarrow{\pi_! \circ i^*} {K}^{*-n}(Y) \to {K}_{*-n}(\mathcal{A}_\pi) \to {K}^{*+1}(M) \xrightarrow{\pi_! \circ i^*} \cdots, 
\]
where $n$ is the dimension of the fiber of $\pi$. 
\end{prop}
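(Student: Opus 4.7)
The plan is to realize $\mathcal{A}_\pi$ as the middle term of a short exact sequence whose ideal is $\mathcal{K}(L^2_Y(\partial M; S(T^V \partial M)))$ and whose quotient is $C(M)$ (respectively $C(M) \otimes \mathbb{C}l_1$) when the fiber dimension $n$ is odd (respectively even), and then to read off the claimed long exact sequence from the induced six-term sequence in $K$-theory, using Lemma \ref{lemK_ext} to identify the connecting homomorphism as $\pi_! \circ i^*$. For uniformity, set $\epsilon \equiv n \pmod{2}$ and take $\mathbb{C}l_\epsilon$ to be $\mathbb{C}$ or $\mathbb{C}l_1$ accordingly; both cases of Definition \ref{def_A_pi} then read uniformly as a pullback of $\Psi(D_\pi)$ and $C(M) \otimes \mathbb{C}l_\epsilon$ over $C(\partial M) \otimes \mathbb{C}l_\epsilon$.

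The first step is to extract the short exact sequence. In the pullback square, the right vertical map $\Psi(D_\pi) \to C(\partial M) \otimes \mathbb{C}l_\epsilon$ is surjective with kernel $\mathcal{K} := \mathcal{K}(L^2_Y(\partial M; S(T^V \partial M)))$ by Lemma \ref{lemK_ext}. A routine diagram chase shows that this forces the left vertical map $\mathcal{A}_\pi \to C(M) \otimes \mathbb{C}l_\epsilon$ to be surjective with the same kernel $\mathcal{K}$, yielding a (graded) short exact sequence
\[
0 \to \mathcal{K} \to \mathcal{A}_\pi \to C(M) \otimes \mathbb{C}l_\epsilon \to 0
\]
equipped with a morphism of extensions into the Lemma \ref{lemK_ext} extension whose left vertical map is $\mathrm{id}_\mathcal{K}$ and whose right vertical map is $i^* \otimes \mathrm{id}_{\mathbb{C}l_\epsilon}$.

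The second step is to apply the six-term exact sequence in (graded) $K$-theory. Morita equivalence gives $K_*(\mathcal{K}) \simeq K^{-*}(Y)$, and the Clifford shift gives $K_*(C(M) \otimes \mathbb{C}l_\epsilon) \simeq K^{-*-\epsilon}(M)$, both modulo $2$-periodicity. After reindexing $* \mapsto * - n$, the six-term sequence becomes
\[
\cdots \to K^*(M) \xrightarrow{\partial} K^{*-n}(Y) \to K_{*-n}(\mathcal{A}_\pi) \to K^{*+1}(M) \xrightarrow{\partial} K^{*+1-n}(Y) \to \cdots,
\]
which is exactly the shape claimed in the proposition. Finally, the naturality of the connecting element under the morphism of extensions identifies $\partial$ with the Kasparov product of $[i^*]$ with the connecting element of the Lemma \ref{lemK_ext} extension, namely $\pi_!$; thus $\partial = \pi_! \circ i^*$.

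The argument is essentially formal once Lemma \ref{lemK_ext} is in hand, and the only subtlety lies in the mod-$2$ bookkeeping of Clifford grading shifts and in checking that the parity-dependent cases $\epsilon = 0, 1$ combine into the single uniform statement of the proposition; this is the place where one must be careful but it presents no serious obstacle.
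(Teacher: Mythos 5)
Your proof is correct and follows essentially the same route as the paper: extract the short exact sequence $0 \to \mathcal{K}(L^2_Y(\partial M; S(T^V\partial M))) \to \mathcal{A}_\pi \to C(M)\otimes\mathbb{C}l_\epsilon \to 0$ from the pullback square, identify its connecting element as $[i^*]\otimes\pi_!$ by naturality with respect to the Lemma \ref{lemK_ext} extension, and read off the six-term sequence with the Clifford/degree bookkeeping. The only cosmetic difference is that you treat both parities uniformly via $\mathbb{C}l_\epsilon$, whereas the paper writes out the even case and declares the odd case similar.
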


\begin{proof}
We only prove the Proposition in the case $n$ is even. 
The odd case is similar. 
By Lemma \ref{lemK_ext} and the surjectivity of the restriction $i^* : C(M)\otimes \mathbb{C}l_1 \to C(\partial M) \otimes \mathbb{C}l_1$, we get the graded exact sequence
\begin{equation}\label{A_pi_ext_eq}
0 \to \mathcal{K}(L_Y^2(N; S(T^VN))) \to \mathcal{A}_\pi \to C(M) \otimes \mathbb{C}l_1 \to 0. 
\end{equation}
By Lemma \ref{lemK_ext}, the connecting element associated to the above exact sequence is equal to $[i] \otimes\pi_! \in KK(C(M), C(Y)) \simeq KK^1(C(M) \otimes \mathbb{C}l_1, \mathcal{K}(L_Y^2(N; S(T^VN))))$. 
Thus the long exact sequence of $K$-groups associated to the above short exact sequence gives the desired sequence. 
\end{proof}

\begin{lem}\label{lem_K_class}
Let $(M,\pi : \partial M \to Y)$ be a compact manifold with fibered boundary.
Denote $\Gamma$ the groupoid $\partial M \times_\pi \partial M \rightrightarrows \partial M$. 
Assume that $\pi$ is equipped with a pre-$spin^c$-structure $P'_\pi$. 
Let $E$ be a $\mathbb{Z}_2$-graded complex vector bundle over $M$. 
Let $Q_\pi \in \mathcal{I}(P'_\pi, E)$ (see Remark \ref{rem_twisted_spinc}). 
Then the pair $(E, Q_\pi)$ naturally defines a class $[(E, Q_\pi)] \in K_{n-1}(\mathcal{A}_\pi)$, where $n$ is the dimension of the fiber of $\pi$. 
\end{lem}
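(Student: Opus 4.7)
The plan is to construct the class inside an $E$-twisted version of the pullback algebra, then transport it to $\mathcal{A}_\pi$ via a canonical $KK$-equivalence coming from the bimodule of sections of $E$. I treat the case $n$ even; the $n$ odd case is parallel, using $\chi$ and an honest projection in place of $\psi$ and an odd self-adjoint unitary. By Remark~\ref{rem_twisted_spinc} and Remark~\ref{rem_rigid}, I first pick auxiliary data (a differential $spin^c$-structure representing $P_\pi'$, a Hermitian metric, and a unitary connection $\nabla^E$ on $E$), knowing that the final class will be independent of these choices.

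First I would set up the twisted algebras. Let $\Psi(D^E_\pi)\subset \mathcal{B}(L^2_Y(\partial M; S(T^V\partial M)\hat\otimes E|_{\partial M}))$ be the $C^*$-subalgebra generated by $\psi(D^E_\pi)$, by pointwise multiplication by sections of $C(\partial M;\mathrm{End}(E|_{\partial M}))$, and by the family compacts $\mathcal{K}^E_Y$. The proof of Lemma~\ref{lemK_ext} adapts verbatim to give an exact sequence
\begin{equation*}
0 \to \mathcal{K}^E_Y \to \Psi(D^E_\pi) \to C(\partial M;\mathrm{End}(E|_{\partial M}))\otimes \mathbb{C}l_1 \to 0,
\end{equation*}
with connecting class the $E$-twisted family index. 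Form the $E$-twisted pullback
\begin{equation*}
\mathcal{A}^E_\pi := \bigl(C(M;\mathrm{End}(E))\otimes\mathbb{C}l_1\bigr)\times_{C(\partial M;\mathrm{End}(E|_{\partial M}))\otimes \mathbb{C}l_1} \Psi(D^E_\pi).
\end{equation*}
Each of the three corners of the defining pullback square is Morita equivalent to the corresponding corner of $\mathcal{A}_\pi$ via the bimodule $\Gamma(M;E)$ (and its boundary restriction), and these Morita equivalences are compatible with the restriction and symbol maps. They assemble into a canonical $KK$-equivalence which I denote $[E]_\ast \in KK(\mathcal{A}_\pi,\mathcal{A}^E_\pi)$.

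Next I would construct the invertible element. Choose a representative $\tilde{D}^E_\pi\in \tilde{\mathcal{I}}_{\mathrm{sm}}(D^E_\pi)$ of $Q_\pi$; since $\tilde{D}^E_\pi$ is invertible, $\tilde{D}^E_\pi|\tilde{D}^E_\pi|^{-1}$ is an odd self-adjoint unitary lying in $\Psi(D^E_\pi)$. Because $\tilde{D}^E_\pi - D^E_\pi$ is an order-zero operator, which is compact family-wise over $C(Y)$ since the fibers of $\pi$ are closed, the image of $\tilde{D}^E_\pi|\tilde{D}^E_\pi|^{-1}$ in the quotient $C(\partial M;\mathrm{End}(E|_{\partial M}))\otimes\mathbb{C}l_1$ agrees with that of $\psi(D^E_\pi)$, which is $\mathrm{id}_{E|_{\partial M}}\otimes\epsilon$. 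This exactly matches the image under $i^\ast$ of $\mathrm{id}_E\otimes\epsilon\in C(M;\mathrm{End}(E))\otimes\mathbb{C}l_1$, so the pair
\begin{equation*}
u_{(E,Q_\pi)} := \bigl(\mathrm{id}_E\otimes\epsilon,\ \tilde{D}^E_\pi|\tilde{D}^E_\pi|^{-1}\bigr)
\end{equation*}
defines an odd self-adjoint unitary in the unitization of $\mathcal{A}^E_\pi$. As in Remark~\ref{bdd_trans} it represents a class $[u_{(E,Q_\pi)}]\in K_1(\mathcal{A}^E_\pi)$, and I set
\begin{equation*}
[(E,Q_\pi)] := [E]_\ast \otimes_{\mathcal{A}^E_\pi}[u_{(E,Q_\pi)}] \in K_1(\mathcal{A}_\pi) = K_{n-1}(\mathcal{A}_\pi).
\end{equation*}

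Finally I would check well-definedness. A homotopy of representatives of $Q_\pi$ in $\tilde{\mathcal{I}}_{\mathrm{sm}}(D^E_\pi)$ produces a continuous family of invertible operators, hence a homotopy of the associated odd self-adjoint unitaries in $\mathcal{A}^E_\pi$, so $[u_{(E,Q_\pi)}]$ depends only on the homotopy class $Q_\pi\in\mathcal{I}(P_\pi',E)$. Independence from the choice of differential $spin^c$-structure, Hermitian metric, and unitary connection on $E$ follows from the contractibility of these spaces of choices; the resulting continuous families of twisted algebras are connected by canonical $KK$-equivalences which identify the classes. The main technical hurdle is the compatibility of the Morita bimodule $\Gamma(M;E)$ with the symbol exact sequences on both sides, so that the assembled equivalence $[E]_\ast$ genuinely intertwines every arrow in the two pullback squares; once this bookkeeping is checked, the construction is functorial, the image of $[(E,Q_\pi)]$ under $\mathcal{A}_\pi\to C(M)\otimes\mathbb{C}l_1$ recovers $[E]\in K^0(M)$, and the relative formula Proposition~\ref{gen_relative} identifies the effect of changing $Q_\pi$ with the expected $K^{-1}(Y)$-action.
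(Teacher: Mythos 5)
Your proposal is correct and follows essentially the same route as the paper: form the $E$-twisted pullback algebra (the paper's $\mathcal{A}_\pi(E)$), exhibit the invertible odd element $(1_M\hat{\otimes}\epsilon,\ \cdot\,)$ built from a smooth representative of $Q_\pi$, and transport its $K_1$-class through the canonical Morita/$KK$-equivalence with $\mathcal{A}_\pi$. The only cosmetic difference is that you use the phase $\tilde{D}^E_\pi|\tilde{D}^E_\pi|^{-1}$ where the paper uses the bounded transform $\psi(\tilde{D}^E_\pi)$; these are homotopic odd self-adjoint invertibles and define the same class.
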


\begin{proof}
We only prove the Lemma in the case $n$ is even. 
Let us denote $D_\pi^E$ the fiberwise $spin^c$-Dirac operators twisted by $E$ (defined using any additional choice; see Remark \ref{rem_twisted_spinc}). 
Let $\Psi(D_\pi^E)$ denote the $\mathbb{Z}_2$-graded $C^*$-subalgebra of $\mathcal{B}(L_Y^2(N; S(T^VN)\hat{\otimes}E))$ generated by $\{\psi(D_\pi^E)\}$, $C(N; \mathrm{End}(E))$ and $\mathcal{K}(L_Y^2(N; S(T^VN)\hat{\otimes}E))$.
Consider the graded $C^*$-algebra $\mathcal{A}_\pi(E)$ defined by the pullback
\begin{equation}\label{def_A_pi_coeff}
\xymatrix{
\mathcal{A}_\pi(E) \ar[r] \ar[d] \pullbackcorner & \Psi(D_\pi^E) \ar[d] \\
C(M; \mathrm{End}(E))\hat{\otimes}\mathbb{C}l_1 \ar[r]^{i^*} &  C(\partial M; \mathrm{End}(E))\hat{\otimes}\mathbb{C}l_1 
}
\end{equation}
as in Definition \ref{def_A_pi}. 
There is a canonical Morita equivalence between $\mathcal{A}_\pi$ and $\mathcal{A}_\pi(E)$. 

Given a pair $(E,Q_\pi)$ where $Q_\pi \in \mathcal{I}(P'_\pi, E)$, we define an element in $K_1(\mathcal{A}_\pi(E))$ as follows. 
Let us choose a representative $\tilde{D}_\pi^E \in \tilde{\mathcal{I}}_{\mathrm{sm}}(D_\pi^E)$. 
Since $\tilde{D}_\pi^E$ is an invertible family which differs from $D_\pi^E$ by a lower order family, $\psi(\tilde{D}_\pi^E)$ is an invertible operator in $\Psi(D_\pi^E)$. 
Thus the element $(1_M \hat{\otimes}\epsilon, \psi(\tilde{D}_\pi^E)) \in \mathcal{A}_\pi(E)$ is invertible. 
The $K_1$ class defined by this element does not depend on the choice of $\tilde{D}_\pi^E$. 
By composing with the $KK$-equivalence between $\mathcal{A}_\pi$ and $\mathcal{A}_\pi(E)$, we get the element $[(E, Q_\pi)] := [(1_M \hat{\otimes}\epsilon, \psi(\tilde{D}_\pi^E))] \in K_1(\mathcal{A}_\pi)$. 
\end{proof}

Now we assume that $M$ is even dimensional and $Y$ is odd dimensional. 
We construct an element $[(P'_M, P'_Y)] \in KK^n(\mathcal{A}_\pi, \Sigma^{\mathring{M}}(G_\Phi))$
for given $P'_M$, $P'_Y$ pre-$spin^c$-structures on $TM$ and $TY$ which are compatible with the given fiberwise pre-$spin^c$ structure $P'_\pi$ at the boundary, i.e., $P'_M|_{\partial M} = \pi^*P'_Y \oplus P'_\pi$. 

The next lemma can be proved in the same way as Lemma \ref{lemK_ext}. 

\begin{lem}\label{lemK_ext_phi}
Let $(M^{\mathrm{ev}},\pi : \partial M \to Y^{\mathrm{odd}})$ be a compact manifold with fibered boundary, equipped with a pre-$spin^c$-structure $P'_\pi$ on $T^V\partial M$. 
Let $P'_M$, $P'_Y$ be pre-$spin^c$-structures on $TM$ and $TY$ respectively. 
We assume that the pre-$spin^c$-structures are compatible at the boundary. 

Choose differential $spin^c$ structures on $TY$ and $T^VM$ representing $P'_Y$ and $P'_\pi$, and denote the associated $spin^c$-Dirac operator on $G_\Phi|_{\partial M} = \partial M \times_\pi \partial M \times_\pi TY \times \mathbb{R} \rightrightarrows \partial M$ as, 
\[
D_{\Phi, \partial} := D_\pi \hat{\otimes} 1 + 1 \hat{\otimes} D_{TY \times \mathbb{R}}. 
\]
Let $\Psi(D_{\Phi, \partial})$ denote the $\mathbb{Z}_2$-graded $C^*$-subalgebra of $\overline{\Psi^0_c(G_\Phi|_{\partial M}; S(\mathfrak{A}G_\Phi|_{\partial M}))}$ generated by $\{\psi(D_{\Phi, \partial})\}$, $C(\partial M)$ and $C^*(G_\Phi|_{\partial M}; S(\mathfrak{A}G_\Phi|_{\partial M}))$. 
If we choose any other differential $spin^c$-structures on $TY$ and $T^VM$ representing $P'_Y$ and $P'_\pi$, the resulting $C^*$-algebras are canonically $KK$-equivalent. 

This $C^*$-algebra fits into the graded exact sequence 
\[
0 \to C^*(G_\Phi|_{\partial M}; S(\mathfrak{A}G_\Phi|_{\partial M})) \to \Psi(D_{\Phi, \partial}) \to C(\partial M)\otimes \mathbb{C}l_1 \to 0. 
\]
The connecting element of this extension coincides with the class $\pi_! \otimes_{C(Y)}\mathrm{Ind}^Y(D_{TY \times \mathbb{R}})\in KK(C(\partial M), C^*(G_\Phi|_{\partial M}))$ ($\mathrm{Ind}^Y$ is defined in (\ref{eq_ind_y})). 
\end{lem}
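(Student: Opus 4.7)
The plan is to adapt verbatim the strategy of Lemma~\ref{lemK_ext}(\ref{lemK_even}) to the fibered-boundary groupoid $G_\Phi|_{\partial M}$. I would first establish the short exact sequence, and then identify the connecting element by combining the argument of Lemma~\ref{lemK_ext} with the identification of $D_{\Phi,\partial}$ as an unbounded Kasparov product.

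For the exact sequence: start from the canonical $\mathbb{Z}_2$-graded pseudodifferential extension
\[
0 \to C^*(G_\Phi|_{\partial M}; S(\mathfrak{A}G_\Phi|_{\partial M})) \to \overline{\Psi^0_c(G_\Phi|_{\partial M}; S(\mathfrak{A}G_\Phi|_{\partial M}))} \xrightarrow{\sigma} C(\mathfrak{S}^*G_\Phi|_{\partial M}; \mathrm{End}(S(\mathfrak{A}G_\Phi|_{\partial M}))) \to 0
\]
and restrict $\sigma$ to the subalgebra $\Psi(D_{\Phi,\partial})$. Because $D_{\Phi,\partial}$ has positive order, $\sigma(\psi(D_{\Phi,\partial}))$ is an odd self-adjoint unitary that commutes with the multiplication subalgebra $C(\partial M)$, so the image of the restriction is canonically $C(\partial M)\otimes\mathbb{C}l_1$ (via $\epsilon \mapsto \sigma(\psi(D_{\Phi,\partial}))$); the kernel is unchanged, giving the stated extension.

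For the connecting element: by naturality, the connecting map for the restricted sequence equals $[\phi]\otimes_{C(\mathfrak{S}^*G_\Phi|_{\partial M})}\mathrm{ind}^{\partial M}(G_\Phi|_{\partial M})$, where $\phi$ is the inclusion $C(\partial M)\otimes\mathbb{C}l_1 \hookrightarrow C(\mathfrak{S}^*G_\Phi|_{\partial M}; \mathrm{End}(S))$. Exactly as in the proof of Lemma~\ref{lemK_ext}, write $[\phi] = [\sigma(D_{\Phi,\partial})]\otimes[m]$ and combine with the fibered Poincar\'e duality interpretation of $[m]\otimes\mathrm{ind}^{\partial M}(G_\Phi|_{\partial M})$ to identify the connecting element with the $KK$-class of the Kasparov bimodule
\[
\bigl(L^2_{\partial M}(G_\Phi|_{\partial M}; S(\mathfrak{A}G_\Phi|_{\partial M})),\ \mathrm{multi},\ \psi(D_{\Phi,\partial})\bigr) \in KK(C(\partial M), C^*(G_\Phi|_{\partial M}; S(\mathfrak{A}G_\Phi|_{\partial M}))).
\]

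The main obstacle, and the only genuinely new step beyond Lemma~\ref{lemK_ext}, is identifying this bimodule with the Kasparov product $\pi_!\otimes_{C(Y)}\mathrm{Ind}^Y(D_{TY\times\mathbb{R}})$. Here I would use the product structure $G_\Phi|_{\partial M} = \partial M\times_\pi\partial M\times_\pi TY\times\mathbb{R}$ together with the decomposition $D_{\Phi,\partial} = D_\pi\hat\otimes 1 + 1\hat\otimes D_{TY\times\mathbb{R}}$ into anticommuting fiberwise operators. The class $\pi_!$ is represented by $(L^2_Y(\partial M;S(T^V\partial M)),\mathrm{multi},\psi(D_\pi))$ and $\mathrm{Ind}^Y(D_{TY\times\mathbb{R}})$ is represented by the $C(Y)$-linear module $(C^*(G_\Phi|_{\partial M};\cdots),\mathrm{multi}\otimes\mathrm{id},\psi(D_{TY\times\mathbb{R}}))$, as in (\ref{eq_ind_y}). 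Since the two operators are self-adjoint, of positive order, and anticommute, the standard unbounded-product computation (already invoked in the proof of Proposition~\ref{relative_spinc} and made explicit in \cite[Sections~10.7--10.8]{HR}) shows that the bimodule above is precisely the (internal) Kasparov product of these two representatives. This yields the claimed identification and completes the proof.
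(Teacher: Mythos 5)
Your proof is correct and follows exactly the route the paper intends: the paper gives no argument for this lemma beyond the remark that it ``can be proved in the same way as Lemma \ref{lemK_ext}'', and your adaptation --- restricting the symbol map to obtain the extension, identifying the connecting element as $[\phi]\otimes\mathrm{ind}^{\partial M}(G_\Phi|_{\partial M})$, and then recognizing the resulting bimodule $(C^*(G_\Phi|_{\partial M};S(\mathfrak{A}G_\Phi|_{\partial M})),\mathrm{multi},\psi(D_{\Phi,\partial}))$ as the $C(Y)$-linear Kasparov product of $\pi_!$ and $\mathrm{Ind}^Y(D_{TY\times\mathbb{R}})$ via the anticommuting-sum criterion of \cite[Sections 10.7--10.8]{HR} --- is precisely the intended completion. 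The same product computation is the one the paper itself invokes in the proof of Proposition \ref{relative_spinc}, so no new analytic input is needed.
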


\begin{defn}[$\mathcal{B}_\pi$]\label{def_B_pi}
In the situations in Lemma \ref{lemK_ext_phi}, we define a graded $C^*$-algebra by the pullback (c.f. Remark \ref{rem_rigid})
\[
\xymatrix{
\mathcal{B}_\pi \ar[r] \ar[d] \pullbackcorner & \Psi(D_{\Phi, \partial}) \ar[d] \\
C(M)\otimes \mathbb{C}l_1 \ar[r]^{i^*} &  C(\partial M) \otimes \mathbb{C}l_1
}
\]
Choosing a differential $spin^c$ structure of $G_\Phi$ representing $P'_M$, we get a canonical injective $*$-homomorphism $\iota : \mathcal{B}_\pi \to \Sigma^{\mathring{M}}(G_\Phi; S(\mathfrak{A}G_\Phi))$ as follows. 
Denote the $spin^c$-Dirac operator on $G_\Phi$ by $D_\Phi \in \mathrm{Diff}^1(G_\Phi; S(\mathfrak{A}G_\Phi))$ and the principal symbol of the bounded transform of this operator by $\sigma(\psi(D_\Phi)) \in C(\mathfrak{S}^*G_\Phi; \mathrm{End}(S(\mathfrak{A}G_\Phi)))$, which is an odd self-adjoint unitary element. 
Thus the $*$-homomorphism 
\begin{align*}
C(M) \otimes\mathbb{C}l_1 &\to C(\mathfrak{S}^*G_\Phi; \mathrm{End}(S(\mathfrak{A}G_\Phi))) \\
f \otimes 1&\mapsto f \\
1 \otimes \epsilon &\mapsto \sigma(\psi(D_\Phi))
\end{align*} 
is well-defined. 
Recall we have $\Sigma^{\mathring{M}}(G_\Phi; S(\mathfrak{A}G_\Phi))  = C(\mathfrak{S}^*G_\Phi; \mathrm{End}(S(\mathfrak{A}G_\Phi)))\oplus_{\partial M} \overline{\Psi^0_c(G_\Phi|_{\partial M}; S(\mathfrak{A}G_\Phi|_{\partial M}))} $. 
It is easy to see that the inclusion $\Psi(D_{\Phi, \partial}) \to \overline{\Psi^0_c(G_\Phi|_{\partial M}; S(\mathfrak{A}G_\Phi|_{\partial M}))} $ is compatible with the above $*$-homomorphism at $\partial M$, so they combine to induce the desired $*$-homomorphism
$\iota : \mathcal{B}_\pi \to \Sigma^{\mathring{M}}(G_\Phi; S(\mathfrak{A}G_\Phi))  $. 
The $KK$-element $[\iota] \in KK(\mathcal{B}_\pi, \Sigma^{\mathring{M}}(G_\Phi))$ is independent of the differential $spin^c$-structure representing $P'_M$.  
\end{defn}

Next, in the settings in Lemma \ref{lemK_ext_phi}, we construct an element $\mu \in KK(\mathcal{A}_\pi, \mathcal{B}_\pi)$ which fits into a commutative diagram in $KK$-theory, 
\begin{equation*}
\xymatrix{
0 \ar[r] &C^*(\Gamma; S(\mathfrak{A}\Gamma)) \ar[r] \ar[d]&\mathcal{A}_\pi \ar[r]\ar[d]^{\mu}& C(M ) \otimes \mathbb{C}l_1\ar[r]\ar@{=}[d] &0
\\
0 \ar[r]  &C^*(G_\Phi|_{\partial M}; S(\mathfrak{A}G_\Phi|_{\partial M})) \ar[r]  &\mathcal{B}_\pi \ar[r] &C( M ) \otimes \mathbb{C}l_1\ar[r] &0.
}
\end{equation*}
Here we denote by $\Gamma$ the groupoid $N \times_\pi N \rightrightarrows N$. 

First we consider a general setting. 
Suppose we are given a compact manifold $Y$, and an oriented real vector bundle $V$ over $Y$. 
For simplicity we only consider the case where the rank $m$ of this vector bundle $V$ is odd. 
We consider the Clifford algebra bundle $\mathbb{C}l(V)$ over $V$ (trivial on each fiber of $V \to Y$) and the $\mathbb{Z}_2$-graded $C^*$-algebra $C_0(V; \mathbb{C}l(V))$, where the algebra structure comes from the pointwise operations and grading comes from the grading on $\mathbb{C}l(V)$. 
We consider a variant of the construction of an asymptotic morphism in \cite[Section 1]{GH}, which gives a $KK$-equivalence between $C_0(V; \mathbb{C}l(V))$ and $C(Y)$. 
We are going to apply the following general construction to the real vector bundle $V = T^*Y$ in our $\Phi$-$spin^c$-setting. 
The construction below is also used in the next subsection for signature operators. 

We fix a riemannian metric on $V$. 
On the Hilbert $C(Y)$-module $L_Y^2(V; \mathbb{C}l(V))$, we consider unbounded, odd essentially self-adjoint operators $C_V$, $D_V$, $B_V$ defined as follows. 
They are families of operators parametrized by $Y$, and for each $y \in Y$, the operator  acts on $L^2(V_y ; \mathbb{C}l(V_y))$ as 
\begin{align*}
D_{V, y} &:= \sum_i \hat{e}_i \frac{\partial}{\partial x_i} \\
C_{V, y} &:= \sum_i x_ie_i \\
B_V &:= D_V + C_V ,
\end{align*}
by choosing an oriented orthonormal basis $\{e_i\}_i$ of $V_y$ and denoting the corresponding orthonormal coodinates by $x_i$ on $V_y$. 
Here we denote the actions of elements in $V_y$ on $\mathbb{C}l(V)$ by
\begin{align*}
e(x) &= e \cdot x \\
\hat{f}(x) &= (-1)^{deg(x)}x \cdot f. 
\end{align*}
Consider the operator $\omega_y := (-1)^{(m+1)/4}\hat{e}_1\hat{e}_2\cdots\hat{e}_m \in \mathrm{End}(\mathbb{C}l(V_y))$. 
This element does not depend on the choice of an oriented orthonormal basis, and gives an odd element $\omega \in \mathrm{End}(Y; \mathbb{C}l(V))$. 
Recall that we have assumed that $m$ is odd. 
This operator satisfies
$\omega^2 = 1$, $\omega=\omega^*$, $\omega_y e_i = -e_i\omega_y$ and $\omega_y \hat{e}_i = \hat{e}_i \omega_y$. 
So we see that $D_V\omega$ is an odd  essentially self-adjoint operator, $D_V\omega = \omega D_V$ and $C_V \omega = -\omega C_V$. 

It is well-known that, the operator $B_V$, called the harmonic oscillator, is Fredholm and has rank $1$-kernel in the even part and zero kernel in the odd part. 
Moreover, this kernel bundle has a canonical trivialization, given by the global section $\{e^{-\| v\|^2/2}\in L^2(V_y ; \mathbb{C}l(V_y))\}_y$.  
We let $P \in \mathcal{K}(L^2_Y(V; \mathbb{C}l(V)))$ to be the projection to the kernel of $B_V$. 
We denote the asymptotic algebra of $\mathcal{K}(L^2_Y(V; \mathbb{C}l(V))$ by
\[
\mathfrak{A}_\mu(\mathcal{K}(L^2_Y(V; \mathbb{C}l(V))) := C_b([\mu, \infty);\mathcal{K}(L^2_Y(V; \mathbb{C}l(V)))/C_0([\mu, \infty);\mathcal{K}(L^2_Y(V; \mathbb{C}l(V)))
\]
for $\mu \in \mathbb{R}$. 
Of course for any $\mu \in \mathbb{R}$, the above algebra $\mathfrak{A}_\mu(\mathcal{K}(L^2_Y(V; \mathbb{C}l(V)))$ are canonically isomorphic. 

We define a $\mathbb{Z}_2$-graded $*$-homomorphism $\phi$ by
\begin{align*}
\phi :  C_0(\mathbb{R} ; \mathbb{C}l_1) \hat{\otimes}C_0(V; \mathbb{C}l(V)) &\to \mathfrak{A}_1(\mathcal{K}(L^2_Y(V; \mathbb{C}l(V))) \\
f \hat{\otimes}h & \mapsto [[1, \infty) \ni t \mapsto f(t^{-1}D_{V}\omega) \cdot M_{h_t}] \\
f\epsilon \hat{\otimes} h &\mapsto [[1, \infty) \ni t \mapsto f(t^{-1}D_{V}\omega)\omega \cdot M_{h_t}], 
\end{align*}
for $f \in C_0(\mathbb{R})$ and $h \in C_0(V; \mathbb{C}l(V)) $. 
Here we denote by $h_t \in C_0(V; \mathbb{C}l(V))$ the function $h_t(v) := h(t^{-1}v)$ and by $M_{h_t}$ the pointwise Clifford multiplication operator by $h_t$. 
The following lemma is an analogue of \cite[Proposition 1.5]{GH}. 
\begin{lem}\label{lem_asymp_mor}
The map $\phi$ defines a $*$-homomorphism.  
\end{lem}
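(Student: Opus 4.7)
The proof follows closely the strategy of Guentner--Higson's Proposition~1.5 in \cite{GH}. I would first verify the basic setup. The operator $D_V\omega$ is essentially self-adjoint and even: it is even because $D_V$ and $\omega$ are both odd, and it is unambiguously defined because $\omega D_V = D_V\omega$, which follows from the fact that $\omega$ is a polynomial in the $\hat{e}_i$'s and each $\hat{e}_j$ commutes with $\omega$, so $\omega$ commutes with each summand $\hat{e}_i\partial/\partial x_i$ of $D_V$. Consequently $f(t^{-1}D_V\omega)$ is a well-defined even bounded operator for any $f\in C_0(\mathbb{R})$, and composed with $M_{h_t}$ (respectively $\omega M_{h_t}$) its grading matches the image of $f\hat{\otimes}h$ (respectively $f\epsilon\hat{\otimes}h$). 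For each fixed $t\geq 1$ and $h\in C_0(V;\mathbb{C}l(V))$, the operator $f(t^{-1}D_V\omega)M_{h_t}$ is compact on $L^2_Y(V;\mathbb{C}l(V))$ by the standard fact that $C_0$-functions of the Euclidean Dirac on each fiber, composed with multiplications by $C_0$-functions of $V_y$, are compact; the assertion is uniform in $y\in Y$ because $Y$ is compact. Uniform norm boundedness $\|f(t^{-1}D_V\omega)M_{h_t}\|\le\|f\|_\infty\|h\|_\infty$ is immediate.

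The analytic heart of the argument is the commutator estimate $\|[f(t^{-1}D_V\omega),M_{h_t}]\|=O(t^{-1})$ as $t\to\infty$, valid for $f$ in the dense subalgebra of Schwartz functions and $h$ smooth and compactly supported. One starts from the pointwise identity
\[
[D_V,M_{h_t}] \;=\; \sum_i \hat{e}_i\, M_{\partial_i h_t}, \qquad \|\partial_i h_t\|_\infty \;=\; t^{-1}\|\partial_i h\|_\infty,
\]
which gives $\|[t^{-1}D_V\omega,M_{h_t}]\|=O(t^{-2})$, and then upgrades this bound to arbitrary Schwartz $f$ via a Helffer--Sj\"ostrand formula (or equivalently by expressing $f$ through its Fourier transform and commuting $M_{h_t}$ past the unitary group $e^{is\,t^{-1}D_V\omega}$, using Duhamel to produce the commutator of the generator). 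Density and norm continuity then extend asymptotic commutativity to all of $C_0(\mathbb{R})\otimes C_0(V;\mathbb{C}l(V))$.

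With asymptotic commutativity in hand, verifying that $\phi$ respects products and adjoints reduces to algebraic bookkeeping. I would check on the four parity combinations of generators $\{f,f\epsilon\}\hat{\otimes}\{h_{\mathrm{ev}},h_{\mathrm{odd}}\}$ that the graded product rule $(a_1\hat{\otimes}b_1)(a_2\hat{\otimes}b_2)=(-1)^{|b_1||a_2|}(a_1a_2)\hat{\otimes}(b_1b_2)$ is matched after commuting the middle $M_{h_{1,t}}$ past $f_2(t^{-1}D_V\omega)$ (asymptotically allowed by the previous paragraph) and applying the relation $M_{h_t}\omega=(-1)^{|h|}\omega M_{h_t}$, which follows from $e_i\omega=-\omega e_i$. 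Compatibility with the $*$-operation uses self-adjointness of $D_V\omega$ and of $\omega$, together with $\omega^2=1$. All of these identities hold modulo $C_0([1,\infty);\mathcal{K}(L^2_Y(V;\mathbb{C}l(V))))$, so $\phi$ descends to a genuine graded $*$-homomorphism into the quotient asymptotic algebra $\mathfrak{A}_1(\mathcal{K}(L^2_Y(V;\mathbb{C}l(V))))$. The main obstacle is the norm estimate of the commutator for arbitrary Schwartz $f$; once that is in place, the grading and $*$-identities are a matter of tracking signs.
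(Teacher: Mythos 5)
Your proof is correct and follows the same route as the paper, which simply refers to the argument of \cite[Proposition 1.5]{GH} and records the key relations $D_V\omega=\omega D_V$, $e_i\omega=-\omega e_i$, $\hat e_i\omega=\omega\hat e_i$; you have fleshed out exactly that argument (asymptotic commutativity of $f(t^{-1}D_V\omega)$ with $M_{h_t}$ via the $O(t^{-1})$ commutator estimate, plus the graded sign bookkeeping using those same relations).
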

\begin{proof}
This can be proved in an analogous way to the proof of \cite[Proposition 1.5]{GH}.
Instead of repeating the proof, we only point out that the essential point is that we have the following relations, 
\[
e_i D_V = -D_V e_i ,\ e_i\omega_y = -\omega_y e_i, \mbox{ and } D_V \omega = \omega D_V. 
\]
\end{proof}

\begin{rem}
In \cite{GH}, they use the $C^*$-algebra $\mathcal{S} :=(C_0(\mathbb{R}) \mbox{ with even-odd grading})$. 
For an Euclidean space $V$, they define a $\mathbb{Z}_2$-graded $*$-homomorphism
\begin{align*}
\tilde{\phi} :  \mathcal{S} \hat{\otimes}C_0(V; \mathbb{C}l(V)) &\to \mathfrak{A}_1(\mathcal{K}(L^2(V; \mathbb{C}l(V))) \\
f \hat{\otimes}h & \mapsto [t \mapsto f(t^{-1}D_{V}) \cdot M_{h_t}]. 
\end{align*}
This $*$-homomorphism defines an element in the $E$-theory group $[\tilde{\phi}] \in E(C_0(V; \mathbb{C}l(V)), \mathbb{C})$, since their definition of $E$-theory groups is $E(A, B) := [[\mathcal{S} \hat{\otimes}A \hat{\otimes}\mathcal{K}(\hat{H}), B\hat{\otimes}\mathcal{K}(\hat{H})]]$ (\cite[Sectiton 2.1]{GH}). 
Here we denoted the abelian group of equivalence classes of asymptotic morphisms from $A$ to $B$ by $[[A, B]]$. 
On the other hand, an asymptotic morphism from $A$ to $B$ which admits a completely positive lifting naturally defines an element in $KK(A, B)$. 
Thus their asymptotic morphism $\tilde{\phi}$ gives an element in $KK(\mathcal{S}\hat{\otimes}C_0(V; \mathbb{C}l(V)), \mathbb{C})$, which is not the desired element. 

We would like to treat $KK$-elements arising from asymptotic morphisms directly in our construction, so we do not use the $C^*$-algebra $\mathcal{S}$ and consider the above asymptotic morphism $\phi$, which indeed gives the $KK$-equivalence between $C_0(\mathbb{R} ; \mathbb{C}l_1) \hat{\otimes}C_0(V; \mathbb{C}l(V))$ and $C(Y)$ (see Proposition \ref{prop_mathcalC} below). 
\end{rem}

Define the $C^*$-algebra 
\[
T_P := \{f \in C_b([0, \infty); \mathcal{K}(L^2_Y(V; \mathbb{C}l(V)))) \ | \ f (0)= Pf(0) = f(0)P  \}. 
\]
Recall that $P$ is the orthogonal projection to $\ker B_V$, which is a rank one bundle on $Y$ with a canonical trivialization. 
So we can identify $C(Y)$ with $P\mathcal{K}(L^2_Y(V; \mathbb{C}l(V)))P$, to get $*$-homomorphism
$
ev_0 : T_P \to C(Y)
$. 
We define a $C^*$-algebra $\mathcal{C}$ by the pullback
\begin{equation}\label{def_mathcalC_var}
\xymatrix{
\mathcal{C} \ar[rr] \ar[d]^{ev^{\mathcal{C}}_\infty} \pullbackcorner & &T_P \ar[d] \\ 
C_0(\mathbb{R}; \mathbb{C}l_1)\hat{\otimes}C_0(V; \mathbb{C}l(V)) \ar[r]^-{\phi} &  \mathfrak{A}_1(\mathcal{K}(L^2_Y(V; \mathbb{C}l(V))) \ar[r]^\simeq & \mathfrak{A}_0(\mathcal{K}(L^2_Y(V; \mathbb{C}l(V)))
}
\end{equation}
We define the $*$-homomorphism $ev_0^{\mathcal{C}} : \mathcal{C} \to C(Y)$ by the composition of the top row of (\ref{def_mathcalC_var}) and $ev_0$. 
\begin{lem}\label{lem_asymp_var}
Let $g$ be a function in $C_0(\mathbb{R})$. 
For $t \in (0, \infty)$, consider the functional calculus $g(t^{-1}B_V) \in \mathcal{K}(L^2_Y(V; \mathbb{C}l(V)))$. 
This family canonically extends to an element in $T_P$ by setting $g(t^{-1}B_V)|_{t = 0} = g(0)P$. 
We abuse the notation and still denote this element by $g(t^{-1}B_V) \in T_P$. 

We define the function $X : \mathbb{R} \to \mathbb{R} , x \mapsto x$.  
Note that $X\epsilon$ and $C_V$ are odd unbounded multipliers on $C_0(\mathbb{R}; \mathbb{C}l_1)$ and $C_0(V; \mathbb{C}l(V))$, respectively. 
Then we have
\[
\phi(g(X\epsilon \hat{\otimes}1 + 1 \hat{\otimes}C_V)) = [g(t^{-1}B_V)] \in \mathfrak{A}(\mathcal{K}(L^2_Y(V; \mathbb{C}l(V))). 
\]
In other words, for a function $g \in C_0(\mathbb{R})$ we have an element $[g(X\epsilon \hat{\otimes} 1 + 1 \hat{\otimes} C_V), g(t^{-1}B_V)] \in \mathcal{C}$. 
Moreover, the operator $\mathbb{B}_V := [X\epsilon \hat{\otimes} 1 + 1 \hat{\otimes}C_V, t^{-1}B_V]$ (defined to be $0$ at $0 \in [0, \infty)$) is an unbounded multiplier (see \cite[pp. 168--169]{GH}) of $\mathcal{C}$.
\end{lem}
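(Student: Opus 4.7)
The proof splits into the three assertions of the lemma, and I would handle them in order. First, to see that $g(t^{-1}B_V)$ extends continuously to $T_P$ with value $g(0)P$ at $t=0$, I would exploit the spectral theory of $B_V$. Using $[\partial_i, x_j] = \delta_{ij}$ together with the anticommutation $\hat{e}_i e_j + e_j \hat{e}_i = 0$ that is immediate from the Clifford conventions, a direct computation gives $B_V^2 = -\Delta + |v|^2 + N$ where $N := \sum_i \hat{e}_i e_i$ is bounded; thus, fiberwise over $Y$, $B_V^2$ is a harmonic oscillator perturbed by a bounded operator. Hence $B_V$ is fiberwise Fredholm with discrete spectrum, rank-one kernel spanned by the normalized Gaussian section, and a uniform spectral gap $\sqrt{\lambda_1} > 0$ between $\ker B_V$ and the rest of the spectrum. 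Under the rescaling $B_V \mapsto t^{-1}B_V$ the nonzero spectrum escapes to infinity as $t \to 0^+$, so for any $g \in C_0(\mathbb{R})$ the operator $g(t^{-1}B_V)$ converges in norm to $g(0)P$ as $t \to 0^+$, and is norm-continuous in $t$ on $(0, \infty)$ by standard functional calculus; thus $g(t^{-1}B_V) \in T_P$.

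Next, for the identity $\phi(g(T)) = [g(t^{-1}B_V)]$ with $T := X\epsilon \hat{\otimes} 1 + 1 \hat{\otimes} C_V$, I would first observe that $T$ is a regular self-adjoint odd unbounded multiplier of $C_0(\mathbb{R}; \mathbb{C}l_1) \hat{\otimes} C_0(V; \mathbb{C}l(V))$: its two odd summands anticommute, acting on disjoint tensor factors, so $T^2 = X^2 + |v|^2$ is a positive scalar multiplier diverging at infinity. From the defining formulas for $\phi$, extended to unbounded multipliers via the bounded/resolvent transform, one computes $\phi(X\epsilon \hat{\otimes} 1) = (t^{-1}D_V \omega) \omega = t^{-1} D_V$ using $\omega^2 = 1$ and $\omega D_V = D_V \omega$ (which holds precisely because $m$ is odd), and $\phi(1 \hat{\otimes} C_V) = M_{(C_V)_t} = t^{-1} C_V$ from the rescaling $h_t(v) = h(t^{-1}v)$. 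Summing, $\phi(T) = t^{-1}B_V$ as regular self-adjoint unbounded multipliers of $\mathfrak{A}_1(\mathcal{K}(L^2_Y(V; \mathbb{C}l(V))))$, and since $\phi$ is a $*$-homomorphism, uniqueness of the functional calculus on regular self-adjoint multipliers yields $\phi(g(T)) = g(t^{-1}B_V)$ in $\mathfrak{A}_1$ for all $g \in C_0(\mathbb{R})$.

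The principal obstacle is making this identification $\phi(T) = t^{-1}B_V$ rigorous at the level of the asymptotic algebra, since $\phi$ is specified on the bounded $*$-algebra. I would reduce the claim to verifying the resolvent identity $\phi((z - T)^{-1}) = [(z - t^{-1}B_V)^{-1}]$ in $\mathfrak{A}_1$ for $z \in \mathbb{C} \setminus \mathbb{R}$; via a Neumann-series argument this reduces, in turn, to controlling commutators of $t^{-1}D_V$ with Clifford multiplication by rescaled functions and of the operator $\omega$ with such multiplications, which produce terms of order $t^{-2}$ and hence vanish in $\mathfrak{A}_1$. This is an analogue of the commutator estimates underlying Lemma \ref{lem_asymp_mor} and follows the same template as Proposition 1.5 of [GH].

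Once the asymptotic identity is established, the pair $[g(T), g(t^{-1}B_V)]$ manifestly satisfies the pullback relation defining $\mathcal{C}$. To conclude that $\mathbb{B}_V$ is an unbounded multiplier of $\mathcal{C}$ in the sense of Woronowicz, I would verify that the assignment $g \mapsto [g(T), g(t^{-1}B_V)]$ is a non-degenerate $*$-homomorphism $C_0(\mathbb{R}) \to \mathcal{C}$, which is a direct consequence of the componentwise functional calculi together with the universal property of the pullback square defining $\mathcal{C}$.
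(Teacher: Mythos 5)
Your first paragraph is correct and actually supplies detail the paper omits: the identity $B_V^2=-\Delta+|v|^2+N$ with $N=\sum_i\hat e_ie_i$ bounded, the uniform spectral gap over the compact base $Y$, and the resulting norm convergence $g(t^{-1}B_V)\to g(0)P$ as $t\to 0^+$ are exactly what is needed to place $g(t^{-1}B_V)$ in $T_P$. The gap is in the central step, the asymptotic identity $\phi(g(T))=[g(t^{-1}B_V)]$ with $T=X\epsilon\hat\otimes 1+1\hat\otimes C_V$. The paper's proof follows \cite[Section 1.13]{GH}: both sides are $*$-homomorphisms from $C_0(\mathbb{R})$ into the asymptotic algebra, so it suffices to check them on the generators $e^{-x^2}$ and $xe^{-x^2}$. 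For these generators $g(T)$ is an elementary tensor — e.g.\ $e^{-T^2}=e^{-X^2}\hat\otimes e^{-C_V^2}$ because the two odd summands of $T$ anticommute — so the left-hand side is literally $[e^{-t^{-2}D_V^2}e^{-t^{-2}C_V^2}]$, and the whole content of the lemma is the comparison of this product with $[e^{-t^{-2}B_V^2}]$, which is done via Mehler's formula (equivalently, smoothing estimates for the harmonic oscillator semigroup).

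Your proposed substitute — establish ``$\phi(T)=t^{-1}B_V$'' at the level of resolvents and invoke uniqueness of functional calculus — does not close this gap as written, for two reasons. First, $(z-T)^{-1}=(z+T)\cdot(z^2-x^2-|v|^2)^{-1}$ is not a finite sum of elementary tensors $f\hat\otimes h$, so $\phi((z-T)^{-1})$ cannot be evaluated from the defining formula without an approximation argument that reintroduces exactly the difficulty you are trying to bypass; this is precisely why \cite{GH} and the paper use the Gaussian generators instead. Second, and more seriously, the error terms you must control are not of the kind handled in Lemma \ref{lem_asymp_mor}. There the commutators are $[f(t^{-1}D_V\omega),M_{h_t}]=O(t^{-2})$ for a \emph{fixed} $h$ with bounded derivative. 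Here the discrepancy between the functional calculus of the sum $t^{-1}(D_V+C_V)$ and the product of the two separate functional calculi is governed by the \emph{unbounded} commutator $[D_V^2,C_V^2]=[-\Delta,|v|^2]$, a first-order operator with linearly growing coefficients; it is controlled only because it is relatively bounded by $B_V^2$ and because $e^{-uB_V^2}$ smooths, which is exactly the Mehler/Duhamel computation. No estimate of this type appears in your sketch, so the claim that all errors are ``of order $t^{-2}$ and hence vanish in $\mathfrak{A}_1$'' is unjustified, and the main identity of the lemma — hence also membership of the pair in $\mathcal{C}$ and the unbounded-multiplier statement for $\mathbb{B}_V$ — remains unproved.
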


\begin{proof}
The essential point is that $B_V$ has discrete spectrum with finite multiplicity. 
The lemma is proved by checking on the generators $e^{-x^2}$ and $xe^{-x^2}$ of $C_0(\mathbb{R})$, and the computations are essentially the same as in \cite[Section 1.13]{GH}. 
\end{proof}

\begin{prop}\label{prop_mathcalC}
The $*$-homomorphisms
\begin{align*}
ev_0^{\mathcal{C}} &: \mathcal{C} \to C(Y), \mbox{ and } \\
ev_\infty^{\mathcal{C}} &: \mathcal{C} \to C_0(\mathbb{R}; \mathbb{C}l_1)\hat{\otimes}C_0(V; \mathbb{C}l(V))  \simeq C_0(V\oplus \mathbb{R}; \mathbb{C}l(V\oplus \mathbb{R}))
\end{align*}
induce $KK$-equivalences among $C(Y)$, $\mathcal{C}$ and $ C_0(V\oplus \mathbb{R}; \mathbb{C}l(V\oplus \mathbb{R}))$. 
Moreover, we have
\begin{equation}\label{prop_mathcalC_statement}
[ev_0^{\mathcal{C}}]^{-1} \otimes [ev_\infty^{\mathcal{C}}] = [\psi(X\epsilon \hat{\otimes}1 + 1 \hat{\otimes}C_V)] \in KK(C(Y), C_0(V\oplus \mathbb{R}; \mathbb{C}l(V\oplus \mathbb{R}))) . 
\end{equation}
\end{prop}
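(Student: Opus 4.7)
The plan is to first show that $ev_\infty^{\mathcal{C}}$ is a $KK$-equivalence via a mapping-cone argument, then construct an explicit unbounded Kasparov bimodule $\alpha = [(\mathcal{C},\rho,\mathbb{B}_V)] \in KK(C(Y),\mathcal{C})$ whose pushforwards under the two evaluation maps yield $1_{C(Y)}$ and the Bott element of \eqref{prop_mathcalC_statement}, respectively. Combined with Kasparov's Bott periodicity for Euclidean vector bundles (which says that the Thom--Bott class $[\psi(X\epsilon\hat\otimes 1 + 1\hat\otimes C_V)]$ is a $KK$-equivalence), these simultaneously give the remaining $KK$-equivalence of $ev_0^{\mathcal{C}}$ and the identity \eqref{prop_mathcalC_statement}.

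For the first stage, I would analyze the ideal $J := \ker(ev_\infty^{\mathcal{C}})$. From the pullback definition of $\mathcal{C}$, one has $J \cong \{f \in C_0([0,\infty);\mathcal{K}(L^2_Y(V;\mathbb{C}l(V)))) : f(0) \in P\mathcal{K}P\}$, and under $P\mathcal{K}P \simeq C(Y)$ this is precisely the mapping cone of the Morita inclusion $C(Y) \hookrightarrow \mathcal{K}(L^2_Y(V;\mathbb{C}l(V)))$. Since the mapping cone of a $KK$-equivalence is $KK$-contractible (by applying the six-term sequence of $KK(D,-)$ to $0 \to SB \to \mathrm{Cone}(f) \to A \to 0$), the exact sequence $0 \to J \to \mathcal{C} \xrightarrow{ev_\infty^{\mathcal{C}}} A \to 0$ forces $ev_\infty^{\mathcal{C}}$ to be a $KK$-equivalence, where $A := C_0(\mathbb{R};\mathbb{C}l_1)\hat\otimes C_0(V;\mathbb{C}l(V)) \simeq C_0(V\oplus\mathbb{R};\mathbb{C}l(V\oplus\mathbb{R}))$.

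For the second stage, I would take $\mathcal{C}$ as the standard right Hilbert $\mathcal{C}$-module over itself (so $\mathcal{K}(\mathcal{C}) = \mathcal{C}$) and $\mathbb{B}_V$ as the odd self-adjoint unbounded multiplier from Lemma \ref{lem_asymp_var}. The left action $\rho : C(Y)\to \mathcal{M}(\mathcal{C})$ is defined, via the pullback description $\mathcal{C}\subset A\oplus T_P$, by $\rho(a) := (\pi_A(a), M_a)$, where $\pi_A(a)$ is fiberwise multiplication by $a$ on $A$ and $M_a \in \mathcal{M}(\mathcal{K})$ is multiplication by $a$ on $L^2_Y(V;\mathbb{C}l(V))$, viewed as a constant $T_P$-multiplier. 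That $\rho(a)\mathcal{C}\subset \mathcal{C}$ rests on two direct calculations: (i) $M_a$ commutes with the projection $P$ onto $\ker B_V$ since both act fiberwise over $Y$, so $M_a$ preserves $T_P$; and (ii) $\phi_t(\pi_A(a)x) = M_a\phi_t(x)$ holds identically (not just asymptotically), because left multiplication by $a(y)$ commutes with $D_V$, $\omega$, $C_V$, and the dilations used in the definition of $\phi$. The graded commutator $[\mathbb{B}_V,\rho(a)]$ vanishes identically by the same commutations, and the compactness $\rho(a)(1+\mathbb{B}_V^2)^{-1/2}\in \mathcal{K}(\mathcal{C}) = \mathcal{C}$ follows by applying Lemma \ref{lem_asymp_var} to $g(x)=(1+x^2)^{-1/2}\in C_0(\mathbb{R})$ together with the exact identity in (ii).

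Evaluating these pushforwards finishes the argument. The pushforward of $(\mathcal{C},\rho,\mathbb{B}_V)$ along $ev_0^{\mathcal{C}}$ is the degenerate module $(C(Y),\mathrm{id},0)$, representing $1_{C(Y)}$, because $t^{-1}B_V$ at $t=0$ acts as $0$ on $\ker B_V \simeq C(Y)$ by Lemma \ref{lem_asymp_var}; the pushforward along $ev_\infty^{\mathcal{C}}$ is $(A, \pi_A, X\epsilon\hat\otimes 1 + 1\hat\otimes C_V)$, whose bounded transform is the class $[\psi(X\epsilon\hat\otimes 1 + 1\hat\otimes C_V)]$ on the right-hand side of \eqref{prop_mathcalC_statement}. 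Since this pushforward equals the Bott--Thom class (a $KK$-equivalence by Kasparov) and $[ev_\infty^{\mathcal{C}}]$ is also a $KK$-equivalence from the first stage, $\alpha = [\psi(X\epsilon\hat\otimes 1 + 1\hat\otimes C_V)] \otimes [ev_\infty^{\mathcal{C}}]^{-1}$ is the composition of two $KK$-equivalences and hence itself invertible. Then $\alpha \otimes [ev_0^{\mathcal{C}}] = 1$ forces $[ev_0^{\mathcal{C}}] = \alpha^{-1}$ to be a $KK$-equivalence, and the identity \eqref{prop_mathcalC_statement} follows. The main obstacle is the regularity and self-adjointness of $\mathbb{B}_V$ as a multiplier of $\mathcal{C}$ together with the verification of the compactness condition; these reproduce, in the parametric family-over-$Y$ setting, the dual-Dirac half of Guentner--Higson's asymptotic-morphism proof of Bott periodicity.
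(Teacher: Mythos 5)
Your proposal is correct and follows essentially the same route as the paper: the paper likewise proves $KK$-contractibility of $\ker(ev_\infty^{\mathcal{C}})$ (via a five-lemma diagram built from the inclusion $C(Y)\simeq P\mathcal{K}P\hookrightarrow \mathcal{K}$, which is the same argument as your mapping-cone phrasing), and then uses the class $[\psi(\mathbb{B}_V)]\in KK(C(Y),\mathcal{C})$ supplied by Lemma \ref{lem_asymp_var}, its two pushforwards along $ev_0^{\mathcal{C}}$ and $ev_\infty^{\mathcal{C}}$, and the invertibility of the Thom element to conclude both the $KK$-equivalence of $[ev_0^{\mathcal{C}}]$ and the identity (\ref{prop_mathcalC_statement}). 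Your explicit verification of the Kasparov-bimodule conditions for $(\mathcal{C},\rho,\mathbb{B}_V)$ merely fills in details the paper leaves implicit.
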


\begin{proof}
The fact that $ev_\infty^{\mathcal{C}}$ is a $KK$-equivalence is seen by checking that the kernel of this $*$-homomorphism, 
$\mathrm{ker}(ev_\infty^{\mathcal{C}}) = \{f \in C_0([0, \infty); \mathcal{K}(L^2_Y(V; \mathbb{C}l(V)))) \ | \ f (0)=Pf(0) = f(0)P \} $, 
is $KK$-contractible. 
Let us use the notation $H_Y :=L^2_Y(V; \mathbb{C}l(V))$ in this proof. 
We have the following commutative diagram, 
\begin{equation*}
\xymatrix{
0 \ar[r] & C_0((0, \infty); \mathcal{K}(H_Y)) \ar[r] \ar@{=}[d]& \ker(ev_\infty^{\mathcal{C}})\ar@{^{(}->}[d]\ar[r]^{ev_0}&C(Y)\ar[r]\ar@{^{(}->}[d] &0
\\
0 \ar[r]  &C_0((0, \infty); \mathcal{K}(H_Y)) \ar[r]  &C_0([0, \infty); \mathcal{K}(H_Y)) \ar[r]^-{ev_0} &\mathcal{K}(H_Y)\ar[r] &0, 
}
\end{equation*}
where the rows are exact. 
The right vertical inclusion is given by $C(Y) \simeq P\mathcal{K}(H_Y)P \hookrightarrow \mathcal{K}(H_Y)$ so this is a $KK$-equivalence. 
By the five lemma in $KK$-theory, we see that the middle vertical arrow is a $KK$-equivalence. 
Since $C_0([0, \infty); \mathcal{K}(H_Y)) $ is $KK$-contractible, so is $\ker (ev_\infty^{\mathcal{C}})$. 

On the other hand, 
by Lemma \ref{lem_asymp_var}, we see that $\psi(\mathbb{B}_V)$ is a self-adjoint multiplier of $\mathcal{C}$ which satisfies $\psi(\mathbb{B}_V)^2 - \mathrm{Id}_{\mathcal{C}} \in \mathcal{C}$, and by construction $\psi(\mathbb{B}_V)$ commutes with the action of $C(Y)$ on $\mathcal{C}$ by multiplication. 
So we get the element $[\psi(\mathbb{B}_V)] \in KK(C(Y), \mathcal{C})$. 
It satisfies $[\psi(\mathbb{B}_V)] \otimes[ev_\infty^{\mathcal{C}}] = [\psi(X\epsilon \hat{\otimes} 1 + 1 \hat{\otimes}C_V)]  \in KK(C(Y), C_0(\mathbb{R} \oplus V; \mathbb{C}l(\mathbb{R} \oplus V)))$ and $[\psi(\mathbb{B}_V)]\otimes [ev_0^{\mathcal{C}}] = id_{C(Y)} \in KK(C(Y), C(Y))$ (this is because $\psi(t^{-1}B_V)|_{t = 0} = \psi(0)P= 0$). 
Thus we have $ [\psi(X\epsilon \hat{\otimes} 1 + 1 \hat{\otimes}C_V)]  \otimes [ev_\infty^{\mathcal{C}}]^{-1} \otimes[ev_0^{\mathcal{C}}] = id_{C(Y)} \in KK(C(Y), C(Y))$. 
Since the element $ [\psi(X\epsilon \hat{\otimes} 1 + 1 \hat{\otimes}C_V)]  $ is the Thom element which is a $KK$-equivalence, we see that $ [ev_0^{\mathcal{C}}] \in KK(\mathcal{C}, C(Y))$ is a $KK$-equivalence, which proves (\ref{prop_mathcalC_statement}). 
\end{proof}

Now, we return to settings of manifolds with fibered boundaries $(M, \pi : \partial M\to Y)$ equipped with pre-$spin^c$-structures. 
We assume that $M$ is even dimensional and $Y$ is odd dimensional. 
We apply the above constructions for $V := T^*Y$ which is an oriented vector bundle over $Y$. 
Let us denote by $\Gamma$ the groupoid $\partial M \times_\pi \partial M \rightrightarrows \partial M$. 

Choosing differential $spin^c$-structures on $T^V\partial M$ and $TY$ representing the given pre-$spin^c$ structure, define a $C^*$-algebra $\Psi(\mathbb{B}_V \hat{\otimes} 1 + 1 \hat{\otimes} D_\pi)$ to be the $C^*$-subalgebra of $\mathcal{M}(\mathcal{C}\hat{\otimes}_{C(Y)} C^*(\Gamma; S(\mathfrak{A}\Gamma)))$ generated by $\{\psi (\mathbb{B}_V \hat{\otimes} 1 + 1 \hat{\otimes} D_\pi)\}$, $\mathcal{C} \hat{\otimes}_{C(Y)} C^*(\Gamma; S(\mathfrak{A}\Gamma))$ and $C(\partial M \times [0,\infty])$. 
Note that we have an exact sequence
\[
0 \to \mathcal{C} \hat{\otimes}_{C(Y)} C^*(\Gamma; S(\mathfrak{A}\Gamma)) \to \Psi(\mathbb{B}_V \hat{\otimes} 1 + 1 \hat{\otimes} D_\pi) \to C(\partial M \times [0, \infty]) \otimes \mathbb{C}l_1 \to 0
\]
analogous to (\ref{A_pi_ext_eq}). 

\begin{defn}[$\mathcal{D}_\pi$]
In the above settings, we define the $C^*$-algebra $\mathcal{D}_\pi$ by the pullback (c.f. Remark \ref{rem_rigid})
\[
\xymatrix{
\mathcal{D}_\pi \ar[r] \ar[d] \pullbackcorner & \Psi(\mathbb{B}_V \hat{\otimes} 1 + 1 \hat{\otimes} D_\pi) \ar[d] \\ 
C(M \times [0, \infty]) \otimes \mathbb{C}l_1 \ar[r]^{i^*} &  C(\partial M \times [0, \infty]) \otimes \mathbb{C}l_1
}
\] 
\end{defn}

We have a $*$-homomorphism $ev_0 := ev_0^{\mathcal{C}} \otimes_{C(Y)} id_{C^*(\Gamma)}: \mathcal{C}\hat{\otimes}C^*(\Gamma; S(\mathfrak{A}\Gamma)) \to C^*(\Gamma; S(\mathfrak{A}\Gamma)) $. 
This $*$-homomorphism extends to a $*$-homomorphism $\mathcal{D}_\pi \to \mathcal{A}_\pi$, by sending $ \psi(\mathbb{B}_V \hat{\otimes}1 + 1 \hat{\otimes}D_\pi)$ to $\psi(D_\pi)$ and by evaluating at $0 \in [0, \infty]$ on $C(M \times [0, \infty]) \otimes \mathbb{C}l_1$, 
since $\psi(0) = 0$ and $P$ is the projection to the kernel of $B_V$. 
We also denote this $*$-homomorphism by $ev_0$. 

On the other hand, using the isomorphism $C_0(\mathbb{R} \oplus T^*Y; \mathbb{C}l(\mathbb{R} \oplus T^*Y))\simeq C^*(TY \times \mathbb{R}; S(TY \times \mathbb{R}))$, we have a $*$-homomorphism $ev_\infty:= ev_\infty^{\mathcal{C}}\otimes_{C(Y)} id_{C^*(\Gamma)} : \mathcal{C}\hat{\otimes}_{C(Y)}C^*(\Gamma; S(\mathfrak{A}\Gamma)) \to C^*(TY \times \mathbb{R}; S(TY \times \mathbb{R})) \hat{\otimes}_{C(Y)} C^*(\Gamma, S(\mathfrak{A}\Gamma)) \simeq C^*(G_\Phi|_{\partial M}; S(\mathfrak{A}G_\Phi|_{\partial M}))$. 
This $*$-homomorphism extends to a $*$-homomorphism $\mathcal{D}_\pi \to \mathcal{B}_\pi$ by sending $\psi(\mathbb{B}_V \hat{\otimes}1 + 1 \hat{\otimes}D_\pi)$ to $\psi(D_{TY \times \mathbb{R}} \hat{\otimes}1 + 1 \hat{\otimes}D_\pi) = \psi(D_{\Phi, \partial})$ and evaluation at $\infty\in[0, \infty]$ on $C(M \times [0, \infty]) \otimes \mathbb{C}l_1$, 
since $X\epsilon \hat{\otimes} 1 + 1 \hat{\otimes} C_V$ corresponds to $D_{TY \times \mathbb{R}}$ under the Fourier transform. 
We also denote this $*$-homomorphism by $ev_\infty$. 

In the next proposition, we need to use the $\mathcal{R}KK$-theory for $C(Y)$-algebras (\cite[Section 2.19]{Kas}).  
Given a compact space $Y$ and two $C(Y)$-algebras $A$ and $B$, we get an abelian group $\mathcal{R}KK(Y; A, B)$, roughly by requiring Kasparov modules to be compatible with the action by $C(Y)$. 
We also have the Kasparov product in this theory, 
\begin{align*}
\mathcal{R}KK(Y; A_1, B_1\hat{\otimes}_{C(Y)}D) &\otimes \mathcal{R}KK(Y; D \hat{\otimes}_{C(Y)}A_2, B_2) \\
&\to \mathcal{R}KK(Y; A_1 \hat{\otimes}_{C(Y)}A_2, B_1 \hat{\otimes}_{C(Y)}B_2). 
\end{align*}
With an abuse of notation, we also denote this product by $\otimes_{D}$. 
Note that $\mathrm{Ind}^Y(D_{TY \times \mathbb{R}})\in KK(C(Y), C^*(TY \times \mathbb{R}))$ lifts canonically to an element in $\mathcal{R}KK(C(Y), C^*(TY\times \mathbb{R}))$, which is also denoted by $\mathrm{Ind}^Y(D_{TY \times \mathbb{R}})$. 

\begin{prop}\label{prop_asymp_var}
Assume that $M$ is even dimensional and $Y$ is odd dimensional. 
Consider the following commutative diagram, 
\begin{equation}\label{prop_asymp_var_diag1}
\xymatrix{
0 \ar[r] &C^*(\Gamma; S(\mathfrak{A}\Gamma)) \ar[r] &\mathcal{A}_\pi \ar[r]& C(M ) \otimes \mathbb{C}l_1\ar[r]&0
\\
0 \ar[r]  &\mathcal{C}\hat{\otimes}_{C(Y)}C^*(\Gamma; S(\mathfrak{A}\Gamma))  \ar[u]^{ev_0}\ar[d]^{ev_\infty} \ar[r] &\mathcal{D}_\pi \ar[r]\ar[u]^{ev_0}\ar[d]^{ev_\infty} & C(M \times [0, \infty] ) \otimes \mathbb{C}l_1\ar[r] \ar[u]^{ev_0}\ar[d]^{ev_\infty}&0
\\
0 \ar[r]  &C^*(G_\Phi|_{\partial M}; S(\mathfrak{A}G_\Phi|_{\partial M})) \ar[r] &\mathcal{B}_\pi \ar[r] &C( M ) \otimes \mathbb{C}l_1\ar[r] &0, 
}
\end{equation}
where the rows are exact. 
The vertical arrows are $KK$-equivalences, and we define
\begin{equation}\label{prop_asymp_statement}
\mu := [ev_0]^{-1} \otimes_{\mathcal{D}_\pi} [ev_\infty] \in KK(\mathcal{A}_\pi, \mathcal{B}_\pi). 
\end{equation}
Then this element fits into the commutative diagram in $KK$-theory, 
\begin{equation}\label{prop_asymp_var_diag2}
\xymatrix{
0 \ar[r] &C^*(\Gamma; S(\mathfrak{A}\Gamma)) \ar[r] \ar[d]_{ \mathrm{Ind}^Y(D_{TY \times \mathbb{R}}) \otimes id_{C^*(\Gamma)}} &\mathcal{A}_\pi \ar[r]\ar[d]^{\mu}& C(M ) \otimes \mathbb{C}l_1\ar[r]\ar@{=}[d] &0
\\
0 \ar[r]  &C^*(G_\Phi|_{\partial M}; S(\mathfrak{A}G_\Phi|_{\partial M})) \ar[r]  &\mathcal{B}_\pi \ar[r] &C( M ) \otimes \mathbb{C}l_1\ar[r] &0. 
}
\end{equation}
Here the left vertical arrow is defined by taking Kasparov product of elements $\mathrm{Ind}^Y(D_{TY \times \mathbb{R}}) \in \mathcal{R}KK(Y; C(Y), C^*(TY \times \mathbb{R}))$ and $id_{C^*(\Gamma)} \in \mathcal{R}KK(Y; C^*(\Gamma), C^*(\Gamma))$, and then forgetting the $C(Y)$-algebra structure. 
\end{prop}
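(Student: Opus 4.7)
The proof proposal is as follows.

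\medskip

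First, I would check that diagram (\ref{prop_asymp_var_diag1}) is well-defined with exact rows. Exactness of the middle row comes from the pullback definition of $\mathcal{D}_\pi$ combined with surjectivity of $i^*: C(M \times [0,\infty]) \otimes \mathbb{C}l_1 \to C(\partial M \times [0,\infty]) \otimes \mathbb{C}l_1$, exactly as in the derivation of (\ref{A_pi_ext_eq}). Commutativity of the squares follows from the defining formulas of $ev_0$ and $ev_\infty$: on $C(M \times [0,\infty]) \otimes \mathbb{C}l_1$ they are evaluation at the endpoints of the interval, and on $\mathcal{C} \hat{\otimes}_{C(Y)} C^*(\Gamma; S(\mathfrak{A}\Gamma))$ they factor through $ev_0^{\mathcal{C}}$ and $ev_\infty^{\mathcal{C}}$, sending the multiplier $\psi(\mathbb{B}_V \hat{\otimes}1 + 1\hat{\otimes}D_\pi)$ either to $\psi(D_\pi)$ (using $\psi(0)=0$ and the kernel projection $P$) or to $\psi(D_{\Phi,\partial})$ via the Fourier-transform identification of $X\epsilon \hat{\otimes}1 + 1\hat{\otimes}C_V$ with $D_{TY \times \mathbb{R}}$.

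\medskip

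Next I would verify that every vertical arrow is a $KK$-equivalence, so that $\mu$ in (\ref{prop_asymp_statement}) makes sense. The right-hand column consists of evaluations at points of the contractible interval $[0,\infty]$ (modulo the obvious homotopy equivalence identifying the value $\infty$ with the $C(M)$-factor), hence $KK$-equivalences. For the left-hand column, Proposition \ref{prop_mathcalC} shows that $ev_0^{\mathcal{C}}$ and $ev_\infty^{\mathcal{C}}$ are $KK$-equivalences; tensoring with the $C(Y)$-algebra $C^*(\Gamma; S(\mathfrak{A}\Gamma))$ preserves this property, because the $KK$-contractibility of $\ker(ev_\infty^{\mathcal{C}})$ shown in the proof of Proposition \ref{prop_mathcalC} is preserved under $\hat{\otimes}_{C(Y)} C^*(\Gamma; S(\mathfrak{A}\Gamma))$ (the relevant ideal becomes $C_0((0,\infty); \mathcal{K}(L^2_Y(V;\mathbb{C}l(V))))\hat{\otimes}_{C(Y)}C^*(\Gamma;S(\mathfrak{A}\Gamma))$, still a cone). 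The five lemma in $KK$-theory then forces the middle column to be a $KK$-equivalence as well.

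\medskip

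For the commutativity of (\ref{prop_asymp_var_diag2}), the right square is immediate: both compositions are the identity on $C(M)\otimes \mathbb{C}l_1$, up to the $KK$-equivalence induced by evaluation on the contractible interval. The left square is the content of the proposition. By Proposition \ref{prop_mathcalC} applied to the oriented real vector bundle $V = T^*Y$ of odd rank,
\[
[ev_0^{\mathcal{C}}]^{-1} \otimes [ev_\infty^{\mathcal{C}}] = [\psi(X\epsilon \hat{\otimes}1 + 1 \hat{\otimes}C_V)] \in KK(C(Y), C_0(T^*Y \oplus \mathbb{R};\mathbb{C}l(T^*Y \oplus \mathbb{R}))) .
\]
Under the fiberwise Fourier transform $C_0(T^*Y \oplus \mathbb{R}; \mathbb{C}l(T^*Y \oplus \mathbb{R})) \simeq C^*(TY\times\mathbb{R}; S(TY\times\mathbb{R}))$, the unbounded multiplier $X\epsilon\hat{\otimes}1 + 1\hat{\otimes}C_V$ is precisely the translation-invariant $spin^c$-Dirac operator $D_{TY\times\mathbb{R}}$, and the resulting class is the fiberwise Kasparov class $\mathrm{Ind}^Y(D_{TY\times\mathbb{R}}) \in \mathcal{R}KK(Y; C(Y), C^*(TY \times \mathbb{R}))$. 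Taking Kasparov product with $id_{C^*(\Gamma; S(\mathfrak{A}\Gamma))}$ over $C(Y)$ in the $\mathcal{R}KK$-framework and forgetting the $C(Y)$-algebra structure then yields the desired equality of the left vertical arrow in (\ref{prop_asymp_var_diag2}) with $\mu|_{C^*(\Gamma; S(\mathfrak{A}\Gamma))}$.

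\medskip

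I expect the only non-routine step to be a careful verification that the Fourier-transform identification really intertwines the Thom element $[\psi(X\epsilon \hat{\otimes}1 + 1\hat{\otimes}C_V)]$ with the $spin^c$ family index class $\mathrm{Ind}^Y(D_{TY \times \mathbb{R}})$; everything else is a combination of the pullback structure of $\mathcal{D}_\pi$, Proposition \ref{prop_mathcalC}, and the five lemma in $KK$-theory.
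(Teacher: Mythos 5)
Your proposal is correct and follows essentially the same route as the paper: commutativity and exactness of the first diagram from the definitions of $ev_0$, $ev_\infty$ and the pullback structure of $\mathcal{D}_\pi$; the left column being a $KK$-equivalence by Proposition \ref{prop_mathcalC} applied fiberwise (i.e.\ tensored over $C(Y)$ with $C^*(\Gamma; S(\mathfrak{A}\Gamma))$); the middle column by the five lemma; and the commutativity of the second diagram from the identity $[ev_0^{\mathcal{C}}]^{-1}\otimes[ev_\infty^{\mathcal{C}}]=[\psi(X\epsilon\hat{\otimes}1+1\hat{\otimes}C_V)]$ together with the Fourier-transform identification of $C_{T^*Y\oplus\mathbb{R}}$ with $D_{TY\times\mathbb{R}}$. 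The only stylistic difference is that you spell out why the $KK$-equivalence survives $\hat{\otimes}_{C(Y)}C^*(\Gamma;S(\mathfrak{A}\Gamma))$, which the paper leaves implicit; this is a harmless elaboration, not a different argument.
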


\begin{proof}
For ease of notations, we drop the coefficient bundle in this proof and write, for example,  $C^*(\Gamma)$ for $C^*(\Gamma; S(\mathfrak{A}\Gamma))$.
The commutativity of the diagram (\ref{prop_asymp_var_diag1}) directly follows from the definition. 

That the arrows in the left column of (\ref{prop_asymp_var_diag1}) are $KK$-equivalences is the direct consequence of Proposition \ref{prop_mathcalC}, by noting that $ev_0 = ev_0^{\mathcal{C}}{\otimes}_{C(Y)} id_{C^*(\Gamma)}$ and $ev_\infty= ev_\infty^{\mathcal{C}}{\otimes}_{C(Y)} id_{C^*(\Gamma)}$. 
By (\ref{prop_mathcalC_statement}), we also have the equality
\begin{equation}\label{prop_asymp_var_eq1}
[ev_0]^{-1} \otimes [ev_\infty] = \mathrm{Ind}^Y(D_{TY \times \mathbb{R}}) {\otimes} id_{C^*(\Gamma)} \in \mathcal{R}KK(Y; C^*(\Gamma), C^*(G_\Phi|_{\partial M})), 
\end{equation}
by noting that the operator $D_{TY \times \mathbb{R}}$ corresponds to $X \epsilon \hat{\otimes}1 + 1 \hat{\otimes}C_{T^*Y}= C_{T^*Y \oplus \mathbb{R}}$ under the Fourier transform. 

Next let us look at the middle column of the diagram (\ref{prop_asymp_var_diag1}). 
By the commutativity of the diagram and the five lemma in $KK$-theory, the above $KK$-equivalence result on the left column implies that the middle vertical arrows are also $KK$-equivalences. 
Finally, the commutativity of the diagram (\ref{prop_asymp_var_diag2}) follows from (\ref{prop_asymp_var_eq1}). 
\end{proof}

\begin{defn}\label{def_Khomology_K}
Let $(M^{\mathrm{ev}},\pi : \partial M \to Y^{\mathrm{odd}})$ be a compact manifold with fibered boundary, equipped with a pre-$spin^c$-structures $P'_\pi$, $P'_M$ and $P'_Y$ on $T^V\partial M$, $TM$ and $TY$, respectively. 
We assume that these pre-$spin^c$ structures are compatible at the boundary. 
Then we define 
\[
[(P'_M, P'_Y)] := \mu \otimes [\iota] \in KK(\mathcal{A}_\pi, \Sigma^{\mathring{M}}(G_\Phi)). 
\]
Here the element $[\iota]\in KK( \mathcal{B}_\pi ,\Sigma^{\mathring{M}}(G_\Phi )) $ is defined in Definition \ref{def_B_pi} and $\mu \in KK(\mathcal{A}_\pi, \mathcal{B}_\pi)$ is defined in Proposition \ref{prop_asymp_var}. 
\end{defn}

\begin{thm}[The index pairing formula for $spin^c$-Dirac operators]\label{main_thm_K}
Let $(M^{\mathrm{ev}},\pi : \partial M \to Y^{\mathrm{odd}})$ be a compact manifold with fibered boundary. 
Let $P'_\pi$, $P'_M$ and $P'_Y$ be pre-$spin^c$-structures on $T^V\partial M$, $TM$ and $TY$ respectively. 
We assume that the pre-$spin^c$-structures are compatible at the boundary. 
Let $E$ be a complex vector bundle over $M$.  
Let $Q_\pi \in \mathcal{I}(P'_\pi, E)$. 
Then we have
\[
\mathrm{Ind}_\Phi(P'_M, P'_Y, E, Q_\pi) = [(E, Q_\pi)] \otimes_{\mathcal{A}_\pi} [(P'_M, P'_Y)] \otimes_{\Sigma^{\mathring{M}}(G_\Phi)} \mathrm{ind}^{\mathring{M}}(G_\Phi)\in \mathbb{Z}. 
\]
Here the element $\mathrm{ind}^{\mathring{M}}(G_\Phi) \in KK^1(\Sigma^{\mathring{M}}(G_\Phi), \mathbb{C})$ is defined in subsection \ref{subsubsec_ellipticity}. 
\end{thm}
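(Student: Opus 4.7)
The plan is to compute the right-hand side by unfolding the definitions and constructing a canonical lift to $K_1(\mathcal{D}_\pi)$ (with coefficients in $E$) that interpolates between the $\mathcal{A}_\pi$-class on one side and the full-symbol class on the other. Concretely, recalling that $[(P'_M,P'_Y)] = \mu\otimes[\iota]$ and $\mu = [ev_0]^{-1}\otimes[ev_\infty]$, the claim reduces to producing $x \in K_1(\mathcal{D}_\pi(E))$ with $(ev_0)_*(x) = [(E,Q_\pi)]$ and then showing
\[
[\iota]_*(ev_\infty)_*(x) \otimes_{\Sigma^{\mathring{M}}(G_\Phi)} \mathrm{ind}^{\mathring{M}}(G_\Phi) = \mathrm{Ind}_\Phi(P'_M,P'_Y,E,Q_\pi).
\]

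For the first step, I would fix a smooth representative $\tilde{D}_\pi^E$ of the homotopy class $Q_\pi$, work with the $E$-coefficient versions $\mathcal{A}_\pi(E), \mathcal{B}_\pi(E), \mathcal{D}_\pi(E)$ (Morita equivalent to the uncoupled algebras in the obvious way), and set
\[
x := \big[\,\big(1_{M\times[0,\infty]}\hat\otimes\epsilon,\ \psi(\mathbb{B}_V\hat\otimes 1 + 1\hat\otimes\tilde{D}_\pi^E)\big)\,\big] \in K_1(\mathcal{D}_\pi(E)),
\]
where $V = T^*Y$. The well-definedness is a version of Lemma \ref{lem_asymp_var} applied to the graded tensor product of the unbounded multiplier $\mathbb{B}_V$ of $\mathcal{C}$ with the invertible family $\tilde{D}_\pi^E$; this uses that $\tilde{D}_\pi^E$ has compact resolvent on fibers and is invertible, so the bounded transform of the sum lies in the generated $C^*$-algebra and has square equal to the identity modulo $\mathcal{C}\hat\otimes_{C(Y)}C^*(\Gamma;S(T^V\partial M)\hat\otimes E)$.

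Next I would verify the two evaluations. At $t=0$, the definition of $\mathcal{C}$ implies $\mathbb{B}_V$ evaluates to the zero operator on $P\mathcal{K}P \simeq C(Y)$, so $ev_0$ sends $\psi(\mathbb{B}_V\hat\otimes 1 + 1\hat\otimes\tilde D_\pi^E)$ to $\psi(\tilde D_\pi^E)\in\Psi(D_\pi^E)$, giving $(ev_0)_*(x)=[(E,Q_\pi)]$. At $t=\infty$, Proposition \ref{prop_mathcalC} identifies $\mathbb{B}_V$ with $X\epsilon\hat\otimes 1 + 1\hat\otimes C_{T^*Y}$, which under Fourier transform is precisely $D_{TY\times\mathbb{R}}$; hence $(ev_\infty)_*(x)$ is represented by $(1_M\hat\otimes\epsilon,\ \psi(\tilde D_\pi^E\hat\otimes 1 + 1\hat\otimes D_{TY\times\mathbb{R}})) = (1_M\hat\otimes\epsilon,\psi(\tilde D^{S\hat\otimes E}_{\Phi,\partial M}))$ in $K_1(\mathcal{B}_\pi(E))$. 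Applying $\iota$ from Definition \ref{def_B_pi} (with coefficients in $E$) sends $1_M\hat\otimes\epsilon$ to the principal symbol $\sigma(\psi(D_\Phi^{S\hat\otimes E}))$, so
\[
[\iota]_*(ev_\infty)_*(x) = [(\sigma(D_\Phi^{S\hat\otimes E}),\ \psi(\tilde D^{S\hat\otimes E}_{\Phi,\partial M}))] \in K_1(\Sigma^{\mathring{M}}(G_\Phi)),
\]
the class whose pairing with $\mathrm{ind}^{\mathring{M}}(G_\Phi)$ defines $\mathrm{Ind}_\Phi(P'_M,P'_Y,E,Q_\pi)$ by Definition \ref{def_index_perturbation_spinc}.

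The main obstacle is the careful construction and well-definedness of the lift $x$. Specifically, one must make precise the unbounded multiplier $\mathbb{B}_V\hat\otimes 1 + 1\hat\otimes\tilde D_\pi^E$ of $\mathcal{C}\hat\otimes_{C(Y)}C^*(\Gamma;S(T^V\partial M)\hat\otimes E)$ and verify that its bounded transform generates a $C^*$-subalgebra of $\Psi(\mathbb{B}_V\hat\otimes 1 + 1\hat\otimes D_\pi^E)$ in which the class $x$ makes sense; this amounts to a fiberwise version of the Connes--Skandalis unbounded picture of the Kasparov product, where the key input is the identity $(\mathbb{B}_V)^2\hat\otimes 1 + 1\hat\otimes(\tilde D_\pi^E)^2 + [\mathbb{B}_V\hat\otimes 1, 1\hat\otimes\tilde D_\pi^E]_+ = $ sum of two commuting positive operators on fibers (the graded commutator being controlled because $\tilde D_\pi^E$ acts trivially in the $\mathcal{C}$-direction). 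Once $x$ is constructed, the remaining verifications are formal: $[(E,Q_\pi)] \otimes\mu = (ev_\infty)_*\circ(ev_0)_*^{-1}[(E,Q_\pi)] = (ev_\infty)_*(x)$, and the rest follows from the definition of the analytic index class.
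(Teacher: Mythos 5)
Your proposal is correct and follows essentially the same route as the paper: the paper's proof also reduces the claim to the identity $[(E,Q_\pi)]\otimes_{\mathcal{A}_\pi}\mu = [(E,Q_{\Phi,\partial})]$ and establishes it by exhibiting exactly the same invertible lift $(1\hat{\otimes}\epsilon,\ \psi(\mathbb{B}_V\hat{\otimes}1 + 1\hat{\otimes}\tilde{D}_\pi^E))$ in $\mathcal{D}_\pi(E)$, evaluating it at $0$ and $\infty$, and then applying $\iota$ to land on the full-symbol class of Definition \ref{def_index_perturbation_spinc}. Your added remarks on the well-definedness of the unbounded multiplier $\mathbb{B}_V\hat{\otimes}1 + 1\hat{\otimes}\tilde{D}_\pi^E$ only flesh out a point the paper handles via Lemma \ref{lem_asymp_var} and the definition of $\Psi(\mathbb{B}_V\hat{\otimes}1 + 1\hat{\otimes}D_\pi)$.
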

\begin{proof}
We only prove the Theorem in the case where the dimension of the fiber of $\pi$ is even. 
As in Lemma \ref{lem_K_class}, a pair $(E, Q_{\Phi, \partial} )$, where $E \to M$ is a $\mathbb{Z}_2$-graded complex vector bundle and $Q_{\Phi, \partial} \in \mathcal{I}(D_{\Phi, \partial}^E)$, naturally defines a class $[(E, Q_{\Phi, \partial} )] \in K_1(\mathcal{B}_\pi)$. 
Here we denoted by $D_{\Phi, \partial}^E$ the $spin^c$ Dirac operator twisted by $E$ and $\mathcal{I}(D_{\Phi, \partial}^E)$ is the set of homotopy classes of $\mathbb{C}l_1$-invertible perturbations of the operator $D_{\Phi, \partial}^E$ on $G_\Phi|_{\partial }$. 

First we remark that, when we are given a twisting bundle $E$, it is convenient to use $C^*$-algebras $\mathcal{A}_\pi(E)$, $\mathcal{B}_\pi(E)$ and $\mathcal{D}_\pi(E)$ which are Morita equivalent to $\mathcal{A}_\pi$, $\mathcal{B}_\pi$ and $\mathcal{D}_\pi$, respectively; the first one appears in (\ref{def_A_pi_coeff}) and the definitions of the other algebras are self-explanatory. 
The corresponding element $\mu \in KK(\mathcal{A}_\pi(E), \mathcal{B}_\pi(E))$ is realized as $[ev_0]^{-1}\otimes_{\mathcal{D}_\pi(E)}[ev_\infty]$ as in Proposition \ref{prop_asymp_var}, and a $*$-homomorphism $\iota_E : \mathcal{B}_\pi(E) \to \Sigma^{\mathring{M}}(G_\Phi; S(\mathfrak{A}G_\Phi)\hat{\otimes}E)$ is constructed analogously to Definition \ref{def_B_pi}. 

It is enough to prove the following. 
Given an element $Q_\pi \in \mathcal{I}(P_\pi, E)$, take some representative $\tilde{D}_\pi^E$ for $Q_\pi$ and consider the class $Q_{\Phi, \partial} := [\tilde{D}_\pi^E \hat{\otimes} 1 +1 \hat{\otimes} D_{TY \times \mathbb{R}} ] \in \mathcal{I}({D}_\pi^E \hat{\otimes} 1 +1 \hat{\otimes} D_{TY \times \mathbb{R}} )$ (this class does not depend on the choice). 
Then we have
\begin{equation}\label{mainthm_eq1}
[(E, Q_\pi) ]\otimes_{\mathcal{A}_\pi} \mu = [(E, Q_{\Phi, \partial})] \in KK(\mathbb{C}l_1, \mathcal{B}_\pi). 
\end{equation}
For, in view of the definition of $\iota_E$, we have the equality
\begin{align*}
[(E, Q_{\Phi, \partial})]\otimes \iota \otimes \mathrm{ind}^{\mathring{M}}(G_\Phi) &= [(\sigma(D_\Phi^E), \psi(\tilde{D}_\pi^E \hat{\otimes}1 + 1 \hat{\otimes }D_{TY \times \mathbb{R}}))]\otimes \mathrm{ind}^{\mathring{M}}(G_\Phi) \\
&= \mathrm{Ind}_\Phi(P'_M, P'_Y, E, Q_\pi) . 
\end{align*}
So let us prove (\ref{mainthm_eq1}). 
For simplicity we assume $E$ is the trivial bundle. 
For a given representative $\tilde{D}_\pi$ for $Q_\pi \in \mathcal{I}(P_\pi)$, we can construct an invertible element in $\mathcal{D}_\pi$ defined as $(1 \hat{\otimes} \epsilon, \psi(\mathbb{B}_V \hat{\otimes} 1 + 1 \hat{\otimes} \tilde{D}_\pi)) \in \mathcal{D}_\pi$. 
By definition we have 
\begin{align*}
ev_0((1 \hat{\otimes} \epsilon, \psi(\mathbb{B}_V \hat{\otimes} 1 + 1 \hat{\otimes} \tilde{D}_\pi))) &=(1 \hat{\otimes}\epsilon, \psi(\tilde{D}_\pi)) &\in \mathcal{A}_\pi \\
ev_\infty((1 \hat{\otimes} \epsilon, \psi(\mathbb{B}_V \hat{\otimes} 1 + 1 \hat{\otimes} \tilde{D}_\pi))) &=(1 \hat{\otimes}\epsilon, \psi(\tilde{D}_{\Phi, \partial})) &\in \mathcal{B}_\pi. 
\end{align*}
Thus we see that $[(1 \hat{\otimes} \epsilon, \psi(\mathbb{B}_V \hat{\otimes} 1 + 1 \hat{\otimes} \tilde{D}_\pi)) ]\otimes[ev_0] = [(\underline{\mathbb{C}}, Q_\pi)] \in K_1(\mathcal{A}_\pi)$ and $[(1 \hat{\otimes} \epsilon, \psi(\mathbb{B}_V \hat{\otimes} 1 + 1 \hat{\otimes} \tilde{D}_\pi)) ] \otimes[ev_\infty]= [(\underline{\mathbb{C}}, Q_{\Phi, \partial})] \in K_1(\mathcal{B}_\pi)$. 
By Proposition \ref{prop_asymp_var}, we see that $[(\underline{\mathbb{C}}, Q_\pi)]  \otimes_{\mathcal{A}_\pi} \mu = [(\underline{\mathbb{C}}, Q_\pi)] \otimes_{\mathcal
{A}_\pi} [ev_0]^{-1} \otimes_{\mathcal{D}_\pi} [ev_\infty] = [(\underline{\mathbb{C}}, Q_{\Phi, \partial})] \in K_1(\mathcal{B}_\pi)$, so we get (\ref{mainthm_eq1}). 
\end{proof}

\subsection{The case of signature operators}

In this subsection, we consider the case of signature operators. 
The arguments are parallel to those in subsection \ref{sec_relativeK_spin}. 
Let $(M, \pi : \partial M \to Y)$ be a compact manifold with fibered boundaries, with fixed orientation on $TM$ and $TY$. 
For simplicity we only consider the case where the fibers of $\pi$ are even dimensional. 

We have the element $\mathrm{Ind}(D_\pi^{\mathrm{sign}}) \in KK(C(\partial M), C(Y))$ given by the fiberwise family of signature operators. 
This element gives the ``signature pushforward'' homomorphism, 
\[
\pi^{\mathrm{sign}}_! := \otimes_{C(\partial M)}\mathrm{Ind}(D_\pi^{\mathrm{sign}}) : K^*(\partial M) \to K^*(Y). 
\]
First, exactly in the analogous way to that in the last subsection, we define a $C^*$-algebra $\mathcal{A}^{\mathrm{sign}}_\pi$ whose $K$-group fits in the exact sequence
\[
\cdots   \to K^{*}(M) \xrightarrow{\pi^{\mathrm{sign}}_! \circ i^*} {K}^{*}(Y) \to {K}_{*}(\mathcal{A}^{\mathrm{sign}}_\pi) \to {K}^{*+1}(M) \xrightarrow{\pi^{\mathrm{sign}}_! \circ i^*}\cdots. 
\] 

Let $N$ be a compact space. 
Let $\pi : N \to Y$ be a fiber bundle whose fibers are equipped with even dimensional closed manifold structure, and an orientation of $\pi$ is fixed. 
Choose any fiberwise riemannian metric, and
denote the fiberwise signature operator by $D_\pi^{\mathrm{sign}}$. 
Let $L_Y^2(N; \wedge_{\mathbb{C}}(T^VN)^*)$ denote the $\mathbb{Z}_2$-graded Hilbert $C(Y)$-module which is obtained by the completion of $C_c^\infty(N; \wedge_{\mathbb{C}}(T^VN)^*)$ with the natural $C(Y)$-valued inner product. 

Denote the odd function $\psi(x) := x/\sqrt{1+x^2}$. 
Let $\Psi(D^{\mathrm{sign}}_\pi)$ denote the $\mathbb{Z}_2$-graded $C^*$-subalgebra of $\mathcal{B}(L_Y^2(N; \wedge_{\mathbb{C}}(T^VN)^*))$ generated by $\{\psi(D^{\mathrm{sign}}_\pi)\}$, $C(N)$ and $\mathcal{K}(L_Y^2(N; \wedge_{\mathbb{C}}(T^VN)^*))$.

\begin{lem}\label{lemK_ext_sign}
The algebra $\Psi(D^{\mathrm{sign}}_\pi)$ fits into the exact sequence of graded $C^*$-algebras
\begin{equation}\label{lemK_exact_sign}
0 \to \mathcal{K}(L_Y^2(N; \wedge_{\mathbb{C}}(T^VN)^*)) \to \Psi(D^{\mathrm{sign}}_\pi) \to C(N)\otimes \mathbb{C}l_1 \to 0. 
\end{equation}
The connecting element of this extension coincides with the class $\pi^{\mathrm{sign}}_! \in KK(C(N), C(Y))$.
\end{lem}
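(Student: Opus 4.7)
The plan is to run the proof of Lemma \ref{lemK_ext}, case (\ref{lemK_even}), essentially verbatim, replacing the spinor bundle $S(T^VN)$ by the complexified vertical exterior algebra $\wedge_{\mathbb{C}}(T^VN)^*$ and the fiberwise $spin^c$-Dirac operator by the fiberwise signature operator $D^{\mathrm{sign}}_\pi$. The only structural input that the earlier argument used was that one has a fiberwise elliptic, odd, formally self-adjoint first-order operator whose bounded transform represents the family index class, and this input is equally available here.

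Concretely, I would first form the groupoid $\Gamma := N \times_\pi N \rightrightarrows N$ with $\mathfrak{A}\Gamma = T^VN$, and consider the pseudodifferential extension
\[
0 \to C^*(\Gamma; \wedge_{\mathbb{C}}\mathfrak{A}^*\Gamma) \to \overline{\Psi^0_c(\Gamma; \wedge_{\mathbb{C}}\mathfrak{A}^*\Gamma)} \xrightarrow{\sigma} C(\mathfrak{S}^*\Gamma; \mathrm{End}(\wedge_{\mathbb{C}}\mathfrak{A}^*\Gamma)) \to 0,
\]
noting that $C^*(\Gamma; \wedge_{\mathbb{C}}\mathfrak{A}^*\Gamma) = \mathcal{K}(L^2_Y(N; \wedge_{\mathbb{C}}(T^VN)^*))$. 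Restricting $\sigma$ to $\Psi(D^{\mathrm{sign}}_\pi)$, the image is generated by $C(N)$ and by $\sigma(\psi(D^{\mathrm{sign}}_\pi))$, which is odd, self-adjoint, unitary, and commutes with $C(N)$; so this image is canonically $C(N)\otimes \mathbb{C}l_1$ via $\sigma(\psi(D^{\mathrm{sign}}_\pi)) \mapsto \epsilon$. This gives the short exact sequence (\ref{lemK_exact_sign}).

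For the connecting element, I would embed the extension (\ref{lemK_exact_sign}) in the pseudodifferential extension through the inclusion $\phi : C(N) \otimes \mathbb{C}l_1 \hookrightarrow C(\mathfrak{S}^*\Gamma; \mathrm{End}(\wedge_{\mathbb{C}}\mathfrak{A}^*\Gamma))$ defined by $\epsilon \mapsto \sigma(\psi(D^{\mathrm{sign}}_\pi))$. By naturality, the connecting element equals $[\phi] \otimes_{C(\mathfrak{S}^*\Gamma)} \mathrm{ind}^N(\Gamma)$. Writing $[\phi] = [\sigma(D^{\mathrm{sign}}_\pi)] \otimes_{C(\mathfrak{S}^*\Gamma)} [m]$ exactly as in the proof of Lemma \ref{lemK_ext}, and then Kasparov-multiplying with $\mathrm{ind}^N(\Gamma)$ via the identity $[m] \otimes_{C(\mathfrak{S}^*\Gamma)} \mathrm{ind}^N(\Gamma) = [q] \otimes_{C_0((T^VN)^*)} [p.d.]$ from fiberwise Poincar\'e duality, one obtains the class represented by the unbounded Kasparov $C(N)$-$C(Y)$ module $(L^2_Y(N; \wedge_{\mathbb{C}}(T^VN)^*), \mathrm{multi}, \psi(D^{\mathrm{sign}}_\pi))$. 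By definition this is $\pi^{\mathrm{sign}}_! \in KK(C(N), C(Y))$.

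The argument is routine and there is no real obstacle: every step of Lemma \ref{lemK_ext}(\ref{lemK_even}) transfers to the signature setting, because the proof there only invoked general properties of $\psi(D_\pi)$ (ellipticity, oddness, self-adjointness of the symbol), all of which hold for $\psi(D^{\mathrm{sign}}_\pi)$. The one conceptual point to highlight is that the Poincar\'e-duality computation identifying the Kasparov product is independent of which Clifford module bundle one twists the vertical Dirac-type operator on, so it gives $\pi^{\mathrm{sign}}_!$ automatically once we start from the signature operator.
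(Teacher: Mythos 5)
Your proposal is correct and is exactly the argument the paper intends: the paper states this lemma without a separate proof precisely because it is the verbatim transcription of the proof of Lemma \ref{lemK_ext}(2) with $S(T^VN)$ replaced by $\wedge_{\mathbb{C}}(T^VN)^*$ and $D_\pi$ by $D^{\mathrm{sign}}_\pi$, which is what you carry out. Your closing observation — that the Connes--Skandalis identification $[m]\otimes_{C(\mathfrak{S}^*\Gamma)}\mathrm{ind}^N(\Gamma)=[q]\otimes_{C_0((T^VN)^*)}[p.d.]$ is insensitive to the choice of Clifford module bundle, so the product lands on the Kasparov module $(L^2_Y(N;\wedge_{\mathbb{C}}(T^VN)^*),\mathrm{multi},\psi(D^{\mathrm{sign}}_\pi))$ defining $\pi^{\mathrm{sign}}_!$ — is the one point that needed to be checked, and it is right.
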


\begin{defn}[$\mathcal{A}^{\mathrm{sign}}_\pi$]\label{def_A_pi_sign}
Let $(M,\pi : \partial M \to Y)$ be a compact manifold with fibered boundary.
Assume that $\pi$ is oriented and the fibers are even dimensional. 
Denote $i : \partial M \to M$ the inclusion. 

We define $\mathcal{A}^{\mathrm{sign}}_\pi$ to be the $\mathbb{Z}_2$-graded $C^*$-algebra defined by the pullback (c.f. Remark \ref{rem_rigid})
\[
\xymatrix{
\mathcal{A}^{\mathrm{sign}}_\pi \ar[r] \ar[d] \pullbackcorner & \Psi(D^{\mathrm{sign}}_\pi) \ar[d] \\
C(M)\otimes \mathbb{C}l_1 \ar[r]^{i^*} &  C(\partial M) \otimes \mathbb{C}l_1
}
\]
\end{defn}

We can prove that this $C^*$-algebra induces the desired long exact sequence, analogously to Proposition \ref{A_pi_longexact}. 
\begin{prop}\label{A_pi_longexact_sign}
Let $(M,\pi : \partial M \to Y)$ be a compact manifold with fibered boundary.
Assume that $\pi$ is oriented and the dimension of fibers is even. 
The $K$-groups of the $C^*$-algebra $\mathcal{A}^{\mathrm{sign}}_\pi$ naturally fits in the exact sequence
\[
\cdots   \to K^{*}(M) \xrightarrow{\pi^{\mathrm{sign}}_! \circ i^*} {K}^{*}(Y) \to {K}_{*}(\mathcal{A}^{\mathrm{sign}}_\pi) \to {K}^{*+1}(M) \xrightarrow{\pi^{\mathrm{sign}}_! \circ i^*}  \cdots. 
\] 
\end{prop}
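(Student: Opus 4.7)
The plan is to imitate the proof of Proposition \ref{A_pi_longexact} verbatim, with the only inputs changed being Lemma \ref{lemK_ext_sign} in place of Lemma \ref{lemK_ext} and $\pi^{\mathrm{sign}}_!$ in place of $\pi_!$. Concretely, first I would observe that the restriction $i^{*}: C(M)\otimes\mathbb{C}l_1 \to C(\partial M)\otimes \mathbb{C}l_1$ is surjective. By Lemma \ref{lemK_ext_sign}, the right-hand map in the defining pullback square of Definition \ref{def_A_pi_sign} is also surjective with kernel $\mathcal{K}(L_Y^{2}(\partial M;\wedge_{\mathbb{C}}(T^{V}\partial M)^{*}))$. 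A standard fact about pullbacks of $C^{*}$-algebras then yields a graded short exact sequence
\[
0 \to \mathcal{K}(L_Y^{2}(\partial M;\wedge_{\mathbb{C}}(T^{V}\partial M)^{*})) \to \mathcal{A}^{\mathrm{sign}}_\pi \to C(M)\otimes\mathbb{C}l_1 \to 0.
\]

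Next, I would identify the connecting element of this extension. By naturality of connecting elements with respect to pullback diagrams, it equals the Kasparov product of $[i^{*}] \in KK(C(M),C(\partial M))$ with the connecting element of the extension $0 \to \mathcal{K}(L_Y^{2}(\partial M;\wedge_{\mathbb{C}}(T^{V}\partial M)^{*})) \to \Psi(D^{\mathrm{sign}}_\pi) \to C(\partial M)\otimes\mathbb{C}l_1 \to 0$, which by the second statement of Lemma \ref{lemK_ext_sign} is exactly $\pi^{\mathrm{sign}}_!$. Hence the connecting element is $[i^{*}]\otimes \pi^{\mathrm{sign}}_! \in KK(C(M),C(Y))$.

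Finally, I would pass to the six-term long exact sequence in $K$-theory. Under the graded Morita equivalence between $\mathcal{K}(L_Y^{2}(\partial M;\wedge_{\mathbb{C}}(T^{V}\partial M)^{*}))$ and $C(Y)$ (noting that the fibers are even-dimensional so the grading on $L_Y^2$ arises in a balanced fashion from the Hodge involution), its graded $K$-theory is identified with $K^{*}(Y)$; similarly $K_{*}(C(M)\otimes\mathbb{C}l_1)\simeq K^{*+1}(M)$. Under these identifications the boundary map becomes $\pi^{\mathrm{sign}}_!\circ i^{*}$, yielding the sequence
\[
\cdots \to K^{*}(M)\xrightarrow{\pi^{\mathrm{sign}}_!\circ i^{*}} K^{*}(Y) \to K_{*}(\mathcal{A}^{\mathrm{sign}}_\pi) \to K^{*+1}(M) \xrightarrow{\pi^{\mathrm{sign}}_!\circ i^{*}} \cdots.
\]

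There is no real obstacle here; the argument is a mechanical transcription of Proposition \ref{A_pi_longexact}. The only points worth double-checking are (i) the surjectivity of $i^{*}$ and the pullback-of-extensions lemma, and (ii) that the identification of the compacts with $K^{*}(Y)$ is really degree-preserving in the $\mathbb{Z}_2$-graded sense, so that no ``extra'' degree shift appears, in contrast to the $spin^{c}$ case where the shift by $n$ is invisible mod $2$ but would matter if one tracked $\mathbb{Z}$-graded $K$-theory.
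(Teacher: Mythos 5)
Your proposal is correct and is exactly the argument the paper intends: the paper gives no separate proof of Proposition \ref{A_pi_longexact_sign}, stating only that it follows ``analogously to Proposition \ref{A_pi_longexact}'', and your transcription (surjectivity of $i^*$, the pullback extension, Lemma \ref{lemK_ext_sign} identifying the connecting element as $[i^*]\otimes\pi^{\mathrm{sign}}_!$, then the six-term sequence) is precisely that analogue. Your closing remark on the absence of a degree shift is also consistent with the statement, since the fibers are even dimensional.
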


Then analogously to Lemma \ref{lem_K_class}, we have the following. 
\begin{lem}\label{lem_K_class_sign}
Let $(M,\pi : \partial M \to Y)$ be a compact manifold with fibered boundary.
Let us denote by $\Gamma$ the groupoid $\partial M \times_\pi \partial M \rightrightarrows \partial M$. 
Assume that $\pi$ is oriented and the fibers are even dimensional. 
Let $E$ be a $\mathbb{Z}_2$-graded complex vector bundle over $M$. 
Assume we are given an element $Q_\pi \in \mathcal{I}^{\mathrm{sign}}(\pi, E)$.  
Then the pair $(E, Q_\pi)$ naturally defines a class $[(E, Q_\pi)] \in K_1(\mathcal{A}^{\mathrm{sign}}_\pi)$. 
\end{lem}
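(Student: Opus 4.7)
My plan is to mirror verbatim the construction from Lemma \ref{lem_K_class}, replacing the twisted $spin^c$-Dirac operator by the twisted signature operator. Concretely, using a fiberwise riemannian metric $g_\pi$, a hermitian metric on $E$ and a unitary connection (whose choices form a contractible space, as in Remark \ref{rem_twisted_spinc}), I would form the twisted fiberwise signature operator $D^{\mathrm{sign}, E}_\pi$ acting on the $\mathbb{Z}_2$-graded Hilbert $C(Y)$-module $L^2_Y(\partial M; \wedge_{\mathbb{C}}(T^V\partial M)^*\hat{\otimes} E)$. Let $\Psi(D^{\mathrm{sign}, E}_\pi)$ be the graded $C^*$-subalgebra of the adjointable operators generated by $\psi(D^{\mathrm{sign}, E}_\pi)$, $C(\partial M;\mathrm{End}(E))$ and the compact operators; by the obvious variant of Lemma \ref{lemK_ext_sign} it sits in a graded extension
\[
0 \to \mathcal{K}(L^2_Y(\partial M; \wedge_{\mathbb{C}}(T^V\partial M)^*\hat{\otimes} E)) \to \Psi(D^{\mathrm{sign}, E}_\pi) \to C(\partial M;\mathrm{End}(E))\otimes \mathbb{C}l_1 \to 0
\]
whose connecting element computes $\pi_!^{\mathrm{sign}}\in KK(C(\partial M;\mathrm{End}(E)),C(Y))$ twisted by $E$.

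Next I would define a graded $C^*$-algebra $\mathcal{A}^{\mathrm{sign}}_\pi(E)$ by the pullback
\[
\xymatrix{
\mathcal{A}^{\mathrm{sign}}_\pi(E) \ar[r] \ar[d] \pullbackcorner & \Psi(D^{\mathrm{sign}, E}_\pi) \ar[d] \\
C(M;\mathrm{End}(E))\hat{\otimes}\mathbb{C}l_1 \ar[r]^{i^*} &  C(\partial M;\mathrm{End}(E))\hat{\otimes}\mathbb{C}l_1
}
\]
exactly as in (\ref{def_A_pi_coeff}). Standard Morita equivalence between coefficients in $\mathrm{End}(E)$ and trivial coefficients, applied levelwise in the pullback diagram, gives a canonical $KK$-equivalence $\mathcal{A}^{\mathrm{sign}}_\pi \sim \mathcal{A}^{\mathrm{sign}}_\pi(E)$; this step is routine and analogous to that in the proof of Lemma \ref{lem_K_class}.

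Finally, given a representative $\tilde{D}^{\mathrm{sign}, E}_\pi \in \tilde{\mathcal{I}}_{\mathrm{sm}}(D^{\mathrm{sign}, E}_\pi)$ of $Q_\pi$, the family $\tilde{D}^{\mathrm{sign}, E}_\pi$ is invertible and differs from $D^{\mathrm{sign}, E}_\pi$ by a zeroth order operator, so $\psi(\tilde{D}^{\mathrm{sign}, E}_\pi)$ is an odd self-adjoint invertible element of $\Psi(D^{\mathrm{sign}, E}_\pi)$. Therefore
\[
\left(1_M \hat{\otimes}\epsilon,\ \psi(\tilde{D}^{\mathrm{sign}, E}_\pi)\right) \in \mathcal{A}^{\mathrm{sign}}_\pi(E)
\]
is an odd self-adjoint invertible multiplier and defines a class in $K_1(\mathcal{A}^{\mathrm{sign}}_\pi(E))$, which transports via the above $KK$-equivalence to the desired $[(E,Q_\pi)]\in K_1(\mathcal{A}^{\mathrm{sign}}_\pi)$. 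Independence from the choice of smooth representative of $Q_\pi$ follows because any two representatives of a single class in $\mathcal{I}^{\mathrm{sign}}(\pi,E)$ are joined by a homotopy through $\tilde{\mathcal{I}}_{\mathrm{sm}}$, which yields a homotopy of odd self-adjoint invertibles in $\mathcal{A}^{\mathrm{sign}}_\pi(E)$; similarly, independence from the auxiliary choices of riemannian metric, hermitian structure and connection (and from the differential representatives needed to make sense of $\Psi(D^{\mathrm{sign}, E}_\pi)$) follows from the contractibility of each choice and the discussion in Remark \ref{rem_rigid}. The only real point requiring any care is checking that $\psi(\tilde{D}^{\mathrm{sign}, E}_\pi)-\psi(D^{\mathrm{sign}, E}_\pi)$ lies in the compact ideal, which is the same standard functional-calculus argument used implicitly in the $spin^c$ case, so no new obstacle arises.
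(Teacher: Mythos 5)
Your proposal is correct and is essentially the paper's intended argument: the paper gives no separate proof of this lemma, stating only that it follows "analogously to Lemma \ref{lem_K_class}", and your construction (the algebra $\Psi(D^{\mathrm{sign},E}_\pi)$, the pullback $\mathcal{A}^{\mathrm{sign}}_\pi(E)$, the Morita equivalence, and the invertible element $(1_M\hat{\otimes}\epsilon,\psi(\tilde{D}^{\mathrm{sign},E}_\pi))$) is exactly the transcription of that proof to the signature setting. No gaps.
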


Now we assume that $M$ is even dimensional and oriented. 
We construct an element $[M^{\mathrm{sign}}] \in KK(\mathcal{A}^{\mathrm{sign}}_\pi, \Sigma^{\mathring{M}}(G_\Phi))$. 

\begin{lem}\label{lemK_ext_phi_sign}
Let $(M^{\mathrm{ev}},\pi : \partial M \to Y^{\mathrm{odd}})$ be a compact manifold with fibered boundary, equipped with orientations on $TM$ and $T^V\partial M$. 

Choose any metric on $\mathfrak{A}G_\Phi$ which has a direct sum decomposition at the boundary, and denote the associated signature operator on $G_\Phi|_{\partial M} = \partial M \times_\pi \partial M \times_\pi TY \times \mathbb{R} \rightrightarrows \partial M$ as, 
\[
D^{\mathrm{sign}}_{\Phi, \partial} := D^{\mathrm{sign}}_\pi \hat{\otimes} 1 + 1 \hat{\otimes} D^{\mathrm{sign}}_{TY \times \mathbb{R}}. 
\]
	Let $\Psi(D^{\mathrm{sign}}_{\Phi, \partial})$ denote the $\mathbb{Z}_2$-graded $C^*$-subalgebra of $\overline{\Psi^0_c(G_\Phi|_{\partial M}; \wedge_{\mathbb{C}}(\mathfrak{A}G_\Phi|_{\partial M})^*)}$ generated by $\{\psi(D^{\mathrm{sign}}_{\Phi, \partial})\}$, $C(\partial M)$ and $C^*(G_\Phi|_{\partial M}; \wedge_{\mathbb{C}}(\mathfrak{A}G_\Phi|_{\partial M})^*)$. 
This $C^*$-algebra fits into the graded exact sequence 
\[
0 \to C^*(G_\Phi|_{\partial M}; \wedge_{\mathbb{C}}(\mathfrak{A}G_\Phi|_{\partial M})^*) \to \Psi(D^{\mathrm{sign}}_{\Phi, \partial}) \to C(\partial M)\otimes \mathbb{C}l_1 \to 0. 
\]
The connecting element of this extension coincides with the class $\pi^{\mathrm{sign}}_! \otimes_{C(Y)}\mathrm{Ind}^Y(D^{\mathrm{sign}}_{TY \times \mathbb{R}})\in KK(C(\partial M), C^*(G_\Phi|_{\partial M}))$. 
\end{lem}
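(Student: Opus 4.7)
The strategy is to follow exactly the template of Lemma \ref{lemK_ext_phi} (which in turn follows Lemma \ref{lemK_ext}), replacing the twisted $spin^c$ setup by the signature setup. First, I would verify the short exact sequence. The operator $\psi(D^{\mathrm{sign}}_{\Phi,\partial})$ is an element of $\overline{\Psi^0_c(G_\Phi|_{\partial M}; \wedge_{\mathbb{C}}(\mathfrak{A}G_\Phi|_{\partial M})^*)}$, and since $D^{\mathrm{sign}}_{\Phi,\partial}$ is elliptic on the $s$-fibers of $G_\Phi|_{\partial M}$, its principal symbol $\sigma(\psi(D^{\mathrm{sign}}_{\Phi,\partial}))$ is an odd self-adjoint unitary in $C(\mathfrak{S}^*G_\Phi|_{\partial M}; \mathrm{End}(\wedge_{\mathbb{C}}(\mathfrak{A}G_\Phi|_{\partial M})^*))$ commuting with $C(\partial M)$. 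Restricting the full pseudodifferential symbol extension to $\Psi(D^{\mathrm{sign}}_{\Phi,\partial})$ therefore gives the claimed exact sequence, and the quotient is identified with $C(\partial M)\otimes\mathbb{C}l_1$ by sending $\epsilon$ to this unitary.

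Next I would identify the connecting element via the same diagram chase as in Lemma \ref{lemK_ext}: the inclusion $\phi:C(\partial M)\otimes\mathbb{C}l_1\hookrightarrow C(\mathfrak{S}^*G_\Phi|_{\partial M};\mathrm{End}(\wedge_{\mathbb{C}}(\mathfrak{A}G_\Phi|_{\partial M})^*))$ is factored as $[\sigma(D^{\mathrm{sign}}_{\Phi,\partial})]\otimes [m]$, where $[m]$ is the multiplication $KK$-element and $[\sigma(D^{\mathrm{sign}}_{\Phi,\partial})]\in KK(\mathbb{C}l_1,C(\mathfrak{S}^*G_\Phi|_{\partial M};\mathrm{End}(\wedge_{\mathbb{C}}\cdot)))$ is the class of the symbol. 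By naturality of connecting elements, the desired class is $[\phi]\otimes\mathrm{ind}^{\partial M}(G_\Phi|_{\partial M})$, and the standard computation (cf.\ \cite[pp.\ 1159--1162]{CS}) expresses this as the fiberwise family index class of $D^{\mathrm{sign}}_{\Phi,\partial}$ in $KK(C(\partial M), C^*(G_\Phi|_{\partial M}))$.

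The key remaining step, which is the main substantive point, is to factor this family index class as an external Kasparov product. I would use the groupoid decomposition
\[
G_\Phi|_{\partial M} \;=\; \partial M \times_\pi \partial M \times_\pi (TY\times\mathbb{R})
\]
together with the identification $C^*(G_\Phi|_{\partial M};\wedge_{\mathbb{C}}(\mathfrak{A}G_\Phi|_{\partial M})^*) \simeq C^*(\Gamma;\wedge_{\mathbb{C}}(T^V\partial M)^*)\hat{\otimes}_{C(Y)} C^*(TY\times\mathbb{R};\wedge_{\mathbb{C}}(TY\times\mathbb{R})^*)$, valid because our metric decomposes orthogonally at the boundary and the exterior bundle of a direct sum is the graded tensor product of the factors. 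Under this identification the operator $D^{\mathrm{sign}}_{\Phi,\partial}$ is the graded sum $D^{\mathrm{sign}}_\pi\hat{\otimes}1+1\hat{\otimes}D^{\mathrm{sign}}_{TY\times\mathbb{R}}$, so its bounded transform satisfies the external Kasparov product recipe (the two summands anticommute, commute with the $C(Y)$-action, and the full symbol is invertible); this is the same $\mathcal{R}KK(Y;-,-)$ argument used after (\ref{eq_kas_prod}) in the proof of Proposition \ref{relative_spinc}. This identifies the family index of $D^{\mathrm{sign}}_{\Phi,\partial}$ with the Kasparov product $\pi^{\mathrm{sign}}_!\otimes_{C(Y)}\mathrm{Ind}^Y(D^{\mathrm{sign}}_{TY\times\mathbb{R}})$.

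The main obstacle I anticipate is purely bookkeeping: making the external product rigorous in the $\mathcal{R}KK$-framework and checking that the $\mathbb{Z}_2$-gradings of the exterior bundles are compatible under the tensor product identification (in particular that the Hodge star involution on $\wedge_{\mathbb{C}}(\mathfrak{A}G_\Phi|_{\partial M})^*$ is the graded tensor product of the Hodge stars on each factor). Once those gradings are correctly lined up, the rest of the argument is an exact transcription of the proof of Lemma \ref{lemK_ext_phi}.
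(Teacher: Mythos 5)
Your proposal is correct and follows exactly the route the paper intends: the paper gives no separate proof of this lemma, deferring to the template of Lemma \ref{lemK_ext} (via Lemma \ref{lemK_ext_phi}), and your argument is a faithful fleshing-out of that template — the symbol-restriction argument for the exact sequence, the factorization $[\phi]=[\sigma]\otimes[m]$ with naturality of connecting elements, and the external Kasparov-product factorization of the graded sum $D^{\mathrm{sign}}_\pi\hat{\otimes}1+1\hat{\otimes}D^{\mathrm{sign}}_{TY\times\mathbb{R}}$ over $C(Y)$, which is the same $\mathcal{R}KK$ computation the paper carries out around (\ref{eq_kas_prod}). The grading compatibility you flag (Hodge-star involution multiplicative under the orthogonal splitting $\mathfrak{A}G_\Phi|_{\partial M}=T^V\partial M\oplus\pi^*TY\oplus\mathbb{R}$, all summands of even total rank here) is indeed the only bookkeeping point, and it checks out.
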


\begin{defn}[$\mathcal{B}^{\mathrm{sign}}_\pi$]\label{def_B_pi_sign}
We define a graded $C^*$-algebra by the pullback (c.f. Remark \ref{rem_rigid})
\[
\xymatrix{
\mathcal{B}^{\mathrm{sign}}_\pi \ar[r] \ar[d] \pullbackcorner & \Psi(D^{\mathrm{sign}}_{\Phi, \partial}) \ar[d] \\
C(M)\otimes \mathbb{C}l_1 \ar[r]^{i^*} &  C(\partial M) \otimes \mathbb{C}l_1
}
\]
Choosing a metric of $G_\Phi$, we get a canonical injective $*$-homomorphism $\iota : \mathcal{B}^{\mathrm{sign}}_\pi \to \Sigma^{\mathring{M}}(G_\Phi; \wedge_{\mathbb{C}}(\mathfrak{A}G_\Phi)^*)$. 
The $KK$-element $[\iota] \in KK(\mathcal{B}^{\mathrm{sign}}_\pi, \Sigma^{\mathring{M}}(G_\Phi))$ is independent of the choice of a metric.  
\end{defn}

We are going to construct a $KK$-element $\mu^{\mathrm{sign}} \in KK(\mathcal{A}_\pi^{\mathrm{sign}}, \mathcal{B}_\pi^{\mathrm{sign}})$ analogous to $\mu$ in Proposition \ref{prop_asymp_var}. 
Note that in the signature case, this element is not a $KK$-equivalence.

\if0

\begin{lem}\label{lemK_KKelem_sign}
There exists a unique element $\mu^{\mathrm{sign}} \in KK^n(\mathcal{A}^{\mathrm{sign}}_\pi, \mathcal{B}^{\mathrm{sign}}_\pi)$, which makes the following diagram commutative in $KK$-theory,
\[
\xymatrix{
0 \ar[r] &C^*(\Gamma; \wedge_{\mathbb{C}}(\mathfrak{A}\Gamma)^*) \ar[r] \ar[d]^{\otimes_{C(Y)} \mathrm{Ind}(D^{\mathrm{sign}}_{TY \times \mathbb{R}})} &\mathcal{A}^{\mathrm{sign}}_\pi \ar[r] \ar[d]^{\mu^{\mathrm{sign}}}& C(M ) \otimes \mathbb{C}l_1\ar[r]\ar@{=}[d] &0
\\
0 \ar[r]  &C^*(G_\Phi|_{\partial M}; \wedge_{\mathbb{C}}(\mathfrak{A}G_\Phi|_{\partial M})^*) \ar[r]  &\mathcal{B}^{\mathrm{sign}}_\pi \ar[r] &C( M ) \otimes \mathbb{C}l_1\ar[r] &0. 
}
\] 
\end{lem}

\fi

We construct a $C^*$-algebra $\mathcal{D}_\pi^{\mathrm{sign}}$, using the $C^*$-algebra $\mathcal{C}$ constructed in the last subsection. 
Recall that the construction of $\mathcal{C}$ does not need any $spin^c$-structure on vector bundle $V \to Y$. 
We apply the constructions in the last subsection for $V = T^*Y$. 
Denote $\Gamma$ the groupoid $ \partial M \times_\pi \partial M \rightrightarrows \partial M$. 

Choosing any riemannian metrics on $T^V\partial M$ and $TY$, define the $C^*$-algebra $\Psi(\mathbb{B}_V \hat{\otimes} 1 + 1 \hat{\otimes} D^{\mathrm{sign}}_\pi)$ to be the $C^*$-subalgebra of $\mathcal{M}(\mathcal{C}\hat{\otimes} C^*(\Gamma; \wedge_{\mathbb{C}}(\mathfrak{A}\Gamma)^*))$ generated by $\{\psi (\mathbb{B}_V \hat{\otimes} 1 + 1 \hat{\otimes} D^{\mathrm{sign}}_\pi)\}$, $\mathcal{C} \hat{\otimes} C^*(\Gamma; \wedge_{\mathbb{C}}(\mathfrak{A}\Gamma)^*)$ and $C(\partial M \times [0,\infty])$. 
Note that we have an exact sequence
\[
0 \to \mathcal{C} \hat{\otimes} C^*(\Gamma; \wedge_{\mathbb{C}}(\mathfrak{A}\Gamma)^*) \to \Psi(\mathbb{B}_V \hat{\otimes} 1 + 1 \hat{\otimes} D^{\mathrm{sign}}_\pi) \to C(\partial M \times [0, \infty]) \otimes \mathbb{C}l_1 \to 0. 
\]
\begin{defn}[$\mathcal{D}_\pi^{\mathrm{sign}}$]
In the above settings, we define the $C^*$-algebra $\mathcal{D}^{\mathrm{sign}}_\pi$ by the pullback (c.f. Remark \ref{rem_rigid})
\[
\xymatrix{
\mathcal{D}^{\mathrm{sign}}_\pi \ar[r] \ar[d] \pullbackcorner & \Psi(\mathbb{B}_V \hat{\otimes} 1 + 1 \hat{\otimes} D^{\mathrm{sign}}_\pi) \ar[d] \\ 
C(M \times [0, \infty]) \otimes \mathbb{C}l_1 \ar[r]^{i^*} &  C(\partial M \times [0, \infty]) \otimes \mathbb{C}l_1
}
\]
\end{defn}
Analogously to the last subsection, 
the $*$-homomorphism $ev_0^{\mathcal{C}} : \mathcal{C} \to C(Y)$ induces a $*$-homomorphism $ev_0 : \mathcal{C}\hat{\otimes}C^*(\Gamma; \wedge_{\mathbb{C}}(\mathfrak{A}\Gamma)^*) \to C^*(\Gamma; \wedge_{\mathbb{C}}(\mathfrak{A}\Gamma)^*) $. 
This $*$-homomorphism extends to a $*$-homomorphism $\mathcal{D}^{\mathrm{sign}}_\pi \to \mathcal{A}^{\mathrm{sign}}_\pi$, by sending $ \psi(\mathbb{B}_V \hat{\otimes}1 + 1 \hat{\otimes}D^{\mathrm{sign}}_\pi)$ to $\psi(D^{\mathrm{sign}}_\pi)$ and evaluation at $0$ on $C(M \times [0, \infty]) \otimes \mathbb{C}l_1$. 
We denote this $*$-homomorphism by $ev_0$. 

On the other hand, contrary to the last subsection, the $*$-homomorphism $ev_\infty^{\mathcal{C}} : \mathcal{C} \to C_0(\mathbb{R}; \mathbb{C}l_1) \hat{\otimes} C_0(V ; \mathbb{C}l(V)) $ induces a $*$-homomorphism $ev_\infty : \mathcal{C}\hat{\otimes}_{C(Y)}C^*(\Gamma; \wedge_{\mathbb{C}}(\mathfrak{A}\Gamma)^*) \to C_0(V\oplus \mathbb{R}; \mathbb{C}l(V\oplus \mathbb{R})) \hat{\otimes}_{C(Y)} C^*(\Gamma, \wedge_{\mathbb{C}}(\mathfrak{A}\Gamma)^*)$, and the range of this homomorphism is not isomorphic to $C^*(G_\Phi|_{\partial M}; \wedge_{\mathbb{C}}(\mathfrak{A}G_\Phi|_{\partial M})^*)$. 
To overcome this difference, we need an intermediate $C^*$-algebra $\tilde{\mathcal{B}}_\pi^{\mathrm{sign}}$. 

\begin{defn}[$\tilde{\mathcal{B}}_\pi^{\mathrm{sign}}$]
Let us denote $V = T^*Y$. 
Consider the $C^*$-algebra $C_0(V\oplus \mathbb{R}; \mathbb{C}l(V\oplus \mathbb{R})) \hat{\otimes}_{C(Y)} C^*(\Gamma, \wedge_{\mathbb{C}}(\mathfrak{A}\Gamma)^*)$ and the unbounded multiplier $C_{V \oplus \mathbb{R}} \hat{\otimes} 1 + 1 \hat{\otimes} D_{\pi}^{\mathrm{sign}} $ of this $C^*$-algebra. 
Let $\Psi(C_{V \oplus \mathbb{R}} \hat{\otimes} 1 + 1 \hat{\otimes} D_{\pi}^{\mathrm{sign}} )$ denote the $\mathbb{Z}_2$-graded $C^*$-subalgebra of the multiplier algebra $\mathcal{M}(C_0(V\oplus \mathbb{R}; \mathbb{C}l(V\oplus \mathbb{R})) \hat{\otimes}_{C(Y)} C^*(\Gamma, \wedge_{\mathbb{C}}(\mathfrak{A}\Gamma)^*))$, generated by $\{\psi(C_{V \oplus \mathbb{R}} \hat{\otimes} 1 + 1 \hat{\otimes} D_{\pi}^{\mathrm{sign}})\}$, $C(\partial M)$ and $C_0(V\oplus \mathbb{R}; \mathbb{C}l(V\oplus \mathbb{R})) \hat{\otimes}_{C(Y)} C^*(\Gamma, \wedge_{\mathbb{C}}(\mathfrak{A}\Gamma)^*)$. 
This $C^*$-algebra fits into the graded exact sequence 
\[
0 \to C_0(V\oplus \mathbb{R}; \mathbb{C}l(V\oplus \mathbb{R})) \hat{\otimes}_{C(Y)} C^*(\Gamma, \wedge_{\mathbb{C}}(\mathfrak{A}\Gamma)^*) \to \Psi(C_{V \oplus \mathbb{R}} \hat{\otimes} 1 + 1 \hat{\otimes} D_{\pi}^{\mathrm{sign}}) \to C(\partial M)\otimes \mathbb{C}l_1 \to 0. 
\]
We define the $C^*$-algebra $\tilde{\mathcal{B}}_\pi^{\mathrm{sign}}$ by the pullback (c.f. Remark \ref{rem_rigid})
\[
\xymatrix{
\tilde{\mathcal{B}}^{\mathrm{sign}}_\pi \ar[r] \ar[d] \pullbackcorner & \Psi(C_{V \oplus \mathbb{R}} \hat{\otimes} 1 + 1 \hat{\otimes} D_{\pi}^{\mathrm{sign}} ) \ar[d] \\
C(M)\otimes \mathbb{C}l_1 \ar[r]^{i^*} &  C(\partial M) \otimes \mathbb{C}l_1
}
\]
\end{defn}

Next, we construct a $*$-homomorphism $b:\tilde{\mathcal{B}}_\pi^{\mathrm{sign}} \to {\mathcal{B}}_\pi^{\mathrm{sign}}$. 
Since the vector bundle $\wedge_{\mathbb{C}}(TY\oplus \mathbb{R})^* \to Y $ is a $\mathbb{C}l(TY \oplus \mathbb{R})$-module bundle, 
if we define a spin structure on $TY \oplus \mathbb{R}$ locally, we can write
\[
\wedge_{\mathbb{C}}(TY\oplus \mathbb{R})^* = S(TY \oplus \mathbb{R}) \hat{\otimes} W, 
\]
with some $\mathbb{Z}_2$-graded hermitian vector bundle $W$, and the Clifford multiplication can be written as $c \hat{\otimes} 1$. 
Note that the vector bundle $\mathrm{End}(W)$ is canonically defined independently on the chosen local spin structure, and extends to a vector bundle over the whole $Y$, still denoted by $\mathrm{End}(W)$. 
Under the Fourier transform, the operator $D_{TY \oplus \mathbb{R}}^{\mathrm{sign}}$ corresponds to the unbounded multiplier $C_{V \oplus \mathbb{R}}\hat{\otimes}1$ of $C_0(V \oplus \mathbb{R}; \mathrm{End}(\wedge_{\mathbb{C}}(TY\oplus \mathbb{R})^*)) \simeq C_0(V \oplus \mathbb{R}; \mathbb{C}l(V\oplus \mathbb{R})\hat{\otimes}\mathrm{End}(W))$. 

Thus the $*$-homomorphism
\[
id_{\mathbb{C}l(V\oplus \mathbb{R})}\hat{\otimes}1_{W} : C_0(V \oplus \mathbb{R}; \mathbb{C}l(V\oplus \mathbb{R})) \to C_0(V \oplus \mathbb{R}; \mathbb{C}l(V\oplus \mathbb{R})\hat{\otimes}\mathrm{End}(W)), 
\]
induces the $*$-homomorphism
\[
b' : \Psi(C_{V \oplus \mathbb{R}} \hat{\otimes} 1 + 1 \hat{\otimes} D_{\pi}^{\mathrm{sign}} ) \to \Psi(D^{\mathrm{sign}}_{\Phi, \partial})
\]
by sending $\psi(C_{V \oplus \mathbb{R}} \hat{\otimes} 1 + 1 \hat{\otimes} D_{\pi}^{\mathrm{sign}})$ to $\psi(D^{\mathrm{sign}}_{\Phi, \partial})$, and induces the desired $*$-homomorphism
\[
b:\tilde{\mathcal{B}}_\pi^{\mathrm{sign}} \to {\mathcal{B}}_\pi^{\mathrm{sign}}. 
\]
Using this intermediate algebra, we easily see the following proposition, which is the signature version of Proposition \ref{prop_asymp_var}. 
\begin{prop}\label{prop_asymp_var_sign}
Consider the following commutative diagram, 
\[
\xymatrix{
0 \ar[r] &C^*(\Gamma; \wedge_{\mathbb{C}}(\mathfrak{A}\Gamma)^*) \ar[r] &\mathcal{A}^{\mathrm{sign}}_\pi \ar[r]& C(M ) \otimes \mathbb{C}l_1\ar[r]&0
\\
0 \ar[r]  &\mathcal{C}\hat{\otimes}_{C(Y)}C^*(\Gamma; \wedge_{\mathbb{C}}(\mathfrak{A}\Gamma)^*)  \ar[u]^{ev_0}\ar[d]^{ev_\infty} \ar[r] &\mathcal{D}^{\mathrm{sign}}_\pi \ar[r]\ar[u]^{ev_0}\ar[d]^{ev_\infty} & C(M \times [0, \infty] ) \otimes \mathbb{C}l_1\ar[r] \ar[u]^{ev_0}\ar[d]^{ev_\infty}&0
\\
0 \ar[r]  &C_0(V\oplus \mathbb{R}; \mathbb{C}l(V\oplus \mathbb{R})) \hat{\otimes}_{C(Y)} C^*(\Gamma, \wedge_{\mathbb{C}}(\mathfrak{A}\Gamma)^*) \ar[r]\ar[d]^{\hat{\otimes}1_W\hat{\otimes}id_{C^*(\Gamma)}} &\tilde{\mathcal{B}}^{\mathrm{sign}}_\pi \ar[r] \ar[d]^b&C( M ) \otimes \mathbb{C}l_1\ar[r]\ar@{=}[d] &0 \\
0 \ar[r]  &C^*(G_\Phi|_{\partial M}; \wedge_{\mathbb{C}}(\mathfrak{A}G_\Phi|_{\partial M})^*) \ar[r] &\mathcal{B}^{\mathrm{sign}}_\pi \ar[r] &C( M ) \otimes \mathbb{C}l_1\ar[r] &0
}
\]
where the rows are exact. 
The arrows connecting the first, second and third rows, denoted by $ev_0$ and $ev_\infty$, are $KK$-equivalences, and we define
\begin{equation}\label{prop_asymp_statement}
\mu^{\mathrm{sign}} := [ev_0]^{-1} \otimes_{\mathcal{D}^{\mathrm{sign}}_\pi} [ev_\infty]\otimes_{\tilde{\mathcal{B}}^{\mathrm{sign}}_\pi}[b] \in KK(\mathcal{A}^{\mathrm{sign}}_\pi, \mathcal{B}^{\mathrm{sign}}_\pi). 
\end{equation}
Then this element fits into the commutative diagram in $KK$-theory, 
\[
\xymatrix{
0 \ar[r] &C^*(\Gamma; \wedge_{\mathbb{C}}(\mathfrak{A}\Gamma)^*) \ar[r] \ar[d]^{\mathrm{Ind}^Y(D^{\mathrm{sign}}_{TY \times \mathbb{R}})\otimes id_{C^*(\Gamma)}} &\mathcal{A}^{\mathrm{sign}}_\pi \ar[r] \ar[d]^{\mu^{\mathrm{sign}}}& C(M ) \otimes \mathbb{C}l_1\ar[r]\ar@{=}[d] &0
\\
0 \ar[r]  &C^*(G_\Phi|_{\partial M}; \wedge_{\mathbb{C}}(\mathfrak{A}G_\Phi|_{\partial M})^*) \ar[r]  &\mathcal{B}^{\mathrm{sign}}_\pi \ar[r] &C( M ) \otimes \mathbb{C}l_1\ar[r] &0.
}
\] 
Here the left vertical arrow is defined by taking Kasparov product of elements $\mathrm{Ind}^Y(D^{\mathrm{sign}}_{TY \times \mathbb{R}}) \in \mathcal{R}KK(Y; C(Y), C^*(TY \times \mathbb{R}))$ and $id_{C^*(\Gamma)} \in \mathcal{R}KK(Y; C^*(\Gamma), C^*(\Gamma))$, and then forgetting the $C(Y)$-algebra structure. 
\end{prop}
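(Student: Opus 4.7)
The plan is to mimic the proof of Proposition \ref{prop_asymp_var} as closely as possible, with the extra wrinkle that, in the signature case, the natural Fourier-transform identification sends $C_{V \oplus \mathbb{R}}$ not onto $D^{\mathrm{sign}}_{TY \times \mathbb{R}}$ itself but onto its ``spin part''. This forces us to route $ev_\infty$ through the intermediate algebra $\tilde{\mathcal{B}}^{\mathrm{sign}}_\pi$ and then apply the $*$-homomorphism $b$ to correct for the twisting bundle $W$.

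First I would verify that the six vertical arrows connecting the first, second and third rows are $KK$-equivalences. For the leftmost column, Proposition \ref{prop_mathcalC} applied to $V = T^*Y$ gives that $[ev_0^{\mathcal{C}}]$ and $[ev_\infty^{\mathcal{C}}]$ are $KK$-equivalences. Taking spatial tensor product over $C(Y)$ with $C^*(\Gamma; \wedge_{\mathbb{C}}(\mathfrak{A}\Gamma)^*)$ (which is nuclear) preserves $KK$-equivalences, and identifies $ev_0 = ev_0^{\mathcal{C}} \otimes_{C(Y)} id$ and $ev_\infty = ev_\infty^{\mathcal{C}} \otimes_{C(Y)} id$. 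The right column is trivially $KK$-equivalences (identities on $C(M)\otimes \mathbb{C}l_1$). The middle column then follows from the commutativity of the diagram, the exactness of the rows, and the five lemma in $KK$-theory. Define $\mu^{\mathrm{sign}} := [ev_0]^{-1} \otimes_{\mathcal{D}^{\mathrm{sign}}_\pi} [ev_\infty] \otimes_{\tilde{\mathcal{B}}^{\mathrm{sign}}_\pi} [b]$.

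Next I would identify the composite element on the leftmost column. By the key identity (\ref{prop_mathcalC_statement}) tensored with $id_{C^*(\Gamma; \wedge_{\mathbb{C}}(\mathfrak{A}\Gamma)^*)}$ over $C(Y)$,
\[
[ev_0]^{-1} \otimes [ev_\infty] \;=\; [\psi(X\epsilon \hat{\otimes} 1 + 1 \hat{\otimes} C_V)] \otimes id_{C^*(\Gamma)}
\]
in $\mathcal{R}KK(Y; C^*(\Gamma; \wedge_{\mathbb{C}}(\mathfrak{A}\Gamma)^*), C_0(V\oplus\mathbb{R}; \mathbb{C}l(V\oplus\mathbb{R})) \hat{\otimes}_{C(Y)} C^*(\Gamma; \wedge_{\mathbb{C}}(\mathfrak{A}\Gamma)^*))$. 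It remains to check that post-composing with the map induced by $\hat{\otimes} 1_W$ yields $\mathrm{Ind}^Y(D^{\mathrm{sign}}_{TY\times\mathbb{R}}) \otimes id_{C^*(\Gamma)}$. This is precisely where the intermediate algebra $\tilde{\mathcal{B}}^{\mathrm{sign}}_\pi$ earns its keep: locally choosing a spin structure on $TY \oplus \mathbb{R}$, the Fourier transform identifies $C^*(TY \times \mathbb{R}; S(TY\oplus\mathbb{R}))$ with $C_0(V \oplus \mathbb{R}; \mathbb{C}l(V \oplus \mathbb{R}))$, intertwining $D_{TY\times\mathbb{R}}$ (the spin Dirac operator) with $C_{V \oplus \mathbb{R}}$; under the global decomposition $\wedge_{\mathbb{C}}(TY\oplus\mathbb{R})^* = S(TY\oplus\mathbb{R}) \hat{\otimes} W$, the signature operator $D^{\mathrm{sign}}_{TY\times\mathbb{R}}$ corresponds to $D_{TY\times\mathbb{R}} \hat{\otimes} 1_W$, hence under Fourier transform to $C_{V \oplus \mathbb{R}} \hat{\otimes} 1_W$. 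Applying the recipe of Remark \ref{bdd_trans} to the unbounded multiplier $C_{V\oplus\mathbb{R}} \hat{\otimes} 1_W$ identifies the product $[\psi(X\epsilon \hat{\otimes}1 + 1 \hat{\otimes} C_V)] \otimes [\hat{\otimes}1_W]$ with $\mathrm{Ind}^Y(D^{\mathrm{sign}}_{TY\times\mathbb{R}})$. Tensoring with $id_{C^*(\Gamma; \wedge_{\mathbb{C}}(\mathfrak{A}\Gamma)^*)}$ over $C(Y)$ then yields the left vertical arrow of the final diagram.

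The main obstacle will be bookkeeping the grading and the identification $\wedge_{\mathbb{C}}(TY\oplus\mathbb{R})^* = S(TY\oplus\mathbb{R}) \hat{\otimes} W$ consistently across the Fourier transform: the bundle $W$ is only canonically defined globally as an $\mathrm{End}(W)$-bundle (the spin structure on $TY\oplus\mathbb{R}$ may not exist globally), so one has to argue that the Kasparov product computation, which is local in $Y$, glues to a global identity in $\mathcal{R}KK(Y;-,-)$ after post-composition with $b$. Apart from this, the remaining verifications (naturality of the middle column in the five-lemma argument, and compatibility of evaluations at $0$ and $\infty$ with the short exact sequences) are formal analogs of the corresponding steps in the proof of Proposition \ref{prop_asymp_var}.
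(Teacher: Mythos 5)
Your proposal is correct and follows exactly the route the paper intends: the paper gives no separate proof of this proposition beyond "using this intermediate algebra, we easily see" the signature analogue of Proposition \ref{prop_asymp_var}, and your argument — Proposition \ref{prop_mathcalC} plus the five lemma for the $KK$-equivalences, then identity (\ref{prop_mathcalC_statement}) combined with the Fourier-transform identification $D^{\mathrm{sign}}_{TY\times\mathbb{R}} \leftrightarrow C_{V\oplus\mathbb{R}}\hat{\otimes}1_W$ and the map $b$ to handle the twisting bundle $W$ — is precisely the intended filling-in of that analogy. Your closing concern about the local-versus-global definition of $W$ is resolved exactly as the paper's construction of $b$ already arranges (only $\mathrm{End}(W)$ and the homomorphism $id\hat{\otimes}1_W$ need to be globally defined), so there is no gap.
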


Finally we define the element $[M^{\mathrm{sign}}] \in KK(\mathcal{A}^{\mathrm{sign}}_\pi, \Sigma^{\mathring{M}}(G_\Phi))$ as follows. 
\begin{defn}
Let $(M^{\mathrm{ev}},\pi : \partial M \to Y^{\mathrm{odd}})$ be a compact manifold with fibered boundary, equipped with orientations on $TM$ and $T^V\partial M$. 
Then we define 
\[
[M^{\mathrm{sign}}] := \mu^{\mathrm{sign}} \otimes_{\mathcal{B}_\pi^{\mathrm{sign}}} [\iota] \in KK(\mathcal{A}^{\mathrm{sign}}_\pi, \Sigma^{\mathring{M}}(G_\Phi)). 
\]
Here the element $[\iota] \in KK( \mathcal{B}^{\mathrm{sign}}_\pi ,\Sigma^{\mathring{M}}(G_\Phi) )$ is defined in Definition \ref{def_B_pi_sign} and $\mu^{\mathrm{sign}}$ is defined in Proposition \ref{prop_asymp_var_sign}. 
\end{defn}

Then, we can describe the $\Phi$-signature as follows. 
\begin{thm}[The index pairing formula for signature operators]\label{main_thm_K_sign}
Let $(M,\pi : \partial M \to Y)$ be a compact even dimensional manifold with fibered boundary, equipped with orientations on $TM$ and $T^V\partial M$. 
Let $E$ be a complex vector bundle over $M$.  
Let $Q_\pi \in \mathcal{I}^{\mathrm{sign}}(\pi, E)$. 
Then we have
\[
\mathrm{Sign}_\Phi(M, E, Q_\pi)= [(E, Q_\pi)] \otimes_{\mathcal{A}_\pi^{\mathrm{sign}}} [M^{\mathrm{sign}}] \otimes_{\Sigma^{\mathring{M}}(G_\Phi)} \mathrm{ind}^{\mathring{M}}(G_\Phi) \in \mathbb{Z}. 
\]
\end{thm}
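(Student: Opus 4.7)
The plan is to mimic the proof of Theorem \ref{main_thm_K} almost verbatim, with the intermediate algebra $\tilde{\mathcal{B}}_\pi^{\mathrm{sign}}$ playing an additional bookkeeping role. As in the $spin^c$ case, when a twisting bundle $E$ is present, it is convenient to replace $\mathcal{A}_\pi^{\mathrm{sign}}$, $\mathcal{B}_\pi^{\mathrm{sign}}$, $\tilde{\mathcal{B}}_\pi^{\mathrm{sign}}$ and $\mathcal{D}_\pi^{\mathrm{sign}}$ by their Morita equivalent ``coefficient versions'' $\mathcal{A}_\pi^{\mathrm{sign}}(E)$, etc., obtained by replacing $\wedge_{\mathbb{C}}(\mathfrak{A}\Gamma)^*$ by $\wedge_{\mathbb{C}}(\mathfrak{A}\Gamma)^*\hat{\otimes}E$ throughout. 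The element $\mu^{\mathrm{sign}}$ of Proposition \ref{prop_asymp_var_sign} is then realized as $[ev_0]^{-1}\otimes_{\mathcal{D}_\pi^{\mathrm{sign}}(E)} [ev_\infty]\otimes_{\tilde{\mathcal{B}}_\pi^{\mathrm{sign}}(E)}[b]$, and a $*$-homomorphism $\iota_E:\mathcal{B}_\pi^{\mathrm{sign}}(E)\to \Sigma^{\mathring{M}}(G_\Phi;\wedge_{\mathbb{C}}(\mathfrak{A}G_\Phi)^*\hat{\otimes}E)$ is constructed exactly as in Definition \ref{def_B_pi_sign}. By the very definition of $\iota_E$ and the definition of the $\Phi$-signature in Definition \ref{twisted_sign_perturbation_def}, once we know that
\[
[(E,Q_\pi)]\otimes_{\mathcal{A}_\pi^{\mathrm{sign}}(E)}\mu^{\mathrm{sign}}=[(E, Q_{\Phi,\partial}^{\mathrm{sign}})]\in K_1(\mathcal{B}_\pi^{\mathrm{sign}}(E)),
\]
where $Q_{\Phi,\partial}^{\mathrm{sign}}$ is the class of the perturbation $\tilde{D}_\pi^{\mathrm{sign},E}\hat{\otimes}1+1\hat{\otimes}D^{\mathrm{sign}}_{TY\times \mathbb{R}}$, the theorem follows immediately: tensoring with $[\iota]\otimes \mathrm{ind}^{\mathring{M}}(G_\Phi)$ produces exactly $\mathrm{Sign}_\Phi(M, E, Q_\pi)$.

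The key step is therefore to exhibit a single invertible element of $\mathcal{D}_\pi^{\mathrm{sign}}(E)$ whose images under $ev_0$ and $ev_\infty\circ b$ are the desired $K_1$ classes. Choosing a representative $\tilde{D}_\pi^{\mathrm{sign},E}\in \tilde{\mathcal{I}}_{\mathrm{sm}}(D^{\mathrm{sign},E}_\pi)$ for $Q_\pi$, I would consider the odd self-adjoint invertible multiplier
\[
\bigl(1\hat{\otimes}\epsilon,\ \psi\bigl(\mathbb{B}_V\hat{\otimes}1+1\hat{\otimes}\tilde{D}_\pi^{\mathrm{sign},E}\bigr)\bigr)\in \mathcal{D}_\pi^{\mathrm{sign}}(E).
\]
Its invertibility is a consequence of Lemma \ref{lem_asymp_var} together with the spectral gap of $\tilde{D}_\pi^{\mathrm{sign},E}$ and the anticommutation of the two summands. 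Evaluating at $0$ replaces $\mathbb{B}_V$ by $0$ and yields $(1\hat{\otimes}\epsilon,\psi(\tilde{D}_\pi^{\mathrm{sign},E}))\in \mathcal{A}_\pi^{\mathrm{sign}}(E)$, which by Lemma \ref{lem_K_class_sign} represents $[(E,Q_\pi)]$. Evaluating at $\infty$ replaces $\mathbb{B}_V$ by $X\epsilon\hat{\otimes}1+1\hat{\otimes}C_V$, which under the Fourier transform corresponds to $D_{TY\times\mathbb{R}}$ acting on the spinor bundle of $TY\oplus \mathbb{R}$; applying $b$ then tensors with $1_W$ to identify this with the genuine signature operator $D^{\mathrm{sign}}_{TY\times \mathbb{R}}$ on $\wedge_{\mathbb{C}}(TY\oplus\mathbb{R})^*$, so the image in $\mathcal{B}_\pi^{\mathrm{sign}}(E)$ is exactly the class of $(1\hat{\otimes}\epsilon,\psi(\tilde{D}_\pi^{\mathrm{sign},E}\hat{\otimes}1+1\hat{\otimes}D^{\mathrm{sign}}_{TY\times\mathbb{R}}))$, which represents $[(E, Q_{\Phi,\partial}^{\mathrm{sign}})]$.

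The hard part is the last paragraph, namely the verification that the $*$-homomorphism $b$ sends the image under $ev_\infty$ of the chosen invertible element to the $K_1$ class of the genuine boundary operator of $D_\Phi^{\mathrm{sign},E}$. The subtlety is that the construction of $\mathcal{C}$ uses the Clifford bundle $\mathbb{C}l(T^*Y)$ rather than the exterior bundle $\wedge_{\mathbb{C}}(TY\oplus\mathbb{R})^*$; the map $b$ is what rectifies this discrepancy via the (locally defined, globally coherent) decomposition $\wedge_{\mathbb{C}}(TY\oplus\mathbb{R})^*=S(TY\oplus\mathbb{R})\hat{\otimes}W$. The point to check is that the image of the harmonic-oscillator-type operator $\psi(\mathbb{B}_V\hat{\otimes}1+1\hat{\otimes}\tilde{D}_\pi^{\mathrm{sign},E})$ under $ev_\infty\circ b$ is homotopic (through $\mathbb{C}l_1$-invertible perturbations of the right elliptic symbol) to $\psi(D^{\mathrm{sign}}_{\Phi,\partial}$ with the fiberwise part replaced by $\tilde{D}_\pi^{\mathrm{sign},E})$. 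This uses only the fact that multiplication by $1_W$ commutes with every operator in sight and that the underlying unbounded Kasparov module is unchanged by this tensoring, so the $K_1$ class is preserved. Once this identification is in place, Proposition \ref{prop_asymp_var_sign} gives the Kasparov product $[(E,Q_\pi)]\otimes \mu^{\mathrm{sign}}=[(E,Q_{\Phi,\partial}^{\mathrm{sign}})]$, and the theorem follows as explained above.
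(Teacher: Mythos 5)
Your proposal is correct and follows essentially the same route as the paper's proof: both reduce the theorem to the identity $[(E,Q_\pi)]\otimes_{\mathcal{A}_\pi^{\mathrm{sign}}}\mu^{\mathrm{sign}}=[(E,Q_{\Phi,\partial}^{\mathrm{sign}})]$ and establish it by exhibiting the single invertible element $(1\hat{\otimes}\epsilon,\psi(\mathbb{B}_V\hat{\otimes}1+1\hat{\otimes}\tilde{D}_\pi^{\mathrm{sign},E}))$ of $\mathcal{D}_\pi^{\mathrm{sign}}$ and computing its images under $ev_0$ and $b\circ ev_\infty$. Your extra discussion of how $b$ mediates between $\mathbb{C}l(V\oplus\mathbb{R})$ and $\wedge_{\mathbb{C}}(TY\oplus\mathbb{R})^*$ via the factorization $S(TY\oplus\mathbb{R})\hat{\otimes}W$ is exactly the content of the paper's construction of $b$ preceding Proposition \ref{prop_asymp_var_sign}.
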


\begin{proof}
As in the proof of Theorem \ref{main_thm_K}, it is enough to prove the following. 
Given an element $Q_\pi \in \mathcal{I}^{\mathrm{sign}}(\pi, E)$, take some representative $\tilde{D}_\pi^{\mathrm{sign}, E}$ for $Q_\pi$ and consider the class $Q_{\Phi, \partial} := [\tilde{D}_\pi^{\mathrm{sign}, E} \hat{\otimes} 1 +1 \hat{\otimes} D^{\mathrm{sign}}_{TY \times \mathbb{R}} ] \in \mathcal{I}({D}_\pi^{\mathrm{sign}, E} \hat{\otimes} 1 +1 \hat{\otimes} D^{\mathrm{sign}}_{TY \times \mathbb{R}})$ (this class does not depend on the choice). 
Then we have
\[
[(E, Q_\pi) ]\otimes_{\mathcal{A}^{\mathrm{sign}}_\pi} \mu^{\mathrm{sign}} = [(E, Q_{\Phi, \partial})] \in KK(\mathbb{C}l_1, \mathcal{B}^{\mathrm{sign}}_\pi). 
\]

For simplicity we assume $E$ is the trivial bundle. 
For a given representative $\tilde{D}^{\mathrm{sign}}_\pi$ for $Q_\pi \in \mathcal{I}^{\mathrm{sign}}(\pi)$, we can construct an invertible element in $\mathcal{D}_\pi$ defined as $(1 \hat{\otimes} \epsilon, \psi(\mathbb{B}_V \hat{\otimes} 1 + 1 \hat{\otimes} \tilde{D}^{\mathrm{sign}}_\pi)) \in \mathcal{D}_\pi$. 
By definition we have 
\begin{align*}
ev_0((1 \hat{\otimes} \epsilon, \psi(\mathbb{B}_V \hat{\otimes} 1 + 1 \hat{\otimes} \tilde{D}^{\mathrm{sign}}_\pi))) &=(1 \hat{\otimes}\epsilon, \psi(\tilde{D}^{\mathrm{sign}}_\pi)) &\in \mathcal{A}^{\mathrm{sign}}_\pi \\ 
b \circ ev_\infty((1 \hat{\otimes} \epsilon, \psi(\mathbb{B}_V \hat{\otimes} 1 + 1 \hat{\otimes} \tilde{D}^{\mathrm{sign}}_\pi))) &=b((1 \hat{\otimes}\epsilon, \psi(C_{V \oplus \mathbb{R}}\hat{\otimes}1 + 1 \hat{\otimes}\tilde{D}^{\mathrm{sign}}_\pi))) & \\
& =(1 \hat{\otimes}\epsilon, \psi(D^{\mathrm{sign}}_{TY \oplus \mathbb{R}}\hat{\otimes}1 + 1 \hat{\otimes}\tilde{D}^{\mathrm{sign}}_\pi)) & \in {\mathcal{B}}^{\mathrm{sign}}_\pi. 
\end{align*}
Thus we see that $[(1 \hat{\otimes} \epsilon, \psi(\mathbb{B}_V \hat{\otimes} 1 + 1 \hat{\otimes} \tilde{D}^{\mathrm{sign}}_\pi)) ]\otimes[ev_0] = [(\underline{\mathbb{C}}, Q_\pi)] \in K_1(\mathcal{A}^{\mathrm{sign}}_\pi)$ and $[(1 \hat{\otimes} \epsilon, \psi(\mathbb{B}_V \hat{\otimes} 1 + 1 \hat{\otimes} \tilde{D}^{\mathrm{sign}}_\pi)) ] \otimes[ev_\infty] \otimes [b]= [(\underline{\mathbb{C}}, Q_{\Phi, \partial})] \in K_1(\mathcal{B}^{\mathrm{sign}}_\pi)$. 
By Proposition \ref{prop_asymp_var_sign}, we see that $[(\underline{\mathbb{C}}, Q_\pi)]  \otimes_{\mathcal{A}^{\mathrm{sign}}_\pi} \mu^{\mathrm{sign}} = [(\underline{\mathbb{C}}, Q_\pi)] \otimes_{\mathcal
{A}^{\mathrm{sign}}_\pi} [ev_0]^{-1} \otimes_{\mathcal{D}^{\mathrm{sign}}_\pi} [ev_\infty] \otimes_{\tilde{\mathcal{B}}_\pi}[b]= [(\underline{\mathbb{C}}, Q_{\Phi, \partial})] \in K_1(\mathcal{B}^{\mathrm{sign}}_\pi)$, so we get the result. 
\end{proof}
\section{The local Signature}\label{sec_localsignature}
\subsection{Settings}\label{settings}
The settings for the localization problem for signature are the following. 
\begin{itemize}
\item Let $F$ be an oriented closed even dimensional smooth manifold. 
\item Let $G$ be a subgroup of the orientation-preserving diffeomorphism group $\mbox{Diff}^+(F) $ of $F$. 
\item Let $Z \subset BG$ be a subspace of the classifying space of $G$. 
A particular case of interest is when $Z$ is the $k$-skeleton of a $CW$-complex model of $BG$ for some integer $k$. 
\end{itemize}
We can define the universal family signature class $\mathrm{Sign}(F_{\mathrm{univ}}) \in K^0(BG)$, where $K^0(BG)$ denotes the representable $K$-theory of $BG$ (see subsection \ref{subsec_repK}). 
This class is constructed by considering the fiberwise signature operators on the universal $F$-fiber bundle $\pi_{\mathrm{univ}}$ over $BG$. 
This construction is explained in detail in subsection \ref{subsec_univ_index_invertible_perturbation}. 
Moreover, if there exists a positive integer $n$ such that the restriction of $\mathrm{Sign}(F_{\mathrm{univ}})$ to $Z$ is $n$-torsion in $K^0(Z)$, the set of homotopy equivalence classes of $\mathbb{C}l_1$-invertible perturbations of $n$-direct sum of signature operators, $ \mathcal{I}^{\mathrm{sign}}(\pi_{\mathrm{univ}}|_Z, \underline{\mathbb{C}^n})$, is nonempty and has a canonical affine space structure modeled on $K^{-1}(Z)$. 

\begin{defn}
Let $S_{F, G, Z}$ denote the set of isomorphism classes of pairs $(M, \pi : \partial M \to Y)$ satisfying the following conditions: 
\begin{itemize}
\item The pair $(M, \pi : \partial M \to Y)$ is a compact oriented manifold with fibered boundaries, and assume that $M$ is even dimensional. 
\item Assume that $\pi$ is an $F$-fiber bundle structure with structure group $G$. 
\item Assume that $i_* : [Y, Z] \to [Y, BG]$, induced by the inclusion $i : Z \to BG$, is an isomorphism. 
\end{itemize}
\end{defn}

Our main theorem of this section is the following. 
\begin{thm}\label{mainthm}
Assume that a positive integer $n$ satisfies $n\cdot i^*\mathrm{Sign}(F_{\mathrm{univ}}) = 0 \in K^0(Z)$. 
For each element $Q_{Z} \in \mathcal{I}^{\mathrm{sign}}(\pi_{\mathrm{univ}}|_Z, \underline{\mathbb{C}^n})$, we  have a natural map
\[
\sigma_{Q_Z} : S_{F, G, Z} \to \frac{\mathbb{Z}}{n}
\]
such that the following holds. 
\begin{itemize}
\item (vanishing)

For $(M, \pi) \in S_{F, G, Z}$, if there exist a compact oriented manifold with boundary $(X, \partial X)$ with a fixed diffeomorphism $\partial X \simeq Y$, and an $F$-fiber bundle structure $\pi' : M \to X$ with structure group $G$ which satisfies $\pi'|_{\partial M} =\pi $ such that $i_* : [X, Z] \to [X, BG]$ is surjective, then we have 
\[
\sigma_{Q_Z}(M, \pi)=0. 
\]
\item (additivity) 

For $(M_0, \pi_0)$ and $(M_1, \pi_1)$ in $S_{F, G, Z}$, assume that there exists a decomposition $\partial M_i = \partial M_i^+ \sqcup -\partial M_i^-$ for $i = 0, 1$, 
and there exists an isomorphism of the fiber bundle $\phi : \pi_0|_{\partial M_0^+} \simeq \pi_1|_{- \partial M_1^-}$. 
We can form $(M, \pi) = (M_0, \pi_0)\cup_\phi (M_1, \pi_1) \in S_{F, G, Z}$. 
Then we have 
\[
\sigma_{Q_Z}(M, \pi) = \sigma_{Q_Z}(M_0, \pi_0) + \sigma_{Q_Z}(M_1, \pi_1). 
\]
\item (compatibility with signature) 

An oriented even dimensional closed manifold $M$ can be regarded as an element in $S_{F, G, Z}$. 
For this element, we have
\[
\sigma_{Q_Z}(M) = \mathrm{Sign}(M). 
\]
\end{itemize}
Moreover, if we have two elements $Q_Z^0, Q_Z^1 \in  \mathcal{I}^{\mathrm{sign}}(\pi_{\mathrm{univ}}|_Z, \underline{\mathbb{C}^n})$, the difference between $\sigma_{Q_Z^0}$ and $\sigma_{Q_Z^1}$ is described as follows. 
Let $(M, \pi : \partial M \to Y)$ be an element in $ S_{F, G, Z}$, and denote the classifying map for $\pi$ by $[f] \in [Y, Z] \simeq [Y, BG]$. 
Recall that we have the difference class $[Q_Z^1 - Q_Z^0]\in K^{-1}(Z)$. 
Denote the $K$-homology class of signature operator on $Y$ by $[D_{Y}^{\mathrm{sign}}] \in K_1(Y)$.  
We have, for each $(M, \pi ) \in S_{F, G, Z}$, 
\begin{equation}\label{mainthm_statement_eq}
\sigma_{Q_Z^1}(M, \pi) - \sigma_{Q_Z^0}(M, \pi) = \frac{2}{n}\langle f^*([Q_Z^1 - Q_Z^0]), [D_Y^{\mathrm{sign}}]\rangle. 
\end{equation}
Here we denoted the index pairing by $\langle{\cdot}, {\cdot}\rangle : K^1(Y)\otimes K_1(Y) \to \mathbb{Z}$. 
\end{thm}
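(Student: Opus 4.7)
The plan is to set
\[
\sigma_{Q_Z}(M, \pi) := \frac{1}{n}\,\mathrm{Sign}_\Phi\bigl(M,\,\underline{\mathbb{C}^n},\,f^*Q_Z\bigr) \in \tfrac{1}{n}\mathbb{Z},
\]
where $f : Y \to Z$ is the classifying map for the structure-group-$G$ reduction of $\pi$, and $f^*Q_Z \in \mathcal{I}^{\mathrm{sign}}(\pi, \underline{\mathbb{C}^n})$ denotes the pullback of the universal invertible perturbation. Since $\pi$ has structure group $G$, it is classified up to isomorphism by a homotopy class $[f_0] \in [Y, BG]$, and the hypothesis that $i_* : [Y, Z] \to [Y, BG]$ is a bijection lifts this uniquely to a class $[f] \in [Y, Z]$. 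Fiberwise signature operators are natural under bundle pullback, so a smooth representative of $Q_Z$ pulls back to a $\mathbb{C}l_1$-invertible perturbation of the fiberwise signature family of $\pi$, and homotopic choices of $f$ give homotopic perturbations (via the pullback construction carried out over $Y \times [0,1]$). Thus $f^*Q_Z$ is a well-defined element of $\mathcal{I}^{\mathrm{sign}}(\pi, \underline{\mathbb{C}^n})$, and the resulting integer only depends on this class by Definition \ref{twisted_sign_perturbation_def}.

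The four properties then follow directly from the machinery of Section \ref{sec_index_perturbation}. For vanishing, given the extension $\pi' : M \to X$, surjectivity of $i_* : [X, Z] \to [X, BG]$ furnishes a lift $g : X \to Z$ of its classifying map with $g|_Y$ homotopic to $f$; then $g^*Q_Z \in \mathcal{I}^{\mathrm{sign}}(\pi', \underline{\mathbb{C}^n})$ extends $f^*Q_Z$ over the bulk, and Proposition \ref{vanishing_perturbation} yields $\mathrm{Sign}_\Phi = 0$. Additivity is the direct consequence of Proposition \ref{gluing_perturbation}, since the pulled-back perturbations on $M_0$ and $M_1$ tautologically agree on the identified boundary components (both are pulled back from the single datum $Q_Z$). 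Compatibility with the signature follows because for a closed $M$ there is no boundary family and $\mathrm{Sign}_\Phi(M, \underline{\mathbb{C}^n})$ reduces to the Atiyah--Singer signature twisted by the trivial rank $n$ bundle, i.e.\ $n\,\mathrm{Sign}(M)$. The relative formula (\ref{mainthm_statement_eq}) is obtained by applying Proposition \ref{relative_signature} with $Q_\pi^i := f^*Q_Z^i$, using naturality of the difference class $[f^*Q_Z^1 - f^*Q_Z^0] = f^*[Q_Z^1 - Q_Z^0] \in K^{-1}(Y)$, and dividing the resulting identity by $n$.

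The main obstacle is the well-definedness argument in the first paragraph: one must check that $Q_Z \mapsto f^*Q_Z$ descends to homotopy classes in both variables and lands in $\mathcal{I}^{\mathrm{sign}}(\pi, \underline{\mathbb{C}^n})$. This is conceptually clear from the naturality of fiberwise elliptic theory under bundle pullback, but some care is needed to smooth out continuous homotopies of classifying maps and to align the ambient differential data (rigid metrics, connections) used in Definition \ref{def_smooth_perturbation} between the universal model over $Z$ and its pullback to $Y$. Essentially this is the family-parametrized version of the standard fact that the index pairing between $K^0$ and $K^{-1}$ is natural under continuous maps, applied to the universal family classified by $Z \subset BG$.
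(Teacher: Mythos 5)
Your proposal is correct and follows essentially the same route as the paper: the same definition $\sigma_{Q_Z}(M,\pi)=\frac{1}{n}\mathrm{Sign}_\Phi(M,\underline{\mathbb{C}^n},[f]^*Q_Z)$, with vanishing, additivity, compatibility, and the relative formula deduced from the signature versions of Propositions \ref{vanishing_perturbation}, \ref{gluing_perturbation}, and Proposition \ref{relative_signature}. The well-definedness issue you flag at the end (a homotopy class $[f]\in[Y,Z]$ must be promoted to an actual bundle map in order to pull back the perturbation) is exactly what the paper resolves via sections of $P\times_G\tilde{Z}\to Y$ and the homotopy-lifting argument of Lemma \ref{classifyingmap_small}, so your sketch matches the paper's proof in substance.
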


\subsection{The universal index class and the pullback of $\mathbb{C}l_1$-invertible perturbations}\label{subsec_univ_index_invertible_perturbation}

Let $F$ be an oriented closed even dimensional smooth manifold and $G \subset \mathrm{Diff}^+(F)$ be a subgroup. 
We define the universal signature class $\mathrm{Sign}(F_{\mathrm{univ}}) \in K^0(BG)$ as follows. 
We have the universal $F$-fiber bundle over $BG$, 
\[
\pi_{\mathrm{univ}} : EG \times_G F \to BG. 
\]
Fix any continuous family of fiberwise smooth metrics $g_{\mathrm{univ}}$ over this fiber bundle. 
Then this defines a $\mathbb{Z}_2$-graded Hilbert bundle (see Definition \ref{def_hilbertbundle})
\[
\hat{\mathcal{H}}_{\mathrm{univ}} := \{L^2(\pi_{\mathrm{univ}}^{-1}(x); \wedge_{\mathbb{C}}T^*F_{g_{univ, x}} )\}_{x \in BG} \to BG. 
\]
Also the metric $g_{\mathrm{univ}}$ defines the fiberwise signature operator $D_{\mathrm{univ}}^{\mathrm{sign}}$ acting on $\hat{\mathcal{H}}_{\mathrm{univ}}$, and the bounded transform of this operator, $\psi(D_{\mathrm{univ}}^{\mathrm{sign}})$, gives an element $\psi(D_{\mathrm{univ}}^{\mathrm{sign}}) \in \Gamma(BG; \mathrm{Fred}^{(0)}(\hat{\mathcal{H}}_{\mathrm{univ}}))$ (see subsection \ref{subsec_invertibleperturbation}). 
So this operator defines a class 
\[
\mathrm{Sign}_{\mathrm{univ}}(F) := [\psi(D_{\mathrm{univ}}^{\mathrm{sign}})] \in K^0(BG),
\]
where the symbol $K^0$ denotes the representable $K$-theory. 
Since the space of choices of fiberwise metric $g_{\mathrm{univ}}$ is contractible, this class does not depend on the choice of $g_{\mathrm{univ}}$.  

Given a subset $Z \subset BG$, we can restrict this class and get the universal signature class over $Z$, 
\[
\mathrm{Sign}_{\mathrm{univ}}(F) := i^* \mathrm{Sign}_{\mathrm{univ}}(F) \in K^0(Z), 
\]
where $i : Z \to BG$ denotes the inclusion.  
We abuse the notation and use the same symbol for the universal signature class over $Z$ and $BG$ without any confusion. 

Next we give fundamental remarks on pullbacks of $\mathbb{C}l_1$-invertible perturbations for signature operators. 
Suppose that we are given a continuous map $f : X_0 \to X_1$ between topological spaces, and a continuous oriented fiber bundle structure $\pi : M \to X_1$ with fiber $F$. 
The fiberwise signature class, $\mathrm{Sign}(\pi) \in K^0(X_1)$, is defined as above. 
Consider the pullback bundle $f^*\pi : f^*(M) \to X_0$. 
The fiberwise signature class of this bundle satisfies $\mathrm{Sign}(f^*\pi)  = f^*\mathrm{Sign}(\pi) \in K^0(X_0)$.  

Suppose that we have $\mathrm{Sign}(\pi) = 0\in K^0(X_1)$. 
Then by Lemma \ref{perturbationexistence}, we see that the set $\mathcal{I}^{\mathrm{sign}}(\pi)$ is nonempty (see Remark \ref{rem_twisted_spinc}; it is easy to see the remark applies to the case where we do not assume smooth structure on the base $X_1$). 
By the contractibility of the space of fiberwise metrics, the pullback map, 
\[
f^* : \mathcal{I}^{\mathrm{sign}}(\pi) \to \mathcal{I}^{\mathrm{sign}}(f^*\pi), 
\]
is well-defined. 
Moreover we can easily see that this map is actually a homomorphism of affine spaces, with respect to the homomorphism $f^* : K^{-1}(X_1) \to K^{-1}(X_0)$. 

We can also generalize this construction to the signature operators twisted by the trivial rank $n$ bundle $\underline{\mathbb{C}^n}$, i.e., the $n$-direct sum of signature operators. 
Suppose that $n \cdot\mathrm{Sign}(f^*\pi)  = f^*\mathrm{Sign}(\pi) \in K^0(X_0) $. 
Then the set $\mathcal{I}^{\mathrm{sign}}(\pi, \underline{\mathbb{C}^n})$ is nonempty, and we have a well-defined affine space homomorphism
\begin{equation}\label{rigid_pullback_eq}
f^* : \mathcal{I}^{\mathrm{sign}}(\pi, \underline{\mathbb{C}^n}) \to \mathcal{I}^{\mathrm{sign}}(f^*\pi,\underline{\mathbb{C}^n}). 
\end{equation}

\subsection{The local signature}
In this subsection, we return to the settings of subsection \ref{settings}. 
First we explain the pullback of $\mathbb{C}l_1$-invertible perturbations by the classifying maps. 
We cannot apply the procedure explained in the last section directly, because the classifying map is defined {\it up to homotopy}. 

\begin{prop}
Suppose we are given a $F$-fiber bundle $\pi : M \to X$ over a topological space $X$ with structure group $G$. 
\begin{enumerate}
\item Suppose that the universal signature class satisfies $n \cdot \mathrm{Sign}(F_{\mathrm{univ}}) = 0 \in K^0(BG)$. 
Then the classifying map $[f] \in [X, BG]$ induces a well-defined affine space homomorphism
\[
[f]^* : \mathcal{I}^{\mathrm{sign}}(\pi_{\mathrm{univ}}, \underline{\mathbb{C}^n}) \to \mathcal{I}^{\mathrm{sign}}(\pi,\underline{\mathbb{C}^n}). 
\]
\label{pullback_prop_1}
\item Let $Z \subset BG$ be a subspace and assume that $i_* : [X, Z] \to [X, BG]$ induced by the inclusion $i : Z \to BG$ is an isomorphism. 
Also assume that $n \cdot i^* \mathrm{Sign}(F_{\mathrm{univ}}) = 0 \in K^0(Z)$. 
Then the classifying map $[f] \in [X, Z]$ induces a well-defined affine space homomorphism
\[
[f]^* : \mathcal{I}^{\mathrm{sign}}(\pi_{\mathrm{univ}}|_{Z}, \underline{\mathbb{C}^n}) \to \mathcal{I}^{\mathrm{sign}}(\pi,\underline{\mathbb{C}^n}). 
\]
\label{pullback_prop_2}
\end{enumerate}
\end{prop}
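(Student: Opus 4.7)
The plan is to define $[f]^*$ by choosing any continuous representative $f: X \to BG$ (or $X \to Z$ for part (\ref{pullback_prop_2})) of the homotopy class $[f]$, using the concrete pullback (\ref{rigid_pullback_eq}) of fiberwise $\mathbb{C}l_1$-invertible perturbations, and then verifying independence of the choice of representative. Concretely, given $f$, I would form the pullback bundle $f^*\pi_{\mathrm{univ}}$, note that the classifying property supplies a $G$-bundle isomorphism $\alpha_f : f^*\pi_{\mathrm{univ}} \xrightarrow{\simeq} \pi$, and then set
\[
[f]^* := (\alpha_f)_* \circ f^* : \mathcal{I}^{\mathrm{sign}}(\pi_{\mathrm{univ}}, \underline{\mathbb{C}^n}) \to \mathcal{I}^{\mathrm{sign}}(f^*\pi_{\mathrm{univ}}, \underline{\mathbb{C}^n}) \xrightarrow{\simeq} \mathcal{I}^{\mathrm{sign}}(\pi, \underline{\mathbb{C}^n}),
\]
where the first arrow is (\ref{rigid_pullback_eq}). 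The affine structure is automatic: its linear part is the usual $K^{-1}$-pullback $f^* : K^{-1}(BG) \to K^{-1}(X)$, which is itself homotopy invariant.

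The substantive step is the independence of $[f]^*$ from the choice of representative $f$ and the isomorphism $\alpha_f$. Given two representatives $f_0, f_1$ joined by a homotopy $H : X \times I \to BG$, I would apply (\ref{rigid_pullback_eq}) to $H$ itself to obtain a map $H^* : \mathcal{I}^{\mathrm{sign}}(\pi_{\mathrm{univ}}, \underline{\mathbb{C}^n}) \to \mathcal{I}^{\mathrm{sign}}(H^*\pi_{\mathrm{univ}}, \underline{\mathbb{C}^n})$, and observe the tautological factorizations
\[
f_t^* = \mathrm{inc}_t^* \circ H^*, \qquad t = 0, 1,
\]
where $\mathrm{inc}_t : X \hookrightarrow X \times I$. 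Because $X \times \{0\} \hookrightarrow X \times I$ is a deformation retract, the bundle $H^*\pi_{\mathrm{univ}}$ admits a $G$-bundle isomorphism $\Phi : H^*\pi_{\mathrm{univ}} \xrightarrow{\simeq} \mathrm{pr}_X^* (f_0^*\pi_{\mathrm{univ}})$, and transporting $H^*Q$ by $\Phi$ produces an honest homotopy of elements of $\tilde{\mathcal{I}}^{\mathrm{sign}}(f_0^*\pi_{\mathrm{univ}}, \underline{\mathbb{C}^n})$ starting at $f_0^*Q$ and ending at a perturbation related to $f_1^* Q$ by the bundle isomorphism $\alpha_{f_1}^{-1}\circ \alpha_{f_0} : f_0^*\pi_{\mathrm{univ}} \simeq f_1^*\pi_{\mathrm{univ}}$ induced by the homotopy. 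Identifying both with $\pi$ via $\alpha_{f_0}, \alpha_{f_1}$, this shows $[f_0]^*Q = [f_1]^*Q$ in $\mathcal{I}^{\mathrm{sign}}(\pi, \underline{\mathbb{C}^n})$.

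The hard part will be controlling the two layers of ambiguity simultaneously — that of the homotopy class representative $f$ and that of the classifying isomorphism $\alpha_f$ — so I would organize the verification via the intermediate object $\mathcal{I}^{\mathrm{sign}}(H^*\pi_{\mathrm{univ}}, \underline{\mathbb{C}^n})$ and rely on the contractibility of the space of fiberwise metrics (already used to define (\ref{rigid_pullback_eq})) to reduce all ambiguities to the homotopy invariance of $K^{-1}$-pullback. Once well-definedness is established, additivity on the affine structure (i.e.\ the compatibility with the $K^{-1}$-actions) follows directly from the additivity already contained in (\ref{rigid_pullback_eq}) applied to $f$, together with functoriality of $K^{-1}$.

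Part (\ref{pullback_prop_2}) reduces immediately to part (\ref{pullback_prop_1}): the hypothesis that $i_* : [X, Z] \to [X, BG]$ is an isomorphism lets us lift the classifying homotopy class $[X \to BG]$ to a unique class $[f] \in [X, Z]$, and the argument above applied to $Z$ in place of $BG$ (using $i^*\pi_{\mathrm{univ}}$ in place of $\pi_{\mathrm{univ}}$) yields the desired affine homomorphism. The compatibility $i^* \circ [f]^{Z*} = [i \circ f]^{BG*}$ on the overlap of domains is again a direct consequence of the functoriality expressed in (\ref{rigid_pullback_eq}).
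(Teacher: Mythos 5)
Your definition $[f]^* = (\alpha_f)_*\circ f^*$ depends on two choices, and your homotopy argument only controls one of them. For a \emph{fixed} representative $f$, the set of $G$-bundle isomorphisms $\alpha_f : f^*\pi_{\mathrm{univ}}\to\pi$ is a torsor over the gauge group $\mathrm{Aut}(\pi)$, which is in general disconnected; a gauge transformation $g$ in a homotopically nontrivial component acts on $\mathcal{I}^{\mathrm{sign}}(\pi,\underline{\mathbb{C}^n})$ as translation by a difference class $[g_*Q-Q]\in K^{-1}(X)$ (a spectral-flow--type invariant of the corresponding loop of classifying maps), and this class need not vanish. Hence $(\alpha_f)_*\circ f^*$ genuinely depends on $\alpha_f$ and not only on $f$. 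The step where you declare the endpoint of your homotopy to be ``related to $f_1^*Q$ by the bundle isomorphism $\alpha_{f_1}^{-1}\circ\alpha_{f_0}$ \emph{induced by the homotopy}'' is exactly where this is swept under the rug: the isomorphism $f_0^*\pi_{\mathrm{univ}}\simeq f_1^*\pi_{\mathrm{univ}}$ obtained by covering the homotopy $H$ over $X\times I$ and the isomorphism $\alpha_{f_1}^{-1}\circ\alpha_{f_0}$ obtained from two independently chosen classifying identifications differ by precisely such a gauge transformation, and the contractibility of the space of fiberwise metrics (which handles only the metric ambiguity) does not remove the discrepancy. You flag the problem yourself, but the resolution you propose does not address it.

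The correct fix, and the route the paper takes, is to make a \emph{single} choice that determines the classifying map and the bundle identification simultaneously: a section $s$ of $P\times_G EG\to X$, where $P$ is the principal $G$-bundle underlying $\pi$. Such a section yields at once a map $f_s:X\to BG$ and a bundle map $\phi_s$, i.e.\ an identification $f_s^*M_{\mathrm{univ}}\simeq M$, and the space of such sections is nonempty and path-connected because the fibers of $P\times_G EG\to X$ are contractible; well-definedness then follows from the homotopy invariance already built into (\ref{rigid_pullback_eq}). Note also that part (\ref{pullback_prop_2}) is not a formal reduction to part (\ref{pullback_prop_1}): since $Z$ is not itself a classifying space, one must separately show that the space of sections landing over $Z$ is nonempty and path-connected; this is Lemma \ref{classifyingmap_small}, which uses surjectivity of $i_*$ for existence and injectivity of $i_*$ (via the homotopy lifting property) for connectedness.
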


\begin{proof}
First we prove the case (\ref{pullback_prop_1}). 
We recall the construction of the classifying map for the fiber bundle $\pi$. 
Let $\tilde{\pi}: P \to X$ be the principal $G$-bundle such that $\pi =( P \times_G F \to X)$. 
We have a fiber bundle
\begin{equation}\label{contractiblebundle}
P \times_G EG \to X. 
\end{equation}
Since the fiber of the bundle (\ref{contractiblebundle}) is contractible, we can take a section $s : X \to P \times_G EG$, and any choice of section is homotopic to each other. 
If we fix a section $s $, we get the associated maps
\begin{align}\label{classifyingmap}
f_s : X \to BG ; & \  x \mapsto \bar{\pi}(s(x)) \\
\phi'_s : P \to EG ; & \ s(\tilde{\pi}(p)) = [p, \phi'_s(p)] \ \mbox{for }p \in P. 
\end{align}
Here we denoted by $\bar{\pi}$ the canonical map $P \times_G EG \to BG$. 
This defines a bundle map $(\phi'_s, f_s) : (P, X) \to (EG, BG)$. 
This induces a bundle map between the associated bundles $\pi : M = P \times_G F \to X$ and $\pi_{\mathrm{univ}} : M_{\mathrm{univ}} = EG \times_G F \to BG$, denoted by $(\phi_s, f_s) : (M, X) \to (M_{\mathrm{univ}}, BG )$.  
Note that fixing a bundle map as above is equivalent to fixing an identification $f_s^*M_{\mathrm{univ}} \simeq M$ as a fiber bundle over $X$. 
As in (\ref{rigid_pullback_eq}), this induces an affine space homomorphism
\[
(\phi_s, f_s)^* :  \mathcal{I}^{\mathrm{sign}}(\pi_{\mathrm{univ}}, \underline{\mathbb{C}^n}) \to \mathcal{I}^{\mathrm{sign}}(\pi,\underline{\mathbb{C}^n}).
\]
Since any two choices of the section $s$ are homotopic, we can easily see that this homomorphism does not depend on the choice of $s$. 

Next we prove the case (\ref{pullback_prop_2}). 
Denote $\pi_{\mathrm{univ}}^{-1}(Z) = \tilde{Z} \subset M_{\mathrm{univ}}$. 
In this case, by the next Lemma \ref{classifyingmap_small}, we see that we can take a section $s : X \to P \times_G \tilde{Z}$, and any choice of section is homotopic to each other. 
Thus we can apply exactly the same argument as in the case (\ref{pullback_prop_1}) and get the result. 
\end{proof}

\begin{lem}\label{classifyingmap_small}
If $i_* : [X, Z] \simeq [X, BG]$, the space $\Gamma(X; P \times_G \tilde{Z})$ is nonempty and path-connected. 
\end{lem}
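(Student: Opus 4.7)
The plan is to rephrase the section space $\Gamma(X; P \times_G \tilde{Z})$ in purely classifying-space terms. A section $s : X \to P \times_G \tilde{Z}$ is the same datum as a $G$-equivariant map $\tilde{s} : P \to \tilde{Z}$, and since $\tilde{Z} \to Z$ is itself a principal $G$-bundle (the pullback of $EG \to BG$ along $i$), such a $G$-equivariant map is equivalent to a pair $(\bar{f}, \phi)$ consisting of a map $\bar{f} : X \to Z$ (the map induced on base quotients) together with a $G$-bundle isomorphism $\phi : P \xrightarrow{\cong} \bar{f}^* \tilde{Z}$. Thus sections correspond precisely to lifts of the classifying map $f : X \to BG$ to $Z$, equipped with a choice of trivialization of the pulled-back bundle.

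For nonemptiness, I would use the surjectivity of $i_*$ to pick $\bar{f} : X \to Z$ with $i \bar{f}$ homotopic to $f$. Then $\bar{f}^* \tilde{Z}$ is a principal $G$-bundle over $X$ classified by $i\bar{f} \simeq f$, hence isomorphic to $P$; any such isomorphism $\phi$ produces a section.

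For path-connectedness, take two sections given by data $(\bar{f}_0, \phi_0)$ and $(\bar{f}_1, \phi_1)$. The injectivity of $i_*$ yields a homotopy $H : X \times I \to Z$ from $\bar{f}_0$ to $\bar{f}_1$. By the standard homotopy invariance of principal $G$-bundles over paracompact spaces, I would extend $\phi_0$ to a $G$-bundle isomorphism $\Phi : P \times I \xrightarrow{\cong} H^* \tilde{Z}$. Reading $\Phi$ as a $G$-equivariant map $P \times I \to \tilde{Z}$ yields a path of sections from $s_0$ to some section $s_1'$ over $\bar{f}_1$.

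The main technical obstacle will be the remaining step of connecting $s_1'$ to $s_1$: both cover the same $\bar{f}_1$ but may differ by a gauge transformation of $\bar{f}_1^* \tilde{Z}$. I would absorb this by exploiting the homotopy pullback identification $\Gamma(X; P \times_G \tilde{Z}) \simeq \mathrm{Map}(X, Z) \times^{h}_{\mathrm{Map}(X, BG)} \{f\}$, which comes from the fact that $\Gamma(X; P \times_G EG)$ is weakly contractible, and then reading the long exact sequence of the fibration $i_* : \mathrm{Map}(X, Z) \to \mathrm{Map}(X, BG)$: the bijectivity of $i_*$ on $\pi_0$ shows that $\pi_0$ of this homotopy fiber is a single orbit of $\pi_1(\mathrm{Map}(X, BG), f)$, and one then checks that this orbit collapses to a point by directly producing, from a given loop in $\mathrm{Map}(X, BG)$ at $f$, a corresponding loop in $\mathrm{Map}(X, Z)$ at $\bar{f}$ by applying the bijectivity of $i_*$ to an appropriate auxiliary classifying map constructed from the extended bundle isomorphism $\Phi$.
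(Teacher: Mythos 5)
Your reformulation of $\Gamma(X;P\times_G\tilde{Z})$ as the homotopy fiber of $i_*:\mathrm{Map}(X,Z)\to\mathrm{Map}(X,BG)$ over the classifying map $f$ is sound, and your nonemptiness argument is correct and essentially equivalent to the paper's (the paper lifts a homotopy of classifying maps through the bundle $P\times_G EG\to X\times BG$ instead of invoking the classification of principal bundles, but the content is the same). The problem is the last step of path-connectedness.

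From the exact sequence of the fibration you correctly get that $\pi_0$ of the homotopy fiber is the orbit $\pi_1(\mathrm{Map}(X,BG),f)\,/\,\mathrm{im}\,\pi_1(\mathrm{Map}(X,Z),\bar{f})$, so what you must prove is that $\pi_1(\mathrm{Map}(X,Z),\bar{f})\to\pi_1(\mathrm{Map}(X,BG),f)$ is surjective. An element of the target is a map $X\times S^1\to BG$ equal to $f$ on $X\times\{\ast\}$, and lifting it to the source means compressing that map into $Z$ rel $X\times\{\ast\}$ --- a statement about homotopy classes of maps out of $X\times S^1$ (equivalently, about deforming a homotopy $X\times[0,1]\to BG$ with ends in $Z$ into $Z$ rel the ends). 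This does \emph{not} follow from bijectivity of $[X,Z]\to[X,BG]$, no matter which auxiliary map out of $X$ you feed the hypothesis. Concretely, take $G=\mathbb{Z}$, $BG=S^1$, $Z=\{z_0\}$ a point, $X=\mathrm{pt}$: then $i_*:[X,Z]\to[X,BG]$ is a bijection, but $\pi_1(\mathrm{Map}(X,Z))=0\to\pi_1(\mathrm{Map}(X,BG))=\mathbb{Z}$ is not surjective, and indeed $P\times_G\tilde{Z}$ is the discrete set $\mathbb{Z}$, whose section space over the point has infinitely many components. So the step you describe as ``directly producing a corresponding loop in $\mathrm{Map}(X,Z)$'' cannot be carried out from the stated hypothesis; the conclusion genuinely fails for such $(X,Z)$.

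You are in good company: the paper's own proof silently uses the same relative statement (the existence of the two-parameter homotopy $\mathcal{F}$ compressing the track $\Pi_{BG}\circ S:X\times[0,1]\to BG$ into $Z$ rel $X\times\{0,1\}$), deducing it from $i_*:[X,Z]\simeq[X,BG]$ without further comment. What actually justifies it in the applications is that $Z=Z_{g,m}$ is the $m$-skeleton of a CW model of $BG$ with $m\ge\dim X+1$, so cellular approximation compresses maps of $X\times[0,1]$ into $Z$ rel $X\times\{0,1\}$. If you add this as a hypothesis (equivalently, your missing $\pi_1$-surjectivity, or bijectivity of $[X\times I,Z]\to[X\times I,BG]$ rel ends), your argument closes up.
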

\begin{proof}
First we prove the nonemptiness. 
Choose a section $s : X \to P \times_G EG$. 
As in (\ref{classifyingmap}), we get the associated maps $f : X \to BG$ and $\phi : P \to EG$. 
Since $i_* : [X, Z] \simeq [X, BG]$, we can find a continuous map
\[
F' : X \times [0, 1] \to BG
\]
such that $F'|_{X \times \{0\}} = f$ and $\mbox{Im}(F'|_{X \times \{1\}}) \subset Z$. 
We denote $F = id_X \times F' : X \times [0, 1] \to X \times BG$. 
Note that $s$ is a lift of $F|_{X \times \{0\}}$ for the fiber bundle $\Pi : P \times_G EG \to X \times BG$. 
By the homotopy lifting property applied to the diagram
\[
\xymatrix{
	& P \times_G EG \ar[d]_\Pi \\
	X \times [0, 1] \ar@{..>}[ru] \ar[r]_F & X \times BG
}
\]
we can lift $F$ to $\tilde{F} : X \times [0, 1] \to P \times_G EG$. 
$\tilde{F}|_{X \times \{1\}} : X \to P \times_G \tilde{Z}$ gives an element in $\Gamma(X; P \times_G \tilde{Z})$. 

Next we prove the path-connectedness. 
Let us denote the canonical projection by $\Pi = (\Pi_X, \Pi_{BG}) : P \times_G EG \to X \times BG$. 
Suppose we are given two sections $s_0, s_1 \in \Gamma(X; P \times_G \tilde{Z})$. 
Since the fiber of the fiber bundle $\Pi_X : P \times_G EG \to X$ is contractible, we can choose a path $\{s_t\}_{t \in [0, 1]}$ in $\Gamma(X; P \times_G EG)$ connecting $s_0$ and $s_1$. 
We denote
\begin{align*}
S : X \times[0,1] \to P \times_G EG &: \ (x, t ) \mapsto s_t(x). 
\end{align*}
$\Pi_X \circ S : X \times [0, 1] \to BG$ satisfies $\mathrm{Im}(\Pi_X \circ S |_{X \times \{0, 1\}}) \subset Z$. 
Since $i_* : [X, Z] \simeq [X, BG]$, we can take a continuous map
\[
\mathcal{F} : X \times [0, 1]_t \times [0, 1]_u \to BG
\]
satisfying $\mathcal{F}|_{X \times [0, 1] \times \{0\}} = \Pi_X \circ S$, 
$\mathcal{F}|_{X \times \{i\} \times \{u\}} = \mathcal{F}|_{X \times \{i\} \times \{0\}}$ for all $u \in [0, 1]$ and $i = 0, 1$, and 
$\mathrm{Im}(\mathcal{F}|_{X \times [0, 1] \times \{1\}}) \subset Z$. 
In the diagram
\[
\xymatrix{
	& P \times_G EG \ar[d]_\Pi \\
	(X \times [0, 1]_t) \times [0, 1]_u \ar@{..>}[ru] \ar[r]_-{id_X \times \mathcal{F}} & X \times BG
}
\]
we see that $S$ is a lift of $(id_X \times \mathcal{F})|_{X \times [0, 1] \times \{0\}}$. 
By the homotopy lifting property, we get a lift of $id_X \times \mathcal{F}$. 
Its restriction to $X \times [0, 1] \times \{1\}$ is a map $X \times [0, 1] \to P \times_G \tilde{Z}$, and by the construction, this gives a path in $\Gamma(X; P \times_G \tilde{Z})$ connecting $s_0$ and $s_1$. 
\end{proof}

We proceed to give a proof of Theorem \ref{mainthm}. 
Under the assumption $n \cdot i^*\mathrm{Sign}(F_{\mathrm{univ}}) = 0 \in K^0(Z)$, the set $\mathcal{I}^{\mathrm{sign}}(\pi_{\mathrm{univ}}|_Z, \underline{\mathbb{C}})$ is nonempty. 
For each $Q_{Z} \in \mathcal{I}^{\mathrm{sign}}(\pi_{\mathrm{univ}}|_Z, \underline{\mathbb{C}})$, we are going to construct a map 
\[
\sigma_{Q_Z} : S_{F, \Gamma, Z} \to \frac{\mathbb{Z}}{n}
\]
satisfying the conditions in the Theorem \ref{mainthm}. 

\begin{defn}
Assume that $n \cdot i^*\mathrm{Sign}(F_{\mathrm{univ}}) = 0 \in K^0(Z)$. 
For a given element $Q_{Z} \in \mathcal{I}^{\mathrm{sign}}(\pi_{\mathrm{univ}}|_Z, \underline{\mathbb{C}^n})$, we define the map
\[
\sigma_{Q_Z}  : S_{F, G, Z} \to \frac{\mathbb{Z}}{n}
\]
as, for $(M, \pi: \partial M \to Y)\in S_{F, G, Z}$, 
\[
\sigma_{Q_Z}(M, \pi : \partial M \to Y) = \frac{1}{n}\mathrm{Sign}_\Phi(M, \underline{\mathbb{C}^n}, [f]^*(Q_{Z})), 
\]
where $[f] \in [Y, Z]$ is the classifying map for the fiber bundle $\pi$. 
\end{defn}

We check that this map satisfies the conditions in Theorem \ref{mainthm}. 

\begin{proof}{(of Theorem \ref{mainthm})}
First we prove the vanishing condition. 
Suppose we are given an element $(M, \pi) \in S_{F, G, Z}$ such that $\pi$ extends to an $F$-fiber bundle structure $\pi' : M \to X$ with structure group $G$ and $i_* : [X, Z] \to [X, BG]$ is surjective. 
Denote the classifying map of $\pi'$ by $[f'] \in [X, BG]$. 
Take any lift of $[f']$ to an element of $[\tilde{f}'] \in [X, Z]$, and realize this map as a bundle map $(\tilde{\phi}', \tilde{f}') : (M, X) \to (\tilde{Z}, Z)$. 
Then as in (\ref{rigid_pullback_eq}), we can pullback the element $Q_{Z} \in \mathcal{I}^{\mathrm{sign}}(\pi_{\mathrm{univ}}|_Z, \underline{\mathbb{C}^n})$ by the bundle map $(\tilde{\phi}', \tilde{f}')$ to get an element $(\tilde{\phi}', \tilde{f}')^*Q_Z \in \mathcal{I}^{\mathrm{sign}}(\pi', \underline{\mathbb{C}^n})$. 
This element restricts to $[f]^*(Q_{Z}) \in \mathcal{I}^{\mathrm{sign}}(\pi, \underline{\mathbb{C}^n})$ at the boundary. 
Thus applying the vanishing proposition, the signature version of Proposition \ref{vanishing_perturbation}, we get the result. 

The additivity follows from the gluing formula, the twisted signature version of Proposition \ref{gluing_perturbation}. 

The compatibility with signature is obvious by definition. 

The equation (\ref{mainthm_statement_eq}) follows from the relative formula for $\Phi$-signature, Proposition \ref{relative_signature}. 

\end{proof}
\section{Examples}\label{sec_example}
In this section, as an application of Theorem \ref{mainthm}, we consider the following localization problem for singular surface bundles. 

Fix a positive integer $k$ and an integer $g \geq 0$. 
Let $M, X$ be $4k, (4k-2)$-dimensional closed oriented smooth manifolds, respectively. 
Let $\pi : M \to X$ be a smooth map and $X = U \cup \cup^m_{i=1} V_i$ be a partition into compact manifolds with closed boundaries, 
i.e., $U$ and $V_i$ are compact manifolds with closed boundaries, and each two of them intersect only on their boundaries. 
Assume $\{V_i\}_i$ are disjoint. 
Denote $M_U := \pi^{-1}(U)$ and $M_i := \pi^{-1}(V_i)$. 
Assume that $\pi|_{M_U} :M_U \to U$ defines a smooth fiber bundle with fiber $\Sigma_g$ (closed oriented surface with genus $g$). 
Then the localization problem is stated as follows: 

\begin{prob}[Localization problem for signature of singular surface bundles]\label{exprob}
Can we define a real number $\sigma (M_i, V_i, \pi|_{M_i}) \in \mathbb{R}$, which only depends on the data $(M_i, V_i, \pi|_{M_i})$, and write
\[
\mathrm{Sign}(M) = \sum_{i = 1}^m \sigma (M_i, V_i, \pi|_{M_i}) \ ?
\]
\end{prob}
We call the real number $\sigma (M_i, V_i, \pi|_{M_i}) $ the {\it local signature}. 
The answer to this problem is positive in the case $g = 0, 1, 2$. 
However the answer is negative for $g \geq 3$, since there exists a smooth $\Sigma_g$-fiber bundle $M \to X$ over a closed surface $X$ with $\mbox{Sign}(M) \neq 0$. 
However, if we assume some structure on the fiber bundle $\pi |_{M_U}$, the answer can be positive. 
There are some examples of ``structures'' for which the localization problem has a positive answer, and the local signatures are constructed and calculated in various areas of mathematics, including topology, algebraic geometry and complex analysis. 
See \cite{AK} and the introduction of \cite{S} for more detailed survey on this problem. 
In this paper, we consider the case $g \geq 2$ and {\it hyperellipticity} (Definition \ref{hypfib}) as the ``structure'' imposed on the regular part of the fibration. 
This is the analogue to the setting in \cite{E}, where the case $k=1$ is considered (note that in \cite{E} the case $g = 0, 1$ are also included).  
We consider the following variant of the Problem \ref{exprob}. 
\begin{prob}\label{localization_prob}
Let $g \geq 2$ and $k$ be fixed as above. 
Assume that $\pi|_{M_U} : M_U \to U$ defines a hyperelliptic $\Sigma_g$-fiber bundle structure (see Definition \ref{hypfib}). 
Can we define a real number $\sigma (M_i, V_i, \pi|_{M_i}) \in \mathbb{R}$, which only depends on the data $(M_i, V_i, \pi|_{M_i})$, and write
\[
\mathrm{Sign}(M) = \sum_{i = 1}^m \sigma (M_i, V_i, \pi|_{M_i}) \ ?
\]
\end{prob} 
We construct such a function $\sigma$ using Theorem \ref{mainthm}.  
We apply the notations in the last section for the following. 
\begin{itemize}
\item Let $F = \Sigma_g$, a closed oriented closed $2$-dimensional manifold of genus $g$. 
\item Denote by $\mathrm{MCG}^+(F)$ the orientation-preserving mapping class group of $F$. 
Let $G=\mathrm{Diff}^H_g := p^{-1}(H_g) \subset \mathrm{Diff}^+(\Sigma_g)$, where $H_g \subset \mathrm{MCG}^+(G)$ is the hyperelliptic mapping class group (Definition \ref{def_Hg}) and $p : \mathrm{Diff}^+(F) \to \mathrm{MCG}^+(F)$ is the quotient map. 
\end{itemize}

\begin{defn}\label{def_Hg}
Let $c \in \mathrm{MCG}^+(\Sigma_g)$ denote the class of hyperelliptic involution (\cite[p.240]{E}) on $\Sigma_g$. 
The hyperelliptic mapping class group, denoted by $H_g$, is defined as follows. 
\[
H_g := \{\gamma \in \mathrm{MCG}^+(\Sigma_g) | \gamma c = c \gamma \}. 
\]
For detailed descriptions of this group, see \cite{E}. 
\end{defn}

\begin{rem}
If $g = 0, 1, 2$, $\mathrm{MCG}^+(\Sigma_g) = H_g$. 
But in the case $g \geq 3$, $H_g$ is a subgroup of infinite index in $\mathrm{MCG}^+(\Sigma_g)$. 
\end{rem}

\begin{defn}\label{hypfib}
Let $X$ be a topological space. 
A $\Sigma_g$-fiber bundle $\pi : M \to X$ with structure group $\mathrm{Diff}^H_g$ is called a {\it hyperelliptic fiber bundle}. 
\end{defn}
We have the following facts about the groups $H_g$ and $\mathrm{Diff}_g^H$. 
\begin{fact}\label{exfact}
\begin{enumerate}
\item The rational group cohomology of $H_g$ satisfies $H^i(H_g; \mathbb{Q}) = 0$ for all $i \geq 1$ (\cite{Kaw}) . 
\label{exfact1}
\item For $g \geq 2$, the unit component of $\mathrm{Diff}_g^H$ is contractible. 
In particular we have a homotopy equivalence between $BH_g$ and $B\mathrm{Diff}_g^H$ (\cite{EE}). 
\label{exfact2}
\item For all $g \geq 0$, $H_g$ is of type $FP_{\infty}$. 
That is, $BH_g$ has a realization as a CW-complex whose $m$-skeleton are finite for all $m \geq 0$. 

It well-known that the mapping class group of an oriented compact surface of genus $g$ with $s$ punctures and $n$ boundary components is of type $FP_\infty$ (For example see \cite{Luc}). 
This case can be seen by noting that an extension of a type $FP_\infty$ group by a type $FP_\infty$ group is also of type $FP_\infty$, and that we have an extension by Birman-Hilden theorem (see \cite[equation (2.1)]{Kaw})
\[
0 \to \mathbb{Z}_2 \to H_g \to \pi_0\mathrm{Diff}^+(S^2, \{(2g+2)-\mbox{punctures}\}) \to 1. 
\]
\label{exfact5}
\end{enumerate}
\end{fact}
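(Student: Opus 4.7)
The statement is a collection of three known facts about $H_g$ and $\mathrm{Diff}^H_g$; my proof proposal is to reduce each item in turn to a standard citation and verify the connecting step.

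For (1), the approach is to use the Birman--Hilden short exact sequence
\[
1 \to \mathbb{Z}_2 \to H_g \to \pi_0\mathrm{Diff}^+(S^2, \{(2g+2)\text{-punctures}\}) \to 1,
\]
in which the kernel is generated by the class of the hyperelliptic involution. Apply the Lyndon--Hochschild--Serre spectral sequence with rational coefficients; because the kernel is finite, its rational cohomology vanishes in positive degrees and the spectral sequence degenerates to an isomorphism $H^*(H_g;\mathbb{Q}) \cong H^*(\pi_0\mathrm{Diff}^+(S^2, \{(2g+2)\text{-punctures}\});\mathbb{Q})$. The latter can be analyzed via the pure mapping class subgroup of the punctured sphere (which is a quotient of a sphere braid group and has trivial rational cohomology in positive degrees) and the permutation action of the symmetric group $S_{2g+2}$ (whose rational cohomology in positive degrees also vanishes). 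This is the content of Kawazumi's computation and I would take it from the citation rather than reproduce it.

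For (2), I would invoke the Earle--Eells theorem, which states that for $g \geq 2$ the identity component $\mathrm{Diff}_0^+(\Sigma_g)$ of the orientation-preserving diffeomorphism group is contractible. Since $\mathrm{Diff}^H_g = p^{-1}(H_g)$ is a union of path components of $\mathrm{Diff}^+(\Sigma_g)$ containing the identity, its unit component agrees with $\mathrm{Diff}_0^+(\Sigma_g)$ and is contractible. The homotopy equivalence $B\mathrm{Diff}^H_g \simeq BH_g$ then follows by passing to classifying spaces in the fibration $\mathrm{Diff}_0^+(\Sigma_g) \to \mathrm{Diff}^H_g \to H_g$, which has contractible fiber.

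For (3), I would carry out exactly the argument already sketched in the statement: in the Birman--Hilden extension above, the kernel $\mathbb{Z}_2$ is trivially of type $FP_\infty$, and the quotient is the mapping class group of a finite-type surface (a $(2g+2)$-punctured sphere), which is known to be of type $FP_\infty$ by the existence of a finite-dimensional classifying space. Since an extension of $FP_\infty$ groups by $FP_\infty$ groups is again $FP_\infty$, the conclusion follows. Among the three items, the main obstacle is (1), since it is the only genuinely substantive cohomological computation; (2) and (3) are essentially reductions to well-established results on surface diffeomorphism groups and finite-type mapping class groups, and for these I would rely on the cited references with only a short verification of the connecting fibration and extension arguments.
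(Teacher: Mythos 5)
Your proposal follows essentially the same route as the paper: items (1) and (2) are handled by the citations to Kawazumi and Earle--Eells respectively, and item (3) is exactly the extension argument the paper itself sketches (a $\mathbb{Z}_2$ kernel and an $FP_\infty$ quotient in the Birman--Hilden sequence), so the overall structure is fine. One slip worth flagging in your sketch of (1): the pure mapping class group of the $(2g+2)$-punctured sphere does \emph{not} have trivial rational cohomology in positive degrees --- its classifying space is the moduli space $\mathcal{M}_{0,n}$, and already $\mathcal{M}_{0,4}$ is a thrice-punctured sphere with $H^1 \cong \mathbb{Q}^2$. The correct statement underlying Kawazumi's computation is that the $S_{2g+2}$-\emph{invariant} part of $H^{*}(\mathcal{M}_{0,2g+2};\mathbb{Q})$ vanishes in positive degrees, which is what the Lyndon--Hochschild--Serre argument for the extension $1 \to \mathrm{PMod} \to \mathrm{Mod} \to S_{2g+2} \to 1$ actually requires. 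Since you explicitly defer to the citation for this computation, the error does not invalidate your proposal, but the parenthetical justification as written is false.
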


\begin{rem}
The reason for assuming $g \geq 2$ in Problem \ref{localization_prob} comes from Fact \ref{exfact} (\ref{exfact2}). 
For $g =0$ the inclusion $SO(3) \to \mathrm{Diff}^H_0(= \mathrm{Diff}^+(S^2))$ is a homotopy equivalence, and for $g = 1$ the unit component of $\mathrm{Diff}^H_1 = \mathrm{Diff}^+(T^2)$ is homotopy equivalent to $S^1 \times S^1$ (\cite{EE}). 
These groups have torsion in group cohomology, so the argument below does not work in these cases. 
\end{rem}

From now on, we fix an integer $g \geq 2$. 
From Fact \ref{exfact} (\ref{exfact2}) and (\ref{exfact5}), we see that $B\mbox{Diff}_g^H$ has a realization as a CW-complex whose $m$-skeleton are finite for all $m \geq 0$. 
We fix such a realization and denote its $m$-skeleton by $Z_{g,m}$. 

\begin{lem}\label{signaturetorsion}
For $g \geq 2$, the universal fiberwise signature class for hyperelliptic fiber bundle, $Sign(F_{\mathrm{univ}} ) \in K^0(B\mathrm{Diff}_g^H)$, maps to $0 \in K^0(B\mathrm{Diff}_g^H; \mathbb{Q})$ under the canonical homomorphism $K^0(B\mathrm{Diff}_g^H) \to K^0(B\mathrm{Diff}_g^H; \mathbb{Q})$. 
Here the symbol $K^0$ denotes the representable $K$-theory. 
\end{lem}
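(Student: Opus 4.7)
The plan is to reduce the $K$-theoretic statement to a cohomological one via the Chern character, then invoke the rational vanishing of $H^{*}(BH_g;\mathbb{Q})$ in positive degrees.

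First I would use Fact~\ref{exfact}(\ref{exfact2}) to replace $B\mathrm{Diff}_g^H$ by the homotopy equivalent space $BH_g$ throughout. Since the rationalization $KU_{\mathbb{Q}}$ of complex $K$-theory splits as a product of shifted rational Eilenberg--MacLane spectra, the Chern character realizes, for any space $X$, a natural isomorphism
\[
ch \colon K^{0}(X;\mathbb{Q}) \xrightarrow{\;\simeq\;} \prod_{n \geq 0} H^{2n}(X;\mathbb{Q}),
\]
through which the canonical map $K^{0}(X) \to K^{0}(X;\mathbb{Q})$ factors. Applying this to $X = BH_g$ and invoking Fact~\ref{exfact}(\ref{exfact1}), the target collapses to $H^{0}(BH_g;\mathbb{Q}) \cong \mathbb{Q}$, so only the virtual rank of the class can produce a nontrivial obstruction.

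Next I would compute $ch\bigl(\mathrm{Sign}(F_{\mathrm{univ}})\bigr)$ by means of the Atiyah--Singer family index theorem applied to the universal fiberwise signature operator, obtaining
\[
ch\bigl(\mathrm{Sign}(F_{\mathrm{univ}})\bigr) \;=\; (\pi_{\mathrm{univ}})_{*}\bigl(L(T^{V}\pi_{\mathrm{univ}})\bigr) \;\in\; H^{\mathrm{ev}}(BH_g;\mathbb{Q}),
\]
where $L$ denotes the Hirzebruch $L$-polynomial. Since the fibers are closed oriented surfaces of real dimension $2$, integration along the fiber lowers cohomological degree by $2$, so the component $(\pi_{\mathrm{univ}})_{*}(L_k)$ lies in $H^{4k-2}(BH_g;\mathbb{Q})$, which vanishes for every $k \geq 1$ by Fact~\ref{exfact}(\ref{exfact1}). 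The degree $0$ part of the Chern character equals the Fredholm index of the signature operator on a single fiber $\Sigma_g$, which is zero because a closed oriented $2$-manifold has vanishing signature. Combining, $ch\bigl(\mathrm{Sign}(F_{\mathrm{univ}})\bigr) = 0$, and the lemma follows.

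I do not anticipate a substantive obstacle: once the Chern character has been installed as the correct rational invariant, every remaining ingredient is supplied directly by Fact~\ref{exfact}. The only point requiring mild care is the degree $0$ component, which one may read either as the vanishing signature of a surface or formally as the pushforward of an $H^{0}$-class along a $2$-dimensional fibration.
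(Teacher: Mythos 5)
Your proposal is correct and follows essentially the same route as the paper: pass to $BH_g$, use the rational Chern character together with Fact \ref{exfact}(\ref{exfact1}) to collapse $K^0(BH_g;\mathbb{Q})$ to $H^0\cong\mathbb{Q}$, and observe that the virtual rank vanishes because the fiberwise signature operator on $\Sigma_g$ has index $\mathrm{Sign}(\Sigma_g)=0$. The detour through the Atiyah--Singer family index theorem is harmless but superfluous, since once everything is concentrated in degree $0$ only the rank (the index on a single fiber) can contribute.
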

\begin{proof}
We have the rational Chern character isomorphism $Ch : K^{0} (BH_g; \mathbb{Q}) \simeq H^{\mathrm{ev}}(H_g ; \mathbb{Q})$. 
Using the Fact \ref{exfact} (\ref{exfact1}) and (\ref{exfact2}), we have $K^0(B\mathrm{Diff}^H_g; \mathbb{Q}) \simeq K^0(BH_g; \mathbb{Q}) \simeq \mathbb{Q}$ and this isomorphism is given by a map $* \to B\mathrm{Diff}^H_g$. 
Since the manifold $\Sigma_g$ is two dimensional, the virtual rank of the class $\mathrm{Sign}(F_{\mathrm{univ}}) \in K^0(B\mathrm{Diff}^H_g)$ is $0$. 
Thus we have $\mathrm{Sign}(F_{\mathrm{univ}}) = 0 \in K^0(B\mathrm{Diff}^H_g; \mathbb{Q})$ and the lemma follows. 
\end{proof}

For each $m$, we also denote  the restriction of the class $\mathrm{Sign}(F_{\mathrm{univ}})$ to $Z_{g, m}$ by $\mathrm{Sign}(F_{\mathrm{univ}}) \in K^0(Z_{g, m})$. 
By Lemma \ref{signaturetorsion}, we have $\mathrm{Sign}(F_{\mathrm{univ}}) = 0 \in K^0(Z_{g, m}; \mathbb{Q})$. 
Since $Z_{g, m}$ is compact, we have $K^0(Z_{g, m}; \mathbb{Q}) \simeq K^0(Z_{g, m}) \otimes\mathbb{Q}$, so the class $\mathrm{Sign}(F_{\mathrm{univ}}) $ is of finite order in $K^0(Z_{g, m}) $. 
\begin{defn}
For each positive integer $m$, let $n_{g, m}$ denote the order of the class $\mathrm{Sign}(F_{\mathrm{univ}}) \in K^0(Z_{g, m}) $. 
i.e., $n_{g, m}$ is the smallest positive integer satisfying $n_{g, m} \cdot \mathrm{Sign}(F_{\mathrm{univ}}) = 0 \in K^0(Z_{g, m}) $.  
\end{defn}

We are in the situation where Theorem \ref{mainthm} applies. 
\begin{defn}\label{domsigma}
Let $k$ be a positive integer and $g \geq 2$. 
Let $\mathcal{S}_{g, k}$ be the set of isomorphism classes of pairs $(M, \pi : \partial M \to Y)$ such that

\begin{itemize}
\item The pair $(M, \pi : \partial M \to Y)$ is a compact oriented manifold with fibered boundaries, and $M$ is $4k$-dimensional.  
\item The fiber bundle $\pi : \partial M \to Y$ is a hyperelliptic fiber bundle with fiber $\Sigma_g$. 
\end{itemize}
\end{defn}
For $(M, \pi : \partial M \to Y) \in \mathcal{S}_{g, k}$, $Y$ is a $(4k-3)$-dimensional manifold. 
Thus we have $[Y, Z_{g, 4k-2}] \simeq [Y, B\mathrm{Diff}_g^H]$. 
We see that $\mathcal{S}_{g, k} \subset S_{\Sigma_g,\mathrm{Diff}_g^H , Z_{g, 4k-2}}$. 
We apply Theorem \ref{mainthm} to the case $F = \Sigma_g$, $G = \mathrm{Diff}_g^H$, $Z=Z_{g, 4k-2}$, and $n= n_{g, 4k-2}$. 
Note that we have $K^{-1}(Z_{g, 4k-2}) \otimes \mathbb{Q} = 0$ because of Fact \ref{exfact}, (\ref{exfact1}) and (\ref{exfact2}). 
Thus, choosing any element $Q_{Z_{4k-2}} \in \mathcal{I}^{\mathrm{sign}}(\pi_{\mathrm{univ}}|_{Z_{4k-2}}, \underline{\mathbb{C}^n})$, we get the same map $\sigma_{Q_{Z_{4k-2}}}$ by (\ref{mainthm_statement_eq}) in Theorem \ref{mainthm}. 
So we set $\sigma := \sigma_{Q_{Z_{4k-2}}}$. 

\begin{cor}\label{example_cor}
Let $k$ be a positive integer and $g \geq 2$. 
We have a canonical map
\[
\sigma :\mathcal{S}_{g, k}  \to \frac{\mathbb{Z}}{n_{g, 4k-2}}
\]
satisfying the following. 
\begin{itemize}
\item (vanishing)

For $(M, \pi : \partial M \to Y) \in \mathcal{S}_{g, k}$, assume that $\pi$ extends to an oriented hyperelliptic $\Sigma_g$-fiber bundle structure $\pi' : (M, \partial M) \to ( X, \partial X)$ preserving boundaries. 
Here $X$ is an oriented smooth compact oriented $(4k-2)$-dimensional manifold and an orientation preserving diffeomorphism $\partial X \simeq Y$ is fixed. 
Then we have 
\[
\sigma(M, \pi)=0. 
\]
\item (additivity)

For $(M_0, \pi_0)$ and $(M_1, \pi_1)$ in $\mathcal{S}_{g, k}$, assume that there exists a decomposition $\partial M_i = \partial M_i^+ \sqcup -\partial M_i^-$ for $i = 0, 1$, 
and there exists an isomorphism of the fiber bundle $\phi : \pi_0|_{\partial M_0^+} \simeq \pi_1|_{- \partial M_1^-}$. 
We form the union $(M, \pi) = (M_0, \pi_0)\cup_\phi (M_1, \pi_1) \in \mathcal{S}_{g,k}$. 
Then we have 
\[
\sigma(M, \pi) = \sigma(M_0, \pi_0) + \sigma(M_1, \pi_1). 
\]
\item (compatibility with signature)

An oriented $4k$-dimensional closed manifold $M$ can be regarded as an element in $\mathcal{S}_{g, k}$. 
For this element we have
\[
\sigma(M) = \mathrm{Sign}(M). 
\]
\end{itemize}
\end{cor}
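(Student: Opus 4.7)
The plan is to apply Theorem~\ref{mainthm} with the specific choices $(F, G, Z, n) = (\Sigma_g, \mathrm{Diff}_g^H, Z_{g, 4k-2}, n_{g, 4k-2})$ and then restrict the resulting map to $\mathcal{S}_{g,k}$. First I would verify the inclusion $\mathcal{S}_{g,k} \subseteq S_{\Sigma_g, \mathrm{Diff}_g^H, Z_{g, 4k-2}}$: for $(M, \pi : \partial M \to Y) \in \mathcal{S}_{g,k}$, the base $Y$ is a closed manifold of dimension $4k-3$, and cellular approximation---applied both to maps $Y \to B\mathrm{Diff}_g^H$ and to homotopies $Y \times [0,1] \to B\mathrm{Diff}_g^H$ relative to their endpoints (using $\dim(Y \times [0,1]) = 4k-2 = \dim Z_{g, 4k-2}$)---gives the required bijection $i_* : [Y, Z_{g, 4k-2}] \simeq [Y, B\mathrm{Diff}_g^H]$. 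The torsion hypothesis $n_{g, 4k-2} \cdot i^*\mathrm{Sign}(F_{\mathrm{univ}}) = 0$ in $K^0(Z_{g, 4k-2})$ is immediate from the definition of $n_{g, 4k-2}$, whose existence is guaranteed by Lemma~\ref{signaturetorsion} together with the finite-CW property recorded in Fact~\ref{exfact} (\ref{exfact5}).

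For any choice of $Q_{Z_{4k-2}} \in \mathcal{I}^{\mathrm{sign}}(\pi_{\mathrm{univ}}|_{Z_{g, 4k-2}}, \underline{\mathbb{C}^{n_{g, 4k-2}}})$---nonempty by the torsion condition and Lemma~\ref{perturbationexistence}---Theorem~\ref{mainthm} produces the map $\sigma_{Q_{Z_{4k-2}}}$ on $\mathcal{S}_{g,k}$. The key step is to check that this map does not depend on the chosen perturbation class, so that a canonical $\sigma$ is obtained. By formula~(\ref{mainthm_statement_eq}), the difference between two such maps on a pair $(M, \pi)$ equals $\tfrac{2}{n_{g, 4k-2}}\langle f^*[Q_Z^1 - Q_Z^0], [D_Y^{\mathrm{sign}}]\rangle$, so it suffices to show that $K^{-1}(Z_{g, 4k-2})$ is a torsion group, which forces this integer-valued index pairing to vanish. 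I would establish this via the rational Chern character $K^{-1}(Z_{g, 4k-2}) \otimes \mathbb{Q} \simeq H^{\mathrm{odd}}(Z_{g, 4k-2}; \mathbb{Q})$: by Fact~\ref{exfact} (\ref{exfact1}) and (\ref{exfact2}) we have $H^i(B\mathrm{Diff}_g^H; \mathbb{Q}) = 0$ for all $i \geq 1$, and since $Z_{g, 4k-2}$ is the $(4k-2)$-skeleton of $B\mathrm{Diff}_g^H$ its rational cohomology agrees with that of $B\mathrm{Diff}_g^H$ in degrees below $4k-2$ and vanishes in degrees above $4k-2$, whence $H^{\mathrm{odd}}(Z_{g, 4k-2}; \mathbb{Q}) = 0$. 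Combined with finite generation (Fact~\ref{exfact} (\ref{exfact5})), this gives the desired torsion statement.

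Finally, the three listed properties of $\sigma$ are inherited directly from the corresponding properties in Theorem~\ref{mainthm}. The only subtlety is the vanishing: given an extension $\pi' : M \to X$ with $X$ a compact $(4k-2)$-manifold, Theorem~\ref{mainthm} requires the surjectivity of $i_* : [X, Z_{g, 4k-2}] \to [X, B\mathrm{Diff}_g^H]$, which is again immediate from cellular approximation since $\dim X = 4k-2 = \dim Z_{g, 4k-2}$. Additivity and compatibility with the classical signature translate verbatim. The main obstacle in the whole argument is the rational-cohomology vanishing used in the previous paragraph to make $\sigma$ independent of the perturbation, but this is a routine consequence of Fact~\ref{exfact}; no new analytic input beyond Theorem~\ref{mainthm} is required.
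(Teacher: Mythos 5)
Your proposal is correct and follows essentially the same route as the paper: restrict Theorem \ref{mainthm} to $\mathcal{S}_{g,k}\subset S_{\Sigma_g,\mathrm{Diff}_g^H,Z_{g,4k-2}}$ using that $\dim Y=4k-3$ (and $\dim X=4k-2$ for the vanishing hypothesis), and deduce independence of the perturbation class from $K^{-1}(Z_{g,4k-2})\otimes\mathbb{Q}=0$ via Fact \ref{exfact} together with formula (\ref{mainthm_statement_eq}). You merely spell out in more detail (cellular approximation, the rational Chern character on the skeleton) what the paper leaves implicit, and these details check out.
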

This solves Problem \ref{localization_prob} as follows. 
For each $i$, we have $(M_i, \pi|_{\partial M_i} : \partial M_i \to \partial V_i) \in \mathcal{S}_{g, k}$. 
We define $\sigma(M_i, V_i, \pi|_{M_i}) := \sigma(M_i, \pi|_{\partial M_i} : \partial M_i \to \partial V_i)$, where the right hand side is defined by Corollary \ref{example_cor}. 
We have, by the additivity property and the compatibility with signature proved in Corollary \ref{example_cor}, 
\[
\mathrm{Sign}(M) = \sigma(M_U, \pi|_{\partial M_U} : \partial M_U \to \partial U ) + \sum_{i = 1}^m \sigma (M_i, V_i, \pi|_{M_i}). 
\]
On the other hand, by the vanishing property, we have $\sigma(M_U, \pi|_{\partial M_U} : \partial M_U \to \partial U )=0$. 
Thus we get the equality
\[
\mbox{Sign}(M) = \sum_{i = 1}^m \sigma (M_i, V_i, \pi|_{M_i}). 
\] 

\begin{rem}
We remark that Corollary \ref{example_cor} ``solves'' the localization problem, Problem \ref{localization_prob}, in the sense that we have shown the existence of local signature function. 
However, this construction is abstract and does not give an explicit formula for the local signature. 
In contrast, in \cite{E} the author provides an explicit formula for the local signature in the case $k=1$. 
In order to find applications of the above results, we would definitely need to find an explicit formula. 
To proceed further, we need more geometric insight to signature class and their invertible perturbations on mapping class groups. 
In future works, the author hopes to investigate more on this aspect.  
\end{rem}
\section*{Acknowledgment}
This paper is written for master's thesis of the author. 
The author would like to thank her supervisor Yasuyuki Kawahigashi for his support and encouragement. 
She also would like to thank Georges Skandalis, Mikio Furuta and Yosuke Kubota for fruitful advice and discussions. 
This work is supported by Leading Graduate Course for Frontiers of Mathematical Sciences and Physics, MEXT, Japan. 

\end{document}